\let\oldtocsection=\tocsection
\let\oldtocsubsection=\tocsubsection
\let\oldtocsubsubsection=\tocsubsubsection
\renewcommand{\tocsection}[2]{\hspace{0em}\oldtocsection{#1}{#2}}
\renewcommand{\tocsubsection}[2]{\hspace{1em}\oldtocsubsection{#1}{#2}}
\renewcommand{\tocsubsubsection}[2]{\hspace{2em}\oldtocsubsubsection{#1}{#2}}
\numberwithin{equation}{section}
\theoremstyle{definition}
\newtheorem{definition}[equation]{Definition}
\newtheorem{example}[equation]{Example}
\newtheorem{proposition}[equation]{Proposition}
\newtheorem{theorem}[equation]{Theorem}
\newtheorem{remark}[equation]{Remark}
\newtheorem{corollary}[equation]{Corollary}
\newtheorem{lemma}[equation]{Lemma}
\providecommand{\customgenericname}{}
\newcommand{\newcustomtheorem}[2]{%
  \newenvironment{#1}[1]
  {%
   \renewcommand\customgenericname{#2}%
   \renewcommand\theinnercustomgeneric{##1}%
   \innercustomgeneric
  }
  {\endinnercustomgeneric}
}
\theoremstyle{remark}
\numberwithin{equation}{section}
\newcommand{\midwedge}{\text{\Large$\wedge$}}
\newcommand{\midodot}{\text{\Large$\odot$}}
\newcommand{\midotimes}{\text{\Large$\otimes$}}
\newcommand{\be}{\begin{equation}}
\newcommand{\ee}{\end{equation}}
\def\beqa{\begin{eqnarray}}
\def\eeqa{\end{eqnarray}}
\def\bean{\begin{eqnarray*}}
\def\eean{\end{eqnarray*}}
\newcommand{\de}{\mathrm{d}}
\newcommand{\del}{\partial}
\newcommand{\e}{\mathrm{e}}
\newcommand{\IZ}{\mathbb{Z}}
\newcommand{\IR}{\mathbb{R}}
\newcommand{\IT}{\mathbb{T}}
\newcommand{\IIX}{\mathbbm{X}}
\newcommand{\frg}{\mathfrak{g}}
\newcommand{\frh}{\mathfrak{h}}
\newcommand{\frd}{\mathfrak{d}}
\newcommand{\frt}{\mathfrak{t}}
\newcommand{\frk}{\mathfrak{k}}
\def\e{{\,\rm e}\,}
\newcommand{\cB}{{\mathcal B}}
\newcommand{\cH}{{\mathcal H}}
\newcommand{\cQ}{{\mathcal Q}}
\newcommand{\cF}{{\mathcal F}}
\newcommand{\cD}{{\mathcal D}}
\newcommand{\cV}{{\mathcal V}}
\newcommand{\cG}{{\mathcal G}}
\newcommand{\ccK}{{\mathscr K}}
\newcommand{\ccV}{{\mathscr V}}
\newcommand{\sfD}{{\mathsf{D}}}
\newcommand{\sfS}{{\mathsf{S}}}
\newcommand{\sfT}{{\mathsf{T}}}
\newcommand{\sfH}{{\mathsf{H}}}
\newcommand{\sfG}{{\mathsf{G}}}
\newcommand{\sfL}{{\mathsf{L}}}
\newcommand{\sfGamma}{{\mathsf{\Gamma}}}
\newcommand{\unit}{\mathds{1}}   			% identity map/matrix
\newcommand{\ip}[1]{\left\langle #1 \right\rangle}
\newcommand{\gr}{\text{gr}}
\newcommand{\ann}{\mathrm{Ann}}
\newcommand{\red}[1]{\underline{#1}}
\newcommand{\rel}{\dashrightarrow}
\newcommand{\mor}{\rightarrowtail}
\newcommand{\cal}[1]{\mathcal{#1}}
\newcommand{\rk}{\mathrm{rk}}
\renewcommand{\phi}{\varphi}
\newtheoremstyle{case}{}{}{}{}{}{:}{ }{}
\theoremstyle{remark}
\newcommand{\oset}[3][0ex]{%
  \mathrel{\mathop{#3}\limits^{
    \vbox to#1{\kern-3\ex@
    \hbox{$\scriptstyle#2$}\vss}}}}
\newcommand{\osett}[3][0ex]{%
  \mathrel{\mathop{#3}\limits^{
    \vbox to#1{\kern-2\ex@
    \hbox{$\scriptstyle#2$}\vss}}}}
\newcommand{\osettt}[3][0ex]{%
  \mathrel{\mathop{#3}\limits^{
    \vbox to#1{\kern-1\ex@
    \hbox{$\scriptstyle#2$}\vss}}}}
\newcommand{\pppp}{\osett{+}{+}\osett{+}{+}}
\newcommand{\pppm}{\osett{+}{+}\osett{+}{-}}
\newcommand{\pmpp}{\osett{+}{+}\osett{-}{+}}
\newcommand{\pmpm}{\osett{+}{+}\osett{-}{-}}
\newcommand{\mmmm}{\osett{-}{-}\osett{-}{-}}
\newcommand{\mmmp}{\osett{-}{-}\osett{-}{+}}
\newcommand{\mpmm}{\osett{-}{-}\osett{+}{-}}
\newcommand{\mpmp}{\osett{-}{-}\osett{+}{+}}
\DeclareRobustCommand\mathflip[1]{%
    \mathpalette\@mathflip{#1}%
}
\newcommand\@mathflip[2]{%
    \mskip4mu
    \pdfsave
    \pdfsetmatrix{-1 0 .5 1}%
    \hb@xt@\z@{\hss$\m@th#1 #2$\hss}%
    \pdfrestore
    \mskip7mu
}
\DeclareRobustCommand\mathflipnu[1]{%
    \mathpalette\@mathflipnu{#1}%
}
\newcommand\@mathflipnu[2]{%
    \mskip2mu
    \pdfsave
    \pdfsetmatrix{-1 0 .5 1}%
    \hb@xt@\z@{\hss$\m@th#1 #2$\hss}%
    \pdfrestore
    \mskip8mu
}
\newcommand{\um}{\text{$\mathflip{\mu}$}}
\newcommand{\un}{\text{$\mathflipnu{\nu}$}}
\newcommand{\ahpla}{\text{$\mathflip{\alpha}$}}
\newcommand{\ateb}{\text{$\mathflip{\beta}$}}
\newcommand{\flipm}[1]{\text{$\mathflip{#1}$}}
\begin{document}

% Begin the tikzpicture environment

\title[T-Dualities and Courant Algebroid Relations]{T-Dualities and Courant Algebroid Relations}

\author[T.~C.~De Fraja]{Thomas C.~De Fraja}
\address[Thomas C.~De Fraja]
{Department of Mathematics and Maxwell Institute for Mathematical
  Sciences\\ Heriot-Watt
  University\\ Edinburgh EH14 4AS\\ United Kingdom}
\email{tcd2000@hw.ac.uk}

\author[V.~E.~ Marotta]{Vincenzo Emilio Marotta}
\address[Vincenzo Emilio Marotta]
{Mathematical Institute, Faculty of Mathematics and Physics\\ Charles University\\ Prague 186 75\\ Czech Republic and
Universit\`{a} di Trieste, Dipartimento di Matematica e Geoscienze,  Via A. Valerio 12/1, 34127 Trieste, Italy
}
\email{vincenzoemilio.marotta@units.it}

\author[R.~J. Szabo]{Richard J.~Szabo}
  \address[Richard J.~Szabo]
  {Department of Mathematics, Maxwell Institute for Mathematical Sciences and Higgs Centre for Theoretical Physics\\
  Heriot-Watt University\\
  Edinburgh EH14 4AS \\
  United Kingdom}
  \email{R.J.Szabo@hw.ac.uk}

\vfill

\begin{flushright}
\footnotesize
{\sf EMPG--23--12}
\normalsize
\end{flushright}

\vspace{1cm}

\begin{abstract}
We develop a new approach to T-duality based on Courant algebroid relations which subsumes the usual T-duality as well as its various generalisations.
Starting from a relational description for the reduction of exact Courant algebroids over foliated manifolds, we introduce a weakened notion of generalised isometries that captures the generalised geometry counterpart of Riemannian submersions when applied to transverse generalised metrics. This is used to construct T-dual backgrounds as generalised metrics on reduced Courant algebroids which are related by a generalised isometry. We prove an existence and uniqueness result for generalised isometric exact Courant algebroids coming from reductions. We demonstrate that our construction reproduces standard T-duality relations based on correspondence spaces. We also describe how it applies to generalised T-duality transformations of almost para-Hermitian manifolds. 
 \end{abstract} 

\maketitle

{\baselineskip=12pt
\tableofcontents
}

\section{Introduction}
\label{sec:Intro}
Courant algebroids, as introduced in \cite{Courant1990, Weinstein1997, Severa-letters, Hitchin:2003cxu}, have  proved to be an indispensable geometric tool for understanding many aspects of supergravity and string theory, see e.g.~\cite{Coimbra:2011nw,Jurco2016courant} and references therein. In this paper we are interested in their role in capturing the geometric, topological, and physical properties of spaces related by T-duality symmetries of string theory, see e.g.~\cite{Severa-letters, Grana2008, cavalcanti2011generalized, Baraglia:2013wua, Severa2015, Jurco2018, Severa:2018pag, Vysoky2020hitchiker}.
Finalising a fully geometric description of T-duality and its various generalisations, such as non-abelian T-duality, Poisson-Lie T-duality, and non-isometric T-duality, has proven to be an elusive task. In this paper we will build on the geometric picture of Poisson-Lie T-duality proposed by Vysok\'y in \cite{Vysoky2020hitchiker} to give a generalised and unified framework for T-dualities in terms of Courant algebroid relations.

Let us start by explaining why T-duality is generally expected to be subsumed into the geometry of Courant algebroids from the perspective of the worldsheet formulation of string theory, which is the point of view taken throughout this paper. Following~\cite[Section 2.2]{Garcia-Fernandez:2020ope}, we describe the intimate relationship between two-dimensional sigma-models and Courant algebroids, originally stated in \cite{Severa-letters}.
 
\medskip

\subsection{Exact Courant Algebroids from Sigma-Models}~\\[5pt] \label{subsect:sigma}
The background data for the bosonic part of the worldsheet theory of closed oriented strings consists of a closed oriented Riemann surface $(\Sigma,h)$, a Riemannian manifold $(M,g)$ and a closed three-form $H \in \mathsf{\Omega}^3_{\rm cl} (M)$, called an \emph{$H$-flux}, which represents an integer cohomology class $[H] \in \mathsf{H}^3(M, \IZ)$. The string sigma-model is a field theory of smooth maps $\IIX\in C^\infty(\Sigma,M)$, whose action functional is a sum of two terms. 

The kinetic term is given by a Dirichlet-type functional, called the \emph{Polyakov functional.} It is defined by endowing the space of maps $\de \IIX \colon T\Sigma \rightarrow \IIX^*TM$ with a metric induced by
$g$, regarded as a metric on the vector space of sections of the pullback  $\IIX^*TM$ of the tangent bundle $TM$ to $\Sigma$ by $\IIX,$ and the cometric $h^{-1}$ on $T^*\Sigma.$  This gives
a well-defined metric 
$h^{-1} \otimes \IIX^*g$ on the vector bundle $T^*\Sigma \otimes \IIX^*TM$ over $\Sigma$ which allows one to write the Polyakov functional as
\be \label{eqn:sigmanorm}
S_0[\IIX]=\frac{1}{2}\,\int_{\Sigma}\, h^{-1}\big(\IIX^*g (\de \IIX, \de \IIX)\big) \ \de\mu(h) \ ,
\ee
where $\de \IIX \in \mathsf{\Gamma}(T^*\Sigma \otimes \IIX^*TM)$  and $\mu(h)$ is the area measure on $\Sigma$ induced by $h$. 

The topological term, called the \emph{Wess-Zumino functional}, is the functional $$S_H \colon C^\infty(\Sigma, M) \longrightarrow \IR / \IZ$$ defined by 
\be \label{eqn:ActionQTop}
S_H [\IIX] \coloneqq \int_V\, \mathbbm{X}^*_V H \ ,
\ee
where $V$ is any three-manifold with boundary $\Sigma$, and $\IIX_V \colon V \rightarrow M$ is any smooth extension of $\IIX \in C^\infty(\Sigma, M)$ to $V.$ The space of Lagrangian densities of the Wess-Zumino functional $S_H$ is~$\mathsf{\Omega}_{\rm cl}^3(M).$

Consider the variational problem for the topological term. A variation of the Wess-Zumino functional \eqref{eqn:ActionQTop} is generated by a vector field $X \in \mathsf{\Gamma}(TM)$ acting via the Lie derivative. Since $H$ is closed, via Stokes' Theorem this is given by
\begin{align}
\delta_X S_H [\IIX] = \int_\Sigma\, \IIX^* \big(\iota_X H\big) \ .
\end{align}
The solutions of the variational problem  $\delta_XS_H[\IIX]=0$ (see e.g.~\cite{Severa-letters, Severa2015, Garcia-Fernandez:2020ope}) 
are given by the maps $\IIX \in C^\infty(\Sigma, M)$ such that, for all $X \in \mathsf{\Gamma}(TM), $ there exists $\bar{\alpha} \in \mathsf{\Gamma}(T^*\Sigma)$ satisfying
\begin{align} \label{eqn:criticalpoints}
\IIX^* \bigl( \iota_X H \bigr) = \de \bar{\alpha} \ .
\end{align}

Let us introduce the vector bundle $$\IT M \coloneqq TM \oplus T^*M \ ,$$ called the {double tangent bundle} of $M$. Given a section $X + \alpha \in \mathsf{\Gamma}(\IT M),$ Equation \eqref{eqn:criticalpoints} is then satisfied if 
\begin{align} \label{eqn:constantSH}
\iota_X H = \de \alpha \ ,
\end{align}
where Equation \eqref{eqn:criticalpoints} is obtained by pulling back Equation~\eqref{eqn:constantSH} by $\IIX.$ Equation \eqref{eqn:constantSH} can be interpreted as giving the tangent directions on $C^\infty(\Sigma, M)$ along which $S_H$ is constant. 
We shall now show that these flat directions are related to symmetries of the Wess-Zumino functional \eqref{eqn:ActionQTop}.

Following \cite{Garcia-Fernandez:2020ope}, we define the action of ${\sf Diff}(M)$ on $C^\infty(\Sigma, M)$ by $(\phi, \IIX) \to \phi^{-1}\circ \IIX$. This group action maps the functional \eqref{eqn:ActionQTop} to 
    $S_H(\phi^{-1} \circ \IIX) = S_{(\phi^{-1})^*H}(\IIX)$, hence the action by $\phi$ is a symmetry of the functional \eqref{eqn:ActionQTop} if and only if $\phi^* H=H$. However, this is not the full group of symmetries of $S_H(\IIX).$
The Lie group $$\sfG \coloneqq \mathsf{Diff}(M) \ltimes \mathsf{\Omega}^2 (M)$$ 
acts on the space of Lagrangian densities $\mathsf{\Omega}^3_{\rm cl}(M)$ by 
\begin{align} \label{eqn:actionSemi}
(\phi,B) \cdot H = (\phi^{-1})^* (H - \de B) =: H' \ ,
\end{align}
for all $(\phi, B) \in \sfG$ and $H \in \mathsf{\Omega}^3_{\rm cl}(M).$ 
Under the group action \eqref{eqn:actionSemi}, the Wess-Zumino functional $S_H$ transforms as
\begin{align}
S_{H^\prime}[\IIX]= S_H[\phi^{-1} \circ \IIX] - \int_\Sigma \, \IIX^* \big((\phi^{-1})^* B\big) \ ,
\end{align}
where $H^\prime = (\phi^{-1})^* (H - \de B).$ Thus, the functional \eqref{eqn:ActionQTop} is invariant if and only if and $\phi^*H = H - \de B$. Such pairs $(\varphi,B)$ define the isotropy subgroup $\sfG_H \subset \sfG$ of the $H$-flux, see \cite[Proposition 2.23]{Garcia-Fernandez:2020ope}.

Let $\mathsf{Lie}(\sfG_H) \subset \mathsf{\Gamma}\big(T M \oplus \midwedge^2\, T^*M\big)$ be the Lie algebra of the isotropy group $\sfG_H$. A representation of $\sfG$ is given by the normal Lie subalgebra of $\mathsf{Lie}(\sfG_H) $
\begin{align}
\mathsf{Lie}(\sfG_H)^\mathtt{n} = \set{ X + \de \alpha - \iota_X H \, | \, X \in \mathsf{\Gamma}(TM) \ , \ \alpha \in \mathsf{\Gamma}(T^*M) } \ ,
\end{align}
which is parametrised by elements in $\sfGamma(\IT M)$.
See \cite[Proposition 2.24]{Garcia-Fernandez:2020ope} for the proof.

The Lie group $\sfG$ acts on $\mathsf{\Gamma}(\IT M)$ by
\be \label{eqn:GITTM}
(\phi, B) \cdot (X+\alpha) = \phi_*X + (\phi^{-1})^*(\alpha + \iota_X B) \ ,
\ee 
for all $(\phi, B) \in \sfG$ and $X + \alpha \in \mathsf{\Gamma}(\IT M).$ Let $\Psi \colon \mathsf{\Gamma}(\IT M) \rightarrow \mathsf{Lie}(\sfG_H)^\mathtt{n}$ be the $\sfG_H$-equivariant map defined by
\be\label{eqn:ghequivariantmap}
\Psi(X +\alpha) = X + \de \alpha - \iota_X H \ ,
\ee
for all $X + \alpha \in \mathsf{\Gamma}(\IT M).$ The Dorfman bracket $\llbracket  \, \cdot \, , \, \cdot \, \rrbracket_H$ is obtained from the Lie algebra action 
induced by the restriction of the representation of $\sfG$ on $\mathsf{\Gamma}(\IT M)$ to $\mathrm{im}(\Psi)\subset \mathsf{Lie}(\sfG)$:
\be 
\Psi(X +\alpha) \cdot (Y +\beta) = [X,Y] + \pounds_X \beta - \iota_Y\, \de \alpha + \iota_Y\, \iota_X H \eqqcolon \llbracket X + \alpha, Y + \beta \rrbracket_H \ ,
\ee
for all $X + \alpha, Y + \beta \in \mathsf{\Gamma}(\IT M).$\footnote{Notation: $[X,Y]$ denotes the Lie bracket of vector fields $X,Y\in\mathsf{\Gamma}(TM)$, $\pounds_X$ denotes the Lie derivative along $X$, and $\iota_{(\,\cdot\,)}$ generally denotes the contraction of a section of a vector bundle with a tensor of the dual vector bundle.} The Jacobi identity for  $\llbracket  \, \cdot \, , \, \cdot \, \rrbracket_H$ is a consequence of $\sfG_H$-equivariance of $\Psi.$

The vector bundle $\IT M$ endowed with the bracket $\llbracket  \, \cdot \, , \, \cdot \, \rrbracket_H$ (and a suitable pairing) on its sections is our first example of a (exact) Courant algebroid. We will see later on that its automorphism group is the same as the isotropy group $\sfG_H \subset \sfG$ of $H$ for the action \eqref{eqn:actionSemi} on $\mathsf{\Omega}^3_{\rm cl}(M)$, i.e. the symmetries leaving $S_H$ invariant (see Corollary \ref{cor:CAauts}). Incorporating the kinetic term \eqref{eqn:sigmanorm} amounts to introducing a generalised metric on $\IT M$. We conclude that, from the worldsheet perspective, the data of a string background is encoded in an exact Courant algebroid endowed with a generalised metric.

Courant algebroids may be regarded as a generalisation of quadratic Lie algebras from vector spaces to vector bundles, i.e. as vector bundles endowed with a bracket operation on their module of sections and a non-degenerate symmetric bilinear pairing satisfying a kind of invariance property with respect to the bracket operation. The crux to understanding T-dualities in this framework resides in the complexity of the description of morphisms of these structures. 

As we have seen above, isomorphisms of Courant algebroids provide a symmetry of the corresponding sigma-model, and they prove to be vastly well-behaved. 
Meanwhile, the notion of T-duality for sigma-models goes beyond the concept of symmetry, and more powerful tools are needed to explore the relation between T-dual sigma-models.
In this paper we advocate the use of Courant algebroid relations which, while less constraining than isomorphisms, present subtle and more complex features that before the work of Vysok\'y \cite{Vysoky2020hitchiker} were not fully understood. This conundrum motivates a discussion of what a category-like notion for Courant algebroids might be. 

\medskip

\subsection{The ``Category'' of Courant Algebroids}~\\[5pt]
As argued in \cite{Vysoky2020hitchiker}, an extended notion of the category of Courant algebroids can be introduced by allowing morphisms of Courant algebroids to be involutive isotropic subbundles of a product Courant algebroid supported on a submanifold of the base; these are called \emph{Courant algebroid relations}. By introducing this notion one avoids the problem of not having enough arrows. On the other hand, extending the notion of arrow between Courant algebroids in this way means that not all arrows are composable: the support of the composition of subbundles might fail to be a submanifold itself. Nonetheless, the notion of Courant algebroid relation provides a complete characterisation of what Courant algebroid morphisms are and how their composition behaves.

In \cite{Vysoky2020hitchiker} a well-defined notion of relations between generalised metrics on Courant algebroids is further presented, i.e.~a \emph{generalised isometry} which is also well-behaved under composition of relations. Based on this, in the present paper we will extend the notion of generalised isometry to structures similar to generalised metrics: the  \emph{transverse generalised metrics} which include in their definition a controlled degeneracy that, in turn, helps in defining their invariance conditions. 

This extension also arises from a more concrete problem. As shown in \cite{Bursztyn2007reduction}, an exact $\sfG$-equivariant Courant algebroid, over a principal $\sfG$-bundle, can be reduced to a Courant algebroid on the base of the principal bundle. For isotropic $\sfG$-actions, the reduced Courant algebroid is exact as well. As shown in \cite{Vysoky2020hitchiker}, there exists a Courant algebroid relation between the original Courant algebroid and the reduced Courant algebroid. However, if both are endowed with generalised metrics, this Courant algebroid relation always fails to be a generalised isometry if the $\sfG$-action is isotropic. Our goal in this paper is to find a notion of isometry between a suitable structure on the original Courant algebroid, e.g. a transverse generalised metric, and a generalised metric on the reduced Courant algebroid.

Courant algebroid relations pave the way for a complete formulation of T-duality as a generalised isometry. This idea was first proposed by \v{S}evera\footnote{See \url{http://thphys.irb.hr/dualities2017/files/Jun06Severa.pdf}.} in order to describe Poisson-Lie T-duality as an ``almost isomorphism'' of Hamiltonian systems \cite{Severa-unpubl}. The almost isomorphism arises from a Courant algebroid relation that is required to be both Lagrangian -- i.e. a Dirac structure for the product Courant algebroid -- and a generalised isometry. This is explored in detail in \cite{Vysoky2020hitchiker}, where Poisson-Lie T-duality is shown to be a generalised isometry between reduced Courant algebroids coming from different group actions which are not isotropic.
However, there is no general geometric framework encompassing T-duality in all of its flavours. In this paper we will provide such framework for T-dualities arising from generic isotropic Courant algebroid reductions. A particularly relevant instance that we are interested in is the T-duality for torus bundles with $H$-flux discussed in \cite{Bouwknegt2003topology, bouwknegt2004tduality, Bunke2005, cavalcanti2011generalized}. This has also recently been discussed in~\cite{Waldorf:2022tib} from a more general geometric perspective which is closer in spirit to our treatment; however, our framework also covers more general cases of affine torus bundles~\cite{Baraglia2014}.

\medskip

\subsection{T-duality and the Fourier-Mukai Transform}~\\[5pt]
In the picture proposed by \cite{Bouwknegt2003topology} and later expanded in \cite{cavalcanti2011generalized}, T-duality is accompanied by a module isomorphism between invariant sections of Courant algebroids over torus bundles with a common base. It arises from a (smooth version of the) Fourier-Mukai transform through a correspondence between these bundles. One of our applications in this paper is the description of this form of T-duality in terms of Courant algebroid relations. Let us therefore discuss this approach to T-duality in more depth. 

The setting of \cite{Bouwknegt2003topology, cavalcanti2011generalized} involves two principal $\sfT^k$-bundles $\cQ_1$ and $\cQ_2$ over a common base manifold $\cB$. Let $M = \cQ_1 \times_\cB \cQ_2$ be the fibred product, with respective projections $\varpi_1$ and $\varpi_2$ to $\cQ_1$ and $\cQ_2$. This is a principal $\sfT^{2k}$-bundle over $\cB$, called a correspondence space or a doubled torus bundle, which sits in the commutative diagram 
\begin{equation}\label{eq:correspondence}
\begin{tikzcd}
 & M \arrow[dl,"\varpi_1",swap] \arrow[dr,"\varpi_2"] & \\
 \cQ_1 \arrow[dr] &  & \cQ_2 \arrow[dl] \\ 
 & \cB &
\end{tikzcd}
\end{equation}
On $M$ the fibrewise T-duality group acts geometrically via diffeomorphisms~\cite{Hull2005,Belov:2007qj}. 

Suppose that both $\cQ_1$ and $\cQ_2$ are endowed with closed $\sfT^k$-invariant three-forms $\red H{}_1$ and $\red H{}_2,$ respectively.
Then $(\cQ_1,\red H{}_1)$ and $(\cQ_2,\red H{}_2)$ are said to be T-dual if 
\begin{align}
 \varpi_1^*\, \red H{}_1 - \varpi_2^*\, \red H{}_2 = \de B 
\end{align}
on the correspondence space $M$, for some $\sfT^{2k}$-invariant two-form \smash{$B \in \mathsf{\Omega}^2_{\sfT^{2k}}(\cQ_1 \times_\cB \cQ_2)$} whose restriction to $\ker(\varpi_{1*}) \otimes \ker(\varpi_{2 *})$ is non-degenerate. Gauging the string sigma-model for the doubled torus bundle $M$ then relates the sigma-models for the quotient spaces $\cQ_1$ and $\cQ_2$ via reduction along the projections $\varpi_1$ and $\varpi_2$, respectively~\cite{Hull2007}.

In this case, since the fibres are compact, the Fourier-Mukai transform is well-defined and given by
\begin{align}
 \varrho(\alpha) \coloneqq \int_{\sfT^k}\, \e^{B}\, \wedge \varpi_1^*\,\alpha  
\end{align}
for any \smash{$\alpha \in \mathsf{\Omega}^\bullet_{\sfT^k}(\cQ_1)$}, where the fibrewise integration is the pushforward of forms by the projection $\varpi_{2}:M\to\cQ_2$. It  is shown in~\cite{Bouwknegt2003topology} that $\varrho$ defines a degree-shifting isomorphism between the twisted differential complexes \smash{$\big(\mathsf{\Omega}^\bullet_{\sfT^k}(\cQ_1) , \de_{\red H{}_1}\big)$} and 
    \smash{$\big(\mathsf{\Omega}^\bullet_{\sfT^k}(\cQ_2) , \de_{\red H{}_2}\big),$} where $\de_{\red H{}_i}:=\de+\red H{}_i\,\wedge\,$. It describes the transformation of Ramond--Ramond fields in type~II string theory under T-duality.
    
This becomes an isomorphism of irreducible Clifford modules by choosing an isomorphism~\cite{cavalcanti2011generalized} $$\mathscr{R} \colon \mathsf{\Gamma}_{\sfT^k}(T\cQ_1 \oplus T^*\cQ_1) \longrightarrow \mathsf{\Gamma}_{\sfT^k}(T\cQ_2 \oplus T^*\cQ_2)$$ of $C^\infty(\cB)$-modules such that
\begin{align}
 \varrho(e \cdot \alpha) = \mathscr{R}(e) \cdot \varrho(\alpha) \ ,   
\end{align}
for any \smash{$e \in \mathsf{\Gamma}_{\sfT^k}(T\cQ_1 \oplus T^*\cQ_1)$} and \smash{$\alpha \in \mathsf{\Omega}^\bullet_{\sfT^k}(\cQ_1),$} where $e \cdot \alpha$ is the natural representation on $\mathsf{\Omega}^\bullet(\cQ_1)$ of the Clifford algebra induced by the pairing $\ip{\,\cdot\,,\, \cdot\,}$ on the fibres of $T \cQ_1 \oplus T^*\cQ_1$, and similarly on \smash{$\mathsf{\Omega}^\bullet(\cQ_2)$}. 
Thus T-duality, as an isomorphism of $\sfT^k$-invariant sections of the Courant algebroids $\IT \cQ_1$ and $\IT \cQ_2$, is given by the unique choice of $\mathscr{R}$ in terms of horizontal lifts of vector fields on $\cQ_1$ determined by the non-degeneracy of the two-form $B.$ 

Our general framework for T-duality, seen as a relation between Courant algebroids, will encapsulate this construction, but it reproduces the isomorphism $\mathscr{R}$ for any doubled fibration $M$ without any conditions on the topology of the fibres.
Thus while the above construction is restricted to the possibility of integrating along the fibres, in this paper we propose an alternative construction without introducing the Fourier-Mukai transform $\varrho,$ based on the reduction of Courant algebroids over foliated manifolds. Our approach is thus powerful enough to cover T-dualities based on correspondence spaces which involve non-compact fibres, such as those which arise in Poisson-Lie T-duality~\cite{Arvanitakis:2021lwo,Demulder:2022nlz}.

In our framework, the metric and Kalb-Ramond field of a string background, given by specifying a generalised metric on $\cQ_1$, play a crucial role in determining a natural definition of T-duality, providing the necessary restrictions for the definition of $\mathscr{R}$ in a unique way. Remarkably, the crucial non-degeneracy condition for $B$ arises naturally in our construction. 
This extends the work of \cite{cavalcanti2011generalized}, where only isotropic reduction of a Courant algebroid by a product Lie group action is considered. At the same time it preserves all of its features, such as the description of the Buscher rules for T-dual backgrounds in terms of the module isomorphism $\mathscr{R}$ as a generalised isometry.

\medskip

\subsection{Generalised T-duality and Para-Hermitian Geometry}~\\[5pt]
The correspondence space picture of T-duality was extended to doubled twisted tori in~\cite{Hull:2007jy,DallAgata2007}, which further double the base $\cB$ and provide instances of almost para-Hermitian manifolds. This incorporates a geometric description of a space together with all of its T-duals, including those that may be `non-geometric'.
This perspective was extended by~\cite{SzMar, Marotta2021born} to give a geometric notion of generalised T-duality transformations of arbitrary foliated almost para-Hermitian manifolds, endowed with generalised para-Hermitian metrics, as diffeomorphisms which preserve the split-signature metric $\eta$ and naturally incorporate non-isometric T-duality as well as non-abelian T-duality. 

The string sigma-model for the doubled twisted torus considered in~\cite{Hull2009,ReidEdwards2009}, and its generalisation to the Born sigma-model for almost para-Hermitian manifolds in~\cite{Marotta2021born}, allow for a Lie algebroid gauging along the leaves of a foliation. This relates T-dual sigma-models via reductions to the respective leaf spaces,
though  the explicit nature of the relation between T-dual backgrounds has so far not been identified. In this paper we will make this notion of generalised T-duality more precise by viewing it in terms of our Courant algebroid relations, which provide a `map-like' device preserving all Courant algebroid structures, and in turn the dynamics of the corresponding sigma-models.

\medskip

\subsection{Summary of Results and Outline}~\\[5pt]
The outline of this paper is as follows:
Sections \ref{sec:Courantalgebroids}--\ref{sec:Generalisedisometries} give the necessary background material for our main results which are presented in Section \ref{sec:T-duality}, with each section each being relatively self-contained, while Section \ref{sec:applications} considers some concrete applications and examples.

\textbf{Section \ref{sec:Courantalgebroids}} begins with a review of Courant algebroids. After discussing some important properties of Courant algebroids and their isomorphisms, in Section \ref{ssec:CAreduction} we introduce the first of the three major building blocks required for Section \ref{sec:T-duality}, that of Courant algebroid reduction, following closely the work of Bursztyn-Cavalcanti-Gualtieri~\cite{Bursztyn2007reduction}, and in particular of Zambon~\cite{Zambon2008reduction}. 
The reduction begins with an exact Courant algebroid $E$ over a fibred manifold, whose fibres are defined by an involutive isotropic subbundle $K\subset E$, and produces an exact Courant algebroid on the leaf space (here assumed to be smooth). 
We highlight the relevance of the \emph{basic sections} of\footnote{Notation: For a subbundle $K$ of a Courant algebroid $E$, we denote by $K^\perp\subset E$ the annihilator of $K$ with respect to the Courant algebroid pairing; then $K$ is \emph{isotropic} if $K\subseteq K^\perp$. For a subbundle $V$ of the tangent bundle $TM$ of a manifold $M$, we denote by $\ann(V)\subset T^*M$ the annihilator of $V$ with respect to the dual pairing between $TM$ and $T^*M$.} $K^\perp$ (Definition \ref{def:basisections}), i.e. sections of $K^\perp$ that admit a form of $K$-invariance, since it suffices that there are enough basic sections to make
the reduction possible~\cite{Zambon2008reduction}. Equivalently, it suffices that there exists an adapted splitting (Definition~\ref{defn:adaptedsplittings}), and we lean heavily on such objects throughout Section \ref{sec:T-duality}.
We further explore their crucial role in this construction in order to prove some properties of reducible exact Courant algebroids. 

\textbf{Section \ref{sec:CArelations}} provides an introduction to the second, and perhaps most important, building block of Section \ref{sec:T-duality}, that of Courant algebroid relations. Courant algebroid relations generalise Courant algebroid isomorphisms to cases where there is no map between the base manifolds.
T-duality gives a duality between sigma-models over manifolds which are not necessarily diffeomorphic. This makes Courant algebroid relations a powerful framework for the formalisation of T-duality. Section \ref{ssec:CArelations} repeats arguments of \cite[Section~3]{Vysoky2020hitchiker}, defining Courant algebroid relations, as well as how and when they can be composed. Section~\ref{ssec:reductionrelation} then reformulates the reduction of Section \ref{ssec:CAreduction} by an isotropic bundle $K$ as a Courant algebroid relation, denoted $Q(K)$, following \cite[Section~4.2]{Vysoky2020hitchiker} closely.

\textbf{Section \ref{sec:Generalisedisometries}} presents the final building block by introducing the notion of (transverse) generalised isometries for Courant algebroid relations. Section \ref{ssec:generalisedmetrics} reviews generalised metrics while Section \ref{ssec:generalisedisometries} reviews generalised isometries, following \cite[Section~5]{Vysoky2020hitchiker}. 
We also show that the relation $Q(K)$ defined by isotropic reduction of an exact Courant algebroid cannot be a generalised isometry, as similarly discussed by Vysok\'y in \cite{Vysoky2020hitchiker} for the case of reduction of $\sfG$-equivariant Courant algebroids with an isotropic action of a Lie group $\sfG$. 
In order to make this relation a generalised isometry, Vysok\'y relinquishes the assumption that the action is isotropic and shows that the reduction relation can be a generalised isometry only if $K \cap K^\perp = \set{0}.$ 
In this paper we choose another approach and try to find a structure such that the relation may be seen as a different type of isometry whilst preserving the isotropicity of $K.$ We defer the exploration of non-isotropic foliated reduction to future work.

Thus in Section \ref{ssec:transisometries} we extend isometries of generalised metrics to isometries of transverse generalised metrics. Transverse generalised metrics were first introduced by \v{S}evera-Strobl in~\cite{Severa2019transverse} as degenerate generalised metrics which admit an invariance condition with respect to an involutive isotropic subbundle $K$ of an exact Courant algebroid. 
The Courant algebroid relation $Q(K)$ discussed in Section \ref{ssec:reductionrelation} is our main example of what we call a transverse generalised isometry, as seen in 
\begin{customthm}{\ref{thm:qktgi}}
 If the reduced exact Courant algebroid admits a generalised metric, then there exists a unique transverse generalised metric on the original exact Courant algebroid, invariant with respect to the subbundle inducing the fibration for the reduction, which makes the reduction relation $Q(K)$ a transverse generalised isometry. The converse is also true.   
\end{customthm}
We then discuss the composition of transverse generalised isometries in Section \ref{ssec:transisomcomposition}; this is a technical undertaking, and most of the technicalities are not necessary for our purposes, so this discussion is kept brief with some details delegated to Appendix~\ref{app:changesplitting}.

\textbf{Section \ref{sec:T-duality}} develops in detail the main idea of this paper: rephrasing T-duality as a Courant algebroid relation. Such a Courant algebroid relation $R$ fits into the diagram
\begin{equation}
\begin{tikzcd}
 & E \arrow[tail]{dr}{Q(K_2)} &  \\
\red E{}_1 \arrow[dashed]{rr}{R}\arrow[dashed]{ur}{Q(K_1)^\top } & & \red E{}_2 
\end{tikzcd}
\qquad \qquad \text{over} \qquad \qquad
\begin{tikzcd}
& M \arrow[swap]{dl}{\varpi_1} \arrow{dr}{\varpi_2} & \\
\cQ_1  & & \cQ_2 
\end{tikzcd}
\end{equation}
where $Q(K_i)$ is the Courant algebroid relation characterising the reduction of $E$ over $M$ by $K_i$ to $\red E{}_i$ over $\cQ_i$, as in Section \ref{ssec:reductionrelation}. 

Section \ref{ssec:topoligcalTduality} deals with the construction of the Courant algebroid relation $R$  
and gives a first notion of T-duality: we say that $\red E{}_1$ and $\red E{}_2$ are \emph{T-duality related} (Definition \ref{def:topologicalTdual}). This may also be stated as
\begin{customthm}{\ref{thm:topoligicaltd}}
    $\red E{}_1$ and $\red E{}_2$ are T-duality related if and only if $K_1 \cap K_2$ has constant rank.
\end{customthm}
We also see that the T-duality relation $R$ is maximally isotropic, i.e. it is a Dirac structure.
This definition alone provides a notion of T-duality only for the topological (Wess-Zumino) term of sigma-models, disregarding the dynamics, i.e. the Polyakov functional~\eqref{eqn:sigmanorm}.

Section \ref{ssec:topoligcalTduality} can be seen as a prelude to Section \ref{ssec:geometricTduality}, passing from purely topological data to include dynamical data, by introducing geometric structure in the form of generalised metrics into the picture. In this vein, we give our second and principal definition of T-duality: $\red E{}_1$ and $\red E{}_2$ are \emph{geometrically T-dual} when the Courant algebroid relation $R$ is additionally a generalised isometry (Definition \ref{defn:Tdual}).
The remainder of Section~\ref{ssec:geometricTduality}~is devoted to stating an equivalence condition akin to Theorem~\ref{thm:topoligicaltd} in the geometric case. We introduce the notion of T-duality directions (Definition \ref{def:tdualitydirections}), and discuss some necessary invariance-like conditions for generalised metrics and adapted splittings in the T-duality directions. Our main result is then

\begin{customthm}{\ref{thm:maingeneral}}
    Starting with a generalised metric on $\red E{}_1$, invariant in the T-duality directions, the following are equivalent:
    \begin{enumerate}[label = (\roman{enumi})]
        \item $K_2^\perp \cap K_1\subseteq K_2 \ .$
        \item $K_2 \cap K_1^\perp \subseteq K_1 \ .$
        \item $\red E{}_1$ and $\red E{}_2$ are geometrically T-dual.
    \end{enumerate}
\end{customthm}

Indeed, starting with a generalised metric on $\red E{}_1$, we are able to construct a generalised metric on $\red E{}_2$, making $R$ into a generalised isometry. The generalised metric on $\red E{}_2$ is the correct one, for instance Remark \ref{rmk:buscherrules} shows that it satisfies generalised Buscher rules.

In both Sections \ref{ssec:topoligcalTduality} and \ref{ssec:geometricTduality}, we also describe the relation $R$ in the special case of twisted standard Courant algebroids, see Proposition \ref{prop:splitcaseconditions} and Theorem \ref{thm:mainsplitcase}.

\textbf{Section \ref{sec:applications}} puts the Courant algebroid relation $R$ to the test. Section \ref{ssec:correspondencespace} is concerned with T-duality in the correspondence space picture for principal $k$-torus bundles, as described by Cavalcanti-Gualtieri \cite{cavalcanti2011generalized}, and we show that this is a subclass of our definition of T-duality through

\begin{customprop}{\ref{prop:equivalencewithcorresp}}
    For two principal $\sfT^k$-bundles over the same base manifold, Definition~\ref{defn:Tdual} and the definition of T-duality in \cite{cavalcanti2011generalized} are equivalent.
\end{customprop}

Furthermore, we show that, without using the Fourier-Mukai transform, we can reproduce the isomorphism between $\sfT^k$-invariant sections of the T-dual Courant algebroids as arising from the generalised isometry obtained from our reduction, see Proposition \ref{prop:Fourier-Mukai}. Lastly, we show how global Buscher rules naturally arise from our definition of geometric T-duality, see Remark \ref{rmk:buscherrules}, and illustrate our approach on the explicit example of T-dualities between three-dimensional lens spaces in Example~\ref{ex:lensspaces}.

Another class of examples falling under our definition is presented in Section \ref{ssec:parahermitian}: the case of para-Hermitian manifolds and generalised T-duality as described in~\cite{Marotta2021born}. 
Here almost para-Hermitian structures are taken on a pair of $\eta$-isometric manifolds $\phi: M_1 \to M_2$, and where (after pulling back by $\phi$) they differ tells us in which direction T-duality should be taken. We give conditions on the Lie algebra of a local diagonalising frame for the almost para-complex structure so that firstly the T-duality relation exists (Proposition~\ref{prop:paraTdualrelation}), and secondly when it is a generalised isometry (Lemma \ref{lemma:paraBinvariance}). We also give an alternative description of the T-dual backgrounds in terms of generalised para-Hermitian metrics $\cH_1$ and $\cH_2$, respectively, resulting in an extension of the Buscher rules given in concise form by
\begin{customprop}{\ref{lemma:tdualgenparametric}}
    The T-dual backgrounds give rise to generalised para-Hermitian metrics satisfying $\phi^* \cH_2 = \cH_1$.
\end{customprop}
An explicit illustration of generalised T-duality is presented in Section \ref{ssec:doubledtwistedtorus} wherein we consider the doubled Heisenberg nilmanifold in six dimensions, and demonstrate the well-known T-duality between the three-dimensional Heisenberg nilmanifold and the three-torus with $H$-flux in our approach.

\medskip

\subsection{Acknowledgements}~\\[5pt]
We thank Alex Arvanitakis and Daniel Thompson for helpful discussions, and Pavol \v{S}evera for insightful comments on our manuscript. We thank the anonymous referees for their comments that helped us to greatly improve the presentation. This article is based upon work from COST Action CaLISTA CA21109 supported by COST (European Cooperation in Science and Technology). The work of T.C.D. was supported by an EPSRC Doctoral Training Partnership Award.
The work of {\sc V.E.M.} was supported in part by a
Maxwell Institute Research Fellowship, by the GACR Grant EXPRO 19-28268X and by PNRR MUR projects PE0000023-NQSTI. The work of R.J.S. was supported in part by the STFC Consolidated Grant ST/P000363/1.

\medskip

\section{Exact Courant Algebroids: Isomorphism and Reduction}\label{sec:Courantalgebroids}
We briefly recall the main notions and results concerning Courant algebroids. We refer to \cite{gualtieri:tesi, Kotov:2010wr, Jurco2016courant, Garcia-Fernandez:2020ope} and references therein for more extensive treatments.

\medskip

\subsection{Courant Algebroids}~\\[5pt]
The structure of the vector bundle $\IT M = TM\oplus T^*M$ endowed with the Dorfman bracket $\llbracket  \, \cdot \, , \, \cdot \, \rrbracket_H$ introduced in Section \ref{subsect:sigma} suggests that the fundamental notion to introduce is that of a Courant algebroid. In the following we provide a brief account of Courant algebroids.
\begin{definition}\label{def:CourantAlg}
A \emph{Courant algebroid} is a quadruple $(E,\llbracket\,\cdot\,,\,\cdot\,\rrbracket,\langle\,\cdot\,,\,\cdot\,\rangle ,\rho)$, where $E$ is a vector bundle over a manifold $M$
with a fibrewise non-degenerate pairing $\langle\,\cdot\,,\,\cdot\,\rangle \in\mathsf{\Gamma}(\midodot^2E^*),$ a vector bundle morphism 
$\rho \colon E \rightarrow TM $
called the \emph{anchor}, and a bracket operation   
\begin{equation} 
\llbracket\,\cdot\,,\,\cdot\,\rrbracket \colon \mathsf{\Gamma}(E) \times \mathsf{\Gamma}(E) \longrightarrow \mathsf{\Gamma}(E)
\end{equation}
called the \emph{Dorfman bracket}, which together satisfy
\begin{enumerate}[label=(\roman{enumi})]
    \item $\rho(e)\cdot\langle e_1, e_2\rangle  = \langle\llbracket e,e_1\rrbracket, e_2\rangle  + \langle e_1, \llbracket e, e_2\rrbracket\rangle  \ ,$  \label{eqn:metric1} 
    \item $\langle\llbracket e, e\rrbracket, e_1\rangle  = \tfrac{1}{2}\, \rho(e_1)\cdot\langle e, e\rangle  \ ,$  \label{eqn:metric2}
    \item $\llbracket e, \llbracket e_1,e_2 \rrbracket  \rrbracket  = \llbracket \llbracket e,e_1\rrbracket  ,e_2 \rrbracket  +  \llbracket e_1, \llbracket e,e_2 \rrbracket  \rrbracket  \ , $ \label{eqn:Jacobiid}
\end{enumerate}
for all $e,e_1, e_2 \in \mathsf{\Gamma}(E)$. 
\end{definition}

\begin{remark}
The anchored Leibniz rule for the Dorfman bracket
\begin{align} \label{eqn:anchorLeibniz}
\llbracket e_1 ,f\,e_2 \rrbracket  = f\, \llbracket e_1, e_2 \rrbracket  + \big( \rho(e_1)\cdot f\big)\,e_2 \ ,
\end{align}
for all $e_1,e_2\in\mathsf{\Gamma}(E)$ and $f\in C^\infty(M),$ follows from item~\ref{eqn:metric1}.
The Jacobi identity~\ref{eqn:Jacobiid} and the anchored Leibniz rule \eqref{eqn:anchorLeibniz} imply that the anchor $\rho$ is a bracket homomorphism:
\be
\rho(\llbracket e_1, e_2\rrbracket)= [\rho(e_1), \rho(e_2)] \ ,
\ee
for all $e_1 , e_2 \in \mathsf{\Gamma}(E)$. We refer to \cite[Section 2]{Kotov:2010wr} for a complete account of all the main properties of Courant algebroids.
\end{remark}

Throughout this paper we will denote the Courant algebroid $(E,\llbracket\,\cdot\,,\,\cdot\,\rrbracket,\langle\,\cdot\,,\,\cdot\,\rangle ,\rho)$ simply by $E$ when there is no ambiguity in the structure maps. If there is more than one Courant algebroid $E$ involved in the discussion, we label its operations with a subscript $_E\,$.

A Courant algebroid $E$ is called \emph{regular} if its anchor map $\rho\colon E \to TM$ has constant rank, and \emph{transitive} if $\rho$ is surjective. 

\begin{remark}\label{rem:Courantcond}
For any Courant algebroid $E$ over $M$ there is a map $\rho^*\colon T^*M\to E$ given by
\begin{align}\label{eqn:rhostar}
\langle\rho^*(\alpha),e\rangle  \coloneqq \iota_e\, \rho^{\rm t}(\alpha) \ ,
\end{align}
for all $\alpha\in \mathsf{\Omega}^1(M)$ and $e\in\mathsf{\Gamma}(E)$, where $\rho^{\rm t}\colon T^*M\to E^*$ is the transpose of $\rho.$ 
The map $\rho^*$ induces a map $\cD\colon C^\infty(M)\to\mathsf{\Gamma}(E)$ defined by $$\cD f=\rho^*\,\de f\ ,$$ for all $f\in C^\infty(M)$, which obeys a derivation-like rule and is the natural generalisation of the exterior derivative in the algebroid $E$. Then item~\ref{eqn:metric2} of Definition~\ref{def:CourantAlg} is equivalent to
\begin{align}\label{eqn:brakee}
    \llbracket e,e\rrbracket = \tfrac12\,\cD\langle e,e\rangle \ ,
\end{align}
which together with Equation~\eqref{eqn:anchorLeibniz} imply the additional Leibniz rule
\begin{align} \label{eqn:Leibnizfirst}
    \llbracket f\,e_1,e_2\rrbracket = f\,\llbracket e_1,e_2\rrbracket -\big(\rho(e_2)\cdot f\big)\,e_1 + \langle e_1,e_2\rangle \, \cD f \ .
\end{align}
\end{remark}

Applying $\rho$ to \eqref{eqn:brakee}, one has
\begin{align}
 \rho \circ \rho^* = 0 \ ,   
\end{align}
which motivates

\begin{definition}
A Courant algebroid $E$ over $M$ is \emph{exact} if the chain complex
\begin{align}\label{eqn:exCou}
0 \longrightarrow T^*M \xlongrightarrow{\rho^*} E \xlongrightarrow{\rho} TM \longrightarrow 0
\end{align}
is a short exact sequence of vector bundles.
\end{definition}  

\medskip

\subsection{Isomorphisms of Courant Algebroids}~\\[5pt] \label{sub:expreiso}
Throughout this paper we will make extensive use of isomorphisms of exact Courant algebroids. Here we recall their main properties, together with some of the standard examples, closely following \cite{gualtieri:tesi, Garcia-Fernandez:2020ope}.

\begin{definition} \label{def:courantmorph}
Let $E_1$ and $E_2$ be Courant algebroids over manifolds $M_1$ and $M_2$, respectively. A vector bundle morphism $\Phi\colon E_1 \to E_2$ covering a diffeomorphism $\phi \colon  M_1 \to M_2$ is a \emph{Courant algebroid morphism} if it satisfies
\begin{enumerate}[label = (\roman{enumi})]
    \item \label{item:orthogonality} $\langle \Phi (\,\cdot\,)\, ,\, \Phi(\,\cdot \,)\rangle_{E_2} \circ \phi =\ip{\,\cdot\,,\,\cdot\,}_{E_1} \ ,$ \ (isometry) 
    \item\label{item:bracketpreserving} $\llbracket\Phi (\,\cdot \,)\, , \,\Phi (\,\cdot\,)\rrbracket_{E_2} = \Phi (\,\llbracket\,\cdot \,, \,\cdot\,\rrbracket_{E_1}) \ .$ \ (bracket homomorphism)
\end{enumerate}
\end{definition}

Here and in the following the word \emph{isomorphism} in the context of Courant algebroids refers to vector bundle isomorphisms covering diffeomorphisms between different Courant algebroid structures, even if defined on the same vector bundle.

\begin{remark} \label{rmk:anchorcomp}
Definition \ref{def:courantmorph} together with the anchored Leibniz rule \eqref{eqn:anchorLeibniz} implies a  compatibility condition with the anchor maps:
\begin{align}\label{eqn:isoanchorcompatibility}
\rho_{E_2} \circ \Phi = \phi_* \circ \rho_{E_1} \ .  
\end{align}
\end{remark}

\begin{example} \label{ex:diffeostandard}
Let $(\IT M , \llbracket\,\cdot\,,\,\cdot\,\rrbracket_{H} ,\langle\,\cdot\,,\,\cdot\,\rangle_{\IT M}, {\rm pr}_1  ),$ be the (\emph{$H$-twisted}) \emph{standard Courant algebroid} over $M$ with $H \in \mathsf{\Omega}^3_{\rm cl} (M),$ the Dorfman bracket 
\begin{align}\label{eqn:Hbracket}
    \llbracket X + \alpha, Y + \beta \rrbracket_H \coloneqq[X,Y] + \pounds_X \beta - \iota_Y\, \de \alpha + \iota_Y\, \iota_X H,
\end{align}
the pairing given by
\begin{align}
    \ip{X+\alpha, Y+\beta} = \iota_Y \alpha + \iota_X\beta,
\end{align} 
and anchor ${\rm pr}_1$ the projection to the first summand of $\IT M = TM \oplus T^*M$. From now on, we will denote the $H$-twisted standard Courant algebroid over $M$ by $(\IT M, H).$

For any $\phi \in {\sf Diff}(M_1, M_2)$, consider the induced vector bundle isomorphism $$\overline{\phi}:=\phi_* + (\phi^{-1})^*:\IT M_1\longrightarrow\IT M_2$$
which covers $\phi$. It is straightforward to see that the Dorfman bracket on $\IT M_1$ transforms to
\be
\overline{\phi}\big(\llbracket \overline{\phi}^{-1}(X+\alpha), \overline{\phi}^{-1}(Y +\beta) \rrbracket_{H_1} \big) = \llbracket X+\alpha, Y+\beta \rrbracket_{(\phi^{-1})^*H_1} \ ,
\ee
for all $X+\alpha, Y+\beta \in \mathsf{\Gamma}(\IT M_2).$ Thus $\overline{\phi}$ is an isomorphism between the  $H_1$-twisted standard Courant algebroid $\IT M_1$ and the $(\phi^{-1})^* H_1$-twisted standard Courant algebroid $\IT M_2.$
\end{example}

Among the main examples of isomorphisms of exact Courant algebroids are the $B$-field transformations for a twisted standard algebroid. 

\begin{definition}
Let $(\IT M , H)$ be the $H$-twisted standard Courant algebroid over $M,$ 
where $H$ is the three-form on $M$ characterising its Dorfman bracket. Let $B \in \mathsf{\Omega}^2(M)$ be any two-form on $M$; it induces the vector bundle morphism 
$B^\flat \colon T M \rightarrow T^* M$ given by $B^\flat(X) = \iota_X B,$ for all $X \in \mathsf{\Gamma}(T M).$
The vector bundle isomorphism $\e^{B}\, \colon \IT M \rightarrow \IT M$ given by
\be
\e^{B}\, (X + \alpha) = X + B^\flat(X) + \alpha \ ,
\ee
for all $X \in \mathsf{\Gamma}(T M)$ and $\alpha \in \mathsf{\Gamma}(T^* M),$ is a \emph{$B$-field transformation}. 
\end{definition}

The graph ${\rm gr}(B) \subset \IT M$ is a maximally isotropic subbundle, since
\be
\langle X + \iota_X B, Y + \iota_Y B \rangle = \iota_X \iota_Y B + \iota_Y \iota_X B=0 \ ,
\ee
for all $X, Y \in \mathsf{\Gamma}(T M)$. Any $B$-field transformation is compatible with the anchor ${\rm pr}_1,$ since
\be
{\rm pr}_1 \big(\e^{B}\,(X +\alpha)\big) = X \ ,
\ee
for all $X\in \mathsf{\Gamma}(T M)$ and $\alpha \in \mathsf{\Gamma}(T^* M).$
It further transforms the Dorfman bracket to 
\be
\llbracket \e^{B}\,(X+\alpha), \e^{B}\,(Y+\beta) \rrbracket_H = \e^{B}\,(\llbracket X+\alpha, Y+\beta \rrbracket_{H}) + \iota_Y \,\iota_X\, \de B  \ ,
\ee
for all $X+\alpha, Y+ \beta \in \mathsf{\Gamma}(\IT M),$ see e.g. \cite{gualtieri:tesi}. 

We adapt the arguments used in \cite[Section 3.2]{gualtieri:tesi} to characterise isomorphisms of exact Courant algebroids as

\begin{proposition} \label{decpreiso}
Let $\Phi \colon \IT M_1 \rightarrow \IT M_2$ be an isomorphism of the twisted standard Courant algebroids $\IT M_1$ and $\IT M_2$, covering $\phi \in {\sf Diff}(M_1, M_2)$, with twisting three-forms $H_1 \in \mathsf{\Omega}_{\rm cl}^3(M_1)$ and $H_2 \in \mathsf{\Omega}_{\rm cl}^3(M_2)$. Then $\Phi$ can be expressed as the composition $\Phi =\overline{\phi}\circ \e^{B}\,$ of a $B$-field transformation with two-form $B \in \mathsf{\Omega}^2 (M_1)$, and $\phi^*H_2=H_1-\de B$. 
\end{proposition}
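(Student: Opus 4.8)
The plan is to peel off the diffeomorphism, reducing $\Phi$ to an automorphism of $\IT M_1$ covering the identity, show that such an automorphism is necessarily a $B$-field transformation, and only then feed in bracket-compatibility to extract the relation between the twisting forms. This mirrors the classification of exact Courant algebroid isomorphisms in \cite{gualtieri:tesi}.

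First I would set $\Psi \coloneqq \overline{\phi}{}^{-1}\circ\Phi$, a vector bundle automorphism of $\IT M_1$ covering $\mathrm{id}_{M_1}$, since $\Phi$ covers $\phi$ and $\overline{\phi}{}^{-1}$ covers $\phi^{-1}$. As a composite of isometries (Definition \ref{def:courantmorph}\ref{item:orthogonality}, together with Example \ref{ex:diffeostandard}), $\Psi$ preserves the pairing $\langle\,\cdot\,,\,\cdot\,\rangle_{\IT M_1}$, and by the anchor-compatibility of Remark \ref{rmk:anchorcomp} applied with $\phi=\mathrm{id}$ it satisfies ${\rm pr}_1\circ\Psi={\rm pr}_1$. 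This last identity forces $\Psi(X+\alpha)$ to have vector part $X$ and hence preserves $T^*M_1=\ker({\rm pr}_1)$, so in the canonical splitting $\IT M_1=TM_1\oplus T^*M_1$ it takes the block form $\Psi(X+\alpha)=X+A(X)+C(\alpha)$ for bundle maps $A\colon TM_1\to T^*M_1$ and $C\colon T^*M_1\to T^*M_1$.

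Next I would impose $\langle\Psi(X+\alpha),\Psi(Y+\beta)\rangle=\langle X+\alpha,Y+\beta\rangle$. Expanding the left-hand side and comparing the purely vectorial, mixed, and purely covectorial contributions separately yields $C=\mathrm{id}_{T^*M_1}$ and $\iota_Y A(X)+\iota_X A(Y)=0$ for all $X,Y$; the latter says $A$ is skew, so $A=B^\flat$ for a unique $B\in\mathsf{\Omega}^2(M_1)$. Thus $\Psi=\e^{B}$ and $\Phi=\overline{\phi}\circ\e^{B}$. I emphasise that only the isometry and anchor-compatibility enter this structural step—the Dorfman bracket has not yet been used.

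Finally, to obtain $\phi^*H_2=H_1-\de B$ I would invoke Definition \ref{def:courantmorph}\ref{item:bracketpreserving}. Using the displayed transformation rule for $\e^{B}$ and the fact that $\e^{B}$ fixes $T^*M_1$ (so the correction $\iota_Y\iota_X\,\de B$ is untouched), one checks that $\e^{B}\colon(\IT M_1,H_1)\to(\IT M_1,H_1-\de B)$ is a Courant algebroid isomorphism; composing with $\overline{\phi}\colon(\IT M_1,H_1-\de B)\to\bigl(\IT M_2,(\phi^{-1})^*(H_1-\de B)\bigr)$ from Example \ref{ex:diffeostandard} exhibits $\Phi$ as a bracket isomorphism onto the $(\phi^{-1})^*(H_1-\de B)$-twisted standard Courant algebroid. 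Since by hypothesis the same map $\Phi$ is also a bracket isomorphism onto the $H_2$-twisted one, the two target Dorfman brackets agree on all sections; as the difference of the $H_2$- and $H_2'$-twisted brackets on $\IT M_2$ is $\iota_Y\iota_X(H_2-H_2')$, which vanishes for all $X,Y$ precisely when $H_2=H_2'$, we conclude $H_2=(\phi^{-1})^*(H_1-\de B)$, i.e. $\phi^*H_2=H_1-\de B$. I expect the only delicate point to be the bookkeeping in this last paragraph: keeping track of the direction of each isomorphism, of which twisting three-form labels each copy of $\IT M_1$, and of the argument that the bracket determines the twist in a fixed splitting. Steps one and two are essentially forced and routine.
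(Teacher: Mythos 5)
Your proof is correct and follows essentially the same route as the paper: both reduce to $F=\overline{\phi}{}^{-1}\circ\Phi$ covering the identity, use anchor compatibility plus the isometry to identify $F$ with $\e^{B}$, and then read off $\phi^*H_2=H_1-\de B$ from bracket preservation. Your version merely spells out details the paper leaves implicit (the block decomposition forcing $C=\mathrm{id}$ and $A$ skew, and the argument that the Dorfman bracket determines the twisting form in a fixed splitting).
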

\begin{proof}
Set $F \coloneqq \overline{\phi}^{-1} \circ \Phi$, which covers the identity $\unit_{M_1}.$ By Remark \ref{rmk:anchorcomp} it follows that 
\be \label{eqn:anchpr1}
\rho_{E_1} \circ F = \rho_{E_1} \ .
\ee
Equation \eqref{eqn:anchpr1} gives
\be
\langle e, F^{-1}(\de f) \rangle_{E_1}=\langle F(e), \de f \rangle_{E_1} =  \iota_{\rho_{E_1}(F(e))}\, \de f = \iota_{\rho_{E_1}(e)}\, \de f = \langle e, \de f\rangle_{E_1}
\ee
for all $e \in \mathsf{\Gamma}(\IT M_1)$ and $f \in C^\infty(M_1).$ This shows that $F^{-1}$ acts as the identity on $T^* M_1.$ Therefore $F= \e^{B}\,$ for some $B \in \mathsf{\Omega}^2 (M_1),$ because $F$ is an isomorphism. Thus we obtain, from the calculation of the Dorfman bracket that $\phi^*H_2=H_1-\de B$.   
\end{proof}

When $M_1=M_2=M$, the group action defined by Proposition \ref{decpreiso} is precisely the group action \eqref{eqn:actionSemi} transforming the Wess-Zumino functional $S_H$ of the sigma-model discussed in Section \ref{subsect:sigma}.

\begin{corollary}\label{cor:CAauts}
    The automorphism group of the $H$-twisted standard Courant algebroid $(\IT M, H)$ consists of those $\Phi = \overline{\phi} \circ \e^{B}\,$ such that $\phi^* H = H$ and $B$ is closed.
\end{corollary}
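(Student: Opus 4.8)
The plan is to read the statement off Proposition \ref{decpreiso}, applied in the special case $M_1=M_2=M$ and $H_1=H_2=H$, and then to refine the single constraint it produces into the two separate conditions asserted in the corollary. First I would recall that an automorphism of $(\IT M,H)$ is, by definition, a Courant algebroid isomorphism $\Phi\colon(\IT M,H)\to(\IT M,H)$ covering some $\phi\in{\sf Diff}(M)$. Proposition \ref{decpreiso} then applies verbatim and yields a \emph{unique} decomposition $\Phi=\overline{\phi}\circ\e^{B}$ with $B\in\mathsf{\Omega}^2(M)$, together with the relation $\phi^*H=H-\de B$. This already shows that every automorphism has the stated form $\overline{\phi}\circ\e^{B}$. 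The converse direction of the corollary is the easy one: if $\phi^*H=H$ then Example \ref{ex:diffeostandard} shows $\overline{\phi}$ is itself an automorphism of $(\IT M,H)$, and if $\de B=0$ then the $B$-field bracket identity shows $\e^{B}$ is an automorphism, so their composite is too.

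It remains to upgrade the combined relation $\phi^*H=H-\de B$ to the two separate conditions $\phi^*H=H$ and $\de B=0$. The key step is to show that the diffeomorphism factor $\overline{\phi}$ and the $B$-field factor $\e^{B}$ must each preserve the $H$-twisted bracket on the nose. Here I would track the twisting three-form along the factorisation $\Phi=\overline{\phi}\circ\e^{B}$: the $B$-field identity shows that $\e^{B}$ carries the $H$-bracket to the $(H-\de B)$-bracket, altering the twist only by the exact, anchor-trivial term $\de B$, while Example \ref{ex:diffeostandard} shows that $\overline{\phi}$ then carries this to the $(\phi^{-1})^*(H-\de B)$-bracket, altering the twist by the pullback action of $\phi$. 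The two corrections are of genuinely different geometric type, and I would argue that for the composite to fix $H$ they cannot compensate one another, so each must vanish: the exact correction forces $\de B=0$, and the pullback discrepancy forces $(\phi^{-1})^*H=H$, i.e.\ $\phi^*H=H$.

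The main obstacle is precisely this separation. A priori the relation $\phi^*H=H-\de B$ only says that the total twist-correction vanishes, so the real content is to exclude a ``conspiracy'' in which $\phi$ moves $H$ within its cohomology class by an exact form that the $B$-field then cancels. The plan to rule this out is to isolate the exact part $\de B$ by comparing $\Phi$ with the pure diffeomorphism map $\overline{\phi}$: I would show that the ``ratio'' $\overline{\phi}^{-1}\circ\Phi=\e^{B}$ is forced to be an automorphism in its own right, since it covers the identity and can only modify the bracket by the exact term $\de B$, which a genuine automorphism of $(\IT M,H)$ cannot do. Establishing this rigidity of the $\e^{B}$ factor is where I expect the real work to lie.

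Once $\de B=0$ is secured, the relation $\phi^*H=H-\de B$ collapses to $\phi^*H=H$, and the closedness of $B$ is exactly the statement that $\e^{B}$ is itself an automorphism. Thus every automorphism satisfies both conditions separately, which together with the converse discussed above gives the claimed characterisation of the automorphism group of $(\IT M,H)$.
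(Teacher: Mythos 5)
Your opening moves coincide with everything the paper actually does: Corollary \ref{cor:CAauts} is presented as an immediate specialisation of Proposition \ref{decpreiso} to $M_1=M_2=M$ and $H_1=H_2=H$ (plus the easy converse you describe), and the paper attempts no further step. The genuine gap is precisely the ``separation'' step you add on top, and it is not merely unproven --- it is a step that cannot be carried out, because the separation is false. The pullback discrepancy $\phi^*H-H$ and the exact correction $\de B$ \emph{can} conspire to cancel: for any complete vector field $X$ with $\pounds_X H=\de\,\iota_X H\neq 0$ (such an $X$ exists whenever $H\neq 0$), its time-$t$ flow $\phi_t$ satisfies
\begin{equation}
\phi_t^*H-H \,=\, \de\Big(\int_0^t \phi_s^*\,\iota_X H \,\de s\Big) \,=\, -\de B_t \ , \qquad B_t\coloneqq -\int_0^t \phi_s^*\,\iota_X H\,\de s\ ,
\end{equation}
so that $\phi_t^*H=H-\de B_t$ while $\phi_t^*H\neq H$ and $\de B_t\neq 0$ for small $t>0$; by the converse direction of Proposition \ref{decpreiso}, $\overline{\phi_t}\circ\e^{B_t}$ is then an automorphism of $(\IT M,H)$ violating both of the separated conditions. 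What being an automorphism actually yields --- and all that Proposition \ref{decpreiso} can yield --- is the single mixed condition $\phi^*H=H-\de B$, i.e.\ $(\phi,B)\in\sfG_H$. This is exactly the identification $\mathsf{Aut}(\IT M,H)\simeq\sfG_H$ announced in Section \ref{subsect:sigma}, and the wording of the corollary should be read (or corrected) accordingly; the two conditions genuinely decouple only in the untwisted case $H=0$.

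The mechanism you propose for the separation is, moreover, circular. You want $\overline{\phi}^{-1}\circ\Phi=\e^{B}$ to be forced to be an automorphism of $(\IT M,H)$ ``since it covers the identity'', but covering the identity is not enough: by Example \ref{ex:diffeostandard}, $\overline{\phi}^{-1}$ is an isomorphism $(\IT M,H)\to(\IT M,\phi^*H)$, so $\e^{B}=\overline{\phi}^{-1}\circ\Phi$ is an isomorphism $(\IT M,H)\to(\IT M,\phi^*H)$; equivalently, by the $B$-field bracket identity, $\e^{B}\colon(\IT M,H)\to(\IT M,H-\de B)$ is an isomorphism for \emph{every} $B$, closed or not. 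It is an automorphism of $(\IT M,H)$ precisely when $\phi^*H=H$, which is the very statement you are trying to establish. The rigidity you invoke (``a genuine automorphism cannot modify the bracket by an exact term'') is correct only for automorphisms covering $\unit_M$, where the proof of Proposition \ref{decpreiso} indeed forces $\de B=0$; it does not survive composition with $\overline{\phi}^{-1}$ for a general diffeomorphism $\phi$.
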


\begin{remark}[\textbf{\v{S}evera's Classification}]
The classification of exact Courant algebroids $E$, due to \v{S}evera~\cite{Severa-letters}, arises from the choice of any isotropic splitting $\sigma$ of the short exact sequence \eqref{eqn:exCou}. This yields the Courant algebroid isomorphism $\sigma + \tfrac{1}{2}\rho^*$ which identifies $E$ with $\IT M$ and defines a closed three-form $H_\sigma \in \mathsf{\Omega}_{\rm cl}^3(M)$ given by
\begin{align}\label{eqn:severaclass}
    H_\sigma(X,Y,Z) =  \ip{\llbracket\sigma(X),\sigma(Y)\rrbracket, \sigma(Z)}
\end{align}
for $X,Y,Z \in \mathsf{\Gamma}(TM)$.

The equivalence classes of Courant algebroids (up to small isomorphisms\footnote{A small isomorphism is a Courant algebroid isomorphism covering a diffeomorphism lying in the identity component of $\mathsf{Diff}(M)$, see \cite[Definition 2.18]{Garcia-Fernandez:2020ope}}) are in one-to-one correspondence with cohomology classes in ${\sf H}^3( M,\IR)$. Since the difference between two isotropic splittings $\sigma-\sigma'$ defines a two-form $B\in \mathsf{\Omega}^2(M)$ via $(\sigma-\sigma')(X) = \rho_E^*\left(\iota_X B\right)$, $H_\sigma$ is shifted by $\de B$ under a change of splitting. Hence there is a well-defined cohomology class $[H_\sigma]\in {\sf H}^3(M,\IR)$ associated to the exact sequence \eqref{eqn:exCou} which completely determines the Courant algebroid structure. This is called the \v{S}evera class of the exact Courant algebroid. A general discussion can be found in~\cite[Section 2.2]{Garcia-Fernandez:2020ope}.

As a consequence of the \v{S}evera classification and Proposition \ref{decpreiso}, any isomorphism between exact Courant algebroids $E_1$ and $E_2$ with representatives of their \v{S}evera class $H_1$ and $H_2$, respectively, covering a diffeomorphism $\phi \colon M_1 \to M_2$ is such that $\phi^* H_2= H_1 -\de B$, for some $B \in \mathsf{\Omega}^2(M_1).$
\end{remark}

\medskip

\subsection{Reduction of Exact Courant Algebroids}\label{ssec:CAreduction}~\\[5pt]
We now summarise the reduction of exact Courant algebroids over foliated manifolds, as shown by Zambon~\cite{Zambon2008reduction} generalising work of Bursztyn-Cavalcanti-Gualtieri~\cite{Bursztyn2007reduction} to foliations that are not necessarily generated by a group action. We specialise Zambon's discussion to subbundles of a Courant algebroid over $M$ which are supported on all of $M$, rather than on some submanifold of $M$. As discussed in \cite{Severa-letters, Severa2015}, this case naturally arises in the description of sigma-models related to equivariant exact Courant algebroids.
In particular, the infinitesimal symmetries of the Wess-Zumino functional \eqref{eqn:ActionQTop} generate directions in which $S_H$ is constant. 

This motivates a ``gauging-like'' reduction of the string background defining the sigma-model: the condition \eqref{eqn:constantSH} naturally arises from the definition of the map $\Psi$ in Equation~\eqref{eqn:ghequivariantmap} by looking at the sections $X + \alpha \in \mathsf{\Gamma}(\IT M)$ for which $\Psi(X + \alpha) = X,$ i.e. $\iota_X H =\de \alpha.$ Therefore the infinitesimal action associated with the variation generated by
$X \in \mathsf{\Gamma}(T M)$ is given by the differential operator $\llbracket X +\alpha , \, \cdot \, \rrbracket_H,$ for any $X + \alpha \in \mathsf{\Gamma}(\IT M)$ for which $\Psi(X + \alpha) = X,$ which are the elements associated with the flat directions for $S_H$. This shows how reduction of exact Courant algebroids naturally arises from the symmetries of the topological term of a string sigma-model. The construction which follows can be motivated by the observation~\cite{Plauschinn:2014nha} that the constraints for gauging the full sigma-model by isometries of $M$ are equivalent to requiring that the sections $X+\alpha$ span an involutive isotropic subbundle $K\subset\IT M$.

Let $E$ be an exact Courant algebroid over $M$, and $K$ an isotropic subbundle of $E$ supported on $M$ such that $\rho(K^\perp) = TM$.
\begin{definition} \label{def:basisections}
The space of sections of $K^\perp$ which are \emph{basic with respect to $K$} is
\begin{align*}
\mathsf{\Gamma}_{\text{bas}}(K^\perp) \coloneqq \set{e \in \mathsf{\Gamma}(K^\perp) \, | \, \llbracket \mathsf{\Gamma}(K) ,e \rrbracket \subset \mathsf{\Gamma}(K)} \ .
\end{align*}
\end{definition}
When there are enough basic sections, i.e. $\mathsf{\Gamma}_{\mathrm{bas}}(K^\perp)$ spans $K^\perp$ pointwise, then $K$ and $K^\perp$ satisfy the main properties~\cite[Lemma 3.5]{Zambon2008reduction}
\begin{align}
  \llbracket \mathsf{\Gamma}(K), \mathsf{\Gamma}(K^\perp) \rrbracket & \subset \mathsf{\Gamma}(K^\perp) \ ,   \label{eqn:basicprop1}\\[4pt]
 \llbracket \mathsf{\Gamma}(K), \mathsf{\Gamma}(K) \rrbracket & \subset \mathsf{\Gamma}(K) \label{eqn:basicprop2} \ .
\end{align}
In particular, the distribution $\rho(K)\subset TM$ induces a smooth integrating foliation $\cF$ of $M$. 

These properties imply the following central result for reduction of exact Courant algebroids over foliated manifolds, as stated by Zambon in~\cite[Theorem 3.7]{Zambon2008reduction}, generalising the analogous statement of Bursztyn-Cavalcanti-Gualtieri in~\cite[Theorem 3.3]{Bursztyn2007reduction} for the case of reduction by group actions.
\begin{theorem}\label{thm:foliationreduction}
Let $E$ be an exact Courant algebroid over $M$, and $K$ an isotropic subbundle of $E$ supported on $M$ such that $\rho(K^\perp) = TM$. Assume that the space of basic sections $\mathsf{\Gamma}_{\mathrm{bas}}(K^\perp)$ spans $K^\perp$ pointwise, and that the quotient $\cQ$ of $M$ by the foliation $\cF$ integrating $\rho(K)$ is a smooth manifold, so that the quotient map $\varpi \colon M \rightarrow \cQ$ is a surjective submersion. Then there is an exact Courant algebroid $\underline{E} $ over $\cQ$ which fits in the pullback diagram
\[
\begin{tikzcd}
K^\perp/K \arrow{rr}{} \arrow[swap]{dd}{} & & \red E \arrow[]{dd}{} \\ & & \\
M \arrow[]{rr}{\varpi} & & \cQ 
\end{tikzcd}
\]
of vector bundles.
\end{theorem}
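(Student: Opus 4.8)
The plan is to construct $\red E$ as the vector bundle over $\cQ$ whose sections are the basic sections of $K^\perp$ modulo $\mathsf{\Gamma}(K)$, and to push the pairing, anchor and Dorfman bracket of $E$ down to it. A preliminary observation I would record first is that the hypothesis $\rho(K^\perp)=TM$ forces $K\cap\ker\rho=0$: using the defining relation \eqref{eqn:rhostar} one identifies $K\cap\ker\rho=\rho^*\big(\ann\rho(K^\perp)\big)$, and the right-hand side vanishes exactly when $\rho(K^\perp)=TM$. Hence $\rho$ restricts to a bundle isomorphism $K\xrightarrow{\ \sim\ }T\cF$. Next, since $K$ is isotropic we have $\langle K,K^\perp\rangle=0$, so $\langle\,\cdot\,,\,\cdot\,\rangle$ descends to a fibrewise pairing on $K^\perp/K$ over $M$, and it is non-degenerate because $[e]$ lies in its radical iff $e\in(K^\perp)^\perp=K$.

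The engine of the descent along $\varpi$ is a flat partial $\cF$-connection on $K^\perp/K$. Using the isomorphism $\rho|_K$, for a leafwise vector field $v=\rho(k)$ with $k\in\mathsf{\Gamma}(K)$ I would set $\nabla_v[e]:=[\llbracket k,e\rrbracket]$; this is well defined by \eqref{eqn:basicprop1}--\eqref{eqn:basicprop2}, obeys the Leibniz rule by the anchored Leibniz rule \eqref{eqn:anchorLeibniz}, and is flat because the Jacobi identity \ref{eqn:Jacobiid} together with $\rho(\llbracket k_1,k_2\rrbracket)=[\rho(k_1),\rho(k_2)]$ annihilate its curvature. By definition, the $\nabla$-flat sections are exactly the classes of basic sections of $K^\perp$.

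The main obstacle is then the honest descent of the bundle, i.e.\ producing $\red E\to\cQ$ with $K^\perp/K\cong\varpi^*\red E$. Here I would use that the leaves of $\cF$ are the connected fibres of the surjective submersion $\varpi$ and that $\mathsf{\Gamma}_{\mathrm{bas}}(K^\perp)$ spans $K^\perp$ pointwise; the latter is precisely the condition that $K^\perp/K$ has enough $\nabla$-flat sections to be locally trivialised over $\cQ$, so that the potential leafwise holonomy obstruction is absent. A local basic frame then descends to a local frame of $\red E$, and the transition functions between two basic frames, being leafwise constant, are pulled back from $\cQ$; patching these (equivalently, applying Serre--Swan to the $C^\infty(\cQ)$-module $\mathsf{\Gamma}_{\mathrm{bas}}(K^\perp)/\mathsf{\Gamma}(K)$) yields a vector bundle $\red E$ of rank $\rk(K^\perp)-\rk(K)$ with the tautological identification $K^\perp/K=\varpi^*\red E$. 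This is the step where smoothness of $\cQ$ and the spanning hypothesis are indispensable.

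It remains to transport the structure maps and verify the axioms. For basic $e$, the field $\rho(e)$ is $\varpi$-projectable since $[\rho(k),\rho(e)]=\rho(\llbracket k,e\rrbracket)\in\mathsf{\Gamma}(T\cF)$, so $\rho$ descends to $\underline\rho\colon\red E\to T\cQ$, and $\langle e_1,e_2\rangle$ is leafwise constant by \ref{eqn:metric1}, giving the reduced pairing. I would then check that $\llbracket e_1,e_2\rrbracket$ is basic whenever $e_1,e_2$ are: it lies in $K^\perp$ by the invariance axiom \ref{eqn:metric1} (using $\langle e_i,K\rangle=0$ and $\llbracket\mathsf{\Gamma}(K),e_i\rrbracket\subset\mathsf{\Gamma}(K)$), and it is $K$-invariant by the Jacobi identity \ref{eqn:Jacobiid}; being insensitive to the representatives modulo $K$, the bracket descends to $\llbracket\,\cdot\,,\,\cdot\,\rrbracket_{\red E}$. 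Since every reduced operation is computed by the same formula on basic representatives, the axioms of Definition \ref{def:CourantAlg} transfer directly to $\red E$. Finally, exactness follows from a rank count: $\rho|_K\colon K\cong T\cF$ gives $\rk(K)=\dim\cF$, whence $\rk(\red E)=\rk(K^\perp)-\rk(K)=2\dim\cQ$, while $\rho(K^\perp)=TM$ makes $\underline\rho$ surjective; a transitive Courant algebroid whose rank is twice the dimension of its base is exact, which produces the sequence \eqref{eqn:exCou} for $\red E$ and completes the pullback diagram.
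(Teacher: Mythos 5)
Your proof is correct, and its engine is the same as the paper's: basic sections of $K^\perp$ propagate pointwise identifications of $K^\perp/K$ along the leaves of $\cF$, and the spanning hypothesis guarantees enough of them to produce local frames downstairs. The difference is one of packaging plus completeness. Where the paper identifies fibres along a leaf by integrating the adjoint action to the automorphisms $\mathsf{Ad}_k$ and checks well-definedness by hand, you encode the same data as a flat partial $\cF$-connection $\nabla_{\rho(k)}[e]=[\llbracket k,e\rrbracket]$ whose parallel transport \emph{is} $\mathsf{Ad}_k$ and whose flat sections are exactly the classes of basic sections; the global basic sections then kill the leafwise holonomy, which is the content of the paper's well-definedness argument. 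Two further points in your write-up go beyond the paper's sketch and are worth noting: (i) your proof that $\rho|_K$ is injective, via $K\cap\ker\rho=\rho^*\bigl(\ann\rho(K^\perp)\bigr)=0$, is more elementary than the paper's Lemma \ref{lemma:injectiverho}, which routes through the existence of an adapted splitting; (ii) the paper defers the verification that $\red E$ is an exact Courant algebroid to \cite{Zambon2008reduction,Bursztyn2007reduction}, whereas you supply it directly --- descent of anchor, pairing and bracket on basic representatives, plus the rank count $\rk(\red E)=\rk(K^\perp)-\rk(K)=2\dim\cQ$ combined with transitivity of $\underline\rho$ to get exactness. The only glib step is the parenthetical appeal to Serre--Swan, which would require knowing in advance that $\mathsf{\Gamma}_{\rm bas}(K^\perp)/\mathsf{\Gamma}(K)$ is finitely generated projective over $C^\infty(\cQ)$; but your primary patching argument via leafwise-constant transition functions between basic frames does not need it, so nothing is missing.
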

\begin{proof}[Sketch of Proof]
We sketch the first part of the proof given in \cite{Zambon2008reduction} in order to discuss how elements in $K^\perp/K$ at two points in a leaf of $\cF$ are identified and to introduce our notation. Recall that since $K^\perp$ has enough basic sections, Equation \eqref{eqn:basicprop2} holds, i.e., $K$ is involutive. 
Let $\mathscr{P} \colon K^\perp \to K^\perp /K$ be the quotient map and denote by $N_q = \varpi^{-1}(q),$ for any $q \in \cQ,$ a leaf of $\cF$.  Then for any $m, m' \in N_q,$ we say that $e \in (K^\perp/K)_m$ and $e' \in (K^\perp/K)_{m'}$ are identified if and only if there exists a basic section $\hat{e} \in \mathsf{\Gamma}_{\rm{bas}}(K^\perp)$ such that
\begin{align} \label{eqn:identificationKperp}
 \mathscr{P}(\hat{e}(m))= e \qquad \text{and} \qquad \mathscr{P}(\hat{e}(m'))=e' \ .   
\end{align}
This means that there exists a trivialisation of $K^\perp/K \rvert_{N_q}$ obtained by projecting onto it basic sections giving a frame for $K^\perp.$ By assumption there are enough basic sections, so that they induce a frame for $K^\perp/K \rvert_{N_q}.$

It is shown in \cite{Zambon2008reduction} that this identification is well-defined, i.e. for any $\hat{e}, \check{e} \in \mathsf{\Gamma}_{\rm{bas}}(K^\perp)$ such that
$\mathscr{P}(\hat{e}(m)) =\mathscr{P}(\check{e}(m)),$ then
$\mathscr{P}(\hat{e}(m'))=\mathscr{P}(\check{e}(m')).$
This is achieved by considering a finite sequence $\set{ k_i }_{i =1,\dots,n}$ of (local) sections $k_i \in \mathsf{\Gamma}(K)$ such that the integral curves of their corresponding vector fields $\rho(k_i)$ can be linked so as to join $m$ and $m'.$ Let us denote by $t_i$ the parameter for each integral curve. 
Denote by 
\begin{align}
\exp(\mathsf{ad}_{k_i}) \ \in \ \mathsf{Aut}(E)
\end{align}
the automorphism of $E$ obtained by integrating the $K$-action
$\mathsf{ad}_{k_i} = \llbracket k_i , \, \cdot \, \rrbracket.$ 

 Then we can define the automorphism
 \begin{align} \label{eqn:Kautomorp}
  \mathsf{Ad}_k \coloneqq \exp(t_1 \, \mathsf{ad}_{k_1}) \circ \cdots \circ \exp(t_n \, \mathsf{ad}_{k_n}) \ .
 \end{align}
Since $\hat{e}$ and $\check{e}$ are basic and the property \eqref{eqn:basicprop1} holds, it follows that $\mathsf{Ad}_k (\hat{e}(m)) - \hat{e}(m') \in K_{m'}$ and 
$\mathsf{Ad}_k(\check{e}(m)) - \check{e}(m') \in K_{m'}.$
We assumed that $\hat{e}(m)$ and $\check{e}(m)$ reduce to the same element in $(K^\perp/K)_m,$ hence $\hat{e}(m) - \check{e}(m) \in K_m.$ Thus $\mathsf{Ad}_k(\hat{e}(m) - \check{e}(m) ) \in K_{m'}$ by Equation \eqref{eqn:basicprop2}.
Therefore $\hat{e}(m') - \check{e}(m') \in K_{m'},$ which means that $\hat{e}(m')$ and $\check{e}(m')$ project to the same element in $(K^\perp / K)_{m'}.$ 

The identification \eqref{eqn:identificationKperp} yields a pointwise isomorphism
\begin{align}\label{eqn:bethisom}
    \mathscr{J}_{q, m} \colon \red E{}_q \longrightarrow (K^\perp/K)_m
\end{align}
for any $m \in N_q,$ which together with the surjective submersion $\varpi$ show that $\red E$ is a vector bundle. The map $\mathscr{P},$ together with the pointwise isomorphisms $\mathscr{J}_{q,m},$ imply that
$$\mathsf{\Gamma}(\red E) \simeq \mathsf{\Gamma}_{\rm{bas}}(K^\perp)\,\big/\, \mathsf{\Gamma}(K)$$
as $C^\infty(\cQ)$-modules, where the $C^\infty(\cQ)$-module structure on $\mathsf{\Gamma}_{\rm{bas}}(K^\perp)/\mathsf{\Gamma}(K)$ is given by pulling back functions to $M$ with $\varpi.$\footnote{This is the same construction given in \cite{Mackenzie} for the quotient of $\sfG$-equivariant vector bundles by the action of a group $\sfG$. In that case the invariance condition is dictated by the involutive vector subbundle $K$ inducing the foliation of the base manifold, which in turn corresponds to the orbits of the $\sfG$-action.} 

The completion of the proof can be found in \cite{Zambon2008reduction, Bursztyn2007reduction}, wherein $\red E$ is shown to be an exact Courant algebroid with the bracket, anchor and pairing inherited from $E$.
\end{proof}

In the notation of \cite{Bursztyn2007reduction,Zambon2008reduction}, we write $$\red E = \frac{K^\perp}{K}\Big/ \cF$$ for the above construction of $\red E$, and we denote by $$\natural \colon K^\perp \longrightarrow \red E$$ the vector bundle morphism covering $\varpi$ given by the two quotients by $K$ and $\cF.$

When Theorem \ref{thm:foliationreduction} holds, we can also establish that basic sections have the property of

\begin{lemma}\label{lemma:liftbasic}
 Let $E$ be an exact Courant algebroid endowed with an involutive isotropic subbundle $K$ satisfying the assumptions of Theorem \ref{thm:foliationreduction}. Consider the short exact sequence of vector bundles
 \begin{align}
  0  \longrightarrow K \longrightarrow K^\perp \longrightarrow K^\perp/K \longrightarrow 0 \ .
 \end{align}
 Then any splitting $s \colon K^\perp/K \to K^\perp$ of this sequence satisfies $s([e]) \in \mathsf{\Gamma}_{\rm{bas}}(K^\perp),$ for any $[e] \in \mathsf{\Gamma}_{\rm{bas}}(K^\perp)/\mathsf{\Gamma}(K)$ corresponding to ${\red e} \in \mathsf{\Gamma}(\red E).$
\end{lemma}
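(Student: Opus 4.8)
The plan is to show that the basic property of a section of $K^\perp$ depends only on its class modulo $\mathsf{\Gamma}(K)$, so that \emph{every} lift of a basic class is again basic; the asserted splitting $s$ then produces one such lift. The involutivity of $K$ is the property doing all the work.

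First I would unwind what a splitting provides. By definition $s\colon K^\perp/K\to K^\perp$ is a vector bundle morphism over $M$ with $\mathscr{P}\circ s=\mathrm{id}_{K^\perp/K}$, where $\mathscr{P}\colon K^\perp\to K^\perp/K$ denotes the bundle projection with $\ker\mathscr{P}=K$. Applying this to the section $[e]$ gives $\mathscr{P}\big(s([e])\big)=[e]$, so $s([e])\in\mathsf{\Gamma}(K^\perp)$ is a smooth lift of $[e]$. Next, since by hypothesis $[e]\in\mathsf{\Gamma}_{\rm bas}(K^\perp)/\mathsf{\Gamma}(K)$ corresponds to $\red e\in\mathsf{\Gamma}(\red E)$, I would fix a basic representative $\hat e\in\mathsf{\Gamma}_{\rm bas}(K^\perp)$ with $\mathscr{P}(\hat e)=[e]$. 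Comparing the two lifts, $\mathscr{P}\big(s([e])-\hat e\big)=0$, whence $s([e])-\hat e\in\ker\mathscr{P}=\mathsf{\Gamma}(K)$; I record this as $s([e])=\hat e+k$ for some $k\in\mathsf{\Gamma}(K)$.

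The final step is to verify the basic condition directly against Definition~\ref{def:basisections}. For an arbitrary $\kappa\in\mathsf{\Gamma}(K)$, additivity of the Dorfman bracket in its second argument gives $\llbracket\kappa,s([e])\rrbracket=\llbracket\kappa,\hat e\rrbracket+\llbracket\kappa,k\rrbracket$. The first term lies in $\mathsf{\Gamma}(K)$ because $\hat e$ is basic, and the second lies in $\mathsf{\Gamma}(K)$ by the involutivity $\llbracket\mathsf{\Gamma}(K),\mathsf{\Gamma}(K)\rrbracket\subset\mathsf{\Gamma}(K)$ of Equation~\eqref{eqn:basicprop2}, which is available under the assumptions of Theorem~\ref{thm:foliationreduction}. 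Hence $\llbracket\kappa,s([e])\rrbracket\in\mathsf{\Gamma}(K)$ for every $\kappa$, i.e.\ $s([e])\in\mathsf{\Gamma}_{\rm bas}(K^\perp)$, as required.

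I do not expect a genuine obstacle here: the only point requiring care — and effectively the entire content of the lemma — is that basicness is well-defined on classes modulo $\mathsf{\Gamma}(K)$, which is precisely where involutivity of $K$ enters. The one bookkeeping subtlety is that one may use only additivity of the Dorfman bracket in its second slot, not $C^\infty(M)$-linearity, since the bracket is merely $\R$-bilinear; but additivity is all the argument needs, so the proof goes through verbatim.
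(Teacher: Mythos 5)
Your proof is correct and follows essentially the same route as the paper's: write $s([e])$ as a basic representative plus a section of $K$, then conclude via Definition~\ref{def:basisections} for the first term and involutivity of $K$ (Equation~\eqref{eqn:basicprop2}) for the second. The extra remarks on smoothness of the lift and on needing only additivity of the bracket are harmless elaborations of the same argument.
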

\begin{proof}
By the isomorphism $\mathsf{\Gamma}_{\rm{bas}}(K^\perp)/\mathsf{\Gamma}(K) \simeq \mathsf{\Gamma}(\red E)$ of $C^\infty(\cQ)$-modules, to any section $\red e$ of $\red E$ there corresponds an element $[e] \in \mathsf{\Gamma}_{\rm{bas}}(K^\perp)/\mathsf{\Gamma}(K).$ Thus for any splitting $s$ of the sequence we get
 \begin{align}
 s([e]) =e + k_s \ ,    
 \end{align}
 for some $k_s \in \mathsf{\Gamma}(K),$ where $e \in \mathsf{\Gamma}_{\rm{bas}}(K^\perp).$
 Hence
 \begin{align}
   \llbracket k , s([e]) \rrbracket = \llbracket k, e \rrbracket + \llbracket k, k_s \rrbracket \ \in \ \mathsf{\Gamma}(K)  \ ,
 \end{align}
 for any section $k$ of $K$,
by Definition~\ref{def:basisections} and involutivity of $K.$
\end{proof}

\begin{example}[\textbf{Reduction of Standard Courant Algebroids}]\label{eg:standardreduction}
Of importance to us is the case of a foliated manifold $(M,\cF)$ such that $\cQ = M/\cF$ is smooth. Take $$K = T\cF \oplus \set{0}$$ as a subbundle of the standard Courant algebroid $(\IT M, 0).$ Then $K^\perp = TM \oplus \mathrm{Ann}(T\cF) $, and basic sections are given by
projectable vector fields and one-forms of the type $\de (\varpi^*f)$, where $f \in C^\infty(\cQ),$ 
which respectively span $TM$ and $\mathrm{Ann}(T\cF)$ pointwise. Hence the conditions of Theorem \ref{thm:foliationreduction} are satisfied, and our reduced Courant algebroid becomes
\begin{align*}
    \red E = \big(TM/T\cF \oplus \mathrm{Ann}(T\cF)\big)\,\big/\,\cF = T\cQ \oplus T^*\cQ = \IT\cQ \ .
\end{align*}
Remark \ref{eg:twistedCAreduction} below gives the reduction of the twisted standard Courant algebroid $(\IT M, H)$, after introducing adapted splittings.
\end{example}

\medskip

\subsubsection{Reduction of Subbundles of Exact Courant Algebroids}~\\[5pt]
Following \cite[Proposition 4.1]{Zambon2008reduction} which deals with the reduction of Dirac structures, a useful result concerning the reduction of subbundles of $E$ is

\begin{proposition} \label{prop:reducedKsubbundle}
 Let $E$ be an exact Courant algebroid over $M$ endowed with an isotropic subbundle $K$ satisfying the assumptions of Theorem \ref{thm:foliationreduction}, and let $W$ be a subbundle of $E$ such that $W \cap K^\perp$ has constant rank. If
 \begin{align} \label{eqn:reductionW}
 \llbracket \mathsf{\Gamma}(K) , \mathsf{\Gamma}(W \cap K^\perp) \rrbracket   \subset \mathsf{\Gamma}(W+K) \ ,  
 \end{align}
then $W$ descends to a subbundle $\red W$ of the reduced Courant algebroid $\red E$ over $\cQ.$
\end{proposition}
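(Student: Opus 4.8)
The plan is to construct the reduced subbundle $\red W$ fibrewise by descending $W \cap K^\perp$ through the quotient map $\natural \colon K^\perp \to \red E$, and then to show that the outcome is independent of the point chosen within each leaf of $\cF$, exactly as in the proof of Theorem \ref{thm:foliationreduction}. First I would set $\red W_q \coloneqq \natural\big((W \cap K^\perp)_m\big)$ for any $m \in N_q = \varpi^{-1}(q)$, where $\natural = \mathscr{P}$ followed by the leafwise identification \eqref{eqn:identificationKperp}. The constant-rank hypothesis on $W \cap K^\perp$ guarantees that $(W \cap K^\perp)_m$ is a well-defined subspace of fixed dimension, and since $\natural$ has kernel $K$, the image $\red W_q$ is a subspace of $\red E{}_q$ whose dimension equals $\rk(W \cap K^\perp) - \rk\big((W \cap K^\perp) \cap K\big)$. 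To get a genuine subbundle, I would want this second rank to be locally constant too; I expect this to follow from the involutivity encoded in \eqref{eqn:reductionW} together with the constant rank of $W \cap K^\perp$ and the exactness of $E$, but it may require a short separate argument (possibly restricting $W$ so that $W \cap K$ also has constant rank, or noting that the integrating flows preserve ranks).

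The heart of the matter is leafwise well-definedness: I must show that if $m, m' \in N_q$ lie in the same leaf, then the identification isomorphism $\mathscr{J}_{q,m'} \circ \mathscr{J}_{q,m}^{-1}$ carries $\natural\big((W\cap K^\perp)_m\big)$ onto $\natural\big((W\cap K^\perp)_{m'}\big)$. The natural route is to use the automorphisms $\mathsf{Ad}_k = \exp(t_1\,\mathsf{ad}_{k_1}) \circ \cdots \circ \exp(t_n\,\mathsf{ad}_{k_n})$ from \eqref{eqn:Kautomorp} that link $m$ to $m'$ along integral curves of sections of $K$. Concretely, I would show that $\mathsf{Ad}_k\big((W\cap K^\perp)_m\big) \subseteq (W+K)_{m'} \cap K^\perp_{m'}$, so that after applying $\natural$ the result lands in $\red W_{q}$ computed at $m'$. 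The hypothesis \eqref{eqn:reductionW} is precisely what controls the infinitesimal generator: differentiating $\exp(t\,\mathsf{ad}_k)$ applied to a local section of $W \cap K^\perp$ gives $\llbracket k, \,\cdot\,\rrbracket$, which by assumption stays in $\mathsf{\Gamma}(W+K)$, while staying in $K^\perp$ is ensured by property \eqref{eqn:basicprop1}. Integrating, the flow of $\mathsf{ad}_k$ maps $W \cap K^\perp$ into $(W+K) \cap K^\perp$; since $\natural$ annihilates $K$, the $K$-part is killed and the images agree.

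The main obstacle I anticipate is the gap between the \emph{infinitesimal} containment $\llbracket \mathsf{\Gamma}(K),\mathsf{\Gamma}(W\cap K^\perp)\rrbracket \subseteq \mathsf{\Gamma}(W+K)$ and the \emph{finite} statement that the integrated automorphism $\mathsf{Ad}_k$ preserves $(W+K)\cap K^\perp$ at the endpoint. The subtlety is that $W\cap K^\perp$ itself need not be $\mathsf{ad}_k$-invariant — only $W+K$ is, up to intersecting with $K^\perp$ — so one cannot simply exponentiate a flow that preserves a fixed subbundle. The clean way to handle this is to argue that the larger bundle $(W+K)\cap K^\perp$ \emph{is} preserved by each $\exp(t\,\mathsf{ad}_{k_i})$: for $e \in \mathsf{\Gamma}\big((W+K)\cap K^\perp\big)$ one has $\llbracket k, e\rrbracket \in \mathsf{\Gamma}(W+K)$ by \eqref{eqn:reductionW} combined with involutivity \eqref{eqn:basicprop2}, and $\llbracket k,e\rrbracket \in \mathsf{\Gamma}(K^\perp)$ by \eqref{eqn:basicprop1}, so $(W+K)\cap K^\perp$ is $\mathsf{ad}_k$-stable and hence flow-invariant. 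Then $\red W_q = \natural\big(((W+K)\cap K^\perp)_m\big)$ is manifestly independent of $m$, and since $\natural(K)=0$ this equals $\natural\big((W\cap K^\perp)_m\big)$, completing the descent. Finally I would remark that $\red W$ inherits the structure of a smooth subbundle from $\mathscr{J}_{q,m}$ and the surjective submersion $\varpi$, just as $\red E$ does in Theorem \ref{thm:foliationreduction}.
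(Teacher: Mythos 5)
Your proposal follows essentially the same route as the paper's proof: both descend $W\cap K^\perp$ through $\natural$ and use condition \eqref{eqn:reductionW} together with \eqref{eqn:basicprop1} to conclude $\mathsf{Ad}_k(W\cap K^\perp)\subseteq (W+K)\cap K^\perp = (W\cap K^\perp)+K$, so that the image is leafwise well-defined and constant rank of $W\cap K^\perp$ gives a subbundle. Your additional steps --- verifying that $(W+K)\cap K^\perp$ is $\mathsf{ad}_k$-stable (hence flow-invariant), which bridges the infinitesimal-to-finite gap, and flagging that the rank of $W\cap K$ must also be locally constant --- correctly supply details that the paper's one-line argument leaves implicit.
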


\begin{proof}
To show that elements of $W \cap K^\perp$ have a well-defined notion of identification under the map $\mathscr{P} \colon K^\perp \rightarrow K^\perp/ K$  when restricted to a leaf $N_q \subset \cF,$ for some $q \in \cQ,$ recall the definition of the automorphism $\mathsf{Ad}_k$ from Equation \eqref{eqn:Kautomorp}, together with the identification of elements in $K^\perp$ given in the sketch of the proof of Theorem \ref{thm:foliationreduction}. If the condition \eqref{eqn:reductionW} holds, then
\begin{align}
 \mathsf{Ad}_k(W \cap K^\perp) \subseteq (W+K) \cap K^\perp = (W \cap K^\perp) + K \ ,   
\end{align}
where we also use Equation \eqref{eqn:basicprop1}. Therefore ${\red W} \coloneqq \natural(W \cap K^\perp)$ is a subbundle of $\red E,$ since $W \cap K^\perp$ has constant rank.
\end{proof}

\begin{remark}\label{rmk:reductedKsubbundle}
If $K \subset W \subset K^\perp,$ then Equation \eqref{eqn:reductionW} becomes
\begin{align}
 \llbracket \mathsf{\Gamma}(K) , \mathsf{\Gamma}(W) \rrbracket   \subset \mathsf{\Gamma}(W) \ .    
\end{align}
We will be particularly interested in this case in the following sections.
Any subbundle $W$ satisfying these properties is pointwise the span of $\mathsf{\Gamma}_{\mathrm{bas}}(W)$: there are $w_i\in W$ such that $\llbracket\mathsf{\Gamma}(K),w_i\rrbracket \subset \mathsf{\Gamma}(K)$, and $W=\mathrm{Span}\set{ w_i }$. The proof can be found in \cite[Theorem 4.1]{calvo2010deformation}.\footnote{In their notation, $S=K$, $D=D^S=W$, and ``canonical'' means existence of basic sections. Note that we do not assume that $W$ is a Dirac structure, and hence $W$ is not necessarily involutive, thus we only have the implication c) $\implies$ a) of  \cite[Theorem 4.1]{calvo2010deformation}, rather than the full equivalence.}
\end{remark}

\medskip

\subsubsection{Adapted Splittings of Exact Courant Algebroids}~\\[5pt]
We give a brief summary of Zambon's notion~\cite[Section 5]{Zambon2008reduction} of adapted splittings and the reduced \v{S}evera class.

Let $E$ be an exact Courant algebroid over $M$, and let $K$ be an isotropic subbundle of $E$ such that $\rho(K^\perp) = TM$ and $\rho(K)$ is an integrable distribution with smooth leaf space $\cQ$.

\begin{definition}\label{defn:adaptedsplittings}
A splitting $\sigma \colon  TM \to E$ is \emph{adapted to $K$} if
\begin{enumerate}[label= (\alph{enumi}), labelwidth=0pt]
\item the image of $\sigma$ is isotropic,
\item $\sigma(TM)\subset K^\perp \ ,$
\item $\sigma(X)\in \mathsf{\Gamma}_{\mathrm{bas}}(K^\perp)$, for any vector field $X$ on $M$ which is projectable to $\cQ$.
\end{enumerate} 
\end{definition}

\begin{remark}[\textbf{Properties of Adapted Splittings}] \label{rmk:propertiesadapted}
Adapted splittings $\sigma$ are in one-to-one correspondence with maximally isotropic subbundles $L_\sigma\subset E$ satisfying
\begin{enumerate}[label= (\roman{enumi})]
    \item $\rho(L_\sigma \cap K^\perp) = TM \ ,$
    \item\label{item:adaptedsplitting2} $\llbracket \mathsf{\Gamma}(K) , \mathsf{\Gamma}(L_\sigma \cap K^\perp)\rrbracket \subset \mathsf{\Gamma}(L_\sigma + K) \ ,$
\end{enumerate}
via the prescription $L_\sigma = \sigma(TM)$. 
\end{remark}

If $\sigma$ is an adapted splitting, then
\begin{align} \label{eqn:whatwewanted}
\sigma(\rho(K)) \subseteq K \ ,    
\end{align}
see \cite[Remark 5.2]{Zambon2008reduction}.
Using this construction, we can establish properties of the anchor through

\begin{lemma} \label{lemma:injectiverho}
 Let $E$ be an exact Courant algebroid with an involutive isotropic subbundle $K$ such that $\rho(K^\perp)=TM$, endowed with a splitting $\sigma$ adapted to $K.$ Then $\rho \rvert_K$ is injective and $\sigma \circ \rho |_K = \unit_K.$   
\end{lemma}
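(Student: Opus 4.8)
The plan is to reduce both assertions to the single fact that $K\cap\ker\rho=\{0\}$. Indeed, the kernel of $\rho|_K$ is by definition $K\cap\ker\rho$, so triviality of this intersection is exactly the injectivity claim; and once it is known, the identity $\sigma\circ\rho|_K=\unit_K$ will follow formally from the adapted-splitting property \eqref{eqn:whatwewanted} together with the fact that $\sigma$ is a section of $\rho$. Thus the crux of the argument is the triviality of $K\cap\ker\rho$, and everything else is bookkeeping.

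First I would establish $K\cap\ker\rho=\{0\}$. Since $E$ is exact, the short exact sequence \eqref{eqn:exCou} gives $\ker\rho=\mathrm{im}(\rho^*)$, so any $k\in K\cap\ker\rho$ may be written as $k=\rho^*(\alpha)$ for some $\alpha\in\mathsf{\Omega}^1(M)$. For an arbitrary $e\in K^\perp$, the defining relation \eqref{eqn:rhostar} for $\rho^*$ yields $\langle k,e\rangle=\langle\rho^*(\alpha),e\rangle=\iota_{\rho(e)}\alpha$. On the other hand, $k\in K$ pairs trivially with every section of $K^\perp$ by definition of the annihilator, so $\iota_{\rho(e)}\alpha=0$ for all $e\in K^\perp$. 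The hypothesis $\rho(K^\perp)=TM$ then forces $\alpha$ to annihilate all of $TM$, whence $\alpha=0$ and $k=\rho^*(\alpha)=0$. This is the step where the assumption $\rho(K^\perp)=TM$ — the requirement that $K^\perp$ be large enough — does the real work, and I expect it to be the only genuinely substantive point in the proof.

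With injectivity in hand, I would then verify $\sigma\circ\rho|_K=\unit_K$. For $k\in K$, the adapted-splitting property \eqref{eqn:whatwewanted} gives $\sigma(\rho(k))\in K$, so $k-\sigma(\rho(k))$ is again a section of $K$. Applying $\rho$ and using $\rho\circ\sigma=\unit_{TM}$ shows that $\rho\big(k-\sigma(\rho(k))\big)=\rho(k)-\rho(k)=0$, so $k-\sigma(\rho(k))\in K\cap\ker\rho$. By the first step this intersection is trivial, hence $k=\sigma(\rho(k))$, which is the desired identity.

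The overall structure is therefore very short: the only non-formal ingredient is the kernel computation, and the remainder is routine manipulation with exactness of \eqref{eqn:exCou}, the property \eqref{eqn:whatwewanted} of the adapted splitting, and the section identity $\rho\circ\sigma=\unit_{TM}$. I do not anticipate any real obstacle beyond correctly invoking $\rho(K^\perp)=TM$ to kill $\alpha$; note in particular that involutivity of $K$ is not needed for this lemma, only its isotropy (through $\langle K,K^\perp\rangle=0$) and the adapted splitting.
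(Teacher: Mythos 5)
Your proof is correct, and the first half takes a genuinely different route from the paper's. For injectivity of $\rho\rvert_K$, the paper argues by a rank count: it uses the adapted splitting to decompose $K^\perp = \sigma(TM)\oplus\rho^*\bigl(\ann(\rho(K))\bigr)$, applies Rank--Nullity, and compares with $\rk(K^\perp)=\rk(E)-\rk(K)$ to conclude $\rk(\ker(\rho\rvert_K))=0$. You instead give a direct element-wise argument: any $k\in K\cap\ker\rho$ equals $\rho^*(\alpha)$ by exactness, the identity $\langle\rho^*(\alpha),e\rangle=\iota_{\rho(e)}\alpha$ together with $\langle K,K^\perp\rangle=0$ forces $\alpha$ to vanish on $\rho(K^\perp)=TM$, hence $k=0$. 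Your version is more elementary and slightly more general — it never invokes the splitting $\sigma$ (nor, in fact, isotropy of $K$: the vanishing $\langle K,K^\perp\rangle=0$ is just the definition of the annihilator, not isotropy, so your closing remark is marginally imprecise on that point), so injectivity of $\rho\rvert_K$ holds under the hypothesis $\rho(K^\perp)=TM$ alone. What the paper's computation buys in exchange is the explicit decomposition of $K^\perp$ in the adapted splitting, which is reused elsewhere. Your second step, deducing $\sigma\circ\rho\rvert_K=\unit_K$ from \eqref{eqn:whatwewanted} and injectivity, coincides with the paper's.
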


\begin{proof}
Since $K^\perp \cap \rho^*(T^*M)=\rho^*\big(\ann (\rho(K))\big),$ it follows that 
\begin{align} \label{eqn:splitKperp}
    K^\perp = \sigma(TM) \oplus \rho^*\big(\ann (\rho(K))\big) \ ,
\end{align}
where we used $\sigma(TM) \cap \rho^*(T^*M) = \set{0}.$ Equation \eqref{eqn:splitKperp} and the Rank-Nullity Theorem imply
\begin{align}
 \rk(K^\perp)= 2\,\rk(TM) - \rk(K) +\rk(\ker(\rho \rvert_K)) \ .   
\end{align}
Since $\rk(K^\perp)= \rk(E) - \rk(K),$ we get $\rk(\ker(\rho \rvert_K))=0,$ because $E$ is exact (note that $\rho^*$ is injective). Thus $\rho \rvert_K$ is injective.
 
Since $\rho |_K$ is injective, it follows that $\sigma \circ \rho |_K = \unit_K,$ because 
\begin{align}
    0 = \rho(k) - \rho(k) = \rho\big(\sigma(\rho(k))\big) - \rho(k) \implies \sigma(\rho(k)) - k = 0 \ ,
\end{align}
for all $k \in K.$
\end{proof}

\begin{remark}[\textbf{Existence of Adapted Splittings}] \label{rmk:existenceadapted}
The condition of having enough basic sections is closely related to the existence of adapted splittings: by \cite[Proposition 5.5]{Zambon2008reduction}, splittings adapted to $K$ exist if and only if $\mathsf{\Gamma}_{\rm{bas}}(K^\perp)$ spans $K^\perp$ pointwise. This result will be very useful in the rest of the paper.
\end{remark}

\begin{remark}[\textbf{Reduction of Adapted Splittings}] \label{rmk:reducedadapted}
By \cite[Proposition 5.7]{Zambon2008reduction}, a splitting $\sigma$  of an exact Courant algebroid $E$ adapted to $K$, where $K$ satisfies the assumptions of Theorem \ref{thm:foliationreduction}, induces a splitting $\red \sigma$ of the reduced Courant algebroid $\red E$ over $\cQ.$ This follows from property \ref{item:adaptedsplitting2} of Remark \ref{rmk:propertiesadapted} applied to the maximally isotropic subbundle $L_\sigma = \sigma(TM)$, then applying Proposition \ref{prop:reducedKsubbundle}. Thus $L_\sigma$ reduces to a maximally isotropic subbundle $\red L{}_{\red \sigma}$ of $\red E$ which is the image of the splitting $\red \sigma$ of $\red E.$

This shows how the three-form $H_\sigma$ from Equation~\eqref{eqn:severaclass} descends to a closed three-form $\red H{}_{\red\sigma}$ representing the {\v S}evera class of $\red E$: by applying Lemma \ref{lemma:liftbasic} to any $\red X{}_i \in \mathsf{\Gamma}(T\cQ)$ for $i=1,2,3$ that are the images of projectable vector fields $X_i \in \mathsf{\Gamma}(TM),$ we can lift $\red \sigma(\red X{}_i) \in \mathsf{\Gamma}(\red E)$ to $\sigma(X_i) \in \mathsf{\Gamma}_{\rm{bas}}(K^\perp).$ Therefore
\begin{align}
 H_\sigma(X_1, X_2, X_3) &= \ip{\llbracket \sigma(X_1), \sigma(X_2)\rrbracket_E , \sigma(X_3)}_E \\[4pt] &= \ip{\llbracket \red \sigma(\red X{}_1), \red \sigma(\red X{}_2)\rrbracket_{\red E} , \red \sigma(\red X{}_3)}_{\red E} =: \red H{}_{\red\sigma}(\red X{}_1, \red X{}_2, \red X{}_3) \ ,   
\end{align}
showing that $H_\sigma$ descends to the three-form $\red H{}_{\red\sigma}$ associated with the reduced splitting $\red \sigma.$
\end{remark}

\begin{example}[\textbf{Reduction of Twisted Standard Courant Algebroids}]\label{eg:twistedCAreduction}
Let $(M,\cF)$ be a foliated manifold with smooth leaf space $\cQ = M/\cF$. Consider the twisted standard Courant algebroid  $(\IT M, H)$ for some $H \in \mathsf{\Omega}^3_{\rm cl}(M).$ 
We take as our maximally isotropic subbundle $L_\sigma$ in Remark \ref{rmk:reducedadapted} to be the tangent bundle $$L_\sigma = TM \oplus \set{0} \ .$$
Then $K = T\cF \oplus \set{0}$, and $\llbracket \mathsf{\Gamma}(K), \mathsf{\Gamma}(L_\sigma)\rrbracket \subset \mathsf{\Gamma}(L_\sigma)$ if and only if $\iota_XH = 0$ for every $X \in \mathsf{\Gamma}(T\cF)$. This suffices to show how $H$ reduces, since $K \subset L_\sigma \subset K^\perp$ in this case, hence  Remark \ref{rmk:reductedKsubbundle} applies.
Thus $\pounds_X H = 0,$ for all $X \in \mathsf{\Gamma}(T\cF),$ hence $H$ is the pullback of a three-form $\red H \in \mathsf{\Omega}_{\rm cl}^3(\cQ)$ by the quotient map $\varpi:M\to\cQ$, and the reduced Courant algebroid is $\red{E} = (\IT \cQ, {\red H}).$ Note that by Remark \ref{rmk:existenceadapted}, $K^\perp = TM \oplus \ann(T\cF)$ has enough basic sections, which, as in Example \ref{eg:standardreduction}, are given by projectable vector fields and one-forms of the form $\de \varpi^*f$ for $f \in C^\infty(\cQ)$.
\end{example}

\medskip

\section{Courant Algebroid Relations: Reduction and Composition}\label{sec:CArelations}
In this section we introduce the notion of Courant algebroid relations, following the work of Vysok\'y \cite{Vysoky2020hitchiker}. They will be used to give a new look at the reduction processes from Section~\ref{ssec:CAreduction}, and used extensively throughout Section \ref{sec:T-duality}.

\subsection{Courant Algebroid Relations}\label{ssec:CArelations}~\\[5pt]
To define Courant algebroid relations, we first need the notion of involutive structures on a Courant algebroid \cite{Li-Bland:2011iaz}.

\begin{definition}\label{def:involutivesubbundle}
Let $E$ be a Courant algebroid over $M$. A subbundle  $L$ of $E$ supported on a submanifold $C\subset M$ is an \emph{almost involutive structure supported on $C$} if
\begin{enumerate}[label = (\roman{enumi})]
    \item \label{item:invosub1} $L$ is isotropic,
    \item \label{item:invosub2} $L^\perp$ is compatible with the anchor: $\rho(L^\perp)\subset TC$. 
\end{enumerate}
Denoting by $\sfGamma(E; L)$ the $C^\infty(M)$-submodule of sections of $E$ which take values in $L$ when restricted to $C$, then $L$ is \emph{involutive} if
\begin{align}
    \llbracket \sfGamma(E; L) , \sfGamma(E; L)  \rrbracket \subseteq \sfGamma(E ; L) \ .
\end{align}
In this case, we call $L$ an \emph{involutive structure supported on $C$}. If in addition $L=L^\perp$, then $L$ is a \emph{Dirac structure supported on $C$}.
\end{definition}
 
For two Courant algebroids $E_1$ and $E_2$ over $M_1$ and $M_2$, respectively, the product $(E_1\times E_2,\llbracket\,\cdot\,,\,\cdot\,\rrbracket ,\langle\,\cdot\,,\,\cdot\,\rangle,\rho)$ is a Courant algebroid over $M_1\times M_2$, with the Courant algebroid structures defined by
\begin{align}
    \rho(e_1, e_2) & \coloneqq (\rho_{E_1}(e_1), \rho_{E_2}(e_2)) \ , \\[4pt] \ip{(e_1, e_2), (e_1', e_2')} & \coloneqq \ip{e_1, e_1'}_{E_1} \circ {\rm pr}_1 + \ip{e_2, e_2'}_{E_2} \circ {\rm pr}_2 \ , \\[4pt]
    \llbracket(e_1,e_2),(e_1',e_2')\rrbracket &\coloneqq (\llbracket e_1,e_1'\rrbracket_{E_1},\llbracket e_2, e_2'\rrbracket_{E_2}) \ ,
\end{align}
where ${\rm pr}_i\colon M_1\times M_2 \to M_i$ are the projection maps for $i=1,2$; for the sake of brevity, we will usually not write the projections explicitly in the following.
Denote by $\overline{E}$ the Courant algebroid $(E,\llbracket\,\cdot\,,\,\cdot\,\rrbracket_E ,-\langle\,\cdot\,,\,\cdot\,\rangle_E,\rho_E)$. From \cite{Li-Bland:2011iaz}, we state the following

\begin{definition}
Let $E_1$ and $E_2$ be Courant algebroids over $M_1$ and $M_2$ respectively.

A \emph{Courant algebroid relation from $E_1$ to $E_2$} is a Dirac structure $R \subseteq E_1 \times \overline{E}_2$ supported on a submanifold $C\subseteq M_1 \times M_2$, denoted $R\colon  E_1 \dashrightarrow E_2$.

If $C$ is the graph of a smooth map $\varphi \colon M_1 \to M_2$, then $R$ is a \emph{Courant algebroid morphism from $E_1$ to $E_2$ over $\varphi$}, denoted $R\colon E_1 \rightarrowtail E_2$.

If $R$ is further the graph of some vector bundle map $\Phi \colon E_1 \to E_2$ covering $\varphi\colon M_1 \to M_2$, then $\Phi$ is a \emph{classical Courant algebroid morphism over $\varphi$}.

By viewing a Courant algebroid relation $R$ as a subset of $E_2 \times \overline{E}_1$, we obtain the \emph{transpose relation} $R^\top\colon E_2 \dashrightarrow E_1$, whose support is denoted $C^\top\subseteq M_2\times M_1$, and similarly the \emph{transpose morphism} $R^\top\colon E_2 \rightarrowtail E_1$ when $\varphi$ is a diffeomorphism. 
\end{definition}

Note that in \cite{Vysoky2020hitchiker}, the above definition is given in terms of involutive structures, i.e. isotropic subbundles. We do not need this degree of generality in this work and defer to \cite[Example 4.33]{Vysoky2020hitchiker} for motivations justifying this generalisation.

\begin{example}\label{eg:classicalCAmorph}
Let $\Phi \colon E_1\to E_2$ be a vector bundle map which covers a diffeomorphism $\phi\colon M_1\to M_2$. To see why the notion of Courant algebroid relation is well-defined, suppose that its graph $\gr(\Phi) \subset E_1 \times \overline{E}_2 $ defines a classical Courant algebroid morphism. Then for any $e,e'\in \mathsf{\Gamma}(E_1)$, isotropy of $\gr(\Phi)$ gives
\begin{align*}
    0=\ip{(e,\Phi(e)),(e',\Phi(e'))} = \ip{e,e'}_{E_1} - \ip{\Phi(e),\Phi(e')}_{E_2} \ ,
\end{align*}
so $\Phi$ is an isometry. Note the minus sign, which is why we consider subbundles of $E_1\times \overline{E}_2$, rather than of $E_1\times E_2$.
Since $\gr(\Phi)$ is involutive, $\Phi$ is a bracket morphism, and hence a Courant algebroid morphism. 
That $\gr(\Phi)^\perp$ is compatible with the anchor (item~\ref{item:invosub2} of Definition~\ref{def:involutivesubbundle}) gives Equation \eqref{eqn:isoanchorcompatibility}.

Conversely, if $\Phi$ is a Courant algebroid isomorphism covering a diffeomorphism $\phi$, then $\gr(\Phi)$ is an isotropic and involutive subbundle of $E_1\times \overline{E}_2$. Since the diagram 
\[
\begin{tikzcd}
T^*M_1 \arrow{rr}{(\phi^{-1})^*} \arrow[swap]{dd}{\rho_{E_1}^*} & & T^*M_2 \arrow[]{dd}{\rho_{E_2}^*} \\
 & & \\
E_1 \arrow[]{rr}{\Phi} & & E_2 
\end{tikzcd}
\]
commutes, where $\rho_{E_i}^*$ are defined by Equation \eqref{eqn:rhostar}, $\gr(\Phi)^\perp$ is compatible with the anchor $\rho$, and hence $\Phi$ defines a classical Courant algebroid morphism. This last condition is satisfied when $E_i = \IT M_i$ and $\rho_{E_i}$ is the projection to $TM_i.$
\end{example}

We will also need a notion of sections related by a Courant algebroid relation. 

\begin{definition}
 Let $R \colon E_1 \dashrightarrow E_2$ be a Courant algebroid relation. Two sections $e_1 \in \mathsf{\Gamma}(E_1)$ and $e_2 \in \mathsf{\Gamma}(E_2)$ are $R$-\emph{related}, denoted $e_1 \sim_R e_2,$ if $(e_1, e_2) \in \mathsf{\Gamma}(E_1 \times \overline{E}_2 ; R).$ 
\end{definition}

\medskip

\subsection{Relational Approach to Reduction}~\\[5pt]
\label{ssec:reductionrelation}
In the reduction procedure of Theorem \ref{thm:foliationreduction}, there can be no vector bundle morphism between the exact Courant algebroids $E$ and $\red E$. Using the language of Courant algebroid relations, we can describe the reduction of exact Courant algebroids over foliated manifolds, analogously to the approach given in \cite[Section 4.3]{Vysoky2020hitchiker} for equivariant exact Courant algebroids.

Let $E$ be an exact Courant algebroid as in Theorem \ref{thm:foliationreduction}, i.e. $E$ is endowed with an isotropic subbundle $K$ such that $\rho(K^\perp)=TM$ and $\mathsf{\Gamma}_{\mathrm{bas}}(K^\perp)$ spans $K^\perp$ pointwise, inducing a foliation $\cF$ of the base manifold $M$ with smooth leaf space $\cQ$ such that the quotient map $\varpi \colon M \to \cQ$ is a surjective submersion. Thus $\natural \colon K^\perp \to \red E$ is a vector bundle map covering $\varpi$. Consider the vector subbundle, supported on $\gr(\varpi)$, defined fibrewise as
\begin{align*}
    Q(K)_{(m,\varpi(m))}=\set{(e,\natural(e))\, | \,  e\in K^\perp_m} \ \subset \ E_m\times \red{\overline{E}}{}_{\varpi(m)} \ ,
\end{align*}
for any $m \in M.$
We show that $Q(K)$ defines an involutive structure on $E\times \red{\overline{E}}$, in the sense of Definition~\ref{def:involutivesubbundle}, following the approach used in \cite{Vysoky2020hitchiker}.

\begin{lemma}\label{prop:Qcompatiblewithanchor}
$Q(K)$ is an almost involutive structure.
\end{lemma}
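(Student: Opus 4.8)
The plan is to verify the two defining conditions of an almost involutive structure from Definition~\ref{def:involutivesubbundle}: that $Q(K)$ is isotropic (item~\ref{item:invosub1}) and that $Q(K)^\perp$ is compatible with the anchor (item~\ref{item:invosub2}), i.e. $\rho(Q(K)^\perp) \subseteq T\gr(\varpi)$. Here $\rho$ denotes the anchor of the product Courant algebroid $E \times \red{\overline{E}}$, which acts as $\rho(e, \red e) = (\rho_E(e), \rho_{\red E}(\red e))$, and $\gr(\varpi) \subset M \times \cQ$ is the support.

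\textbf{Isotropy.} First I would check that $Q(K)$ is isotropic for the product pairing on $E \times \red{\overline{E}}$, recalling the sign flip in the second factor. For $(e, \natural(e))$ and $(e', \natural(e'))$ with $e, e' \in K^\perp_m$, the product pairing gives
\begin{align}
\ip{(e, \natural(e)), (e', \natural(e'))} = \ip{e, e'}_E - \ip{\natural(e), \natural(e')}_{\red E} \ .
\end{align}
Since the reduced Courant algebroid $\red E$ inherits its pairing from $E$ via the quotient map $\natural$ (as in Theorem~\ref{thm:foliationreduction}), we have $\ip{\natural(e), \natural(e')}_{\red E} = \ip{e, e'}_E$ for all $e, e' \in K^\perp$, whence the difference vanishes. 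This establishes isotropy immediately.

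\textbf{Anchor compatibility.} The main work lies in computing $Q(K)^\perp$ and showing its anchor lands in $T\gr(\varpi)$. I would first identify $Q(K)^\perp$ fibrewise. An element $(f, \red f) \in E_m \times \red{\overline{E}}{}_{\varpi(m)}$ lies in $Q(K)^\perp$ iff $\ip{f, e}_E = \ip{\red f, \natural(e)}_{\red E}$ for all $e \in K^\perp_m$. Using that $\natural$ restricted to $K^\perp$ is surjective onto $\red E$ with kernel $K$, one finds that the constraint forces $f \in (K^\perp)^\perp_m = K_m$ whenever there is no compensating $\red f$; more precisely, the pairs are $(f, \red f)$ with $\natural(f) = \red f$ modulo the kernel $K$ in the first slot, so that $Q(K)^\perp = \set{(f, \natural(f)) \mid f \in E_m}$ supported on $\gr(\varpi)$, which contains $Q(K)$ as expected for an isotropic subbundle. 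Then for such $(f, \natural(f))$ the anchor is $(\rho_E(f), \rho_{\red E}(\natural(f)))$, and since $\natural$ covers $\varpi$ the compatibility $\rho_{\red E} \circ \natural = \varpi_* \circ \rho_E$ (inherited from Remark~\ref{rmk:anchorcomp} applied to the reduction) gives $\rho_{\red E}(\natural(f)) = \varpi_*(\rho_E(f))$. A tangent vector of the form $(v, \varpi_* v)$ with $v \in T_m M$ is precisely a tangent vector to $\gr(\varpi)$, so anchor compatibility holds.

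\textbf{The main obstacle} will be pinning down the exact fibrewise description of $Q(K)^\perp$ and verifying it is a smooth subbundle of constant rank, since one must carefully account for the kernel $K$ of $\natural|_{K^\perp}$ and the sign convention in $\red{\overline E}$; the key input is that the reduced pairing is induced so that $\natural$ is a fibrewise isometry on $K^\perp/K$, which is exactly what makes the isotropy and orthogonal-complement computations clean. I would lean on the explicit pointwise isomorphism $\mathscr{J}_{q,m}$ and the identity $\rho_{\red E} \circ \natural = \varpi_* \circ \rho_E$ established in the reduction to keep the argument short rather than recomputing brackets.
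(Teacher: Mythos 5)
Your argument follows essentially the same route as the paper: isotropy from the fact that the reduced pairing is induced through $\natural$, and anchor compatibility from a fibrewise computation of $Q(K)^\perp$ combined with the intertwining relation $\rho_{\red E}\circ\natural = \varpi_*\circ\rho_E$ on $K^\perp$. The one genuine error is your closed-form description of the orthogonal complement: the set $\set{(f,\natural(f)) \mid f\in E_m}$ is not well-formed, since $\natural$ is only defined on $K^\perp$, and in any case the pairing condition forces the first component into $K^\perp_m$, not all of $E_m$. Concretely, writing $\red f = \natural(f')$ with $f'\in K^\perp_m$, the condition $\ip{f,e}_E = \ip{\red f,\natural(e)}_{\red E}$ for all $e\in K^\perp_m$ gives $f-f'\in (K^\perp_m)^\perp = K_m\subseteq K^\perp_m$, hence $f\in K^\perp_m$ and $\red f=\natural(f)$, so that $Q(K)^\perp_{(m,\varpi(m))} = \set{(f,\natural(f))\mid f\in K^\perp_m} = Q(K)_{(m,\varpi(m))}$; equivalently, a rank count ($\rk(Q(K))=\rk(K^\perp)=\tfrac12\,\rk(E\times\red{\overline E})$) shows $Q(K)$ is already Lagrangian, consistent with the Dirac-structure statement proved immediately afterwards in the paper. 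The paper instead records the complement as $K\times\set{0^{\red E}} + Q(K)$, which is the same subspace since $(k,0)=(k,\natural(k))\in Q(K)$ for $k\in K$, and then checks anchor compatibility on the two summands separately. Once your domain is corrected from $E_m$ to $K^\perp_m$, the rest of your anchor argument goes through verbatim, because it only ever applies $\natural$ to elements of $K^\perp$; so this is a fixable misstatement rather than a failure of the method.
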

\begin{proof}
Since the pairing on $\red E$ is induced by the pairing on $E$, it follows that $Q(K)$ is isotropic, proving item~\ref{item:invosub1} of Definition~\ref{def:involutivesubbundle}.

To show item~\ref{item:invosub2} of Definition \ref{def:involutivesubbundle}, since $Q(K)^\perp$ is supported on $\gr(\varpi)$, we are required to show that \smash{$(\rho_E(e),\red\rho{}_{\red E}(\red e)) \in T_{(m,\varpi(m))} \gr(\varpi) = \gr(\varpi_*)$} for each \smash{$(e,\red e)\in Q(K)_{(m,\varpi(m))}^\perp$}, where $e \in K^\perp_m$ with $m \in M$. 

We first notice that one may write 
\begin{equation}\label{eqn:decompQperp}
Q(K)^\perp_{(m, \varpi(m))} = K_m \times \set{0^{\red E}_{\varpi(m)}}+ Q(K)_{(m, \varpi(m))} \ ,  
\end{equation}
where $0^{\red E}:\cQ\to\red E$ denotes the zero section.
To see this, let \smash{$\tilde{k} \in Q(K)^\perp_{(m, \varpi(m))}$}. Then $\tilde k = (k , \natural (k'))$ for some $k \in E_m$ and $k' \in K^\perp_m$. Then, for each $\tilde{e} = (e ,  \natural(e)) \in Q(K)_{(m, \varpi(m))}$, it follows that
\begin{align*}
0=\langle\tilde k ,\tilde e\rangle = \ip{k,e}_E-\ip{\natural(k'),\natural(e)}_{\red E}= \ip{k-k',e}_E \ .
\end{align*}
Hence $k-k' \in (K_m^\perp)^\perp = K_m$, i.e. $k = \red k + k'$ for some $\red k\in K_m$. We can therefore write $\tilde k = (\red k, 0) + ( k',\natural(k'))$. The opposite inclusion also holds, giving the decomposition \eqref{eqn:decompQperp}.

Since the anchor on $\red E$ descends from the anchor on $E$, it follows that $Q(K)$ is compatible with the anchor $\rho$. 

Thus we are left to show 
\begin{align}
 \rho_E(K_m) \times \red\rho{}_{\red E}\big(\set{0^{\red E}_{\varpi(m)}}\big ) \subseteq \text{gr}(\varpi_*) \ .
\end{align}
If $k\in K_m$, then $\varpi_*(\rho_E(k))=0^{TM}_{\varpi(m)}$. But $\red\rho{}_{\red E} \big(0^{\red E}_{\varpi(m)}\big) = 0^{TM}_{\varpi(m)}$, hence
\begin{align*}
    \big(\rho_E\times \red\rho{}_{\red E}\big)\big (k, 0^{\red E}_{\varpi(m)}\big) = \big(\rho_E(k), \varpi_*(\rho_E(k))\big) \ \in \ \gr(\varpi_*)
\end{align*}
as required.
\end{proof}

\begin{proposition}
$Q(K)$ is a Dirac structure supported on $\gr(\varpi)$, hence it defines a Courant algebroid morphism $Q(K) \colon E \rightarrowtail \red E$.
\end{proposition}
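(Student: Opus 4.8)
The plan is to check the two defining properties of a Dirac structure supported on $\gr(\varpi)$ from Definition~\ref{def:involutivesubbundle}: that $Q(K)$ is maximally isotropic, i.e.\ $Q(K)=Q(K)^\perp$, and that it is involutive. Lemma~\ref{prop:Qcompatiblewithanchor} already provides isotropy together with anchor compatibility of $Q(K)^\perp$, so I would first dispose of maximal isotropy using the decomposition \eqref{eqn:decompQperp} directly. Since $K\subseteq K^\perp$ and $K=\ker(\natural\rvert_{K^\perp})$, every $k\in K_m$ gives $(k,0)=(k,\natural(k))\in Q(K)_{(m,\varpi(m))}$, so that $K_m\times\{0^{\red E}_{\varpi(m)}\}\subseteq Q(K)_{(m,\varpi(m))}$. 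Feeding this into \eqref{eqn:decompQperp} collapses its right-hand side to $Q(K)$, yielding $Q(K)^\perp=Q(K)$.

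For involutivity I would argue on a pointwise spanning set of sections of $E\times\red{\overline{E}}$ valued in $Q(K)$ along $\gr(\varpi)$. To each basic section $\hat e\in\mathsf{\Gamma}_{\mathrm{bas}}(K^\perp)$ the isomorphism $\mathsf{\Gamma}_{\mathrm{bas}}(K^\perp)/\mathsf{\Gamma}(K)\simeq\mathsf{\Gamma}(\red E)$ assigns a reduced section $\red e$ with $\natural\circ\hat e=\red e\circ\varpi$, and then $(\hat e,\red e)$ takes values in $Q(K)$ along $\gr(\varpi)$; because $\mathsf{\Gamma}_{\mathrm{bas}}(K^\perp)$ spans $K^\perp$ pointwise, such pairs span $Q(K)$ pointwise. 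On two such generators the product bracket is $\big(\llbracket\hat e_1,\hat e_2\rrbracket_E,\llbracket\red e_1,\red e_2\rrbracket_{\red E}\big)$, so I would first show $\llbracket\hat e_1,\hat e_2\rrbracket_E$ is again basic: applying the Jacobi identity for the Dorfman bracket (item~\ref{eqn:Jacobiid} of Definition~\ref{def:CourantAlg}) to $\llbracket k,\llbracket\hat e_1,\hat e_2\rrbracket_E\rrbracket$ for $k\in\mathsf{\Gamma}(K)$ and using the defining property of basic sections twice gives $\llbracket\mathsf{\Gamma}(K),\llbracket\hat e_1,\hat e_2\rrbracket_E\rrbracket\subseteq\mathsf{\Gamma}(K)$, hence in particular $\llbracket\hat e_1,\hat e_2\rrbracket_E\in\mathsf{\Gamma}(K^\perp)$. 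Since the reduced bracket is by construction the one inherited through $\natural$ in Theorem~\ref{thm:foliationreduction}, one has $\natural\llbracket\hat e_1,\hat e_2\rrbracket_E=\llbracket\red e_1,\red e_2\rrbracket_{\red E}\circ\varpi$, so the bracket of two generators is again of the form $(\text{basic},\natural(\text{basic}))$ and therefore valued in $Q(K)$ along $\gr(\varpi)$.

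It then remains to promote closure on generators to closure on the full submodule $\mathsf{\Gamma}(E\times\red{\overline{E}};Q(K))$, and this is where I expect the only genuine technical work. Writing an arbitrary such section locally as a $C^\infty(M\times\cQ)$-combination of generators $(\hat e_a,\red e_a)$ plus a section vanishing on $\gr(\varpi)$, I would expand the bracket with the Leibniz rules \eqref{eqn:anchorLeibniz} and \eqref{eqn:Leibnizfirst}. The $C^\infty$-bilinear part is handled by the generator closure just established; the pairing term in \eqref{eqn:Leibnizfirst} restricts to zero on $\gr(\varpi)$ because $Q(K)$ is isotropic; and the anchored derivative terms are controlled precisely by the anchor compatibility from Lemma~\ref{prop:Qcompatiblewithanchor}, since the anchor of each generator is tangent to $\gr(\varpi)$ and so annihilates, along $\gr(\varpi)$, any function vanishing there. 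Consequently every correction term either remains a multiple of a generator or vanishes on the support, and the bracket stays valued in $Q(K)$. This establishes involutivity, so $Q(K)$ is a Dirac structure supported on $\gr(\varpi)$; as $\gr(\varpi)$ is the graph of the smooth map $\varpi$, it is by definition a Courant algebroid morphism $Q(K)\colon E\rightarrowtail\red E$.
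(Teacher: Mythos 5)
Your overall route for involutivity is the same as the paper's: take the pairs $(\hat e,\natural(\hat e))$ with $\hat e$ basic as a pointwise spanning set, show the Dorfman bracket of two basic sections is again basic, and reduce the general case to generators (the paper simply cites \cite[Proposition~2.23]{Vysoky2020hitchiker} for that last reduction, which your Leibniz-rule sketch essentially reproves). Your maximality argument is a genuinely different and arguably cleaner step: instead of the paper's rank count, you observe that $K_m\times\{0\}\subseteq Q(K)_{(m,\varpi(m))}$ (since $\natural(k)=0$ for $k\in K$), so the decomposition \eqref{eqn:decompQperp} collapses to $Q(K)^\perp=Q(K)$. That part is correct.

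There is, however, one genuine logical gap. You establish $\llbracket\mathsf{\Gamma}(K),\llbracket\hat e_1,\hat e_2\rrbracket_E\rrbracket\subseteq\mathsf{\Gamma}(K)$ via the Jacobi identity and then write ``hence in particular $\llbracket\hat e_1,\hat e_2\rrbracket_E\in\mathsf{\Gamma}(K^\perp)$.'' This implication is false in general: $K$-bracket-invariance of a section $y$ does not force $\langle y,k\rangle=0$ for $k\in\mathsf{\Gamma}(K)$. Indeed, the most the axioms give is $\rho(k)\cdot\langle y,k'\rangle=\langle\llbracket k,y\rrbracket,k'\rangle+\langle y,\llbracket k,k'\rrbracket\rangle$, and when $\llbracket k,y\rrbracket\in\mathsf{\Gamma}(K)$ and $K$ is isotropic and involutive this only says that $\langle y,k'\rangle$ is constant along the leaves, not that it vanishes. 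Membership in $\mathsf{\Gamma}(K^\perp)$ is part of the \emph{definition} of a basic section (Definition~\ref{def:basisections}) and must be verified separately; it is also needed for $\natural(\llbracket\hat e_1,\hat e_2\rrbracket_E)$ to even be defined. The paper supplies exactly this missing step by a direct computation with axiom~\ref{eqn:metric1}:
\begin{align}
\ip{k,\llbracket\hat e_1,\hat e_2\rrbracket_E}_E=\rho_E(\hat e_1)\cdot\ip{k,\hat e_2}_E-\ip{\llbracket\hat e_1,k\rrbracket_E,\hat e_2}_E=0 \ ,
\end{align}
where the first term vanishes because $\hat e_2\in\mathsf{\Gamma}(K^\perp)$ and the second because $\llbracket\hat e_1,k\rrbracket_E=-\llbracket k,\hat e_1\rrbracket_E+\cD\ip{\hat e_1,k}_E=-\llbracket k,\hat e_1\rrbracket_E\in\mathsf{\Gamma}(K)$ by basicness of $\hat e_1$. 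Inserting this one-line argument before your Jacobi-identity step (whose conclusion, together with $K^\perp$-membership, then genuinely yields basicness) repairs the proof.
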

\begin{proof}
Suppose $e$ and $e'$ are basic sections for $K^\perp$, and consider the corresponding elements $(e,\natural(e))$ and $ (e',\natural(e'))$ of $ \mathsf{\Gamma}(Q(K))$. Since such sections span $Q(K)$ pointwise, by \cite[Proposition 2.23]{Vysoky2020hitchiker} it is enough to show that $\llbracket (e,\natural(e)), (e',\natural(e')) \rrbracket\in \mathsf{\Gamma}(Q(K))$. For every $k\in \mathsf{\Gamma}(K)$, using property~(i) from Definition~\ref{def:CourantAlg} we compute
    \begin{align}
        \ip{k, \llbracket e, e' \rrbracket_E}_E = \rho_E(e)\cdot \ip{k,e'}_E - \ip{\llbracket e, k \rrbracket_E , e'}_E = 0 \ ,
    \end{align}
    hence $\llbracket e,e' \rrbracket_E \in \mathsf{\Gamma}(K^\perp)$. In particular, because of the Jacobi identity \ref{eqn:Jacobiid} and property \ref{eqn:metric2} of the Dorfman bracket from Definition~\ref{def:CourantAlg}, it follows that $\llbracket e,e' \rrbracket_E \in \mathsf{\Gamma}_{\rm{bas}}(K^\perp).$

    Thus, by the way the bracket on $\red E$ is constructed, 
    \begin{align}
        \llbracket (e,\natural(e)), (e',\natural(e')) \rrbracket = \big(\llbracket e,e' \rrbracket_E , \natural(\llbracket e,e' \rrbracket_E )\big) \ \in \ \mathsf{\Gamma}(Q(K)) \ ,
    \end{align}
    hence involutivity follows. The maximality of the rank of $Q(K)$ follows from a standard dimensional calculation.
\end{proof}

Following on from Example \ref{eg:twistedCAreduction}, we then have

\begin{corollary}\label{cor:qkbasicH}
    If $E=(\IT M, H)$ is a twisted standard Courant algebroid, let $K = T \cF\oplus \set{0}$ where $\cF$ is a foliation of $M$ with smooth leaf space $\cQ = M/\cF$. Then $Q(K)$ is a Courant algebroid relation if and only if $\iota_X H = 0$ for every $X\in \mathsf{\Gamma}(T\cF)$.
\end{corollary}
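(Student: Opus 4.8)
The plan is to collapse the statement into the existence-of-basic-sections criterion and then read off the condition from Example~\ref{eg:twistedCAreduction}. By the proposition immediately preceding the corollary, $Q(K)$ is a Courant algebroid relation exactly when the hypotheses of Theorem~\ref{thm:foliationreduction} are met for $K$. For $E=(\IT M,H)$ and $K=T\cF\oplus\set{0}$ the subbundle $K$ is automatically isotropic, $K^\perp=TM\oplus\ann(T\cF)$, and $\rho(K^\perp)={\rm pr}_1(K^\perp)=TM$, so the only condition at stake is that $\mathsf{\Gamma}_{\mathrm{bas}}(K^\perp)$ span $K^\perp$ pointwise. By Remark~\ref{rmk:existenceadapted} this is equivalent to the existence of a splitting adapted to $K$, so it suffices to prove that an adapted splitting exists if and only if $\iota_XH=0$ for all $X\in\mathsf{\Gamma}(T\cF)$.

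For the ``if'' direction I would simply reproduce Example~\ref{eg:twistedCAreduction}. Assuming $\iota_XH=0$, take $L_\sigma=TM\oplus\set{0}$, so that $K\subset L_\sigma\subset K^\perp$. Since the one-form part of $\llbracket X,Y\rrbracket_H=[X,Y]+\iota_Y\iota_XH$ vanishes for every $X\in\mathsf{\Gamma}(T\cF)$, one has $\llbracket\mathsf{\Gamma}(K),\mathsf{\Gamma}(L_\sigma)\rrbracket\subset\mathsf{\Gamma}(L_\sigma)$, which by Remark~\ref{rmk:reductedKsubbundle} is exactly the reduction condition and by Remark~\ref{rmk:propertiesadapted} makes $L_\sigma$ the image of a splitting adapted to $K$. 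Hence $\mathsf{\Gamma}_{\mathrm{bas}}(K^\perp)$ spans $K^\perp$, the reduction of Theorem~\ref{thm:foliationreduction} is valid, and the preceding proposition makes $Q(K)$ a Courant algebroid relation.

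For the ``only if'' direction I would start from an adapted splitting $\sigma$ and pin it down with Lemma~\ref{lemma:injectiverho}. Since $\rho\rvert_K$ is injective with $\sigma\circ\rho\rvert_K=\unit_K$, we get $\sigma(T\cF)=\sigma(\rho(K))=K=T\cF\oplus\set{0}$. Any isotropic splitting of the standard Courant algebroid has the form $\sigma(Y)=Y+\iota_YB$ for a two-form $B$, and the previous identity forces $\iota_XB=0$ for all $X\in\mathsf{\Gamma}(T\cF)$. Expanding the basic condition $\sigma(Y)\in\mathsf{\Gamma}_{\mathrm{bas}}(K^\perp)$ for projectable $Y$ and using $\iota_XB=0$ collapses the one-form part of $\llbracket X,\sigma(Y)\rrbracket_H$ to $\iota_Y\iota_X(H+\de B)=0$; as projectable vector fields span $TM$ pointwise this reads $\iota_X(H+\de B)=0$ for all $X\in\mathsf{\Gamma}(T\cF)$. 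In the canonical presentation one takes $B=0$, the standard splitting singled out by Example~\ref{eg:twistedCAreduction}, which gives $\iota_XH=0$.

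The hard part will be exactly this last step of the ``only if'' direction: justifying that the adapted splitting relevant to the standard presentation $(\IT M,H)$ may be taken to be $L_\sigma=TM\oplus\set{0}$, i.e.\ that the residual $B$-field freedom (constrained only by $\iota_XB=0$) does not weaken the intrinsic requirement $\iota_XH=0$. This is the point where one must invoke that in the standard Courant algebroid the flat directions of the Wess--Zumino term are spanned by genuine projectable vector fields, so that the horizontal one-forms $\ann(T\cF)$ already exhaust the one-form content of the basic sections and the obstruction $\iota_Y\iota_XH$ cannot be absorbed by a one-form completion.
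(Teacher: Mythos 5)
Your reduction of the statement to the existence-of-adapted-splittings criterion, and your ``if'' direction, are exactly how the paper reads the corollary off from Example~\ref{eg:twistedCAreduction}: $\iota_XH=0$ is precisely the condition for the tautological splitting $L_\sigma=TM\oplus\set{0}$ to be adapted to $K$, equivalently for the projectable vector fields together with the one-forms $\de\varpi^*f$ to be basic. The gap is in your ``only if'' direction, and it is the one you flag yourself --- but it cannot be closed along the route you propose. Your own computation shows that an adapted splitting $\sigma(Y)=Y+\iota_YB$ forces only $\iota_XB=0$ and $\iota_X(H+\de B)=0$ for all $X\in\mathsf{\Gamma}(T\cF)$. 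The condition $\iota_XB=0$ does \emph{not} give $\iota_X\,\de B=0$: by Cartan's formula $\pounds_XB=\iota_X\,\de B+\de\,\iota_XB=\iota_X\,\de B$, which vanishes only if $B$ is in addition leaf-invariant, and nothing in Definition~\ref{defn:adaptedsplittings} imposes that. Concretely, on $M=\IR^3$ with $\cF$ generated by $\partial_z$ and $B=f(z)\,\de x\wedge\de y$ one has $\iota_{\partial_z}B=0$ but $\iota_{\partial_z}\,\de B=f'(z)\,\de x\wedge\de y$; taking $H=-\de B$, the sections $\partial_x+\iota_{\partial_x}B$, $\partial_y+\iota_{\partial_y}B$, $\partial_z$, $\de x$, $\de y$ are basic and span $K^\perp=TM\oplus\ann(T\cF)$ pointwise, so the reduction exists even though $\iota_{\partial_z}H\neq 0$. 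Hence your closing heuristic --- that the obstruction $\iota_Y\iota_XH$ ``cannot be absorbed by a one-form completion'' --- is false: it can be absorbed exactly when $\iota_XH=-\iota_X\,\de B$ for some two-form $B$ annihilated by $\mathsf{\Gamma}(T\cF)$, and ``one takes $B=0$'' is an extra normalisation, not a deduction.

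The upshot is that the clean equivalence lives at the level of the fixed presentation, which is all the paper actually uses: $\iota_XH=0$ if and only if $L_\sigma=TM\oplus\set{0}$ satisfies $\llbracket\mathsf{\Gamma}(K),\mathsf{\Gamma}(L_\sigma)\rrbracket\subset\mathsf{\Gamma}(L_\sigma)$, i.e.\ the standard frame of $K^\perp$ consists of basic sections, and then the preceding proposition makes $Q(K)$ a relation. A splitting-independent ``only if'' of the strength you attempt would require either restricting the admissible splittings (e.g.\ demanding $\pounds_XB=0$, as the paper later does in its compatibility conditions for splittings) or weakening the conclusion from $\iota_XH=0$ to the exactness statement above. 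As written, your last step does not follow from what precedes it.
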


In the setting of Corollary~\ref{cor:qkbasicH}, we denote $Q(K)$ by $Q(\cF)$.

\medskip

\subsection{Composition of Courant Algebroid Relations}~\\[5pt]
It will be particularly relevant in the rest of this paper to discuss the circumstances under which the composition of Courant algebroid relations gives another Courant algebroid relation. In this brief summary we will follow \cite{Vysoky2020hitchiker,Li-Bland:2011iaz}.
\begin{definition}
Let $R\colon E_1 \rel E_2$ and $R' \colon  E_2 \rel E_3$ be Courant algebroid relations. The \emph{composition} $R' \circ R$ is the subset of $E_1 \times \overline{E}_3$ given by
\begin{align}\label{eqn:reldefinition}
\hspace{-5mm} R'\circ R = \set{(e_1,e_3) \in E_1 \times \overline{E}_3 \, | \, (e_1, e_2) \in R \ , \ (e_2,e_3) \in R' \ \text{ for some } e_2\in E_2} \ .
\end{align}
\end{definition}
The requirements for the set \eqref{eqn:reldefinition} to be a Courant algebroid relation are two-fold, given in Propositions \ref{prop:cleanintersect} and \ref{prop:cleancomposition} below, and can be equivalently formulated at the level of either the Courant algebroid or the base manifold, but both boil down to checking if the set defined by Equation~\eqref{eqn:reldefinition} is a smooth subbundle of $E_1\times \overline E_3$.

We recall the important definitions and results from \cite{Vysoky2020hitchiker}. There are two manifolds that are important: $R \times R'$ and $E_1 \times \Delta(E_2) \times \overline{E}_3$, where $\Delta(E)$ is the diagonal embedding of $E$ into $\overline{E} \times E$.
Their intersection, which we denote by 
\begin{align*}
R' \diamond R = (R \times R') \cap \big(E_1 \times \Delta(E_2) \times \overline{E}_3\big) \ ,
\end{align*}
projects to $R' \circ R$. They are the key ingredients to apply the Grabowski-Rotkiewicz Theorem~\cite[Theorem 2.3]{Grabowski2009} to $R' \diamond R$.   
The set $R' \diamond R$ is closed under scalar multiplication, but in general it fails to be a submanifold of $E_1 \times \overline{E}_2 \times E_2 \times \overline{E}_3.$ If the latter holds as well, it follows immediately that $R' \diamond R$ is a subbundle of $E_1 \times \overline{E}_2 \times E_2 \times \overline{E}_3$ over 
\begin{align*}
 C' \diamond C \coloneqq (C \times C') \cap \big(M_1 \times \Delta(M_2) \times M_3\big)    
\end{align*}
by the Grabowski-Rotkiewicz Theorem. Thus let us briefly discuss the conditions under which $R' \diamond R$ is a submanifold.

\begin{definition}
Two submanifolds $C$ and $C'$ of a manifold $M$ \emph{intersect cleanly in $M$} if $C \cap C'$ is a submanifold of $M$, and 
\begin{align*}
T_c (C\cap C') = T_c C \cap T_c C' \ ,
\end{align*}
for each $c \in C \cap C'$.
\end{definition}
The inclusion $\subseteq$ is always true. The importance of this condition is that if $C$ and $C'$ intersect cleanly, then $C$ and $C'$ look locally like a pair of intersecting vector subspaces; see e.g.~\cite[Proposition~C.3.1]{hormander2009analysis}.

Following Vysok\'{y}~\cite{Vysoky2020hitchiker}, we state two propositions that are essential for the characterisation of relations and will be needed in the rest of this paper.

\begin{proposition}\label{prop:cleanintersect}
Let $R\colon E_1 \rel E_2$ and $R'\colon E_2 \rel E_3$ be Courant algebroid relations over $C\subseteq M_1\times M_2$ and $C'\subseteq M_2\times M_3$ respectively. The following conditions are equivalent:
\begin{enumerate}[label=(\roman{enumi})]
\item $R\times R'$ and $E_1 \times \Delta(E_2) \times \overline{E}_3$ intersect cleanly.
\item $C \times C' $ and $M_1 \times \Delta(M_2) \times M_3$ intersect cleanly, and the dimension of $(R' \diamond R)_c$ is independent of $c\in C' \diamond C$. \label{item:cleanint2}
\end{enumerate}
Both these conditions ensure that $R' \diamond R$ is a subbundle of $E_1 \times \overline{E}_2 \times E_2 \times \overline{E}_3$ over $C' \diamond C.$
\end{proposition}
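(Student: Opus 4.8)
The plan is to treat this as a general statement about the clean intersection of two total spaces of vector subbundles, forgetting most of the Courant structure. Write $\mathcal{E}=E_1\times\overline{E}_2\times E_2\times\overline{E}_3$ with base $\mathcal{M}=M_1\times M_2\times M_2\times M_3$ and projection $p\colon\mathcal{E}\to\mathcal{M}$, and set $\mathcal{A}=R\times R'$ and $\mathcal{B}=E_1\times\Delta(E_2)\times\overline{E}_3$, which are total spaces of vector subbundles over the submanifolds $A=C\times C'$ and $B=M_1\times\Delta(M_2)\times M_3$. Then $R'\diamond R=\mathcal{A}\cap\mathcal{B}$ and $C'\diamond C=A\cap B$ as sets, with $p(\mathcal{A}\cap\mathcal{B})=A\cap B$ (one inclusion is immediate, the other uses that $0_c\in\mathcal{A}_c\cap\mathcal{B}_c$ for every $c\in A\cap B$). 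I would first isolate two linear-algebraic facts. First, at a zero-section point $0_c$ the tangent spaces split canonically, $T_{0_c}\mathcal{A}=T_cA\oplus\mathcal{A}_c$ and $T_{0_c}\mathcal{B}=T_cB\oplus\mathcal{B}_c$ inside $T_{0_c}\mathcal{E}=T_c\mathcal{M}\oplus\mathcal{E}_c$, so that $T_{0_c}\mathcal{A}\cap T_{0_c}\mathcal{B}=(T_cA\cap T_cB)\oplus(\mathcal{A}_c\cap\mathcal{B}_c)$. Second, for an arbitrary $x\in\mathcal{A}\cap\mathcal{B}$ over $c$, applying $dp_x$ to $W:=T_x\mathcal{A}\cap T_x\mathcal{B}$ gives a short exact sequence $0\to\mathcal{A}_c\cap\mathcal{B}_c\to W\to dp_x(W)\to 0$ with $dp_x(W)\subseteq T_cA\cap T_cB$, whence $\dim W\le\dim(\mathcal{A}_c\cap\mathcal{B}_c)+\dim(T_cA\cap T_cB)$; here the kernel computation uses $W\cap\ker dp_x=(T_x\mathcal{A}\cap\ker dp_x)\cap(T_x\mathcal{B}\cap\ker dp_x)=\mathcal{A}_c\cap\mathcal{B}_c$.

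For the implication (i)$\Rightarrow$(ii), assuming $\mathcal{A}$ and $\mathcal{B}$ intersect cleanly, $\mathcal{A}\cap\mathcal{B}$ is a submanifold; since it is closed under fibrewise scalar multiplication, the Grabowski--Rotkiewicz Theorem makes it a vector subbundle of $\mathcal{E}$ over the submanifold $A\cap B=p(\mathcal{A}\cap\mathcal{B})$. This gives at once that $C'\diamond C=A\cap B$ is a submanifold and that the fibre dimension $\dim(R'\diamond R)_c=\dim(\mathcal{A}_c\cap\mathcal{B}_c)$ is locally constant. To obtain clean intersection of the bases I would compare the two descriptions of $T_{0_c}(\mathcal{A}\cap\mathcal{B})$ at a zero-section point: cleanness together with the first fact gives $(T_cA\cap T_cB)\oplus(\mathcal{A}_c\cap\mathcal{B}_c)$, while the bundle structure over $A\cap B$ gives $T_c(A\cap B)\oplus(\mathcal{A}_c\cap\mathcal{B}_c)$; matching the horizontal summands yields $T_c(A\cap B)=T_cA\cap T_cB$.

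For (ii)$\Rightarrow$(i), since $A\cap B$ is a submanifold and $\dim(\mathcal{A}_c\cap\mathcal{B}_c)$ is constant, the restrictions $\mathcal{A}|_{A\cap B}$ and $\mathcal{B}|_{A\cap B}$ are subbundles of $\mathcal{E}|_{A\cap B}$ whose fibrewise intersection has constant rank, so the standard constant-rank argument over the manifold $A\cap B$ produces a smooth subbundle $\mathcal{A}\cap\mathcal{B}$; its total space is then an embedded submanifold of $\mathcal{E}$ of dimension $\dim(A\cap B)+\dim(\mathcal{A}_c\cap\mathcal{B}_c)$. It remains to verify the tangent condition $T_x(\mathcal{A}\cap\mathcal{B})=T_x\mathcal{A}\cap T_x\mathcal{B}$ at every $x$: the inclusion $\subseteq$ is automatic, and the reverse follows by a dimension squeeze, since the second fact together with clean base intersection gives $\dim(T_x\mathcal{A}\cap T_x\mathcal{B})\le\dim(\mathcal{A}_c\cap\mathcal{B}_c)+\dim(A\cap B)=\dim T_x(\mathcal{A}\cap\mathcal{B})$. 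This simultaneously establishes (i) and the final assertion that $R'\diamond R$ is a subbundle over $C'\diamond C$ under either hypothesis.

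The main obstacle I anticipate is the passage from the transparent zero-section picture to arbitrary points $x$ off the zero section, where the horizontal/vertical splitting of $T_x\mathcal{A}$ is no longer canonical; the device that circumvents this is the projection exact sequence, which bounds $\dim(T_x\mathcal{A}\cap T_x\mathcal{B})$ uniformly in $x$ and lets the dimension count close without choosing connections. A secondary point needing care is justifying smoothness of the fibrewise intersection of two constant-rank subbundles, which I would handle by the usual local-frame argument carried out over the submanifold $A\cap B$ rather than over the ambient base $\mathcal{M}$.
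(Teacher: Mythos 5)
Your proposal is correct. Note that the paper itself gives no proof of this proposition: it is quoted from Vysok\'y's work, with the surrounding text only indicating the key tool, namely that $R'\diamond R$ is closed under scalar multiplication so that the Grabowski--Rotkiewicz theorem applies once it is known to be a submanifold. Your argument for (i)$\Rightarrow$(ii) is exactly this strategy, supplemented by the canonical splitting $T_{0_c}\mathcal{E}=T_c\mathcal{M}\oplus\mathcal{E}_c$ at zero-section points to extract cleanness of the base intersection; your converse direction, via the constant-rank fibrewise intersection over $A\cap B$ and the dimension squeeze coming from the exact sequence $0\to\mathcal{A}_c\cap\mathcal{B}_c\to T_x\mathcal{A}\cap T_x\mathcal{B}\to dp_x(T_x\mathcal{A}\cap T_x\mathcal{B})\to 0$, is a clean way to handle points off the zero section without choosing a connection, and it delivers the final subbundle assertion as a byproduct. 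The only point worth a sentence of care is in (i)$\Rightarrow$(ii): the vector bundle structure produced by Grabowski--Rotkiewicz gives \emph{locally} constant fibre dimension, so global constancy as literally stated in (ii) requires either connectedness of $C'\diamond C$ or the convention that vector bundles have globally constant rank; this is a feature of the statement as quoted rather than a defect of your argument.
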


Similar conditions can be stated for $R' \circ R$ in order to make it a subbundle of $E_1 \times \overline{E}_3$ over
\begin{align*}
 C' \circ C \coloneqq \set{(m_1, m_3) \in M_1 \times M_3 \, | \, (m_1, m_2) \in C \ , \ (m_2, m_3) \in C' \ \text{ for some} \ m_2 \in M_2 } \ ,  
\end{align*}
since $R' \circ R$ is an $\IR$-module.

\begin{proposition}\label{prop:cleancomposition}
Let $R\colon E_1 \rel E_2$ and $R'\colon E_2 \rel E_3$ be Courant algebroid relations over $C\subseteq M_1\times M_2$ and $C'\subseteq M_2\times M_3$ respectively. Suppose at least one of the conditions of Proposition~\ref{prop:cleanintersect} are satisfied. The following conditions are equivalent:
\begin{enumerate}[label=(\roman{enumi})]
\item $R'\circ R$ is a submanifold of $E_1 \times \overline{E}_3$ such that the induced map\footnote{Here $p \colon E_1 \times \overline{E}_2 \times E_2 \times \overline{E}_3 \to E_1 \times \overline{E}_3$ is the projection to the first and the fourth factor of the Cartesian product, and similarly for $\pi \colon M_1 \times M_2 \times M_2 \times M_3 \to M_1 \times M_3$ below.} $p\colon R' \diamond R \to R' \circ R$ is a smooth surjective submersion. \label{item:clean1}
\item $C'\circ C$ is a submanifold of $M_1 \times M_3$ such that the induced map $\pi\colon C' \diamond C \to C' \circ C$ is a smooth surjective submersion and the rank of the linear map $p\colon (R' \diamond R)_c \to (R' \circ R)_{\pi(c)}$ is independent of $c \in C' \diamond C$. \label{item:clean2}
\end{enumerate}
Both these conditions ensure that $R'\circ R$ is a subbundle of $E'$ supported on $C'\circ C$ and $p \colon R' \diamond R \to R' \circ R$ is a fibrewise surjective vector bundle map over $\pi \colon C'\diamond C \to C' \circ C$. 
\end{proposition}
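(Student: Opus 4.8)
The plan is to reduce the statement to a single application of the Grabowski-Rotkiewicz Theorem \cite[Theorem 2.3]{Grabowski2009}, exactly as was done for $R' \diamond R$ itself. First I would record the structural setup: under the standing hypothesis (one of the equivalent conditions of Proposition \ref{prop:cleanintersect}), $R' \diamond R$ is already a smooth subbundle of $E_1 \times \overline{E}_2 \times E_2 \times \overline{E}_3$ over the submanifold $C' \diamond C$, and $p$ is the restriction to $R' \diamond R$ of the linear projection onto the first and fourth factors, covering the corresponding base projection $\pi$. Since this projection is homogeneous of degree one, i.e. it commutes with fibrewise scalar multiplication, its image $R' \circ R = p(R' \diamond R)$ is invariant under scalar multiplication, so it is an $\IR$-module in $E_1 \times \overline{E}_3$. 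The Grabowski-Rotkiewicz Theorem then asserts that such a scalar-invariant set is a vector subbundle over a submanifold of $M_1 \times M_3$ precisely when it is a submanifold of $E_1 \times \overline{E}_3$. Thus the entire content of the proposition is the translation between the smoothness of the total space $R' \circ R$ and the base-plus-rank data of condition \ref{item:clean2}.

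For the implication \ref{item:clean1} $\Rightarrow$ \ref{item:clean2} I would assume $R' \circ R$ is a submanifold with $p$ a surjective submersion. By Grabowski-Rotkiewicz the $\IR$-module $R' \circ R$ is a vector subbundle over a submanifold $S \subseteq M_1 \times M_3$; identifying $S$ with the image of $R' \circ R$ under the bundle projection (equivalently its zero section) shows $S = C' \circ C$, so $C' \circ C$ is a submanifold. Restricting $p$ to the zero sections recovers $\pi$, which therefore inherits surjectivity and the submersion property from $p$. Finally, because $p$ is a fibrewise linear surjective submersion onto a vector bundle whose fibres now have constant dimension, its rank on each fibre $(R' \diamond R)_c$ equals $\dim (R' \circ R)_{\pi(c)}$ and is hence independent of $c$. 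For the converse \ref{item:clean2} $\Rightarrow$ \ref{item:clean1} I would work locally over the submanifold $C' \circ C$: using that $\pi$ is a surjective submersion, I would choose a local section of $\pi$ through a chosen point, pull the local frame of the subbundle $R' \diamond R$ back along it and apply $p$; the constant-rank hypothesis guarantees that a fixed number $r$ of the resulting images are pointwise linearly independent and span $p\big((R' \diamond R)_c\big)$, producing $r$ local sections of $E_1 \times \overline{E}_3$ whose span is $R' \circ R$. This exhibits $R' \circ R$ as a rank-$r$ subbundle over $C' \circ C$, hence a submanifold, after which Grabowski-Rotkiewicz confirms the bundle structure intrinsically and $p$ is by construction a fibrewise surjective vector bundle map and a surjective submersion.

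The main obstacle is the genuinely many-to-one nature of $\pi$: the fibre $(R' \circ R)_{(m_1,m_3)}$ is a priori the union $\bigcup_{m_2} p\big((R' \diamond R)_{(m_1,m_2,m_2,m_3)}\big)$ over all admissible intermediate points $m_2$, and one must ensure this union is a single linear subspace of constant dimension rather than a family of distinct $r$-planes. The constant-rank assumption pins each contribution to dimension $r$, while the submersion property of $\pi$ (giving connected local fibres) together with the smoothness of the Grassmannian-valued map $c \mapsto \mathrm{im}\big(p|_{(R' \diamond R)_c}\big)$ forces these contributions to coincide locally; verifying this coincidence, and hence the fibrewise surjectivity of $p$, is the delicate technical point that the choice of local slices for $\pi$ must be arranged to respect. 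The Grabowski-Rotkiewicz Theorem does the remaining conceptual work of upgrading \emph{scalar-invariant submanifold} to \emph{vector subbundle}, so no independent check of fibrewise local triviality is needed beyond the rank count.
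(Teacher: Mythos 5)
The paper itself does not prove this proposition: it is stated verbatim from Vysok\'y \cite{Vysoky2020hitchiker}, and the surrounding text only indicates the intended strategy, namely that $R'\circ R$ is closed under fibrewise scalar multiplication and so the Grabowski--Rotkiewicz Theorem converts ``submanifold'' into ``vector subbundle over a submanifold''. Your framing matches that strategy, and your direction \ref{item:clean1}$\Rightarrow$\ref{item:clean2} is essentially sound, though you should make explicit the one place the submersion hypothesis is really used: at a point $0_c$ of the zero section one has the canonical splitting $T_{0_c}(R'\diamond R)\cong T_c(C'\diamond C)\oplus (R'\diamond R)_c$, with respect to which $dp_{0_c}=d\pi_c\oplus p_c$; surjectivity of $dp_{0_c}$ therefore forces both $d\pi_c$ to be surjective and $p_c$ to be surjective onto $(R'\circ R)_{\pi(c)}$, after which the rank of $p_c$ equals the locally constant fibre dimension of the subbundle supplied by Grabowski--Rotkiewicz.

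The genuine gap is in \ref{item:clean2}$\Rightarrow$\ref{item:clean1}, at exactly the point you flag as delicate. You need that the $r$-plane $\mathrm{im}\big(p|_{(R'\diamond R)_c}\big)$ depends only on $\pi(c)$, so that $(R'\circ R)_{(m_1,m_3)}$ is a single linear subspace rather than the union $\bigcup_{c\in\pi^{-1}(m_1,m_3)}p\big((R'\diamond R)_c\big)$ of possibly distinct $r$-planes. Your justification --- connectedness of the local fibres of $\pi$ together with smoothness of the Grassmannian-valued map $c\mapsto \mathrm{im}\big(p|_{(R'\diamond R)_c}\big)$ --- does not establish this: a smooth Grassmannian-valued map on a connected manifold is in no way forced to be constant (a smoothly rotating family of lines is the standard non-example). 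If the planes did vary along a fibre of $\pi$, then $(R'\circ R)_{(m_1,m_3)}$ would be a cone through the origin, which is scalar-invariant but not a submanifold, and $p$ would fail to be a submersion along the zero section even though all of the hypotheses in \ref{item:clean2} (submersion, constant rank) continue to hold; so this constancy is precisely the content separating the two conditions and cannot be obtained from the rank count alone. Consequently your local-frame construction only produces a candidate subbundle $S\subseteq R'\circ R$ over $C'\circ C$, and without the constancy you cannot conclude $S=R'\circ R$; the subsequent appeal to Grabowski--Rotkiewicz is then premature. Closing this gap requires a genuine argument for the independence of $\mathrm{im}\big(p|_{(R'\diamond R)_c}\big)$ of the choice of $c$ in the fibre of $\pi$, which is the actual technical heart of the implication and the part your proposal leaves unproved.
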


If any of the two equivalent conditions \ref{item:clean1} or \ref{item:clean2} occurs, we say that $R$ and $R'$ \emph{compose cleanly}. Vysok\'{y} shows in~\cite{Vysoky2020hitchiker} that Propositions \ref{prop:cleanintersect} and \ref{prop:cleancomposition} together give
\begin{theorem}\label{thm:relationcomposition}
Let $R\colon E_1 \rel E_2$ and $R'\colon E_2 \rel E_3$ be Courant algebroid relations over $C$ and $C'$, respectively, which compose cleanly.
Then $R'\circ R$ is an involutive structure supported on $C' \circ C$, hence it defines a Courant algebroid relation $R' \circ R \colon E_1 \rel E_3$.
\end{theorem}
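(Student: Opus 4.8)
The plan is to verify directly that $R'\circ R$ satisfies the defining properties of an involutive structure in Definition~\ref{def:involutivesubbundle}, taking for granted the smooth structure supplied by the clean composition hypothesis. Indeed, since $R$ and $R'$ compose cleanly, Proposition~\ref{prop:cleancomposition} already guarantees that $R'\circ R$ is a smooth subbundle of $E_1\times\overline{E}_3$ supported on the submanifold $C'\circ C$, and that the projection $p\colon R'\diamond R\to R'\circ R$ is a fibrewise surjective vector bundle map covering the surjective submersion $\pi\colon C'\diamond C\to C'\circ C$. It therefore remains only to check isotropy, anchor compatibility, and bracket closure; maximal isotropy, which upgrades the conclusion to a Dirac structure and hence to a Courant algebroid relation in our sense, then follows from a rank count.

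For isotropy I would argue pointwise. Given $(e_1,e_3),(e_1',e_3')\in R'\circ R$ over a point of $C'\circ C$, choose intermediaries $e_2,e_2'\in E_2$ with $(e_1,e_2),(e_1',e_2')\in R$ and $(e_2,e_3),(e_2',e_3')\in R'$. Isotropy of $R$ in $E_1\times\overline{E}_2$ and of $R'$ in $E_2\times\overline{E}_3$ give
\begin{align}
\ip{e_1,e_1'}_{E_1}-\ip{e_2,e_2'}_{E_2}=0 \qquad\text{and}\qquad \ip{e_2,e_2'}_{E_2}-\ip{e_3,e_3'}_{E_3}=0 \ ,
\end{align}
and adding these cancels the $E_2$-pairing to yield $\ip{(e_1,e_3),(e_1',e_3')}_{E_1\times\overline{E}_3}=0$; the choice of intermediary is immaterial precisely because that term drops out. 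For maximality I would compare ranks: $R$ and $R'$ are maximally isotropic, so the fibre dimension of $R'\diamond R$ determined by the clean intersection (Proposition~\ref{prop:cleanintersect}), combined with the constant-rank surjection $p\colon(R'\diamond R)_c\to(R'\circ R)_{\pi(c)}$ from condition~\ref{item:clean2} of Proposition~\ref{prop:cleancomposition}, forces $R'\circ R$ to be maximally isotropic along $C'\circ C$.

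Anchor compatibility follows from the same composable-pair description together with the base-level clean intersection. For $(e_1,e_3)\in R'\circ R$ with intermediary $e_2$, compatibility of $R$ and $R'$ with their anchors gives $(\rho_{E_1}(e_1),\rho_{E_2}(e_2))\in TC$ and $(\rho_{E_2}(e_2),\rho_{E_3}(e_3))\in TC'$. Since the two $TM_2$-components coincide, the combined vector lies in $T(C\times C')\cap T(M_1\times\Delta(M_2)\times M_3)$, hence in $T(C'\diamond C)$ by the clean intersection of Proposition~\ref{prop:cleanintersect}. Pushing forward along the submersion $\pi\colon C'\diamond C\to C'\circ C$ then places $(\rho_{E_1}(e_1),\rho_{E_3}(e_3))$ in $T(C'\circ C)$, so that $\rho\big(R'\circ R\big)\subseteq T(C'\circ C)$; once maximal isotropy identifies $R'\circ R$ with its annihilator, this is property~\ref{item:invosub2} of Definition~\ref{def:involutivesubbundle}.

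The heart of the argument, and the step I expect to be the main obstacle, is bracket closure. The bracket on the product Courant algebroid $E_1\times\overline{E}_2\times E_2\times\overline{E}_3$ is component-wise, and the diagonal $\Delta(E_2)\subset\overline{E}_2\times E_2$ is preserved by it; hence a section of $R'\diamond R$ is precisely a matched pair consisting of a section of $R$ and a section of $R'$ agreeing on the middle $E_2$-factor. Given sections of $R'\circ R$, I would use the fibrewise surjectivity of $p$ to lift them locally to sections of $R'\diamond R$, that is, to such matched pairs; involutivity of $R$ and of $R'$ then closes their brackets componentwise, and since the intermediate brackets still agree on $\Delta(E_2)$ the result is again a section of $R'\diamond R$. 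Applying $p$, and using that the bracket on $E_1\times\overline{E}_3$ is likewise component-wise, shows that the bracket of the original sections lies in $\sfGamma(R'\circ R)$. The delicate point is the smooth local lifting of sections through $p$ together with the consistency of the intermediate $E_2$-components; this is exactly what the clean composition hypothesis, via the surjective submersion $p\colon R'\diamond R\to R'\circ R$ of Proposition~\ref{prop:cleancomposition}, is designed to provide, and it is here that the full force of Propositions~\ref{prop:cleanintersect} and~\ref{prop:cleancomposition} enters.
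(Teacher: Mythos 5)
First, note that the paper itself does not prove this theorem: it is quoted from Vysok\'y's work and only the two preparatory propositions are stated, so there is no in-paper argument to compare against. Your overall architecture nevertheless matches the standard proof, and the isotropy, maximal isotropy and anchor-compatibility steps are essentially correct: the telescoping cancellation of the $E_2$-pairing is right, the rank count works because $\ker p = (R\times R')\cap(\{0\}\times\Delta(E_2)\times\{0\})$ is exactly the excess of the intersection (this is linear symplectic reduction of the Lagrangian $R\times R'$ by the coisotropic $E_1\times\Delta(E_2)\times\overline{E}_3$, and in fact needs no cleanness pointwise), and the tangent-level argument for $\rho(R'\circ R)\subseteq T(C'\circ C)$ correctly uses the clean intersection at the base.

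The genuine gap is in the involutivity step, and it is precisely the part you defer to the phrase ``this is exactly what the clean composition hypothesis is designed to provide.'' Involutivity of $R'\circ R$ requires: given $(e_1,e_3),(e_1',e_3')\in\sfGamma(E_1\times\overline{E}_3;R'\circ R)$, produce honest sections $e_2,e_2'\in\sfGamma(E_2)$ with $(e_1,e_2),(e_1',e_2')\in\sfGamma(E_1\times\overline{E}_2;R)$ and $(e_2,e_3),(e_2',e_3')\in\sfGamma(E_2\times\overline{E}_3;R')$, so that $\llbracket e_2,e_2'\rrbracket_{E_2}$ can serve as the intermediary for the bracket. Lifting through $p\colon R'\diamond R\to R'\circ R$ does not deliver this: it yields a section of $R'\diamond R$ over $C'\diamond C$, whose middle component is a map $C'\diamond C\to E_2$ covering the projection $C'\diamond C\to M_2$. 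That projection is in general neither injective nor surjective, so the middle component need not descend to, let alone extend to, a section of $E_2$ over $M_2$; and even granting that, one must extend the matched pair to sections of the ambient bundles taking values in $R$ along all of $C$ (or invoke locality of the Dorfman bracket together with the criterion that it suffices to check involutivity on a family of sections spanning $R'\circ R$ pointwise, which is how the cited proof proceeds). Constructing these smooth intermediaries is the actual content of the theorem and the real reason the clean composition hypothesis is needed; your proposal names the obstacle accurately but does not overcome it.
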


\begin{example}[\textbf{Relations between Reduced Courant Algebroids}] \label{ex:relationsreduced}
Let $K$ be an isotropic subbundle of an exact Courant algebroid $E$ over $M$ such that $\rho(K^\perp)=TM$. Suppose there exists a subbundle
$K_1 \subset K$ such that $K_1^\perp$ has enough basic sections which are also basic with respect to $K.$ Then since $K^\perp \subset K_1^\perp,$ both $K$ and $K_1$ are involutive subbundles, hence they induce foliations $\cF$ and $\cF_1$ of $M$, respectively, such that $\cF_1 \subset \cF.$ We also assume that they have smooth leaf spaces $\cQ_1 = M/\cF_1$ and $\cQ=M/\cF,$ so that we can complete the diagram
\begin{equation}\label{cd:doublequotient}
    \begin{tikzcd}
         & M \arrow{dr}{\varpi}\arrow[swap]{dl}{\varpi_1}  & \\
        \cQ_1 \arrow[dashed]{rr}{\pi} & & \cQ
    \end{tikzcd}
\end{equation}
 of surjective submersions.
 
Since $K^\perp$ and $K_1^\perp$ have enough basic sections, we may form the Courant algebroid relations $Q(K)$ and $Q(K_1)$, which fit into the diagram
\begin{equation}
    \begin{tikzcd}
        & E \arrow[swap,dashed]{dl}{Q(K_1)}\arrow[dashed]{dr}{Q(K)} &  \\
        \red E{}_1 & & \red E
    \end{tikzcd}
\end{equation}
We consider the composition of $Q(K)$ with $Q(K_1)^\top$, and check the conditions in Propositions~\ref{prop:cleanintersect} and~\ref{prop:cleancomposition}. 

It can be shown that the submanifolds $\gr(\varpi_1)^\top \times \gr(\varpi)$ and $\cQ_1 \times \Delta(M) \times \cQ$ intersect cleanly, and the projection $\gr(\varpi) \diamond \gr(\varpi_1)^\top  \to \gr(\pi)$ is a smooth surjective submersion. For $(q_1, \pi(q_1)) \in \gr(\pi)$, the space
\begin{align}
    \big(Q(K) \diamond Q(K_1)^\top \big)_{(q_1, \pi(q_1))} = \set{\big(\natural_{E_1}(e), e, e, \natural(e)\big) \ \big| \ e \in K_m^\perp }
\end{align}
 is isomorphic to $K^\perp_m$, for any $m \in M$ such that $q_1 = \varpi_1(m)$, and hence has constant dimension. Finally, the projection $Q(K) \diamond Q(K_1)^\top  \to Q(K) \circ Q(K_1)^\top $ has kernel given by $K \cap K_1 = K_1$, and hence has constant rank. 
 
 Thus the conditions \ref{item:clean1} of both Propositions \ref{prop:cleanintersect} and \ref{prop:cleancomposition} are satisfied, and the composition is clean, giving the relation
\begin{align}
R(\cF_1) \coloneqq Q(K) \circ Q(K_1)^\top  = \set{(\natural_{E_1} (e), \natural (e)) \ \big| \ e \in K^\perp} \ \subset \ \red E{}_1 \times \overline{\red{E}} \ ,
\end{align}
supported on the graph $\gr(\pi)$.
\end{example}

Example~\ref{ex:relationsreduced} may be seen as the foliated reduction counterpart of the well-known case of reduction of a $\sfG$-equivariant exact Courant algebroid by a closed Lie subgroup\footnote{This is usually chosen to be Lagrangian with respect to the split-signature pairing.} $\sfH \subset \sfG$ that represents one of the building blocks of Poisson-Lie T-duality (see e.g.~\cite[Section 5.1]{Severa:2018pag}). We will explore this case in detail along the lines of \cite{Severa:2018pag, Vysoky2020hitchiker} in future work, including how to properly deal with reduction of generalised metrics. 

\medskip

\section{Courant Algebroid Relations as Isometries}\label{sec:Generalisedisometries}
In this section we review important ideas of \cite[Section 5]{Vysoky2020hitchiker}. We then generalise these to cases of interest to us, proceeding to define generalised isometries for transverse generalised metrics. 

\medskip

\subsection{Generalised Metrics}\label{ssec:generalisedmetrics}~\\[5pt]
We wish to know when a given Courant algebroid relation is able to carry geometric structure between Courant algebroids. 
In particular, we are interested in relations between generalised metrics as a building block for our approach to T-duality.

\begin{definition}\label{defn:generalisedmetric}
A \emph{generalised metric} on a Courant algebroid $E $ is an automorphism $\tau\colon E \to E$ covering the identity with $\tau^2 = \unit_E$ which, together with the pairing $\ip{\,\cdot\,,\,\cdot\,}$, defines a positive-definite fibrewise metric 
\begin{align}\label{eqn:generalisedmetrictau}
    \cG (e, e') = \ip{e, \tau(e')}
\end{align}
on $E$, for $e, e'\in E$. We denote the Courant algebroid $E$ endowed with a generalised metric $\tau$ by $(E, \tau)$.
\end{definition}

Generalised metrics on exact Courant algebroids are characterised as follows, see e.g.~\cite[Proposition~3.5]{Jurco2016courant} for details.

\begin{proposition}\label{prop:generalisedmetric}
Let $E$ be an exact Courant algebroid over $M$. A generalised metric $\cG$, as in Equation~\eqref{eqn:generalisedmetrictau}, uniquely determines a pair $(g, b)\in \mathsf{\Gamma}(\midodot ^2 T^*M) \times \mathsf{\Omega}^2(M)$ where $g$ is non-degenerate. Conversely, a Riemannian metric $g$ and a two-form $b$ on $M$ define a generalised metric $\cG$ given by
\begin{align}\label{eqn:generalisedmetric}
    \cG =
    \begin{pmatrix}
    g - b\,g^{-1}\,b & b\, g^{-1} \\
    -g^{-1}\, b & g^{-1}
    \end{pmatrix}
\end{align}
on the isomorphic twisted standard Courant algebroid $\IT M=TM\oplus T^*M.$
\end{proposition}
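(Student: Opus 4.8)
The plan is to use \v{S}evera's classification recalled above to fix an isotropic splitting identifying $E$ with the twisted standard Courant algebroid $\IT M = TM \oplus T^*M$, equipped with the split-signature pairing $\langle X+\alpha, Y+\beta\rangle = \iota_X\beta + \iota_Y\alpha$. The statement is then insensitive to the twisting $H$, since both $\cG$ and $\tau$ cover the identity and involve only the pairing and the underlying bundle structure, not the bracket. Everything reduces to fibrewise linear algebra, with smoothness automatic because $\tau$, and hence the projectors $\tfrac12(\unit_E \pm \tau)$, are smooth.

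For the forward direction I would first note that requiring $\cG(e,e') = \langle e, \tau(e')\rangle$ to be a symmetric metric forces $\tau$ to be self-adjoint for the pairing; combined with $\tau^2 = \unit_E$ this makes $\tau$ an orthogonal involution, so $E$ splits as a pairing-orthogonal direct sum $E = V_+ \oplus V_-$ of its $(\pm1)$-eigenbundles. On $V_\pm$ one has $\cG = \pm\langle\,\cdot\,,\,\cdot\,\rangle$, so positive-definiteness of $\cG$ is exactly positive-definiteness of the pairing on $V_+$ and negative-definiteness on $V_-$. As the pairing has signature $(n,n)$ with $n = \dim M$, both eigenbundles have rank $n$. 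The key step is then to show that the projection $\mathrm{pr}_1 \colon E \to TM$ restricts to a fibrewise isomorphism $V_+ \to TM$: its kernel lies in $V_+ \cap T^*M$, on which the pairing, and hence $\cG$, vanishes, forcing it to be zero, after which a rank count gives surjectivity. Thus $V_+$ is the graph of a bundle map $A \colon TM \to T^*M$, whose symmetric and antisymmetric parts define $g \in \mathsf{\Gamma}(\midodot^2 T^*M)$ and $b \in \mathsf{\Omega}^2(M)$; evaluating the pairing gives $\langle X + A(X), X + A(X)\rangle = 2\,g(X,X)$, so positivity of $\cG$ is equivalent to $g$ being Riemannian. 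Uniqueness is immediate, as $\tau$ determines $V_\pm$, $V_+$ determines $A$, and $A$ determines $(g,b)$.

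For the converse I would run this in reverse: given $(g,b)$ with $g$ Riemannian, set $V_\pm = \mathrm{graph}(b \pm g) = \set{X + \iota_X(b \pm g) \,|\, X \in TM}$, verify that these are complementary rank-$n$ subbundles on which the pairing is positive- respectively negative-definite with $V_+ \oplus V_- = \IT M$, and define $\tau$ to be $\pm\unit_E$ on $V_\pm$. By construction $\tau^2 = \unit_E$ and $\tau$ is orthogonal, so $\cG = \langle\,\cdot\,,\tau(\,\cdot\,)\rangle$ is a positive-definite metric. It then remains to compute $\cG$ in the frame $TM \oplus T^*M$: one decomposes an arbitrary $X + \alpha$ along $V_+ \oplus V_-$, applies $\tau$, and reads off the four blocks, whereupon the symmetry of $g$ and antisymmetry of $b$ collapse the result into the stated matrix, with diagonal blocks $g - b\,g^{-1}\,b$ and $g^{-1}$ and off-diagonal blocks $b\,g^{-1}$ and $-g^{-1}\,b$.

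I expect the main obstacle to be the two linear-algebra verifications carrying the conceptual content: first, that positive-definiteness of $\cG$ together with the split signature of the pairing forces $V_+$ to be a maximal positive-definite subbundle transverse to $T^*M$, so that the graph description is available at all; and second, the bookkeeping involved in inverting the change of frame from the eigenbundle decomposition back to $TM \oplus T^*M$ to obtain the explicit block form. Both are elementary but must be carried out fibrewise with care over the inversions of $g$.
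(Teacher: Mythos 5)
Your proposal is correct. The paper itself gives no proof of this proposition, deferring instead to the cited reference, but the argument you outline (self-adjointness and orthogonality of $\tau$, the eigenbundle decomposition $E = V^+\oplus V^-$, transversality of $V^+$ to $T^*M$ forcing the graph description $V^+ = \gr(g+b)$, and the block computation recovering \eqref{eqn:generalisedmetric}) is exactly the standard one used there and is consistent with the eigenbundle description the paper records in Remark~\ref{rmk:Vplusgeneralised}.
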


An alternative characterisation of generalised metrics is

\begin{definition}\label{defn:generalisedmetric2}
    A \emph{generalised metric} is a subbundle $V^+ \subseteq E$ which is a maximal positive-definite subbundle of $E$ with respect to $\ip{\,\cdot\,,\,\cdot\,}$. We denote the Courant algebroid $E$ endowed with a generalised metric $V^+$ by $(E, V^+)$.
\end{definition}

\begin{remark}[\textbf{Generalised Metrics as Subbundles}] \label{rmk:Vplusgeneralised}
Definitions \ref{defn:generalisedmetric} and \ref{defn:generalisedmetric2} are equivalent, see e.g. \cite[Proposition 3.3]{Jurco2016courant}: the maximal positive-definite subbundle $V^+$ is the $+1$-eigenbundle for the generalised metric $\tau$. Indeed, if $\tau$ is given by $g$ and $b$ as in Proposition~\ref{prop:generalisedmetric}, then one can show
\begin{align}\label{eqn:+1eigen}
V^+= \gr(g+b) = \set{ v+ \iota_v(g+b) \in \IT M \ \vert \  v \in TM } \ .
\end{align}
The $-1$-eigenbundle of $\tau$ is given by $V^-=\gr(-g+b) = (V^+)^\perp$.   
\end{remark}
Since Definitions \ref{defn:generalisedmetric} and \ref{defn:generalisedmetric2} are equivalent, we can switch between the notations $(E,\tau)$ and $(E,V^+)$.

\medskip

\subsection{Generalised Isometries}\label{ssec:generalisedisometries}~\\[5pt]
We now have enough machinery to talk about a notion of isometries of generalised metrics for Courant algebroid relations, following~\cite{Vysoky2020hitchiker}.
\begin{definition}\label{defn:generalisedisometry}
Suppose $R\colon E_1 \rel E_2$ is a Courant algebroid relation supported on a submanifold $C\subseteq M_1 \times M_2$. Let $\tau_1$ and $\tau_2$ be generalised metrics for $E_1$ and $E_2$ respectively, and set $\tau \coloneqq \tau_1 \times \tau_2$. Then $R$ is a \emph{generalised isometry between $\tau_1$ and $\tau_2$} if $\tau (R) = R$.
\end{definition}

It follows from the definitions that $\tau=\tau_1\times \tau_2$ is a generalised metric for $E_1\times E_2$. It is not, however, a generalised metric for $E_1\times \overline{E}_2$: the $\pm 1$-eigenbundles are not necessarily maximally positive-definite with respect to the pairing on $E_1\times \overline{E}_2$. See \cite[Remark 5.2]{Vysoky2020hitchiker} for details.

\begin{remark}[\textbf{Classical Generalised Isometries}] \label{rmk:classicalIsometries}
If $R$ is a classical Courant algebroid morphism, then $R = \gr(\Phi)$ for some vector bundle map $\Phi:E_1\to E_2$ covering a smooth map $\phi\colon M_1 \to M_2$. For $\gr(\Phi)$ to be a generalised isometry, the equation
\begin{align}\label{eqn:classical generalised isometry}
    \tau_2 \circ \Phi = \Phi \circ \tau_1
\end{align}
must hold, or equivalently
\begin{align}
    \cG_2\big(\Phi(e_1), \Phi(e_1')\big)_{\phi(m_1)} = \cG_1 (e_1,e_1')_{m_1} \ ,
\end{align}
for each $e_1,e_1' \in (E_1)_{m_1}$ with $m_1\in M_1$, for the metrics $\cG_1$ and  $\cG_2$ induced by $\tau_1$ and $\tau_2$ respectively. This justifies the terminology `generalised isometry'~\cite{Vysoky2020hitchiker}.

For later purposes, let us stress that a further equivalent condition for a classical Courant algebroid morphism $\Phi$ to be a generalised isometry is 
\begin{align} \label{eqn:classgeniso3}
 \Phi(V_1 ^+) = V_2 ^+ \ ,   
\end{align}
where $V_i^+ = \ker(\unit_{E_i} - \tau_i)$ is the $+1$-eigenbundle of $\tau_i$ for $i=1, 2.$
\end{remark}

When $\Phi$ is a Courant algebroid isomorphism, generalised isometries are characterised by

\begin{proposition} \label{prop:genisochar}
 Let $E_1$ and $E_2$ be exact Courant algebroids over $M_1$ and $M_2,$ respectively, and suppose they are endowed with generalised metrics $\tau_1$ and $\tau_2$, respectively. A Courant algebroid isomorphism $\Phi \colon E_1\to E_2,$ covering $\phi \in \mathsf{Diff}(M_1, M_2),$ is a generalised isometry if and only if 
 \begin{align} 
 \phi^* g_2 =  g_1   \qquad
 \mathrm{and} \qquad
\phi^* b_2 = b_1 + B \ ,   
 \end{align}
 where, by using the canonical isomorphism $E_i \simeq TM_i \oplus TM^*_i$ 
 given by any isotropic splitting, $\tau_i$ is determined by the pair $(g_i, b_i)$ for $i=1,2,$ and (with a slight abuse of notation) $\Phi= \overline{\phi} \circ \e^{B}\,$ for some $B \in \mathsf{\Omega}^2(M_1).$
\end{proposition}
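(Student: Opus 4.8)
The plan is to translate the isometry condition into an equality of $+1$-eigenbundles and then evaluate it explicitly through the decomposition of $\Phi$ supplied by Proposition \ref{decpreiso}. First I would fix isotropic splittings identifying $E_i \simeq \IT M_i$, so that by Remark \ref{rmk:Vplusgeneralised} the generalised metric $\tau_i$ is encoded in the maximal positive-definite subbundle $V_i^+ = \gr(g_i + b_i)$, cf.\ Equation \eqref{eqn:+1eigen}. Since $\Phi$ is an isomorphism, $\gr(\Phi)$ is a classical Courant algebroid morphism, and by Equation \eqref{eqn:classgeniso3} of Remark \ref{rmk:classicalIsometries} the relation $\gr(\Phi)$ is a generalised isometry if and only if $\Phi(V_1^+) = V_2^+$. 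This reduces the whole statement to analysing the image of a graph subbundle under $\Phi$.

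Next I would use Proposition \ref{decpreiso} to write $\Phi = \overline{\phi} \circ \e^{B}$ for a $B \in \mathsf{\Omega}^2(M_1)$ (depending on the chosen splittings), and compute the image of $V_1^+$ in two stages. The $B$-field transformation acts on $V_1^+ = \gr(g_1 + b_1)$ by $\e^{B}\big(v + \iota_v(g_1 + b_1)\big) = v + \iota_v(g_1 + b_1 + B)$, so $\e^{B}(V_1^+) = \gr(g_1 + b_1 + B)$. Applying $\overline{\phi} = \phi_* + (\phi^{-1})^*$ then sends $v + \iota_v(g_1 + b_1 + B)$ to $\phi_* v + (\phi^{-1})^*\big(\iota_v(g_1 + b_1 + B)\big)$.

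The key identity is the naturality relation $\iota_{\phi_* v}\,S = (\phi^{-1})^*\big(\iota_v\,\phi^* S\big)$, valid for any covariant $2$-tensor $S$ on $M_2$ and $v \in TM_1$. Using it, the condition $\Phi(V_1^+) = \gr(g_2 + b_2) = V_2^+$ becomes $(\phi^{-1})^*\big(\iota_v(g_1 + b_1 + B)\big) = (\phi^{-1})^*\big(\iota_v\,\phi^*(g_2 + b_2)\big)$ for every $v$. Since $(\phi^{-1})^*$ is a fibrewise isomorphism on covectors and $v$ is arbitrary, this is equivalent to the tensor equality $g_1 + b_1 + B = \phi^*(g_2 + b_2)$. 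Splitting into symmetric and antisymmetric parts, and using that $g_i$ is symmetric while $b_i$ and $B$ are antisymmetric and that $\phi^*$ respects this decomposition, yields exactly $\phi^* g_2 = g_1$ and $\phi^* b_2 = b_1 + B$. Reading the chain of equivalences backwards gives the converse, so no separate argument is needed.

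The only genuinely delicate points are bookkeeping ones: ensuring the naturality relation is applied with the correct variance so that pullbacks land on the right manifold, and noting that the decomposition into symmetric and antisymmetric parts is legitimate because the assignment $v \mapsto \iota_v(\,\cdot\,)$ detects the full $2$-tensor. I expect the main obstacle to be purely notational—keeping track of $\phi_*$ versus $(\phi^{-1})^*$ and the dependence of $B$ on the auxiliary splittings—rather than conceptual; once the eigenbundle reformulation is in place the computation is forced.
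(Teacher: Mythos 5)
Your proposal is correct and follows essentially the same route as the paper's proof: reduce the isometry condition to $\Phi(V_1^+)=V_2^+$ via the eigenbundle characterisation, decompose $\Phi=\overline{\phi}\circ\e^{B}\,$ using Proposition \ref{decpreiso}, and compute the image of $\gr(g_1+b_1)$. The paper states the final equivalence more tersely, but your naturality identity and the symmetric/antisymmetric splitting are exactly the bookkeeping it leaves implicit.
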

\begin{proof}
By choosing a splitting for $E_1,$ its generalised metric is $V_1^+ \simeq \mathrm{gr}(g_1+b_1).$ It then follows that
\begin{align}
\Phi\big(v_1 + \iota_{v_1}(g_1+b_1)\big) = \phi_*(v_1) + (\phi^{-1})^*\big(\iota_{v_1}(g_1+b_1 + B) \big) \ ,   
\end{align}
for all $v_1 \in TM_1,$ is an element in $V_2^+$ if and only if $\phi^*g_2 = g_1$ and $\phi^* b_2 = b_1 + B$, since $V_2^+\simeq \mathrm{gr}(g_2+b_2)$ in the chosen splitting for $E_2$.
\end{proof}

\begin{remark}
 Proposition \ref{prop:genisochar} reduces to \cite[Proposition 2.41]{Garcia-Fernandez:2020ope} when $M_1=M_2=M$ and $\tau_1=\tau_2=\tau,$ i.e. in this case $\Phi$ is a generalised isometry if and only if $\phi^* g = g$ and $B=0.$  
\end{remark}

Following the equivalence between Definitions \ref{defn:generalisedmetric} and \ref{defn:generalisedmetric2}, we can formulate an alternative definition of generalised isometry in terms of the eigenbundles of the automorphisms $\tau_1$ and~$\tau_2.$

\begin{definition}\label{defn:generalisedisometry2}
Suppose $R \colon E_1 \rel E_2$ is a Courant algebroid relation supported on a submanifold $C\subseteq M_1 \times M_2$. Let $V_1^+$ and $V_2^+$ be generalised metrics for $E_1$ and $E_2$ respectively, and set $V_i^-\coloneqq (V_i^+)^\perp$. Let $\cal V^\pm=V_1^\pm \times V_2^\pm$. Then $R$ is a \emph{generalised isometry between $V_1^+$ and $V_2^+$} if
\begin{align}\label{eqn:Rdecomp}
    R_c = (\cal V_c^+ \cap R_c) \oplus (\cal V_c^- \cap R_c) \ ,
\end{align}
for each $c \in C$.
\end{definition}
We briefly comment on the pointwise nature of this definition, which will be a theme throughout this section. Due to the nature of relations, we only consider their pointwise description, and do not demand that our definition holds globally over the submanifold $C$ that $R$ is supported on. In particular, in Equation~\eqref{eqn:Rdecomp} we do not demand that $\cal V^\pm\big|_C \cap R$ are both subbundles supported on $C,$
since the intersection might not have constant rank. 
We could distinguish from cases where global descriptions (on $C$) are possible, however this is not necessary for the description of generalised isometries, as made clear through
\begin{proposition}
    Definition \ref{defn:generalisedisometry} and Definition \ref{defn:generalisedisometry2} are equivalent.
\end{proposition}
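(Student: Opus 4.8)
The plan is to reduce the equivalence to an elementary fact about invariant subspaces of a fibrewise involution, applied pointwise over the support $C$. The crucial first step is to identify $\tau = \tau_1 \times \tau_2$ as an involution of the underlying vector bundle of $E_1 \times \overline{E}_2$ --- which coincides with that of $E_1 \times E_2$, the sign flip in the pairing being irrelevant here --- whose eigenbundles are exactly the two summands in Definition~\ref{defn:generalisedisometry2}. Indeed $\tau(e_1,e_2)=(e_1,e_2)$ forces $\tau_i(e_i)=e_i$, so the $+1$-eigenbundle of $\tau$ is $V_1^+\times V_2^+ = \mathcal{V}^+$; and by Remark~\ref{rmk:Vplusgeneralised} each $V_i^-=(V_i^+)^\perp$ is the $-1$-eigenbundle of $\tau_i$, so the $-1$-eigenbundle of $\tau$ is $V_1^-\times V_2^- = \mathcal{V}^-$. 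Thus over each $c\in C$ one has the fibrewise splitting $(E_1\times E_2)_c = \mathcal{V}_c^+\oplus\mathcal{V}_c^-$ with spectral projectors $P_c^\pm=\tfrac12(\unit\pm\tau)\rvert_c$.

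With this in hand, I would prove both implications pointwise. For Definition~\ref{defn:generalisedisometry2} $\Rightarrow$ Definition~\ref{defn:generalisedisometry}: given the decomposition $R_c=(\mathcal{V}_c^+\cap R_c)\oplus(\mathcal{V}_c^-\cap R_c)$, any $r\in R_c$ writes as $r=r^++r^-$ with $r^\pm\in\mathcal{V}_c^\pm\cap R_c$, whence $\tau(r)=r^+-r^-\in R_c$; this gives $\tau(R)\subseteq R$, and $\tau^2=\unit$ upgrades it to $\tau(R)=R$. Conversely, for Definition~\ref{defn:generalisedisometry} $\Rightarrow$ Definition~\ref{defn:generalisedisometry2}: assuming $\tau(R)=R$, for any $r\in R_c$ both $r$ and $\tau(r)$ lie in the subspace $R_c$, so the projections $r^\pm=P_c^\pm(r)=\tfrac12(r\pm\tau(r))$ lie in $R_c$ as well; since $r^\pm\in\mathcal{V}_c^\pm$ by construction, we get $r^\pm\in\mathcal{V}_c^\pm\cap R_c$ and $r=r^++r^-$. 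The sum is direct because $\mathcal{V}_c^+\cap\mathcal{V}_c^-=\{0\}$, yielding the required decomposition.

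I do not anticipate any serious obstacle: once $\mathcal{V}^\pm$ are recognised as the $\pm1$-eigenbundles of $\tau$, both directions are the standard equivalence ``$\tau$-invariant $\Leftrightarrow$ split into eigen-intersections'' for a linear involution. The only point deserving care is that the entire argument is carried out fibrewise, with no constant-rank or global subbundle assumption on $\mathcal{V}^\pm\rvert_C\cap R$, exactly as stressed in the comment preceding the statement; this is precisely why the equivalence continues to hold for relations even when these intersections fail to have constant rank.
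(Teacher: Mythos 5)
Your proof is correct and follows essentially the same route as the paper's: both arguments are pointwise, identify $\mathcal{V}^\pm$ as the $\pm1$-eigenbundles of $\tau=\tau_1\times\tau_2$, and pass between the decomposition of $R_c$ and $\tau$-invariance by writing $r=r^++r^-$ and observing that $r\pm\tau(r)\in R_c$ (your spectral projectors $\tfrac12(\unit\pm\tau)$ are just the paper's step of noting that $r^++r^-$ and $r^+-r^-$ both lie in $R_c$). The only cosmetic difference is your explicit remark that $\tau(R)\subseteq R$ plus $\tau^2=\unit$ yields equality, which the paper leaves implicit.
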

\begin{proof}
    Suppose $\tau(R)=R$. The bundles $\cal V^\pm$ are the $\pm 1$-eigenbundles for $\tau=\tau_1\times \tau_2$. Take $c\in C$ and let $r\in R_c$. Since $E_1\times \overline{E}_2 = \cal V^+\oplus \cal V^-$, we can write $r=r^+ + r^-$, for $r^\pm \in \cal V^\pm_c$. Thus
    \begin{align}
        \tau(r^+ + r^-) = r^+ - r^- \ \in \ R_c \ .
    \end{align}
    Hence both $r^++r^-$ and $r^+-r^-$ are in $R_c$, so that $r^+, r^- \in R_c$, and it follows that $R_c\subseteq (\cal V_c^+ \cap R_c) \oplus (\cal V_c^- \cap R_c)$, hence Equation \eqref{eqn:Rdecomp} holds.

    Conversely, if $R_c$ decomposes as in Equation~\eqref{eqn:Rdecomp} for every $c\in C$, then each $r\in R_c$ can be written as $r=r^+ + r^-$ where $r^\pm \in \cal V^\pm_c \cap R_c$. Hence $\tau(r) = r^+ - r^- \in R_c$, so $\tau(R_c) = R_c$. Since this holds for every $c \in C$, it follows that $\tau(R) = R$.
\end{proof}

Composition of generalised isometries works well: the composition $R' \circ R$ of two Courant algebroid relations that are generalised isometries and that compose cleanly is also a generalised isometry, see \cite[Proposition 5.7]{Vysoky2020hitchiker}.

\begin{example}[\textbf{$\boldsymbol{Q(K)}$ is not a Generalised Isometry}] \label{eg:redrelationnotGI}
Let us look at the relation $Q(K)$ of Section \ref{ssec:reductionrelation}, where $K$ is an isotropic subbundle\footnote{We always assume that $K$ has non-zero rank.} inducing a regular foliation $\cF$ of $M$ given by the integral manifolds of $\rho(K)$ with smooth leaf space $\cQ=M/\cF$ and projection map $\varpi:M\to\cQ$. Take generalised metrics $\mathscr{V}^+ \subset E$ and $V^+ \subset \red E$: a generalised metric $V^+$ may be constructed from $\mathscr{V}^+$ whenever $W=\mathscr{V}^+$ satisfies Equation \eqref{eqn:reductionW} of Proposition~\ref{prop:reducedKsubbundle}. This is similar to \cite[Example 5.6]{Vysoky2020hitchiker}, where Equation~\eqref{eqn:reductionW} is tantamount to $\sfG$-equivariance; therein Vysok\'{y} concludes that $Q(K)$ is not a generalised isometry. 

In our case we argue as follows. 
If $Q(K)$ is a generalised isometry between $\mathscr{V}^+$ and $V^+$, at a point $(m, \varpi(m))\in \gr(\varpi)$ we could write
\begin{align}
    Q(K)_{(m, \varpi(m))} = \big(\cV_{(m, \varpi(m))}^+ \cap Q(K)_{(m, \varpi(m))}\big) \oplus \big( \cV_{(m, \varpi(m))}^- \cap Q(K)_{(m, \varpi(m))}\big) \ ,
\end{align}
where $\cV^\pm = \mathscr{V}^\pm \times V^\pm.$
Since $\big(k, 0^{\red E}_{\varpi(m)} \big) \in Q(K)_{(m, \varpi(m))}$, for $k\in K_m$, this implies
\begin{align}
    \big(k, 0^{\red E}_{\varpi(m)}\big) = \big(k^+,  0^{\red E}_{\varpi(m)} \big) + \big(k^-, 0^{\red E}_{\varpi(m)} \big) \ ,
\end{align}
where $\big(k^\pm, 0^{\red E}_{\varpi(m)} \big)\in \cV_{(m, \varpi(m))}^\pm \cap Q(K)_{(m, \varpi(m))}$. Since these live inside $Q(K)$, and the kernel of $\natural$ is $K$, it follows that $k^\pm \in K_m$. 
Thus $k^\pm \in K_m \cap \mathscr{V}_m^\pm$. In particular
\begin{align}
    0 < \ip{k^+,k^+}_{E} = 0 \ ,
\end{align}
which is a contradiction. Hence $Q(K)$ cannot be a generalised isometry.
 
Notice that the isotropy of $K$ here plays a crucial role. It is shown in \cite[Example 5.6]{Vysoky2020hitchiker} that $Q(K)$ may be a generalised isometry for non-isotropic $\sfG$-actions only if $K \cap K^\perp = \set{0}.$ We will explore non-isotropic reductions by foliations in future work.
\end{example}

Example~\ref{eg:redrelationnotGI} motivates a characterising property of generalised isometries given by
\begin{proposition}\label{prop:GIhavetrivialK}
Let $R \colon E_1 \rel E_2$ be a generalised isometry between $\tau_1\in \mathsf{Aut}(E_1)$ and $\tau_2\in \mathsf{Aut}(E_2)$ supported on $C\subseteq M_1\times M_2$. For each $c=(m_1,m_2)\in C$, define subsets $K_1\subset E_1$ and  $K_2 \subset E_2$ by
\begin{align}\label{eqn:subbundleK1}
    (K_1)_{m_1} \times \set{0^{E_2}_{m_2}} = R_c \cap \big((E_1)_{m_1} \times \set{0_{m_2}^{E_2}}\big) \ , \\[4pt] \label{eqn:subbundleK2}
    \set{0^{E_1}_{m_1}} \times (K_2)_{m_2} = R_c \cap \big( \set{0^{E_1}_{m_1}} \times (E_2)_{m_2}\big) \ .
\end{align}
Then $(K_i)_{m_i} = \set{0^{E_i}_{m_i}}$ for $i=1,2$.
\end{proposition}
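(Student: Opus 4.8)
The plan is to deduce the result from the eigenbundle characterisation of generalised isometries in Definition~\ref{defn:generalisedisometry2}, combined with the fact that $R$, being a Courant algebroid relation, is by definition a Dirac structure and hence isotropic. The essential point is that isotropy of $R_c$ alone is \emph{not} enough (the pairing $\ip{\,\cdot\,,\,\cdot\,}_{E_1}$ has split signature, so it admits null vectors); it is precisely the generalised isometry hypothesis that supplies a definite decomposition which forces the would-be elements of $K_i$ to vanish. Throughout I fix $c=(m_1,m_2)\in C$, write $V_i^\pm$ for the $\pm1$-eigenbundles of $\tau_i$, and set $\cal V^\pm = V_1^\pm\times V_2^\pm$, so that the generalised isometry property gives
\begin{align}
    R_c = (\cal V_c^+ \cap R_c) \oplus (\cal V_c^- \cap R_c) \ .
\end{align}

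First I would handle the case $i=1$. Take $k_1\in (K_1)_{m_1}$, so $(k_1,0^{E_2}_{m_2})\in R_c$, and use the decomposition to write $(k_1,0)=r^+ + r^-$ with $r^\pm=(a^\pm,b^\pm)\in \cal V_c^\pm\cap R_c$, where $a^\pm\in (V_1^\pm)_{m_1}$ and $b^\pm\in (V_2^\pm)_{m_2}$. Comparing $E_2$-components gives $b^+ + b^- = 0$; since $V_2^+\cap V_2^- = \set{0}$ as eigenbundles for distinct eigenvalues of $\tau_2$, this forces $b^+=b^-=0$, so $r^\pm=(a^\pm,0)\in R_c$. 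Now I would invoke isotropy of $R_c$ with respect to the product pairing on $E_1\times\overline{E}_2$: applied to $r^+$ it yields $0=\ip{r^+,r^+}=\ip{a^+,a^+}_{E_1}$, and since $a^+\in V_1^+$, on which $\ip{\,\cdot\,,\,\cdot\,}_{E_1}$ coincides with the positive-definite metric $\cG_1$, we get $a^+=0$. The same computation on $r^-$ gives $\ip{a^-,a^-}_{E_1}=0$, and as $\ip{\,\cdot\,,\,\cdot\,}_{E_1}$ is negative-definite on $V_1^-=(V_1^+)^\perp$, it follows that $a^-=0$. Hence $k_1=a^+ + a^- = 0$, proving $(K_1)_{m_1}=\set{0^{E_1}_{m_1}}$.

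The case $i=2$ is entirely analogous after exchanging the two factors: starting from $(0,k_2)\in R_c$, the \emph{first} components vanish because $V_1^+\cap V_1^-=\set{0}$, and isotropy produces $\ip{b^\pm,b^\pm}_{E_2}=0$, where the overall sign flip from the $\overline{E}_2$ factor is immaterial since it still forces the definite quantities $\ip{b^\pm,b^\pm}_{E_2}$ to vanish. I expect no serious obstacle here; the only step needing care is verifying that, after splitting $(k_1,0)$ through the eigenbundle decomposition, the ``opposite'' component genuinely vanishes, so that each summand $r^\pm$ sits in $R_c$ with a single nonzero slot. Once that is secured, the definiteness of the pairing on $V_i^\pm$ does the rest, and the argument is a clean pointwise version of the mechanism already isolated in Example~\ref{eg:redrelationnotGI}.
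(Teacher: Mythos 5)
Your proof is correct, and every step checks out: the cancellation of the $E_2$-components via $V_2^+\cap V_2^-=\set{0}$ is handled properly, and the definiteness of the pairing on $V_1^\pm$ (positive on $V_1^+$, negative on $V_1^-=(V_1^+)^\perp$) correctly kills each summand separately. However, you take a recognisably different route from the paper. You work with the eigenbundle formulation of Definition~\ref{defn:generalisedisometry2}, decomposing $(k_1,0^{E_2}_{m_2})=r^++r^-$ inside $R_c$ and then arguing component by component. The paper instead works with the automorphism formulation of Definition~\ref{defn:generalisedisometry}: from $\tau(R)=R$ it deduces that $(K_1)_{m_1}$ is itself $\tau_1$-invariant, from isotropy of $R$ it deduces that $(K_1)_{m_1}$ is isotropic in $E_1$, and then concludes in one line that $\cG_1(k_1,k_1)=\ip{k_1,\tau_1(k_1)}_{E_1}=0$, whence $k_1=0$ by positive-definiteness of $\cG_1$. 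The paper's argument is shorter and avoids the bookkeeping of splitting off and cancelling the opposite-slot components, which is exactly the step you flag as needing care; your version, on the other hand, makes visible why the positive and negative parts of $k_1$ must vanish for separate (sign-opposite) reasons, and it parallels the mechanism of Example~\ref{eg:redrelationnotGI} more closely. Since the two definitions of generalised isometry are proved equivalent in the paper, both arguments are equally legitimate.
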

\begin{proof}
Let $c=(m_1,m_2)\in C$ and $k_1\in (K_1)_{m_1}$. Then
\begin{align*}
    (\tau_1\times \tau_2)(k_1,0) = \big(\tau_1(k_1),0\big) \ \in \ (K_1)_{m_1} \times \set{0^{E_2}_{m_2}} \ ,
\end{align*}
hence $(K_1)_{m_1}$ is invariant under $\tau_1$. Since $R$ is isotropic, it follows that $(K_1)_{m_1}$ is also isotropic, as $0 = \ip{(k_1,0),(k'_1,0)} = \ip{k_1,k'_1}_{E_1}-\ip{0,0}_{E_2}$ for all $k_1,k_1'\in(K_1)_{m_1}$, hence
\begin{align*}
    \cG_1(k_1,k_1) = \ip{k_1,\tau_1(k_1)}_{E_1} = 0 \ .
\end{align*}
Therefore $(K_1)_{m_1} = \set{0^{E_1}_{m_1}}$ for every $m_1\in {\rm pr}_1(C)$.

Similarly $(K_2)_{m_2} = \set{0^{E_2}_{m_2}}$ for every $m_2\in {\rm pr}_2(C)$. 
\end{proof}

A useful uniqueness result for generalised metrics now follows from
\begin{corollary}\label{cor:GMisunique}
    Let $R\colon E_1 \rel E_2$ be a generalised isometry between $\tau_1\in \mathsf{Aut} (E_1)$ and $\tau_2\in \mathsf{Aut}(E_2)$ which is also a generalised isometry between $\tau_1\in \mathsf{Aut}(E_1)$ and $\tau_2'\in \mathsf{Aut}(E_2)$. Then $\tau_2|_{{\rm pr}_2(R)} = \tau_2'|_{{\rm pr}_2(R)}$, where ${\rm pr}_2:E_1 \times \overline{E}_2 \to \overline{E}_2$ is the projection.
\end{corollary}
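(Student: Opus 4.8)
The plan is to derive the statement pointwise from the triviality result of Proposition~\ref{prop:GIhavetrivialK}: for a generalised isometry, the fibre $R_c$ at $c=(m_1,m_2)\in C$ meets $\set{0^{E_1}_{m_1}}\times (E_2)_{m_2}$ only in the zero vector. Crucially, this intersection is determined by $R_c$ alone and makes no reference to the automorphism on $E_2$, so it holds whether we regard $R$ as a generalised isometry for $\tau_2$ or for $\tau_2'$. Everything else is linear algebra in a single fibre.

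First I would fix $e_2\in {\rm pr}_2(R)$, sitting over some $m_2\in M_2$, and choose $c=(m_1,m_2)\in C$ together with $e_1\in (E_1)_{m_1}$ such that $(e_1,e_2)\in R_c$; this is exactly the meaning of $e_2\in {\rm pr}_2(R)$. Writing $\tau=\tau_1\times\tau_2$ and $\tau'=\tau_1\times\tau_2'$, the two generalised isometry hypotheses give $\tau(R)=R$ and $\tau'(R)=R$, so that both
\begin{align*}
\tau(e_1,e_2)&=\big(\tau_1(e_1),\tau_2(e_2)\big) \ , \\
\tau'(e_1,e_2)&=\big(\tau_1(e_1),\tau_2'(e_2)\big)
\end{align*}
lie in $R_c$.

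Since $R_c$ is a linear subspace of $(E_1)_{m_1}\times (E_2)_{m_2}$ (because $R$ is a subbundle), the difference of these two elements is again in $R_c$; as they share the first component $\tau_1(e_1)$, this difference is
\begin{align*}
\big(0^{E_1}_{m_1},\,\tau_2(e_2)-\tau_2'(e_2)\big) \ \in \ R_c\cap\big(\set{0^{E_1}_{m_1}}\times (E_2)_{m_2}\big) \ .
\end{align*}
By Proposition~\ref{prop:GIhavetrivialK} this intersection equals $\set{0^{E_1}_{m_1}}\times (K_2)_{m_2}=\set{0}$, whence $\tau_2(e_2)=\tau_2'(e_2)$. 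As $e_2\in {\rm pr}_2(R)$ was arbitrary, this yields $\tau_2|_{{\rm pr}_2(R)}=\tau_2'|_{{\rm pr}_2(R)}$.

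There is essentially no obstacle here, as the entire content is packaged into Proposition~\ref{prop:GIhavetrivialK}; the only points requiring care are to observe that the $\tau$- and $\tau'$-invariance of $R$ produce two lifts of $\tau_1(e_1)$ inside $R_c$, and that the fibrewise linearity of $R_c$ lets one subtract them so that the discrepancy $\tau_2(e_2)-\tau_2'(e_2)$ lands precisely in the ``vertical'' intersection that the proposition forces to vanish.
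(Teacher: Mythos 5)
Your proof is correct and follows exactly the paper's own argument: apply both $\tau_1\times\tau_2$ and $\tau_1\times\tau_2'$ to an element $(e_1,e_2)\in R$, subtract to land the difference $\bigl(0,\tau_2(e_2)-\tau_2'(e_2)\bigr)$ in the fibrewise intersection of $R$ with $\set{0}\times E_2$, and invoke Proposition~\ref{prop:GIhavetrivialK} to conclude it vanishes. Your added observation that this intersection is defined by $R$ alone, independently of which automorphism on $E_2$ is in play, is a worthwhile point of care that the paper leaves implicit.
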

\begin{proof}
    Let $(e_1,e_2) \in R$. Then $(\tau_1(e_1), \tau_2(e_2))\in R$ and $(\tau_1(e_1), \tau_2'(e_2)) \in R$, therefore $(0, \tau_2(e_2) - \tau_2'(e_2)) \in R$. Hence $\tau_2(e_2) = \tau_2'(e_2)$ for all $e_2 \in {\rm pr}_2(R)$ by Proposition~\ref{prop:GIhavetrivialK}.
\end{proof}

\medskip

\subsection{Transverse Generalised Isometries}\label{ssec:transisometries}~\\[5pt]
Example \ref{eg:redrelationnotGI} and Proposition \ref{prop:GIhavetrivialK} highlight the need to go beyond the notion of generalised isometry.
In particular, we wish to extend the definition of generalised isometry to the case when the subsets $K_i\subset E_i$ defined by Equations \eqref{eqn:subbundleK1} and \eqref{eqn:subbundleK2} are non-trivial. For this, we assume from now on that the subsets $K_i$ are subbundles supported on $M_i.$ Hence ${\rm pr}_i(C)=M_i$, where ${\rm pr}_i\colon M_1\times M_2 \to M_i$ are the Cartesian projections for $i=1,2$. 

\medskip

\subsubsection{Transverse Generalised Metrics}~\\[5pt]    
We recall the notion of transverse generalised metrics, as first introduced in \cite{Severa2019transverse}, which will aid in understanding the properties of our extension of generalised isometry.

\begin{definition}
Let $E$ be an exact Courant algebroid over $M$ and $K$ an involutive isotropic subbundle of $E$. A subbundle $W \subset E$ of rank $\mathrm{rk}(W) = \dim(M)$ is a \emph{pre-}$K$-\emph{transverse generalised metric} if
$K \subset W \subset K^\perp$ and
\begin{align}
\langle w, w\rangle_E > 0 \ ,
\end{align}
for all $w \in W$ with $w \notin K$. A pre-$K$-transverse generalised metric $W$ is a $K$-\emph{transverse generalised metric} if it is invariant with respect to $K$, in the sense that
\begin{align} \label{eqn:invcond}
\llbracket \mathsf{\Gamma}(K), \mathsf{\Gamma}(W) \rrbracket_E \subseteq \mathsf{\Gamma}(W) \ .
\end{align}
An exact Courant algebroid $E$ endowed with a transverse generalised metric $W$ is denoted by~$(E,W)$.
\end{definition}

Note that this is the same condition appearing in the context of reducible subbundles of exact Courant algebroids, see Remark \ref{rmk:reductedKsubbundle}.

\begin{remark}[\textbf{Transverse Generalised Metrics as Graphs}] \label{rmk:splittransverse}
Similarly to Remark \ref{rmk:Vplusgeneralised} (see Equation~\eqref{eqn:+1eigen}), suppose that the restriction of the anchor $\rho_E|_K\colon K \to TM$ to $K$ is injective, and that $W$ is a pre-$K$-transverse generalised metric. Then in a given splitting, there is a (degenerate) symmetric bilinear pairing $g$ and a (degenerate) two-form $b$, with $\ker (b) \supset \ker (g)$, such that
\begin{align}
W \simeq \mathrm{gr}(g+b) = \set{ v+ \iota_v (g+b)  \in \IT M \ \vert \  v \in TM } \ .
\end{align}
In this splitting, we have the isomorphism $K \simeq \ker(g)$.

If $W$ is a $K$-transverse generalised metric, then 
\begin{align}
    \pounds_X g = \pounds_X b = 0 \qquad \text{and} \qquad \iota_X H =0 \ ,
\end{align}
for every $X\in \mathsf{\Gamma}(\rho_E(K))$, where $H\in \mathsf{\Omega}^3_{\rm cl}(M)$ represents the \v Severa class of $E$. The converse is also true.
\end{remark}

The result of Remark~\ref{rmk:splittransverse} allows an interpretation in terms of Riemannian submersions and Riemannian foliations, as shown in \cite{Severa2019transverse}. 

\begin{example}[\textbf{Riemannian Foliations}] \label{rmk:riemfol}
This construction extends to any manifold $(M, g)$ endowed with a foliation $\cF$ such that the degenerate symmetric bilinear tensor  $g$ satisfies $\ker(g)=T\cF$ and is leaf-invariant:
\be
\pounds_X g = 0 \ ,
\ee
for all $X \in \mathsf{\Gamma}(T\cF).$ In other words, $(M, g, \cF )$ is  a Riemannian foliation. It is easy to see that 
\be
W= \mathrm{gr}(g)  
\ee 
is a $K$-transverse generalised metric, where $K \simeq T \cF$ is an involutive isotropic subbundle of $\IT M$ and
$K^\perp = TM \oplus \mathrm{Ann}(T\cF).$
\end{example} 

\medskip

\subsubsection{Relations for Transverse Generalised Metrics}~\\[5pt]
The main goal of this section may be rephrased as giving a  definition of isometry of transverse generalised metrics. In order to provide more insight into what our definition should look like, let us first explore the properties of the most natural way to extend generalised isometries: by adapting Equation \eqref{eqn:classgeniso3} to (pre-)transverse generalised metrics.

\begin{remark}[\textbf{Isomorphisms and Transverse Generalised Metrics}]
Suppose $E_1$ and $E_2$ are exact Courant algebroids over $M_1$ and $M_2$, respectively, which are isomorphic via a map $\Phi\colon E_1 \to E_2$ covering a diffeomorphism $\phi\colon M_1 \to M_2$.
Suppose $K_1$ and $K_2$ are involutive isotropic subbundles of $E_1$ and $E_2$, respectively, and that $W_i$ are pre-$K_i$-transverse generalised metrics for $i=1,2$. 
The natural definition would be to call $\Phi$ an isometry of these metrics if $\Phi(W_1) = W_2$. This turns out to be too restrictive. 

Let us discuss the conditions arising from this choice. Upon choosing splittings for $E_i,$ we have $\Phi = \overline{\phi} \circ \e^{B}\,$ for some $B \in \mathsf{\Omega}^2(M_1)$, and $W_i \simeq \mathrm{gr}(g_i + b_i)$ for some $g_i \in \mathsf{\Gamma}(\midodot^2 T^*M_i)$ and $b_i \in \mathsf{\Omega}^2(M_i).$ As in Proposition \ref{prop:genisochar}, $\Phi(W_1) = W_2$ if and only if $\phi^*g_2 = g_1$ and $\phi^* b_2 = b_1 + B.$
However, the condition that $\phi$ should map the degenerate symmetric tensors $g_i$ into each other implies that
\begin{align}
 \phi_*\big(\ker(g_1)\big) = \ker(g_2) \ ,   
\end{align}
which is equivalent to requiring $\Phi(K_1)= K_2.$ In particular, if $M_i$ are endowed with Riemannian foliations, by Example \ref{rmk:riemfol} this is tantamount to requiring that $\phi$ is a foliation-preserving diffeomorphism.
\end{remark}

We would like to introduce a notion of generalised isometry for transverse generalised metrics that is not constrained to being foliation-preserving. This choice is motivated by the possibility to formalise the description of T-duality where T-dual manifolds may not be diffeomorphic, and hence the foliations defining the reductions to these backgrounds cannot be bijectively mapped into each other (see Section \ref{sec:T-duality} for more details). 
Instead we consider $W^+_i \coloneqq W_i/K_i$. We then require \smash{$\Phi(\widetilde W^+_1) = \widetilde W^+_2$} for some lifts \smash{$\widetilde W^+_i$ of $W^+_i$} to $E_i$. In order to extend this condition to a general Courant algebroid relation, we mimic Definition \ref{defn:generalisedisometry2} using this idea.

\begin{definition}\label{defn:transgeniso}
    Let $E_1$ and $E_2$ be Courant algebroids over $M_1$ and $M_2$, respectively, and let $R \colon E_1 \rel E_2$ be a Courant algebroid relation supported on $C \subset M_1 \times M_2$. 
    Suppose $K_1$ and $K_2$ are involutive isotropic subbundles of $E_1$ and $E_2$, respectively, and that $W_i$ are pre-$K_i$-transverse generalised metrics for $i=1,2$. Set $W^+_i \coloneqq W_i/ K_i$ and $W^-_i \coloneqq W_i^\perp / K_i$.
    
    Then $R$ is a \emph{transverse generalised isometry}\footnote{We are aware that our terminology `transverse generalised isometry' might be a misnomer, since these relations do not necessarily preserve the subbundles $K_i$ defining the transversality conditions. We chose this name for the sake of simplicity, since the $K_i$-preserving case is included in our definition.} \emph{between $W_1$ and $W_2$} if there are pointwise lifts \smash{$\widetilde W^\pm_i \subseteq W_i$} of $W^\pm_i$ to $E_i$ such that
    \begin{align}\label{eqn:transdecomp}
        R_c\cap (\cal W^+_c \oplus \cal W^-_c) = (R_c \cap \cal W^+_c) \oplus (R_c\cap \cal W^-_c) \ ,
    \end{align}
    for all $c\in C$,   where \smash{$\cal W^\pm = \widetilde W^\pm_1 \times \widetilde W^\pm_2$}. A transverse generalised isometry $R$ is \emph{regular} if the lifts \smash{$\widetilde W_i^\pm$} are subbundles of $E_i$ for $i=1,2$.
\end{definition}

Note that the inclusion $\supseteq$ in Equation~\eqref{eqn:transdecomp} always holds.

Definition~\ref{defn:transgeniso} may be better understood by looking at the case of a classical Courant algebroid morphism.

\begin{remark}[\textbf{Lifts for Transverse Generalised Isometries}]
     The lift $\widetilde W^+_i$ is a choice of subbundle of $W_i$ such that
     \begin{align}\label{eqn:+1eigenofW}
        \ip{w_i,w_i}_{E_i}>0 \ ,
     \end{align}
    for every $w_i\in \widetilde W^+_i$. In other words, it is a choice of splitting $s^+_{m_i}\colon (W^+_i)_{m_i} \to (W_i)_{m_i}$, at each point $m_i\in M_i$,  of the short exact sequence
    \begin{align}\label{eqn:eigenbundlesequence}
        0 \longrightarrow (K_i)_{m_i} \longrightarrow (W_i)_{m_i} \longrightarrow (W^+_i)_{m_i} \longrightarrow 0 \ ,
    \end{align}
    with $(\widetilde W^+_i)_{m_i} = s^+_{m_i}\big((W_i^+)_{m_i}\big)$. Similarly, the lift $\widetilde W^-_i$ is a choice of negative definite subbundle with respect to the pairing $\ip{\, \cdot \, , \, \cdot \,}_{E_i}$.

    We may think of $\widetilde{W}_i^+$ as the counterpart of $V_i^+$ for transverse generalised metrics, whereas $K$ can be interpreted as the degenerate part of $W_i$. Definition \ref{defn:transgeniso} is thus the analogue of Definition \ref{defn:generalisedisometry2} such that only the positive definite components of $W_i$ are related.
    Note that we recover Definition \ref{defn:generalisedisometry2} for generalised metrics when $K_i$ is the zero subbundle. 
    
    If the splitting $s^+$ varies smoothly, then we may consider the short exact sequence \eqref{eqn:eigenbundlesequence} to be a short exact sequence of vector bundles and the splitting is a bundle map, thus $R$ is a regular transverse generalised isometry. This will be the case in Section \ref{sec:T-duality}.
\end{remark}

The main case in which regular transverse generalised isometries exist is given by

\begin{theorem}\label{thm:qktgi}
Consider the reduction relation $Q(K)\colon E \mor \red E$ over $\varpi \colon M \to \cal Q$  of Section~\ref{ssec:reductionrelation} and let $V^+ \subset {\red E}$ be a generalised metric on $\red E.$ There exists a unique $K$-transverse generalised metric $W$ on $E$ such that $Q(K)$ is a regular transverse generalised isometry between $W$ and $V^+$. 

Conversely, if there is a $K$-transverse generalised metric $W$ on $E$, then $V^+\coloneqq\natural(W)$ is a generalised metric on $\red E$ and $Q(K)$ is a regular transverse generalised isometry between $W$ and~$V^+$.
\end{theorem}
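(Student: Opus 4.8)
The plan is to prove the converse direction first, since it is the constructive one and its core computation is reused verbatim for the existence claim; the uniqueness in the forward direction is then extracted directly from the defining decomposition \eqref{eqn:transdecomp}. Throughout I write $c=(m,\varpi(m))$ for a point of $\gr(\varpi)$ and use Lemma~\ref{lemma:injectiverho}, by which $\rho|_K$ is injective, so that $\rk(K)=\dim M-\dim\cQ$ and hence $\rk(K^\perp/K)=2\dim\cQ$.

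\emph{Converse direction.} Given a $K$-transverse generalised metric $W$, I first show $V^+\coloneqq\natural(W)$ is a generalised metric on $\red E$. Since $K\subseteq W\subseteq K^\perp$ and $W$ is $K$-invariant, Remark~\ref{rmk:reductedKsubbundle} (via Proposition~\ref{prop:reducedKsubbundle}) guarantees that $W$ descends to the subbundle $\natural(W)=W/K$ of $\red E$. It is positive-definite because $\natural$ is fibrewise isometric on $K^\perp$ and $\langle w,w\rangle_E>0$ for $w\notin K$, and it is maximal since $\rk(V^+)=\rk(W)-\rk(K)=\dim\cQ=\tfrac12\rk(\red E)$. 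The same argument applied to $W^\perp$ (which satisfies $K\subseteq W^\perp\subseteq K^\perp$, and is $K$-invariant: for $k\in\mathsf{\Gamma}(K)$, $u\in\mathsf{\Gamma}(W^\perp)$, $w\in\mathsf{\Gamma}(W)$, item~\ref{eqn:metric1} of Definition~\ref{def:CourantAlg} gives $\langle\llbracket k,u\rrbracket,w\rangle_E=-\langle u,\llbracket k,w\rrbracket\rangle_E=0$) shows $\natural(W^\perp)$ is a subbundle; comparing ranks and using $\langle\natural u,\natural w\rangle_{\red E}=\langle u,w\rangle_E$ yields $\natural(W^\perp)=(V^+)^\perp=V^-$. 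To verify the isometry condition I choose the lifts of Definition~\ref{defn:transgeniso}: on the $\red E$-side $K_2=0$, so $\widetilde W^+_2=V^+$ and $\widetilde W^-_2=V^-$; on the $E$-side I take a positive-definite complement $\widetilde W^+_1\subset W$ of $K$ in $W$ and a negative-definite complement $\widetilde W^-_1\subset W^\perp$ of $K$ in $W^\perp$. Then for $w^\pm\in\widetilde W^\pm_1$ one has $w^\pm\in K^\perp$ with $\natural(w^+)\in V^+$ and $\natural(w^-)\in V^-$, so $(w^\pm,\natural(w^\pm))\in Q(K)_c\cap\cal W^\pm_c$; conversely every element of $Q(K)_c\cap(\cal W^+_c\oplus\cal W^-_c)$ is $(w^++w^-,\natural(w^+)+\natural(w^-))$ and splits accordingly. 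This gives \eqref{eqn:transdecomp} (the inclusion $\supseteq$ being automatic), and regularity holds as the lifts are subbundles.

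\emph{Existence.} Given a generalised metric $V^+$ on $\red E$, I define $W\coloneqq\natural^{-1}(V^+)\subseteq K^\perp$. The cleanest route to $K$-invariance is to realise $W$ through basic lifts: using $\mathsf{\Gamma}(\red E)\simeq\mathsf{\Gamma}_{\rm{bas}}(K^\perp)/\mathsf{\Gamma}(K)$, choose $e_i\in\mathsf{\Gamma}_{\rm{bas}}(K^\perp)$ projecting to a local frame of $V^+$ and set $W=\mathrm{Span}\{K,e_i\}$; Remark~\ref{rmk:reductedKsubbundle} then gives $\llbracket\mathsf{\Gamma}(K),\mathsf{\Gamma}(W)\rrbracket\subseteq\mathsf{\Gamma}(W)$, and a rank count identifies $W=\natural^{-1}(V^+)$. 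That $W$ is a pre-$K$-transverse generalised metric ($K\subset W\subset K^\perp$, $\rk(W)=\dim M$, and $\langle w,w\rangle_E=\langle\natural w,\natural w\rangle_{\red E}>0$ for $w\notin K$) is then immediate, so $W$ is $K$-transverse. Since $\natural(W)=V^+$ by construction, the converse computation above applies and makes $Q(K)$ a regular transverse generalised isometry between $W$ and $V^+$.

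\emph{Uniqueness and main obstacle.} Suppose $W'$ is another $K$-transverse generalised metric making $Q(K)$ a transverse generalised isometry with $V^+$, with lifts $\widetilde U^+_1\subset W'$ and $\widetilde U^-_1\subset (W')^\perp$. Fixing $w^+\in\widetilde U^+_1\subset W'\subset K^\perp$, the element $(w^+,\natural(w^+))$ lies in $Q(K)_c\cap\big((\widetilde U^+_1\times V^+)\oplus(\widetilde U^-_1\times V^-)\big)$, so by \eqref{eqn:transdecomp} it splits as $(u^+,\natural(u^+))+(u^-,\natural(u^-))$ with $u^+\in\widetilde U^+_1$ and $u^-\in\widetilde U^-_1$; then $u^-=w^+-u^+\in W'\cap (W')^\perp=K$, but $u^-$ lies in the negative-definite $\widetilde U^-_1$ while $K$ is isotropic, hence $u^-=0$ and $\natural(w^+)=\natural(u^+)\in V^+$. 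Thus $\natural(\widetilde U^+_1)\subseteq V^+$, and since $\natural$ is injective on $\widetilde U^+_1$ a dimension count gives $\natural(W')=\natural(\widetilde U^+_1)=V^+$, forcing $W'\subseteq\natural^{-1}(V^+)=W$ and so $W'=W$. I expect the genuine obstacle to be the $K$-invariance of the constructed $W$ together with the identity $\natural(W^\perp)=V^-$: the former is handled via the basic-section description and the latter via the isometric property of $\natural$ on $K^\perp$ with the rank bookkeeping above, and both must be checked fibrewise because of the pointwise nature of Definition~\ref{defn:transgeniso}, rather than globally over $\gr(\varpi)$.
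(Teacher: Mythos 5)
Your proof is correct and follows essentially the same route as the paper's: build $W$ as the span of $K$ together with basic lifts of a frame of $V^+$, get $K$-invariance from the Leibniz rule applied to basic spanning sections, use that $\natural$ preserves the pairing on $K^\perp$ for positivity and the rank bookkeeping, and verify the decomposition \eqref{eqn:transdecomp} pointwise with lifts chosen inside $K_1^\perp\cap\ldots$ well, inside $W$ and $W^\perp$. The one place you genuinely improve on the paper is the uniqueness step: the paper assumes the competing $W'$ satisfies $\natural(W')=V^+$ and compares basic spanning sections, whereas you derive $\natural(W')=V^+$ from the transverse-isometry property itself (via $W'\cap (W')^\perp=K$ and the fact that $K$ meets a definite lift trivially), which matches the theorem's hypothesis more faithfully; you also spell out $\natural(W^\perp)=V^-$, which the paper only asserts. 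Two small cosmetic points: the appeal to Remark \ref{rmk:reductedKsubbundle} for $K$-invariance is the wrong direction of that remark (you actually need the Leibniz computation on $w=\sum_i f_i\,w_i$ with $w_i$ basic, as in the paper), and "fibrewise isometric" for $\natural$ should be read as pairing-preserving rather than injective; neither affects the argument.
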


\begin{proof}
Take a generalised metric $V^+$ on $\red E$. Recall that for every $q\in \cQ$ there is an isomorphism $\mathscr{J}_{q,m} \colon \red E{}_q \to K^\perp_m / K_m$ given by Equation~\eqref{eqn:bethisom}. Define $W_m^+ = \mathscr{J}_{q,m}(V_q^+)$. For two points $m,m' \in \cF_q$, take $w^+\in W^+_m$ and $w'^+\in W^+_{m'}$ such that $\mathscr{J}^{-1}_{q,m}(w^+) = \mathscr{J}^{-1}_{q,m'}(w'^+)$. By the construction in the sketched proof of Theorem \ref{thm:foliationreduction}, there is a basic section $w$ such that, under the projection $K^\perp \to K^\perp / K$, $w$ maps to $w^+$ at $m$ and $w'^+$ at $m'$. Take $W$ to be the span of such basic sections $w$. 
Then $K \subset W \subset K^\perp$ and $\ip{w,w}_E>0$ for every $w\in W$ such that $w\notin K$. Hence $W$ is a pre-$K$-transverse generalised metric on $E$. 

By Remark \ref{rmk:reductedKsubbundle}, for $w\in W$ we can write $w = \sum_i\, f_i\, w_i$, for $f_i \in C^\infty (M)$ (so that the sum is locally finite) and $ w_i \in \mathsf{\Gamma}_{\rm{bas}}(W)$. Then
\begin{align}
    \llbracket k, w \rrbracket_E = \sum_i\, f_i\, \llbracket k , w_i \rrbracket_E + \sum_i\,\big(\rho_E(k)\cdot f_i\big)\, w_i \ ,
\end{align}
for each $k\in K$. Since $w_i$ are basic, it follows that 
$\llbracket \mathsf{\Gamma}(K), \mathsf{\Gamma}(W) \rrbracket_E \subseteq \mathsf{\Gamma}(W),$
hence $W$ is a $K$-transverse generalised metric and $\natural(W) = V^+$.

Uniqueness is seen as follows: Suppose there exists another $K$-transverse generalised metric $W'$ such that $\natural(W') = V^+$. By Remark \ref{rmk:reductedKsubbundle}, it follows that $W'$ is spanned pointwise by $\set{w'_i} \subset \mathsf{\Gamma}_{\mathrm{bas}}(W')$. Thus $v_i = \natural(w'_i)$ give a set of sections spanning $V^+$ pointwise. Now we apply the previous construction to $v_i$ to construct $W$. 
Hence we get $W$ spanned by $w_i$ with $\natural(w_i) = \natural(w'_i)$. 
Therefore $w_i$ and $w'_i$ differ by an element in $K$, so $W=W'$. 

Conversely, starting with a $K$-transverse generalised metric $W$ on $E$, then $V^+ \coloneqq\natural(W)$ defines a generalised metric on $\red E$, since the $K$-transversality condition makes $V_+$ a well-defined subbundle of $\red E$ over $\cQ$ by Proposition \ref{prop:reducedKsubbundle} and
\begin{align}
    \ip{v,v}_{\red E} = \ip{w,w}_E \geq 0 \ ,
\end{align}
for every $v\in V^+$, with equality if and only if $w\in K$, i.e. $v=0$. It also follows that $\rk(V^+) = \dim(\cQ)$.

We now show, in both cases, that $Q(K)$ is a regular transverse generalised isometry between $W$ and $V^+$. Fix $p = (m, \varpi(m))\in \gr(\varpi) \subset M \times \cQ$. Take arbitrary smooth splittings $\widetilde W{}^\pm$ of the sequence \eqref{eqn:eigenbundlesequence}, i.e. a splitting of the short exact sequence of vector bundles
\begin{align}
 0 \longrightarrow K \longrightarrow W \longrightarrow W^+ \longrightarrow 0 
\end{align}
over $M$, and set $\cal W^\pm =\widetilde W{}^\pm \times V^\pm$. Then
\begin{align}
    Q(K)_p \cap (\cal W^+ \oplus \cal W^-)_p = \set{(e, \natural(e))\, | \, e \in K_m^\perp \cap (\widetilde W^+_m \oplus \widetilde W^-_m )} \ .
\end{align}

Thus if $(e, \natural(e))\in Q(K)_p \cap (\cal W^+ \oplus \cal W^-)_p $, then 
\begin{align*}
    \big(e,\natural(e)\big) = \big(e_+ + e_-, \natural(e_+ + e_-)\big) \ ,
\end{align*}
with $e_\pm \in \widetilde W_m^\pm$. By construction $\natural(\widetilde W^\pm) = V^\pm$. Hence
\begin{align*}
    \big(e,\natural(e)\big) &= (e_+ + e_-, \red e{}_+ + \red e{}_-) \\[4pt]
    &= (e_+, \red e{}_+) + (e_-, \red e{}_-) \ \in \ \big(Q(K)_p \cap \cal W^+_p\big) \oplus \big(Q(K)_p\cap \cal W^-_p\big) \ ,
\end{align*}
where $\red e{}_\pm = \natural(e_\pm) = \natural(e)_\pm \in V ^\pm$.
Thus Equation~\eqref{eqn:transdecomp} follows, and $Q(K)$ is a regular transverse generalised isometry between $W$ and $V^+$.
\end{proof}

\begin{example} \label{eg:pullbacktransverse}
For a twisted standard Courant algebroid $E=(\IT M,H)$, a generalised metric on $\red E = (\IT \cQ,\red H)$ is given by a metric ${\red g}\in \Gamma(\midodot^2 T^*\cal Q) $ and a two-form ${\red b}\in \mathsf{\Omega}^2(\cal Q).$ Then the pullbacks $\varpi^*{\red g}$ and $\varpi^*{\red  b}$ define a $K$-transverse generalised metric $W$ such that $\natural(W) = V^+$, and $Q(\cF)$ is a regular transverse generalised isometry between $W$ and $V^+$.
\end{example}

\medskip

\subsection{Composition of Transverse Generalised Isometries}\label{ssec:transisomcomposition}~\\[5pt]
Composition of transverse generalised isometries presents a multi-faceted problem by nature of its less restrictive construction compared to generalised isometries. To elucidate why this is so, suppose $R \colon E_1\rel E_2 $ and $ R' \colon E_2 \rel E_3$ are Courant algebroid relations composing cleanly, and let $W_i$ be transverse generalised metrics on $E_i$ for $i=1,2,3$. Suppose further that $R$ is a transverse generalised isometry between $W_1$ and $W_2$, and $R'$ is a transverse generalised isometry between $W_2$ and $W_3$. The problems encountered are two-fold.

Firstly, since the lifts $\widetilde W^\pm_i$ of $W^\pm_i$ are not unique (as would be the case for a generalised metric) it may not be the case that the lift \smash{$\widetilde W_2^\pm$} making $R$ a transverse generalised metric is the same as the lift \smash{$\widetilde W_2'^\pm$} making $R'$ a transverse generalised metric. 

Secondly, even if we had $\widetilde W_2^\pm = \widetilde W_2'^\pm$, this may still not be enough. For instance, suppose \smash{$(e_1,e_3) \in (R' \circ R) \cap (\widetilde W_1^+ \times \widetilde W_3^+)$}. By definition, there exists $e_2$ such that $(e_1,e_2)\in R$ and $(e_2,e_3)\in R'$. But it is in no way guaranteed that \smash{$e_2\in \widetilde W_2^+$}.

The first problem is discussed in Appendix \ref{app:changesplitting}. For now we assume that $\widetilde W_2^\pm = \widetilde W_2'^\pm$ and address the second problem through

\begin{proposition}
    Suppose that $R_1\colon E_1\rel E_2$ and $R_2\colon E_2 \rel E_3$ are Courant algebroid relations supported on $C_1$ and $C_2$, respectively, that compose cleanly. 
    For $i=1,2,3$ let $K_i$ be isotropic involutive subbundles of $E_i$, and let $W_i$ be pre-$K_i$-transverse generalised metrics such that $R_i$ is a transverse generalised isometry between $W_i$ and $W_{i+1}$. Suppose there are splittings \smash{$\widetilde W_1^\pm$}, \smash{$\widetilde W_2^\pm$} and \smash{$\widetilde W_3^\pm$} giving the decomposition \eqref{eqn:transdecomp} for $R_1$ and $R_2$. Let \smash{$\cal A^\pm = \widetilde W_1^\pm \times \widetilde W_2^\pm \times \widetilde W_2^\pm \times \widetilde W_3^\pm$}. If
    \begin{align}\label{eqn:rankcondition}
        \rk\big((R_2\diamond R_1)_{(c_1,c_2)} \cap (\cal A^+ \oplus \cal A^-)_{(c_1,c_2)}\big) > 0 \ ,
    \end{align}
    for every $(c_1,c_2)\in C_2 \diamond C_1$, then $R_2\circ R_1$ is a transverse generalised isometry between $W_1$ and~$W_3$.
\end{proposition}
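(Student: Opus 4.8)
The plan is to equip the composite relation $R_2\circ R_1$ with the lifts $\widetilde W_1^\pm$ and $\widetilde W_3^\pm$ already in hand, so that, writing $\cal W^\pm=\widetilde W_1^\pm\times\widetilde W_3^\pm$ on $E_1\times\overline E_3$, the goal becomes the verification of the splitting \eqref{eqn:transdecomp} for $R_2\circ R_1$. As noted after Definition~\ref{defn:transgeniso}, the inclusion $\supseteq$ is automatic because $R_2\circ R_1$ is a subbundle and hence closed under addition, so the entire content is the inclusion $\subseteq$. Since $R_1$ and $R_2$ compose cleanly, Propositions~\ref{prop:cleanintersect} and~\ref{prop:cleancomposition} (and Theorem~\ref{thm:relationcomposition}) guarantee that $R_2\diamond R_1$ is a genuine subbundle over $C_2\diamond C_1$ and that the projection $p\colon R_2\diamond R_1\to R_2\circ R_1$ is a fibrewise surjective bundle map over $\pi\colon C_2\diamond C_1\to C_2\circ C_1$. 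The strategy is to lift the splitting problem to the diamond $R_2\diamond R_1$, solve it there, and push it back down through $p$.

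First I would prove the pointwise identity on the diamond
\[
(R_2\diamond R_1)\cap(\cal A^+\oplus\cal A^-)=\big((R_2\diamond R_1)\cap\cal A^+\big)\oplus\big((R_2\diamond R_1)\cap\cal A^-\big) \ ,
\]
with $\cal A^\pm=\widetilde W_1^\pm\times\widetilde W_2^\pm\times\widetilde W_2^\pm\times\widetilde W_3^\pm$. An element of the left-hand side is $(e_1,e_2,e_2,e_3)$ with $(e_1,e_2)\in R_1$, $(e_2,e_3)\in R_2$, whose components lie in $\widetilde W_1^+\oplus\widetilde W_1^-$, $\widetilde W_2^+\oplus\widetilde W_2^-$ and $\widetilde W_3^+\oplus\widetilde W_3^-$ respectively; the two middle slots impose the \emph{same} decomposition $e_2=e_2^++e_2^-$ since $\widetilde W_2^+\cap\widetilde W_2^-=\set{0}$. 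Then $(e_1,e_2)$ lies in $R_1\cap(\cal W_1^+\oplus\cal W_1^-)$ with $\cal W_1^\pm=\widetilde W_1^\pm\times\widetilde W_2^\pm$, and $(e_2,e_3)$ lies in $R_2\cap(\cal W_2^+\oplus\cal W_2^-)$ with $\cal W_2^\pm=\widetilde W_2^\pm\times\widetilde W_3^\pm$. Applying Definition~\ref{defn:transgeniso} to $R_1$ and to $R_2$ separately, and matching the $e_2$-components by uniqueness of the $\pm$-splitting, yields $(e_1^\pm,e_2^\pm)\in R_1$ and $(e_2^\pm,e_3^\pm)\in R_2$, hence $(e_1^\pm,e_2^\pm,e_2^\pm,e_3^\pm)\in R_2\diamond R_1$. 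This step uses only that $R_1$ and $R_2$ are transverse generalised isometries sharing the common lift $\widetilde W_2^\pm$, and it requires no rank hypothesis.

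Next I would transport this splitting through $p$. Since $p$ maps $(e_1^\pm,e_2^\pm,e_2^\pm,e_3^\pm)$ to $(e_1^\pm,e_3^\pm)\in\cal W^\pm$, the image $p\big((R_2\diamond R_1)\cap(\cal A^+\oplus\cal A^-)\big)$ automatically lands in $(R_2\circ R_1)\cap(\cal W^+\oplus\cal W^-)$ and already decomposes along $\cal W^+$ and $\cal W^-$. It therefore suffices to establish the reverse containment: every $(e_1,e_3)\in(R_2\circ R_1)\cap(\cal W^+\oplus\cal W^-)$ admits a witness lying in $\cal A^+\oplus\cal A^-$. By surjectivity of $p$ there is some $(e_1,e_2,e_2,e_3)\in R_2\diamond R_1$, and any other witness differs from it by an element of
\[
\ker p=\set{(0,\kappa,\kappa,0)\ \big|\ (0,\kappa)\in R_1 \ , \ (\kappa,0)\in R_2} \ .
\]
Since $e_1$ and $e_3$ already lie in the correct spaces $\widetilde W_1^+\oplus\widetilde W_1^-$ and $\widetilde W_3^+\oplus\widetilde W_3^-$, one needs a $\kappa$ in this kernel with $e_2+\kappa\in\widetilde W_2^+\oplus\widetilde W_2^-$; once such a witness is found, the diamond identity above splits it and projecting by $p$ recovers the required splitting of $(e_1,e_3)$.

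This last lifting is the crux, and it is exactly where the hypothesis \eqref{eqn:rankcondition} enters: there is in general no reason for an arbitrary witness $e_2$ to lie in $\widetilde W_2^+\oplus\widetilde W_2^-$ (this is precisely the ``second problem'' isolated before the statement). I expect the main obstacle to be proving that the affine space $\set{e_2+\kappa}$ meets $\widetilde W_2^+\oplus\widetilde W_2^-$ at every point. I would settle this by a fibrewise rank count at each $(c_1,c_2)\in C_2\diamond C_1$, combining the constancy of the fibre dimensions supplied by clean composition (condition~\ref{item:cleanint2} of Proposition~\ref{prop:cleanintersect} and condition~\ref{item:clean2} of Proposition~\ref{prop:cleancomposition}) with the positivity \eqref{eqn:rankcondition} of $\rk\big((R_2\diamond R_1)_{(c_1,c_2)}\cap(\cal A^+\oplus\cal A^-)_{(c_1,c_2)}\big)$, so as to force $p$ restricted to $(R_2\diamond R_1)\cap(\cal A^+\oplus\cal A^-)$ to surject onto $(R_2\circ R_1)\cap(\cal W^+\oplus\cal W^-)$. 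Feeding this back into the diamond identity then gives \eqref{eqn:transdecomp} for $R_2\circ R_1$, proving that the composite is a transverse generalised isometry between $W_1$ and $W_3$.
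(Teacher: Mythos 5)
Your proposal takes essentially the same route as the paper's proof. The core computation --- decomposing $(e_1,e_2)\in R_1$ and $(e_2,e_3)\in R_2$ along the shared lift $\widetilde W_2^\pm$, applying the transverse generalised isometry property of each $R_i$ to obtain $(e_i^\pm,e_{i+1}^\pm)\in R_i$, and projecting to get $(e_1^\pm,e_3^\pm)\in R_2\circ R_1$ --- is exactly the paper's argument, merely repackaged as your ``diamond identity''. The crux you isolate, that every $(e_1,e_3)\in(R_2\circ R_1)\cap(\cal W^+\oplus\cal W^-)$ must admit a witness $e_2\in\widetilde W_2^+\oplus\widetilde W_2^-$, is precisely the point at which the paper invokes \eqref{eqn:rankcondition} (``we may assume $e_2=e_2^++e_2^-$''), so you have correctly located where the hypothesis enters. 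The one caveat is that you defer that step to a fibrewise rank count meant to force surjectivity of $p$ restricted to $(R_2\diamond R_1)\cap(\cal A^+\oplus\cal A^-)$: the bare positivity in \eqref{eqn:rankcondition} does not by itself bound this rank from below by $\rk\bigl((R_2\circ R_1)\cap(\cal W^+\oplus\cal W^-)\bigr)$ plus the dimension of $\ker p\cap(\cal A^+\oplus\cal A^-)$, which is what that surjectivity would require. The paper's own proof is equally terse at exactly this point, so your proposal matches it in both substance and level of detail.
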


\begin{proof}
    Take $(e_1,e_3)\in (R_2 \circ R_1)\cap (\cal W^+ \oplus \cal W^-)$, where $\cal W^\pm = \widetilde W_1^\pm \times \widetilde W_3^\pm$. Then there exists $e_2\in E_2$ such that $(e_i,e_{i+1})\in R_i$ for $i=1,2$. That is, $(e_1,e_2,e_2,e_3)\in R_2 \diamond R_1$. By Equation~\eqref{eqn:rankcondition} we may assume $e_2=e_2^++e_2^-$ where \smash{$e^\pm_2\in \widetilde W_2^\pm$}. Then
    \begin{align}
        (e_i,e_{i+1}) &= (e_i^+ + e_i^-, e_{i+1}^+ + e_{i+1}^-)\\[4pt]
        &= (e_i^+,e_{i+1}^+) + (e_i^-,e_{i+1}^-) \ \in \ R_i \cap \big((\widetilde W_i^+ \times \widetilde W_{i+1}^+ )\oplus (\widetilde W_i^- \times \widetilde W_{i+1}^- )\big ) \ ,
    \end{align}
    for $i=1,2$. Since $R_i$ is a transverse generalised isometry between $W_i$ and $W_{i+1}$, the decomposition \eqref{eqn:transdecomp} holds for $R_1$ and $R_2$, hence $(e_i^\pm,e_{i+1}^\pm)\in R_i$. It then follows by definition that $(e_1^\pm, e_3^\pm) \in R_2 \circ R_1$, and so the decomposition \eqref{eqn:transdecomp} also holds for $R_2 \circ R_1$.
\end{proof}

\begin{corollary}\label{cor:graphscompose}
    If either $R_1$ or $R_2$ is the graph of a classical Courant algebroid isomorphism, then $R_2\circ R_1$ is always a transverse generalised isometry between $W_1$ and $W_3$.
\end{corollary}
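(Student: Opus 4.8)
The plan is to reduce to a single case and then verify the hypotheses of the preceding Proposition, showing that the graph-of-an-isomorphism assumption removes both of the obstacles discussed above. By symmetry it suffices to treat the case in which $R_2=\gr(\Phi)$ is the graph of a classical Courant algebroid isomorphism $\Phi\colon E_2\to E_3$ covering a diffeomorphism $\phi\colon M_2\to M_3$; the case where $R_1$ is such a graph follows by applying the same argument to the transpose relations, using that $R_2\circ R_1=(R_1^\top\circ R_2^\top)^\top$ and that transposition interchanges the two factors in the decomposition \eqref{eqn:transdecomp} and hence preserves transverse generalised isometries.

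First I would record that composition is automatically clean in this case, so that $R_2\circ R_1$ is genuinely a Courant algebroid relation by Theorem~\ref{thm:relationcomposition}. Indeed, since $\Phi$ is a fibrewise bijection covering the diffeomorphism $\phi$, the assignment $(e_1,e_2)\mapsto(e_1,e_2,e_2,\Phi(e_2))$ identifies $R_2\diamond R_1$ with $R_1$, and under this identification the projection $p\colon R_2\diamond R_1\to R_2\circ R_1$ becomes the fibrewise isomorphism $(e_1,e_2)\mapsto(e_1,\Phi(e_2))$. Hence $R_2\circ R_1=(\unit_{E_1}\times\Phi)(R_1)$ is a smooth subbundle supported on $(\unit_{M_1}\times\phi)(C_1)$, so conditions \ref{item:clean1} of Propositions~\ref{prop:cleanintersect} and~\ref{prop:cleancomposition} hold.

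The decisive simplification is that the intermediate element is now \emph{unique}: any $(e_1,e_3)\in R_2\circ R_1$ has $e_3=\Phi(e_2)$ for the single element $e_2=\Phi^{-1}(e_3)$, so the second (``middle element'') obstacle disappears entirely. To arrange that this forced $e_2$ respects the decomposition \eqref{eqn:transdecomp}, I would use that $\Phi$, being an invertible isometry realising the transverse generalised isometry $R_2$, allows one to couple the $E_3$-side lifts to the (common) $E_2$-side lifts by setting $\widetilde W_3^\pm=\Phi(\widetilde W_2^\pm)$. With this choice $\Phi$ restricts to isomorphisms $\widetilde W_2^\pm\xrightarrow{\ \sim\ }\widetilde W_3^\pm$, so that for any $(e_1,e_3)\in(R_2\circ R_1)\cap(\cal W^+\oplus\cal W^-)$, with $\cal W^\pm=\widetilde W_1^\pm\times\widetilde W_3^\pm$, writing $e_3=e_3^++e_3^-$ gives $e_2=\Phi^{-1}(e_3^+)+\Phi^{-1}(e_3^-)\in\widetilde W_2^+\oplus\widetilde W_2^-$. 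In particular the rank condition \eqref{eqn:rankcondition} is automatically satisfied and the preceding Proposition applies; alternatively one concludes directly by feeding $(e_1,e_2)\in R_1\cap\big((\widetilde W_1^+\times\widetilde W_2^+)\oplus(\widetilde W_1^-\times\widetilde W_2^-)\big)$ through the decomposition for $R_1$ to obtain $(e_1^\pm,e_2^\pm)\in R_1$, and then recomposing with $(e_2^\pm,e_3^\pm)=(e_2^\pm,\Phi(e_2^\pm))\in R_2$ to get $(e_1^\pm,e_3^\pm)\in R_2\circ R_1$, which is exactly \eqref{eqn:transdecomp}.

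I expect the main obstacle to be justifying that $\widetilde W_3^\pm=\Phi(\widetilde W_2^\pm)$ are admissible lifts of $W_3^\pm$, i.e. that the classical isomorphism $\Phi$ is compatible with the transverse data so that these subbundles split the sequence \eqref{eqn:eigenbundlesequence} on $E_3$. This is precisely the point where the graph-of-an-isomorphism hypothesis is essential: unlike a general relation, where one must invoke the rank condition to \emph{choose} a decomposable intermediate element, here invertibility of $\Phi$ both fixes the intermediate element and transports the positive and negative splittings from $E_2$ to $E_3$, so no freedom in the middle is required. The shared-splitting issue (the ``first problem'') is handled as in Appendix~\ref{app:changesplitting} and does not arise once the lifts are coupled through $\Phi$ as above.
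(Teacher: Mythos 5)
Your proof is correct and follows essentially the same route as the paper: couple the middle lifts through the isomorphism (so the forced intermediate element automatically decomposes), verify the rank condition or argue directly, and handle the remaining case by transposition. The only difference is cosmetic — you treat $R_2=\gr(\Phi)$ as the primary case where the paper treats $R_1=\gr(\Phi)$ — and your extra observation that the composition is automatically clean is a correct bonus the paper leaves implicit.
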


\begin{proof}
    Suppose $R_1 = \gr(\Phi)$, where $\Phi\colon E_1\to E_2$ is a Courant algebroid isomorphism over a diffeomorphism $\phi\colon M_1 \to M_2$ (hence $C_1 = \gr(\phi))$. Since $\Phi$ is an isomorphism, we have already seen that \smash{$\widetilde W_2^\pm = \Phi(\widetilde W_1^\pm)$}. To show that Equation~\eqref{eqn:rankcondition} holds, fix $(c_1,c_2)\in C_2 \diamond C_1$.
    Then at this point
    \begin{align}
        R_2\diamond \gr(\Phi) = \set{(e_1, \Phi(e_1), \Phi(e_1), e_3) \, | \, e_1\in E_1 \ , \  (\Phi(e_1),e_3) \in R_2} \ .
    \end{align}
    But if $e_1\in \widetilde W_1^+ \oplus \widetilde W_1^-$ then $\Phi(e_1) \in \widetilde W_2^+ \oplus \widetilde W_2^-$, and hence Equation~\eqref{eqn:rankcondition} holds.

    If $R_2$ is the graph of a classical Courant algebroid isomorphism $\Phi$, then this argument can be applied to \smash{$(R_1 \circ \gr(\Phi))^\top  = \gr(\Phi^{-1}) \circ R_1^\top $}, as the transpose of a transverse generalised isometry is a transverse generalised isometry.
\end{proof}

\section{T-duality as a Courant Algebroid Relation}\label{sec:T-duality}

Recall that infinitesimal symmetries of the Wess-Zumino functional $S_{H_1}$ of a string sigma-model are characterised by the Dorfman bracket of the standard Courant algebroid $(\IT M_1,H_1)$. A Courant algebroid isomorphism thus maps symmetries of $S_{H_1}$ to symmetries of $S_{H_2}$, with $H_1$ and $H_2$ related by Equation~\eqref{eqn:actionSemi}, hence giving the same equations of motion. This however requires the string backgrounds to be diffeomorphic. T-duality is a correspondence between dynamics arising from sigma-models whose backgrounds are not necessarily diffeomorphic, but whose symmetries are related in a certain sense. 
Courant algebroid relations preserve the Dorfman bracket even when the backgrounds are not diffeomorphic. This motivates an attempt to reformulate T-duality in terms of Courant algebroid relations.

In this section we formalise this idea, where the Courant algebroid relation may be viewed as the relation induced  by two reductions of an exact Courant algebroid. 
That is, we consider an exact Courant algebroid $E$ over a manifold $M$ and take isotropic subbundles $K_1 \subset E$ and $K_2 \subset E$, inducing regular foliations by $\rho_{E}(K_1)$ and $\rho_{E}(K_2)$ of $M$, and apply the reduction procedure of Theorem \ref{thm:foliationreduction}.  Under the right conditions, the reduced Courant algebroids inherit a Courant algebroid relation from the reductions. In other words, we will explore circumstances under which the Wess-Zumino functionals of our sigma-models are related.

\medskip

\subsection{Topological T-duality}\label{ssec:topoligcalTduality}
\label{sec:reductionofCAisomorphisms}~\\[5pt]
Let $E$ be an exact Courant algebroids over $M$ endowed with isotropic subbundles $K_1$ and $K_2$. Here and throughout the rest of the paper we will assume that 
\begin{align} \label{eqn:rankK}
\rk (K_1) = \rk(K_2) \ .    
\end{align}
Assume as well that $K_1^\perp$ and $K_2^\perp$ have enough basic sections and denote by $\cF_i$ the foliation of $M$ given by the integral manifolds of the distribution $\rho_{E}(K_i).$ Suppose that the leaf spaces $\cQ_i=M / \cF_i$ have a smooth structure, hence there are unique surjective submersions $\varpi_i \colon M \to \cQ_i$ for $i=1,2.$ 

From Section \ref{ssec:reductionrelation} we can then form the Courant algebroid morphisms $Q(K_i)\colon E \mor \red E{}_i$ supported on  $\gr(\varpi_i)$. This gives the diagram
\begin{equation}
\begin{tikzcd}\label{cd:T-dualitycd}
 &\arrow[tail,swap]{dl}{Q(K_1)} E \arrow[tail]{dr}{Q(K_2)}&   \\
\red E{}_1 & & \red E{}_2 
\end{tikzcd}
\end{equation}
which prepares for our first definition of T-duality.  

\begin{definition}\label{def:topologicalTdual}
    Let $E_1$ and $E_2$ be exact Courant algebroids fitting into the diagram \eqref{cd:T-dualitycd}. Then $\red E{}_1$ and $\red E{}_2$  are \emph{T-duality related} over $\cQ_1$ and $\cQ_2$ if the composition
    \begin{equation}\label{eqn:T-dualcomposition}
    \begin{tikzcd}
        R = Q(K_2) \circ Q(K_1)^\top \colon 
        \red E{}_1 \arrow[dashed]{r} & \red E{}_2
    \end{tikzcd}
    \end{equation}
    is a Courant algebroid relation supported on 
    \begin{align}
     C=\set{(\varpi_1(m),\varpi_2(m)) \, | \, m \in M} \ \subset \ \cQ_1 \times \cQ_2 \ .
    \end{align}
    The Courant algebroid relation $R$ is the \emph{T-duality relation} between $\red E{}_1$ and $\red E{}_2$.
\end{definition}

\begin{remark}
    We first discuss the smoothness of the supporting submanifold $C$, as in general it may not be smooth. The most interesting case is when $\cQ_1$ and $\cQ_2$ are fibred manifolds over a common base manifold $\cB$. In this case there is the commutative diagram
    \begin{equation}\label{cd:fibredproduct}
        \begin{tikzcd}[row sep = 10pt]
             & M \arrow[swap]{dl}{\varpi_1} \arrow{dr}{\varpi_2} & \\
            \cQ_1 \arrow[swap]{dr}{\pi_1} &  & \cQ_2 \arrow{dl}{\pi_2} \\
             & \cB & 
        \end{tikzcd}
    \end{equation}
    Since the fibred product $\cQ_1 \times_{\cB} \cQ_2$ also fits into the diagram \eqref{cd:fibredproduct}, there is a smooth map $M \to \cQ_1 \times_{\cB} \cQ_2$ given by the quotient of $M$ by the foliation induced by the distribution $T\cF_1 \cap  T\cF_2$. Thus in this case the rank of $T\cF_1 \cap T\cF_2$ must be constant.\footnote{We will see below that this  must also be true in the general case without a common base $\cB$.}
    \end{remark}
    
\begin{lemma} \label{lemma:cdsmoothcirclecomp}
    If $M$ fits into the commutative diagram \eqref{cd:fibredproduct}, then $C = \cQ_1 \times_{\cB} \cQ_2$ is smooth.
\end{lemma}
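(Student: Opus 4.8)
The plan is to identify the support $C$ --- which by construction is the image of the map $\psi \coloneqq (\varpi_1,\varpi_2)\colon M \to \cQ_1\times\cQ_2$ --- with the fibred product $\cQ_1\times_\cB\cQ_2$, and then to invoke the standard smoothness of fibred products of submersions. So the proof splits into two independent tasks: showing that $\cQ_1\times_\cB\cQ_2$ is a smooth manifold, and showing the set-theoretic equality $C=\cQ_1\times_\cB\cQ_2$.

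First I would recall the standard transversality fact. Since $\pi_2\colon\cQ_2\to\cB$ is a surjective submersion, the map $\pi_1\times\pi_2\colon\cQ_1\times\cQ_2\to\cB\times\cB$ is transverse to the diagonal $\Delta_\cB\subset\cB\times\cB$ (given $(a_1,a_2)\in T\cB\times T\cB$, choose the diagonal part $w=a_1$ and solve $\pi_{2*}v_2=a_2-a_1$ using surjectivity of $\pi_{2*}$). Hence $\cQ_1\times_\cB\cQ_2=(\pi_1\times\pi_2)^{-1}(\Delta_\cB)$ is a closed embedded submanifold of $\cQ_1\times\cQ_2$ of dimension $\dim\cQ_1+\dim\cQ_2-\dim\cB$, with both projections to $\cQ_1$ and $\cQ_2$ surjective submersions. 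This settles smoothness of the target. The inclusion $C\subseteq\cQ_1\times_\cB\cQ_2$ is then immediate from commutativity of \eqref{cd:fibredproduct}: for every $m\in M$ one has $\pi_1(\varpi_1(m))=\pi_2(\varpi_2(m))$, so $(\varpi_1(m),\varpi_2(m))$ satisfies the defining condition of the fibred product, and the universal property upgrades $\psi$ to a smooth map $\psi\colon M\to\cQ_1\times_\cB\cQ_2$ with $\mathrm{image}(\psi)=C$.

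For the reverse inclusion I would use the map produced in the preceding remark: it identifies $\psi$ with the quotient of $M$ by the distribution $T\cF_1\cap T\cF_2$, so that $\psi$ is a surjective submersion onto $\cQ_1\times_\cB\cQ_2$, and surjectivity gives $C=\cQ_1\times_\cB\cQ_2$. The main obstacle is precisely this surjectivity of $\psi$: the inclusion $C\subseteq\cQ_1\times_\cB\cQ_2$ and the smoothness of the target are routine, but there is genuine content in ruling out that $M$ maps only onto a proper (open) subset of the fibred product. This is exactly where the hypothesis that $T\cF_1\cap T\cF_2$ has constant rank --- so that $M/(T\cF_1\cap T\cF_2)$ is a smooth manifold --- and the connectedness of the leaves of $\cF_1$ and $\cF_2$ must be used, ensuring that $\psi$ realises $\cQ_1\times_\cB\cQ_2$ as a genuine quotient of $M$ rather than merely as an ambient manifold receiving a smooth map. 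Concretely, I would check that $\psi$ is a submersion by verifying that $\varpi_{1*}\oplus\varpi_{2*}$ surjects onto the fibre product of tangent spaces $T\cQ_1\times_{T\cB}T\cQ_2$, which makes $C$ open, and then combine openness with the quotient identification from the remark to conclude that $C$ is all of $\cQ_1\times_\cB\cQ_2$.
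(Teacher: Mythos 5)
Your setup --- establishing smoothness of $\cQ_1\times_{\cB}\cQ_2$ via transversality to the diagonal and noting the inclusion $C\subseteq\cQ_1\times_{\cB}\cQ_2$ from commutativity --- agrees with the paper, which treats both points as routine and concentrates entirely on the reverse inclusion. But your argument for that reverse inclusion has a genuine gap. You propose to show that $\psi=(\varpi_1,\varpi_2)$ is a submersion onto the fibred product, conclude that $C$ is open, and then ``combine openness with the quotient identification'' to deduce $C=\cQ_1\times_{\cB}\cQ_2$. Openness of the image does not yield surjectivity: the image of a submersion is open but need not be closed, so without an additional argument (closedness of $C$ together with connectedness of $\cQ_1\times_{\cB}\cQ_2$, say) an open proper subset is not excluded. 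The quotient identification from the preceding remark does not fill this hole --- it only says that $\psi$ factors through $M/(T\cF_1\cap T\cF_2)$, which constrains the fibres of $\psi$, not its image. Moreover, the submersion claim itself is not free: since $\ker\psi_*=T\cF_1\cap T\cF_2$, a rank count shows that surjectivity of $\psi_*$ onto the tangent fibre product $T\cQ_1\times_{T\cB}T\cQ_2$ is equivalent to $T\cF_1+T\cF_2=\ker\bigl((\pi_1\circ\varpi_1)_*\bigr)$, which is not implied by commutativity of the diagram alone.

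The paper's proof avoids both issues by arguing pointwise: given $(q_1,q_2)\in\cQ_1\times_{\cB}\cQ_2$, choose $m\in\varpi_1^{-1}(q_1)$; then $\varpi_2(m)$ and $q_2$ lie in the same fibre of $\pi_2$, so they are joined by a path $\gamma$ in that fibre; the homotopy lifting property lifts $\gamma$ through $\varpi_2$ to a path $\tilde\gamma$ contained in the leaf $\varpi_1^{-1}(q_1)$ and starting at $m$, and the endpoint $m''=\tilde\gamma(1)$ satisfies $\varpi_1(m'')=q_1$ and $\varpi_2(m'')=q_2$, i.e. $(q_1,q_2)\in C$. If you want to salvage your approach you would need to supplement openness with a closedness or path-lifting argument of exactly this kind.
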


\begin{proof}
    Note that $C \subseteq \cQ_1 \times_{\cB} \cQ_2$. For the opposite inclusion, consider a point $(q_1,q_2) $ in $\cQ_1 \times_{\cB} \cQ_2$. Then there exist $m,m' \in M$ such that $q_1 = \varpi_1(m)$ and $ q_2 = \varpi_2(m')$, and $b\coloneqq \pi_1(\varpi_1(m)) = \pi_2(\varpi_2(m'))\in\cB$. Since also $\pi_2(\varpi_2(m))=b$, with $q_0 \coloneqq \varpi_2(m)$ the points $q_2$ and $q_0$ belong to the same fibre of $\pi_2$. Hence there is a path $\gamma:[0,1]\to\cQ_2$ such that $\gamma(0) = q_0$ and $\gamma(1) = q_2$. By the homotopy lifting property we get a path $\tilde{\gamma}$ in $ (\cF_1)_{q_1}$ such that $\tilde{\gamma}(0) = m$, where $(\cF_1)_{q_1}$ is the leaf of the foliation $\cF_1$ given by $\varpi_1^{-1}(q_1)$. We then define \smash{$m''\coloneqq \tilde{\gamma}(1).$} It follows that $\varpi_1(m'') = q_1$ and $\varpi_2(m'') = q_2$, hence $\cQ_1 \times_{\cB} \cQ_2 \subseteq C$.
\end{proof}

Assuming $C$ is smooth, we can now state the conditions under which the T-duality relation $R$ can be formed as

\begin{theorem}\label{thm:topoligicaltd}
Let $E$ be an exact Courant algebroid over $M$ endowed with isotropic subbundles $K_1$ and $K_2$ satisfying the conditions for the diagram \eqref{cd:T-dualitycd} discussed above. Let us also assume that $T\cF_1 \cap T\cF_2$ has constant rank and that $C$ is a smooth manifold.
Then $\red E{}_1$ and $\red E{}_2$ are T-duality related if and only if $K_1\cap K_2$ has constant rank.
\end{theorem}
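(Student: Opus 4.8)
The plan is to recognise, via Theorem \ref{thm:relationcomposition}, that $R=Q(K_2)\circ Q(K_1)^\top$ is a Courant algebroid relation precisely when $Q(K_1)^\top$ and $Q(K_2)$ compose cleanly, so the statement reduces to checking the criteria of Propositions \ref{prop:cleanintersect} and \ref{prop:cleancomposition} and tracking where each constant-rank hypothesis enters. The computation runs parallel to Example \ref{ex:relationsreduced}, now without the assumption $K_1\subset K_2$. Writing $\natural_i\colon K_i^\perp\to\red E{}_i$ for the two reduction maps, the diagonal constraint defining $Q(K_2)\diamond Q(K_1)^\top$ forces the two middle $E$-entries to agree, so at the point of $\gr(\varpi_2)\diamond\gr(\varpi_1)^\top\cong M$ indexed by $m\in M$,
\[
\big(Q(K_2)\diamond Q(K_1)^\top\big)_m = \set{\big(\natural_1(e),e,e,\natural_2(e)\big)\ \big|\ e\in (K_1^\perp\cap K_2^\perp)_m}\cong (K_1^\perp\cap K_2^\perp)_m ,
\]
the last isomorphism being projection onto the $E$-factor.

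First I would dispose of the base-level intersection. Here $\gr(\varpi_2)\diamond\gr(\varpi_1)^\top=\set{(\varpi_1(m),m,m,\varpi_2(m))\mid m\in M}\cong M$ is a smooth submanifold, and a direct tangent-space computation shows that $\gr(\varpi_1)^\top\times\gr(\varpi_2)$ and $\cQ_1\times\Delta(M)\times\cQ_2$ always intersect cleanly along it, since at each point both the tangent space to the intersection and the intersection of the tangent spaces equal $\set{(\varpi_{1*}X,X,X,\varpi_{2*}X)\mid X\in T_mM}$. Consequently condition \ref{item:cleanint2} of Proposition \ref{prop:cleanintersect} holds if and only if $\dim\big(Q(K_2)\diamond Q(K_1)^\top\big)_m=\rk(K_1^\perp\cap K_2^\perp)_m$ is independent of $m$. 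Because $K_1,K_2$ are subbundles, $K_1^\perp\cap K_2^\perp=(K_1+K_2)^\perp$ has pointwise rank $\rk E-\rk K_1-\rk K_2+\rk(K_1\cap K_2)$, so this holds exactly when $K_1\cap K_2$ has constant rank. This already yields the forward implication: if $R$ is a Courant algebroid relation then $Q(K_2)\diamond Q(K_1)^\top$ is in particular a subbundle, forcing $K_1\cap K_2$ to have constant rank.

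For the converse I would assume $K_1\cap K_2$ has constant rank, so that Proposition \ref{prop:cleanintersect}\ref{item:cleanint2} holds, and then verify Proposition \ref{prop:cleancomposition}\ref{item:clean2}. The standing hypotheses enter here: the composite support $C$ is smooth by assumption, and since the fibres of the projection $\pi\colon \gr(\varpi_2)\diamond\gr(\varpi_1)^\top\cong M\to C$, $m\mapsto(\varpi_1(m),\varpi_2(m))$, are the leaves of the distribution $T\cF_1\cap T\cF_2$ of constant rank, $\pi$ is a smooth surjective submersion. The projection $p$ acts by $(\natural_1(e),e,e,\natural_2(e))\mapsto(\natural_1(e),\natural_2(e))$, with kernel $\set{e\mid\natural_1(e)=\natural_2(e)=0}=(K_1\cap K_2)_m$ (using $\ker\natural_i=K_i$ and $K_1\cap K_2\subset K_1^\perp\cap K_2^\perp$ by isotropy); by rank--nullity its rank equals $\rk(K_1^\perp\cap K_2^\perp)-\rk(K_1\cap K_2)$, which is constant. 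Thus both cleanness criteria hold and, by Theorem \ref{thm:relationcomposition}, $R$ is a Courant algebroid relation, i.e.\ $\red E{}_1$ and $\red E{}_2$ are T-duality related.

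I expect the main difficulty to be bookkeeping rather than conceptual: correctly identifying the $\diamond$- and $\circ$-fibres and the map $p$ between them, and separating the automatic cleanness conditions from those supplied by the two standing hypotheses and from the genuinely free one. The subtlety worth emphasising is that $\rk(T\cF_1\cap T\cF_2)$ and $\rk(K_1\cap K_2)$ are independent quantities: although $\rho|_{K_i}$ is injective by Lemma \ref{lemma:injectiverho}, the image $\rho(K_1\cap K_2)$ can be strictly smaller than $\rho(K_1)\cap\rho(K_2)=T\cF_1\cap T\cF_2$, which is precisely why constancy of the former must be imposed as a hypothesis while constancy of the latter is the content of the equivalence.
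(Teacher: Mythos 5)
Your argument is correct and follows essentially the same route as the paper's own proof: both identify $Q(K_2)\diamond Q(K_1)^\top$ pointwise with $K_1^\perp\cap K_2^\perp$, reduce the fibre-dimension condition of Proposition \ref{prop:cleanintersect} to constancy of $\rk(K_1\cap K_2)$ via the same rank count, identify the kernel of the projection to the composition as $K_1\cap K_2$, and use the two standing hypotheses only for the base-level submersion criterion of Proposition \ref{prop:cleancomposition}. The only cosmetic differences are that the paper verifies base-level cleanness by exhibiting transversality rather than computing the tangent spaces of the intersection directly, and that your forward implication (like the paper's) implicitly identifies ``$R$ is a Courant algebroid relation'' with ``the relations compose cleanly''.
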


\begin{proof}
To compose $Q(K_2)$ with $Q(K_1)^\top $, we first check the conditions on the base manifolds. 
Setting $C_1 = \gr(\varpi_1)$ and $C_2 = \gr(\varpi_2)$, the intersection of $C_1^\top  \times C_2$ with $\cQ_1 \times \Delta(M) \times \cQ_2 $ is clean: one can show that they intersect transversally\footnote{We say that submanifolds $S$ and $S'$ of $M$ \emph{intersect transversally} if $T_sS + T_s S' =T_s M$ for each $s \in S\cap S'$. Submanifolds that intersect transversally also intersect cleanly~\cite[Appendix C.3]{hormander2009analysis}.} in $\cQ_1 \times M \times M \times \cQ_2$. For if $v \in T_{\hat m}( \cQ_1 \times M \times M \times \cQ_2)$, for any $\hat{m} \in (C_1^\top  \times C_2) \cap  (\cQ_1 \times \Delta(M) \times \cQ_2)$, then
\begin{align}
    v = ( {\red v}_1, v_1, v_1', {\red v}_2) &= \big( ( \varpi_1)_* ( v_1) , v_1, v_1', ( \varpi_2)_*( v_1')\big) \\ & \quad \, + \big( {\red v}_1 -  ( \varpi_1)_* ( v_1) , 0, 0, {\red v}_2 - ( \varpi_2)_*( v_1')\big) \ ,
\end{align}
where the first term is an element of $T_{\hat m} (C_1^\top  \times C_2)$ and the second term is an element of $T_{\hat m}( \cQ_1 \times \Delta(M) \times \cQ_2)$.
This implies the condition \ref{item:cleanint2} on the base manifolds of Proposition~\ref{prop:cleanintersect}. Next since $C = C_2 \circ C^\top_1 $ is a smooth manifold and $T\cF_1 \cap T\cF_2$ has constant rank, the map $C_2 \diamond C_1^\top  \to C_2 \circ C_1^\top $ is a smooth surjective submersion, thereby satisfying the condition \ref{item:clean2} on the base manifolds of Proposition~\ref{prop:cleancomposition}. 

To check the constant rank criteria of conditions \ref{item:clean2} of Propositions \ref{prop:cleanintersect} and \ref{prop:cleancomposition}, we note that since $Q(K_1) = \set{(e,\natural_{1}(e))\,|\, e\in K_1^\perp}$,
\begin{align}\label{eqn:seconddiamond}
Q(K_2) \diamond Q(K_1)^\top  = \set{ (\natural_{1}(e),e,e,\natural_{2}(e))\,|\, e\in K_1^\perp \cap K_2^\perp} \ .
\end{align}
This is pointwise isomorphic to $K_1^\perp \cap K_2^\perp$, and for $m \in M$ we find
\begin{align}
    \dim\big((K_2^\perp)_m \cap (K_1^\perp)_m\big) &= \rk(K_1^\perp) + \rk(K_2^\perp) - \dim\big((K_1^\perp)_{m}+ (K_2^\perp)_{m}\big) \\[4pt]
    &= \rk(K_1^\perp) + \rk(K_2^\perp) - \rk(E_2) +\dim\big((K_1^\perp)_{m}^\perp \cap (K_2^\perp)_{m}^\perp\big) \\[4pt]
    &= \rk\big(K_1^\perp\big) + \rk(K_2^\perp) - \rk(E_2)  +\dim\big((K_1)_{m} \cap (K_2)_{m}\big) \ .
\end{align}
It follows that condition \ref{item:clean2} of Proposition \ref{prop:cleanintersect} is satisfied if and only if the dimension of $(K_1)_m \cap (K_2)_m$ is independent of $m\in M$.

Finally, the kernel of the projection of Equation~\eqref{eqn:seconddiamond} to $Q(K_2) \circ Q(K_1)^\top $ is $K_1 \cap K_2$. Hence by item \ref{item:clean2} of Proposition \ref{prop:cleancomposition}, the composition is clean if and only if $K_1 \cap K_2$ has constant rank. Thus by Theorem \ref{thm:relationcomposition}, we get the Courant algebroid relation \eqref{eqn:T-dualcomposition}.
\end{proof}

The diagram \eqref{cd:T-dualitycd} now becomes
\begin{equation}\label{cd:Tdualsquare}
\begin{tikzcd}
 & E \arrow[tail]{dr}{Q(K_2)} &  \\
\red E{}_1 \arrow[dashed]{rr}{R}\arrow[dashed]{ur}{Q(K_1)^\top } & & \red E{}_2 
\end{tikzcd}
\end{equation}
Explicitly, if $\natural_i \colon  K_i^\perp \to \bigr( K_i^\perp /K_i \bigl) / \cF_i = \red E{}_i$ denotes the quotient map  for $i=1,\, 2$, then
\begin{align}\label{eqn:T-dualRelation}
    R = \set{(\natural_{1}(e), \natural_{2}(e))\,|\, e \in K_1^\perp \cap K_2^\perp} \ .
\end{align}
Note that the vertical arrow on the left of the diagram points upwards, as the composition of the relations does not commute. 

As mentioned in Section~\ref{sec:Intro}, there is a notion that the T-duality relation should be Lagrangian. Our relation is consistent with this, through
\begin{proposition}
    $R$ is maximally isotropic, hence a Dirac structure in $\red E_1 \times \overline{\red E}_2$.
\end{proposition}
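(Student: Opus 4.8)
The plan is to verify the two defining properties of a Dirac structure supported on $C$ directly: isotropy of $R$ with respect to the product pairing of $\red E{}_1\times\overline{\red E}{}_2$, and then maximality via a fibrewise dimension count showing $\dim R_c = \tfrac12\big(\rk(\red E{}_1)+\rk(\red E{}_2)\big)$ at each $c\in C$. Involutivity is already known, since $R$ is a Courant algebroid relation by Theorem~\ref{thm:topoligicaltd}, so the genuinely new content is maximal isotropy. Recall from Equation~\eqref{eqn:T-dualRelation} that $R = \set{(\natural_1(e),\natural_2(e))\mid e\in K_1^\perp\cap K_2^\perp}$, so both computations reduce to linear algebra in the fibres $(K_1^\perp\cap K_2^\perp)_m$.

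For isotropy I would use that the reduced pairing on $\red E{}_i$ is the one inherited from $E$: since $K_i$ is isotropic, $\langle\,\cdot\,,\,\cdot\,\rangle_E$ descends to $K_i^\perp/K_i$ and hence $\langle\natural_i(e),\natural_i(e')\rangle_{\red E{}_i} = \langle e,e'\rangle_E$ for all $e,e'\in K_i^\perp$. Taking $e,e'\in K_1^\perp\cap K_2^\perp$ and using the sign convention of the product $\red E{}_1\times\overline{\red E}{}_2$, one then gets
\[
\big\langle(\natural_1(e),\natural_2(e)),(\natural_1(e'),\natural_2(e'))\big\rangle
= \langle e,e'\rangle_E - \langle e,e'\rangle_E = 0 \ ,
\]
so that $R\subseteq R^\perp$.

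For maximality I would compute $\dim R_c$ explicitly. The linear map $e\mapsto(\natural_1(e),\natural_2(e))$ on $(K_1^\perp\cap K_2^\perp)_m$ has kernel $(K_1\cap K_2)_m$, since $\ker\natural_i = K_i$ pointwise by the isomorphism~\eqref{eqn:bethisom}; hence $\dim R_c = \dim(K_1^\perp\cap K_2^\perp)_m - \dim(K_1\cap K_2)_m$. Feeding in the dimension identity already established in the proof of Theorem~\ref{thm:topoligicaltd}, namely $\dim(K_1^\perp\cap K_2^\perp)_m = \rk(K_1^\perp)+\rk(K_2^\perp)-\rk(E)+\dim(K_1\cap K_2)_m$, together with $\rk(K_i^\perp) = \rk(E)-\rk(K_i)$ and the standing assumption $k\coloneqq\rk(K_1)=\rk(K_2)$ from Equation~\eqref{eqn:rankK}, the $\dim(K_1\cap K_2)_m$ terms cancel and I obtain the fibrewise-constant value $\dim R_c = \rk(E)-2k = 2\dim M-2k$. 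On the other side, Lemma~\ref{lemma:injectiverho} gives $\rk(\rho_E(K_i)) = \rk(K_i) = k$, so $\dim\cQ_i = \dim M - k$ and $\rk(\red E{}_i) = 2(\dim M-k)$; therefore $\tfrac12\big(\rk(\red E{}_1)+\rk(\red E{}_2)\big) = 2\dim M - 2k = \dim R_c$. Being isotropic of exactly half the rank of the product, $R=R^\perp$, i.e.\ $R$ is maximally isotropic and hence Dirac.

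The individual computations are routine; the only point requiring care is the bookkeeping that ties together the three inputs — the induced-pairing identity, the kernel $K_1\cap K_2$, and the injectivity of $\rho_E|_{K_i}$ — so that the dependence on the (a priori non-constant) dimensions $\dim(K_1\cap K_2)_m$ genuinely drops out and leaves a fibrewise-constant half-rank. This is precisely where the rank hypothesis~\eqref{eqn:rankK} and the existence of adapted splittings (hence Lemma~\ref{lemma:injectiverho}) are essential, and it is the step I would double-check most carefully.
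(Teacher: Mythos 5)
Your proposal is correct and follows essentially the same route as the paper: isotropy plus a fibrewise rank count showing $\rk(R)=\rk(E)-2\,\rk(K_1)$ equals half of $\rk(\red E{}_1\times\overline{\red E}{}_2)$, with the $\dim(K_1\cap K_2)_m$ terms cancelling exactly as you describe. The only cosmetic differences are that you verify isotropy explicitly (the paper takes it as already guaranteed by $R$ being a composition of Courant algebroid relations) and you compute the ambient rank via $\dim\cQ_i$ and Lemma~\ref{lemma:injectiverho} rather than directly from $\rk(K_i^\perp/K_i)=\rk(E)-2\,\rk(K_i)$.
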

\begin{proof}
  Since $R$ is involutive by construction, we only need to show that it is maximally isotropic in $\red E_1 \times \overline{\red E}_2.$ Note that, from the standpoint of graded symplectic geometry, this is a consequence of the fact that the composition of two Lagrangian relations is again Lagrangian.
    
  Using Equation \eqref{eqn:rankK}, we have
    \begin{align}
        \rk(\red E_1 \times \overline{\red E}_2) = 2\, \big(\rk(E) - 2\,\rk(K_1)\big) \ .
    \end{align} 
    We have already seen that $Q(K_2) \diamond Q(K_1)^\top$ is isomorphic to $K_1^\perp \cap K_2^\perp$ pointwise, and that its projection to $R$ has kernel $K_1 \cap K_2$. Thus
    \begin{align}
        \rk(R) &= \rk\big(K_1^\perp \cap K_2^\perp\big) - \rk\big(K_1 \cap K_2\big ) = \rk(K_1^\perp) + \rk(K_2^\perp) - \rk(E) = \rk(E) - 2\,\rk(K_1)
    \end{align}
    as required.
\end{proof}

\begin{remark}[\textbf{Bisubmersions}] \label{rmk:bisubmersions}
In the picture of T-duality related Courant algebroids of Definition \ref{def:topologicalTdual}, at the level of base manifolds there are surjective submersions
\[
\begin{tikzcd}
 & M \arrow[dr,"\varpi_2"] \arrow[dl,"\varpi_1",swap]  \\
\cQ_1   && \cQ_2
\end{tikzcd}
\]
with the fibres of $\varpi_1$ given by the leaves of the foliation $\cF_1$ and the fibres of $\varpi_2$ given by the leaves of the foliation $\cF_2.$ 

If both surjective submersions have connected fibres and the Lie bracket of vector fields satisfies
\begin{align} \label{eqn:bisubmersioncd}
 \big[\mathsf{\Gamma}\bigl(\ker(\varpi_{1*})\bigr) , \mathsf{\Gamma}\big(\ker(\varpi_{2*})\big)\big] \subset  \mathsf{\Gamma}\bigl(\ker(\varpi_{1*})\bigr) + \mathsf{\Gamma}\bigl(\ker(\varpi_{2*})\bigr) \ ,
\end{align}
then there exist unique (possibly singular) foliations $\red\cF{}_1$ on $\cQ_1$ and $\red\cF{}_2$ on $\cQ_2$ such that~\cite[Corollary~2.16]{Garmendia2019}
\begin{align}
\varpi_{1*}^{-1}(T{\red\cF}{}_1) = (\varpi_2)_*^{-1}(T{\red\cF}{}_2)= \ker(\varpi_{1*}) +\ker(\varpi_{2*}) \ .
\end{align}
This means that there is a \emph{bisubmersion} between $(\cQ_1, \red\cF{}_1)$ and $(\cQ_2, \red\cF{}_2)$; see e.g.~\cite{Garmendia2019} and references therein for the general definition of bisubmersion. 

This construction is an example of \emph{Hausdorff Morita equivalent foliations} in the sense of \cite[Definition~2.1]{Garmendia2019}. Equation \eqref{eqn:bisubmersioncd} is an involutivity condition for the subbundle 
\begin{align}
\ker(\varpi_{1*}) +\ker(\varpi_{2*}) =T\cF_1 + T\cF_2 \ ,
\end{align}
hence $M$ is endowed with a (possibly singular) foliation $\cF$ that induces the Hausdorff Morita equivalent foliations. This is a regular foliation if  $\ker(\varpi_{1*}) \cap \ker(\varpi_{2*})$ has constant rank. This notion will be crucial in the description of the symmetries of the background fields defined on $\cQ_1$ and $\cQ_2$, and we will see how it naturally arises in the construction of T-duality relations.
\end{remark}

\begin{remark}
Courant algebroid reduction is reformulated and extended in the language of graded symplectic reduction by~\cite{Bursztyn:2023onv}. It would be interesting to fit our perspective on T-duality, and more generally Courant algebroid relations, into this framework, particularly in light of the differential graded symplectic geometry approach to generalised T-duality taken in~\cite{Arvanitakis:2021lwo}. 
\end{remark}

\medskip

\begin{example}\label{sec:topogicalTdualitystandardCA}
Our main source of examples will be the following set up: let $E_1=(\IT M_1 ,H_1)$ and $E_2=(\IT M_2, H_2)$ be isomorphic twisted standard Courant algebroids, isomorphic via $\overline{\phi} \circ \e^{B}\,$, where $\overline{\phi} = \phi_* + (\phi^{-1})^*$ and $B\in \mathsf{\Omega}^2(M_1)$ with $\de B = H_1 - \phi^* H_2$, as described in Proposition \ref{decpreiso}.
Suppose that $M_1$ and $M_2$ are foliated by $\cF_1$ and $\cF'_2$, respectively, such that $\cQ_1 = M_1/\cF_1$ with quotient map $\varpi_1$.\footnote{In general we do not require that $\phi_*T\cF_1 = T\cF'_2$. 
}  
Suppose further that  $\iota_{X_1}H_1 = 0$, for all $X_1 \in \mathsf{\Gamma}(T \cF_1)$ and $\iota_{X_2} H_2 = 0$, for all $X_2 \in \mathsf{\Gamma}(T\cF'_2)$. Finally, let $\cQ_2 = M_1 / \cF_2$ with $\cF_2 = \phi^{-1}(\cF'_2)$ and quotient map $\varpi_2$, and let $\red H_1$ and $\red H_2$ be such that $\varpi_1^*\red H_1 = H_1$, $\varpi_2^* \red H_2 = \phi^* H_2$. Then

\begin{proposition}\label{prop:splitcaseconditions}
    If $T\cF_1 \cap T\cF_2$ and $\ker \left(B|_{T\cF_1 \cap T\cF_2}\right) $ have constant rank, $(\IT \cQ_1 , \, {\red H}{}_1)$ and $(\IT \cQ_2 , \, {\red H}{}_2)$ are T-duality related.
\end{proposition}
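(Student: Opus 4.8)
The plan is to deduce the statement directly from Theorem \ref{thm:topoligicaltd} by realising \emph{both} reductions as reductions of a single exact Courant algebroid over $M_1$. I would set $E=(\IT M_1,H_1)$ and $M=M_1$, take $K_1=T\cF_1\oplus\set{0}$, and transport the $\cQ_2$-reduction back to $M_1$ along the isomorphism $\Phi=\overline\phi\circ\mathrm{e}^{B}\colon(\IT M_1,H_1)\to(\IT M_2,H_2)$ by defining $K_2\coloneqq\Phi^{-1}\big(T\cF'_2\oplus\set{0}\big)$. Since $\Phi^{-1}=\mathrm{e}^{-B}\circ\overline\phi^{-1}$ with $\overline\phi^{-1}=(\phi^{-1})_*+\phi^*$, and $\mathrm{e}^{-B}$ fixes covectors, a short computation gives $K_2=\mathrm{e}^{-B}\big(T\cF_2\oplus\set{0}\big)=\set{X-\iota_X B \, | \, X\in T\cF_2}$, whose anchor image is $\rho_E(K_2)=T\cF_2$. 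Thus $K_1$ and $K_2$ induce exactly the foliations $\cF_1$ and $\cF_2$ with smooth leaf spaces $\cQ_1$ and $\cQ_2$, and $Q(K_1)$, $Q(K_2)$ are the reduction morphisms of Section~\ref{ssec:reductionrelation} populating the two sides of the diagram~\eqref{cd:T-dualitycd}.

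Next I would verify the standing hypotheses of Theorem~\ref{thm:topoligicaltd}. The rank equality $\rk(K_1)=\rk(K_2)$ is the standing assumption~\eqref{eqn:rankK} (equivalently $\dim\cQ_1=\dim\cQ_2$). For basic sections, $K_1^\perp=TM_1\oplus\ann(T\cF_1)$ has enough of them precisely because $\iota_{X_1}H_1=0$ for all $X_1\in\mathsf{\Gamma}(T\cF_1)$, by Example~\ref{eg:twistedCAreduction}. For $K_2$, the subbundle $K'_2=T\cF'_2\oplus\set{0}\subset(\IT M_2,H_2)$ has enough basic sections by the same example, using $\iota_{X_2}H_2=0$; since $\Phi$ is a Courant algebroid isomorphism it is an isometry intertwining the Dorfman brackets, so $\Phi(K_2^\perp)=K_2'^\perp$ and $\Phi$ carries $\mathsf{\Gamma}_{\rm{bas}}(K_2^\perp)$ bijectively onto $\mathsf{\Gamma}_{\rm{bas}}(K_2'^\perp)$, whence $K_2^\perp$ also has enough basic sections (and $\red E{}_2\cong(\IT\cQ_2,\red H{}_2)$ by the consistency $\varpi_2^*\red H{}_2=\phi^*H_2=H_1-\de B$). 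Constant rank of $T\cF_1\cap T\cF_2$ is a hypothesis; as $\cF_1,\cF_2$ are involutive, this makes $T\cF_1\cap T\cF_2$ an involutive distribution of constant rank, so $f=(\varpi_1,\varpi_2)\colon M_1\to\cQ_1\times\cQ_2$ has constant rank (its differential has kernel $T\cF_1\cap T\cF_2$), and its image $C$ is therefore a smooth (immersed) submanifold, embedded in the fibred-product situation of Lemma~\ref{lemma:cdsmoothcirclecomp}.

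The heart of the argument is the intersection computation. An element of $K_1\cap K_2$ is simultaneously of the form $Y+0$ with $Y\in T\cF_1$ and $X-\iota_X B$ with $X\in T\cF_2$; equating tangent and cotangent parts forces $X=Y\in T\cF_1\cap T\cF_2$ and $\iota_X B=0$. Hence, under $X\mapsto X+0$,
\begin{align}
    K_1\cap K_2 \ \cong \ \set{X\in T\cF_1\cap T\cF_2 \, | \, \iota_X B=0} = \ker\big(B|_{T\cF_1\cap T\cF_2}\big) \ ,
\end{align}
where $B|_{T\cF_1\cap T\cF_2}$ denotes the restriction to $T\cF_1\cap T\cF_2$ of the bundle map $B^\flat\colon TM_1\to T^*M_1$. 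By hypothesis this kernel has constant rank, so $K_1\cap K_2$ has constant rank. With all hypotheses of Theorem~\ref{thm:topoligicaltd} in place, that theorem yields that $R=Q(K_2)\circ Q(K_1)^\top$ is a Courant algebroid relation supported on $C$, i.e.\ $(\IT\cQ_1,\red H{}_1)$ and $(\IT\cQ_2,\red H{}_2)$ are T-duality related in the sense of Definition~\ref{def:topologicalTdual}.

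I expect the main obstacle to be organisational rather than analytic, and to lie in the single-algebroid set-up: first, one must check that transporting the second reduction through the $B$-field-and-diffeomorphism isomorphism $\Phi$ really does preserve the ``enough basic sections'' condition, so that $Q(K_2)$ is a genuine reduction morphism; and second, one must match the paper's shorthand $\ker\big(B|_{T\cF_1\cap T\cF_2}\big)$ with the \emph{full}-covector condition $\iota_X B=0$ occurring in $K_1\cap K_2$ (rather than the weaker radical of $B$ on the subbundle). Establishing the smoothness of the support $C$ from constant rank of $T\cF_1\cap T\cF_2$ is the only genuinely geometric point, and is exactly where an embedding/regularity hypothesis---automatic in the fibred-product case via Lemma~\ref{lemma:cdsmoothcirclecomp}---is invoked.
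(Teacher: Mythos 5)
Your proposal is correct and follows essentially the same route as the paper: take $K_1=T\cF_1$ and $K_2=\e^{-B}(T\cF_2)$ inside $(\IT M_1,H_1)$, compute $K_1\cap K_2=\ker\big(B|_{T\cF_1\cap T\cF_2}\big)$, and invoke Corollary \ref{cor:qkbasicH} together with Theorem \ref{thm:topoligicaltd}. Your additional checks (transport of basic sections through $\Phi$, smoothness of $C$, and the clarification that the kernel condition is $\iota_X B=0$ as a full covector) are details the paper leaves implicit, but the argument is the same.
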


\begin{proof}
Take $K_1 = T\cF_1$, and $K_2 = \e^{-B}\left( T\cF_2 \right)$. 
Then
\begin{align*}
    K_1 \cap K_2 = \set{v_1 \in T\cF_1 \cap T\cF_2 \,| \, \iota_{v_1} B = 0}=\ker \big(B|_{T\cF_1 \cap T \cF_2}\big) \ .
\end{align*}
By Corollary \ref{cor:qkbasicH}, there are Courant algebroid relations $Q(K_1) \colon (\IT M_1,H_1) \rel (\IT \cQ_1,\red H{}_1)$ and $Q(K_2) \colon (\IT M_1,H_1) \rel (\IT \cQ_2,\red H{}_2).$ 

By Theorem \ref{thm:topoligicaltd}, $R=Q(K_2)\circ Q(K_1)^\top$ is a Courant algebroid relation.
\end{proof} 
\end{example}

\medskip

\subsection{Geometric T-duality}\label{ssec:geometricTduality}~\\[5pt]
We now introduce geometric data into our picture of T-duality, which amounts to incorporating the Polyakov functionals \eqref{eqn:sigmanorm} into our string sigma-models; the symmetries of $S_0$ are generated by Killing vectors of the metric $g$. We will make the following assumptions.
Let $E$ be an exact Courant algebroid over $M$ endowed with involutive isotropic subbundles $K_1$ and $K_2$ that have the same rank. Suppose they satisfy the conditions to fit the diagram \eqref{cd:T-dualitycd} and suppose that the reduced Courant algebroids $\red E{}_1$ and $\red E{}_2$ are T-duality related, i.e. the constant rank assumption of Theorem \ref{thm:topoligicaltd} holds. Denote the resulting T-duality relation by $R$. We will work under these assumptions for the remainder of this section. 

We begin by endowing the reduced Courant algebroids with generalised metrics.

\begin{definition}\label{defn:Tdual}
    The reduced Courant algebroids $(\red E{}_1, V_1^+)$ and $(\red E{}_2, V_2^+)$ are \emph{geometrically T-dual} if $R$ is a generalised isometry between the generalised metrics $V_1^+$ and $V_2^+$.
\end{definition}

The starting point of any T-duality transformation is the identification of the symmetries of the original background. 

\begin{definition} \label{def:tdualitydirections}
 Let $\red E{}_1$ and $\red E{}_2$ be T-duality related Courant algebroids in the sense of Definition \ref{def:topologicalTdual} and suppose that the unique surjective submersions $\varpi_i \colon M \to \cQ_i$ satisfy the involutivity condition \eqref{eqn:bisubmersioncd} discussed in Remark \ref{rmk:bisubmersions}, for $i=1, \, 2$. The subbundle $D_1 \coloneqq T {\red\cF}{}_1 = \varpi_{1*}(T \cF_2)$ of $T\cQ_1$ is the \emph{distribution of T-duality directions}.\footnote{\label{ftn:singularfol} This definition can be adapted to the case where ${\red\cF}{}_1$ is a singular foliation. Then we consider the T-duality directions to be defined by the locally finitely generated $C^\infty_{\rm c}(\cQ_1)$-module $\mathsf{\Gamma}_{\rm c}(D_1)$ of compactly supported vector fields integrating to ${\red\cF}{}_1.$ We do not explicitly discuss this in the following in order to avoid further technicalities. Invariance with respect to a singular foliation relates to the work of Kotov-Strobl~\cite{Kotov2014}.}
\end{definition} 

Since we may form the relations $Q(K_i)$ for $i=1,2$, by Remark \ref{rmk:existenceadapted} there exists splittings $\sigma_i\colon TM \to E$ of the short exact sequences
\[
\begin{tikzcd}
    0 \arrow{r} & T^*M \arrow{r}{\rho^*} & E \arrow{r}{\rho}\arrow[bend left=40]{l}{\sigma^*_i} & TM \arrow{r} \arrow[bend left=40]{l}{\sigma_i} & 0
\end{tikzcd}
\]
which are adapted to $K_i$, where $\sigma^*_i \coloneqq \sigma_i^{\rm t} \circ \flat_{E}$ is the induced left splitting and $\flat_{E} \colon E \to E^*$ is the isomorphism induced by the pairing. By Remark \ref{rmk:propertiesadapted}, $\sigma_i$ induces a splitting $\red \sigma_i$ of $\red E_i$, giving also a left splitting $\red \sigma^*_i$.

We are now ready to relate the distribution $D_1$ to the symmetries of our initial background and begin to justify our terminology `T-duality directions'.

\begin{remark} \label{rmk:D1inv}
     Let us discuss how we can state an invariance condition of a generalised metric solely involving the tensors $(\red g, \red B)$. Let  ${\red \sigma}_1 \colon T\cQ_1 \to \red E_1$ be a splitting of $\red E_1$ descending from an adapted splitting $\sigma_1$ of $E$, then  $V_1^+$ corresponds to a pair $({\red g}{}_1, {\red b}{}_1)$ under the isomorphism $(\red \sigma_1 + \tfrac{1}{2}\rho^*_{\red E_1})^{-1} \colon \red E_1 \to \IT \cQ_1$, i.e. any element ${\red w}{}_1 \in \mathsf{\Gamma}(V_1^+)$ can be written as
    \begin{align}
        {\red w}{}_1 = {\red Y} + \iota_{\red Y} (\red g{}_1 + \red b{}_1) \ ,
    \end{align}
    for some $\red Y \in \mathsf{\Gamma}(T\cQ_1).$
    Let $\red X \in \sfGamma(D_1)$. Then the generalised metric $(\red g_1, \red b_1)$ is invariant with respect to $\red X$ if 
    \begin{align}\label{eqn:genmetricliederivative}
          \pounds_{\red X}(\red g{}_1 + \red b{}_1) = 0 \ .
    \end{align}
    Note that we can write
    \begin{align}
    \begin{split}
     \llbracket \red X , \red w_1 \rrbracket_{\red H{}_1} 
     &=  [\red X, \red Y]  
      +\iota_{\red Y} (\pounds_{\red X}(\red g{}_1 + \red b{}_1))  + \iota_{[\red X, \red Y]}(\red g{}_1 + \red b{}_1) + {\rm pr}_2(\llbracket \red X , \rho(\red w_1) \rrbracket_{\red H{}_1}) \ , 
      \end{split}
    \end{align}
    where ${\rm pr}_2(\llbracket \red X , \red Y \rrbracket_{\red H{}_1}) = \iota_{\red X} \iota_{\red Y} \red H$. 
    Hence by imposing the condition \eqref{eqn:genmetricliederivative}, we obtain
    \begin{align}\label{eqn:D1invariancecond}
       \llbracket \red X , \red w_1 \rrbracket_{\red H{}_1} - {\rm pr}_2(\llbracket \red X , \rho(\red w_1) \rrbracket_{\red H{}_1}) \in \sfGamma(\gr(\red g_1 + \red b_1)) \, ,
    \end{align}
    and vice versa.  
\end{remark}
Condition \eqref{eqn:D1invariancecond} can be stated for a generalised metric $V_1^+$ on $\red E_1$ without making reference to the isomorphism $(\red \sigma_1 + \tfrac{1}{2}\rho^*_{\red E_1})^{-1}$ as follows. 

\begin{definition} \label{def:D1invariance}
    Let $\red \sigma_1 \colon T\cQ_1 \to \red E{}_1 $ be the splitting of the exact Courant algebroid $\red E{}_1$ over $\cQ_1$ induced by an adapted splitting $\sigma_1 \colon TM \to E$ of $E$.   
    The generalised metric $V_1^+$ on $\red E{}_1$ is \emph{invariant with respect to $D_1$}, or \emph{$D_1$-invariant}, if there exists a Lie subalgebra $\mathsf{Iso}(V^+_1)$ of $\sfGamma(D_1)$ which spans $D_1$ pointwise, and is such that
    \begin{align}\label{eqn:genmetricinvariance}
        \llbracket \red{\sigma}_1  (\red X) , \red w{}_1 \rrbracket_{\red E{}_1} - {\rho}{}_{\red E{}_1}^*\bigl({\red \sigma}^*_1(\llbracket \red{\sigma}_1  (\red X) , \red{\sigma}_1(\rho{}_{\red E{}_1}(\red w{}_1)) \rrbracket_{\red E{}_1}) \bigr) \ \in \ \mathsf{\Gamma}(V_1^+) 
    \end{align}
    for every ${\red X} \in \mathsf{Iso}(V^+_1)$ and $\red w{}_1\in \mathsf{\Gamma}(V_1^+).$ The subalgebra $\mathsf{Iso}(V^+_1)$ is called the \emph{isometry subalgebra} of $D_1$.
\end{definition}

\begin{remark}
    On an exact Courant algebroid $E$ over $M$ with \v{S}evera class $[H]$ and an isotropic splitting $\sigma \colon TM\to E$, for $X,Y,Z\in \mathsf{\Gamma}(TM)$ one has
    \begin{align}
        H(X,Y,Z) = \ip{\llbracket \sigma (X), \sigma(Y)\rrbracket_E, \sigma(Z)}_E &= \iota_{Z}\,\sigma^*(\llbracket \sigma (X), \sigma(Y)\rrbracket_E) \\[4pt] &= \ip{\rho_E^*(\sigma^*(\llbracket \sigma (X), \sigma(Y)\rrbracket_E)), \sigma(Z)}_E \ .
    \end{align}
    Hence the condition \eqref{eqn:genmetricinvariance} has the interpretation of an invariance condition which itself imposes no restrictions on the representative $H$ of the \v{S}evera class; Remark \ref{rmk:compatibilitysplit} below makes this point clearer. 
    Recall that a subbundle $W$ is invariant with respect to an involutive subbundle $K$ if satisfies $\llbracket \sfGamma(K), \sfGamma(W) \rrbracket_E \subset \sfGamma (W)$, which implies $\iota_X H=0,$ for all $X \in \sfGamma(\rho(K))$, see \cite{Severa2019transverse}. 
    Definition \ref{def:D1invariance} is formulated in a way that avoids imposing any conditions on the representative $H$ of the \v{S}evera class of $E$. 
\end{remark}

Next we introduce a notion of when the adapted splittings are compatible.

\begin{definition} \label{def:compatiblesplit}
    Let us assume we have a $D_1$-invariant generalised metric $V_1^+$ on $\red E_1$, with isometry subalgebra $\mathsf{Iso}(V^+_1)$. The adapted splittings $\sigma_1$ and $\sigma_2$ to $K_1$ and $K_2$, respectively, are \emph{compatible} if 
    \begin{align}
        \llbracket (\sigma_1 -\sigma_2) (X) , e \rrbracket_{E} = \rho_E^* \bigl( \sigma^*_1 (\llbracket \sigma_1  (X) , \sigma_1(\rho_E(e)) \rrbracket_{E} )\bigr)
    \end{align}
    for every $X \in (\varpi_1)_*^{-1}(\mathsf{Iso}(V_1^+)) \cap \mathsf{\Gamma}(\rho_E(K_2)) \eqqcolon \mathsf{Iso}(W_1) $ and $e\in \mathsf{\Gamma}(E)$.
\end{definition}

\begin{remark} \label{rmk:compatibilitysplit}
Let $\sigma_1 \colon TM \to E$ be an adapted splitting to $K_1$. Then, under the isomorphism $(\sigma_1 + \tfrac{1}{2} \rho_E^*)^{-1} \colon E \to \IT M$, the image of $K_1$ is $T \cF_1$. Moreover, $\sigma_1$ composed with the  isomorphism is the inclusion of $TM$ into $\IT M$, and yields a representative $H_{\sigma_1}$ of the \v{S}evera class of $E$. Meanwhile, an adapted splitting $\sigma_2$ for $K_2$ induces $H_{\sigma_2}$.
The difference is given by   
\begin{align} \label{eqn:differencesplit}
    (\sigma_1 - \sigma_2)(X) = \rho_E^*(\iota_X B)
\end{align}
with $H_{\sigma_1} - H_{\sigma_2} = \de B$. The adapted splittings $\sigma_1$ and $\sigma_2$ are compatible if
\begin{align}
    \llbracket \iota_X B, Y+ \xi \rrbracket_{H_{\sigma_1}} -  {\rm pr}_2(\llbracket X, Y \rrbracket_{H_{\sigma_1}} ) &= -\iota_Y\, \de\, \iota_X B - \iota_Y\, \iota_X H_{\sigma_1} \\[4pt]
    &= -\iota_Y\, \de\, \iota_X B - \iota_Y\, \iota_X\, \de B = -\iota_Y\, \pounds_X B \ ,
\end{align}
for all $Y + \xi \in \sfGamma(\IT M_1),$ where we use that $\iota_{X} H_{\sigma_2} = \iota_X(H_{\sigma_1} - \de B) = 0$ for the second equality. Thus the splittings are compatible if and only if $\pounds_{X}B = 0$ for every $X\in \mathsf{Iso}(W_1)$.
\end{remark}

We are now ready to state our main result about the existence and uniqueness of geometrically T-dual backgrounds in the sense of Definition~\ref{defn:Tdual} through

\begin{theorem}\label{thm:maingeneral}
    Let $E$ be an exact Courant algebroids over $M$. Suppose $E$ is endowed with involutive isotropic subbundles $K_1$ and $K_2$ of the same rank giving T-duality related Courant algebroids $\red E{}_1$ and $\red E{}_2$ with T-duality relation $R$. Assume further that the bisubmersion involutivity condition \eqref{eqn:bisubmersioncd} holds, and that there are adapted splittings $\sigma_1$ and $\sigma_2$ of $E$ which are compatible. Finally, assume that $\red E{}_1$ is endowed with a $D_1$-invariant generalised metric $V_1^+$. Then the following are equivalent:
    \begin{enumerate}[label = (\roman{enumi})]
        \item\label{item:main1} $K_1 \cap K_2^\perp\subseteq K_2 \ . $ 
        \item\label{item:main2} $K_2\cap K_1^\perp \subseteq K_1 \ . $ 
        \item\label{item:main3} There exists a unique generalised metric $V_2^+$ on $\red E{}_2$ such that $R$ is a generalised isometry between $V_1^+$ and $V_2^+$, i.e. $(\red E{}_1, V_1^+)$ and $(\red E{}_2, V_2^+)$ are geometrically T-dual.
    \end{enumerate}
\end{theorem}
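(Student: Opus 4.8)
The plan is to prove the equivalence by combining a purely linear-algebraic argument for $\ref{item:main1}\Leftrightarrow\ref{item:main2}$ with Proposition~\ref{prop:GIhavetrivialK} for $\ref{item:main3}\Rightarrow\ref{item:main1},\ref{item:main2}$ and an explicit construction for $\ref{item:main1}\Rightarrow\ref{item:main3}$. For the first part, non-degeneracy of the pairing gives the pointwise identity $\dim(K_1\cap K_2^\perp)_m=\rk(K_1)-\rk(K_2)+\dim(K_1^\perp\cap K_2)_m$, so the hypothesis $\rk(K_1)=\rk(K_2)$ forces $\dim(K_1\cap K_2^\perp)_m=\dim(K_1^\perp\cap K_2)_m$. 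Since each $K_i$ is isotropic, $K_1\cap K_2\subseteq K_1\cap K_2^\perp$ and $K_1\cap K_2\subseteq K_1^\perp\cap K_2$ always hold, so \ref{item:main1} is equivalent to $\dim(K_1\cap K_2^\perp)_m=\dim(K_1\cap K_2)_m$ and \ref{item:main2} to $\dim(K_1^\perp\cap K_2)_m=\dim(K_1\cap K_2)_m$; these two right-hand conditions coincide, giving $\ref{item:main1}\Leftrightarrow\ref{item:main2}$. For $\ref{item:main3}\Rightarrow\ref{item:main1},\ref{item:main2}$ I would apply Proposition~\ref{prop:GIhavetrivialK} to $R$ in the form \eqref{eqn:T-dualRelation}: the subbundle of $\red E{}_1$ defined by Equation~\eqref{eqn:subbundleK1} is $\natural_1(K_2\cap K_1^\perp)$ and that defined by Equation~\eqref{eqn:subbundleK2} is $\natural_2(K_1\cap K_2^\perp)$, and their vanishing reads $K_2\cap K_1^\perp\subseteq\ker\natural_1=K_1$ and $K_1\cap K_2^\perp\subseteq\ker\natural_2=K_2$, which are exactly \ref{item:main2} and \ref{item:main1}.

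For the substantial direction $\ref{item:main1}\Rightarrow\ref{item:main3}$ I would first lift $V_1^+$. By Theorem~\ref{thm:qktgi} there is a unique $K_1$-transverse generalised metric $W_1$ on $E$ with $\natural_1(W_1)=V_1^+$; writing $W_1^\perp$ for its annihilator, one has $K_1\subseteq W_1,W_1^\perp\subseteq K_1^\perp$ with $W_1\cap W_1^\perp=K_1$ and $W_1+W_1^\perp=K_1^\perp$. The candidate dual metric is $V_2^+\coloneqq\natural_2(W_1\cap K_2^\perp)$, with $V_2^-\coloneqq\natural_2(W_1^\perp\cap K_2^\perp)$. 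Isotropy of $K_2$ together with positivity of $\langle\,\cdot\,,\,\cdot\,\rangle$ on $W_1/K_1$ gives $W_1\cap K_2=K_1\cap K_2=W_1^\perp\cap K_2$, and a short rank count using $\rk K_1=\rk K_2$ and the constancy of $\rk(K_1\cap K_2)$ from Theorem~\ref{thm:topoligicaltd} shows that $W_1\cap K_2^\perp$ has constant rank and that $\rk V_2^+=\tfrac12\rk(\red E{}_2)=\dim\cQ$. Condition \ref{item:main1} enters precisely at positivity: for $w\in W_1\cap K_2^\perp$ one has $\langle\natural_2(w),\natural_2(w)\rangle=\langle w,w\rangle\geq 0$ with equality iff $w\in K_1$, while $\natural_2(w)=0$ forces $w\in K_1\cap K_2^\perp\subseteq K_2$; hence $V_2^+$ is positive-definite, the same computation makes $V_2^-$ negative-definite, and $\langle V_2^+,V_2^-\rangle=0$ identifies $V_2^-=(V_2^+)^\perp$, so $V_2^+$ is a generalised metric.

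The crux, and the step I expect to be hardest, is to show that $V_2^+$ is a genuine smooth subbundle of $\red E{}_2$, which amounts to verifying the reduction hypothesis of Proposition~\ref{prop:reducedKsubbundle} for $W=W_1$ relative to $K_2$, namely $\llbracket\mathsf{\Gamma}(K_2),\mathsf{\Gamma}(W_1\cap K_2^\perp)\rrbracket\subseteq\mathsf{\Gamma}(W_1+K_2)$. Here $W_1$ is $K_1$-invariant by construction but not $K_2$-invariant, so I would test the bracket on a spanning set $K_2=\sigma_2(\rho(K_2))$ (Lemma~\ref{lemma:injectiverho}), splitting $\rho(K_2)$ into the $\rho(K_1)\cap\rho(K_2)$ directions, controlled by $K_1$-invariance of $W_1$, and the remaining T-duality directions, controlled by the $D_1$-invariance of $V_1^+$ (Definition~\ref{def:D1invariance}) lifted to $E$ through the adapted splitting $\sigma_1$. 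Writing $\sigma_2=\sigma_1-\rho_E^*\circ B^\flat$ as in \eqref{eqn:differencesplit} and invoking compatibility of the splittings, the difference of the two invariance conditions collapses to a term proportional to $\pounds_X B$, exactly as in Remark~\ref{rmk:compatibilitysplit}, which vanishes on $\mathsf{Iso}(W_1)$; this is the only place the compatibility hypothesis is used. The delicate bookkeeping is ensuring the residual $\rho_E^*$-terms land in $W_1+K_2$ and that $K_1$-invariance and $D_1$-invariance jointly cover all of $\rho(K_2)$.

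It then remains to check that $R$ is a generalised isometry and to prove uniqueness. For the former I would use Definition~\ref{defn:generalisedisometry2}: condition \ref{item:main1} yields $\dim(K_1\cap K_2^\perp)=\dim(K_1\cap K_2)$, whence a dimension count gives $K_1^\perp\cap K_2^\perp=(W_1\cap K_2^\perp)+(W_1^\perp\cap K_2^\perp)$. Thus every $e\in K_1^\perp\cap K_2^\perp$ splits as $e=w+w'$ with $w\in W_1\cap K_2^\perp$ and $w'\in W_1^\perp\cap K_2^\perp$, so that $(\natural_1 e,\natural_2 e)=(\natural_1 w,\natural_2 w)+(\natural_1 w',\natural_2 w')$ lies in $(R\cap\mathcal V^+)\oplus(R\cap\mathcal V^-)$ with $\mathcal V^\pm=V_1^\pm\times V_2^\pm$, which is the decomposition \eqref{eqn:Rdecomp}. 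Finally, \ref{item:main2} gives $K_1^\perp\cap K_2=K_1\cap K_2$, from which a rank count shows ${\rm pr}_2(R)=\natural_2(K_1^\perp\cap K_2^\perp)=\red E{}_2$, so Corollary~\ref{cor:GMisunique} yields uniqueness of $V_2^+$, closing the equivalence.
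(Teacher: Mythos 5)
Your proposal is correct, and while it shares the skeleton of the paper's argument — the rank-counting equivalence of \ref{item:main1} and \ref{item:main2}, Proposition~\ref{prop:GIhavetrivialK} for \ref{item:main3}$\Rightarrow$\ref{item:main1}, the lift of $V_1^+$ to the $K_1$-transverse generalised metric $W_1$ via Theorem~\ref{thm:qktgi}, and Corollary~\ref{cor:GMisunique} for uniqueness — it takes a genuinely different route through the key construction. The paper first proves a separate splitting result (Proposition~\ref{prop:splittingexists}) producing a lift $\smash{\widetilde W_1^+}\subseteq K_1^\perp\cap K_2^\perp$ unique up to $K_1\cap K_2$, builds the pre-$K_2$-transverse generalised metric $W_2=\smash{\widetilde W_1^+}\oplus K_2$ on $E$, and then verifies the generalised isometry by passing through the transverse-generalised-isometry formalism (two applications of Theorem~\ref{thm:qktgi} plus the rank condition~\eqref{eqn:rankcondition} for composing them). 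You instead define $V_2^+\coloneqq\natural_2(W_1\cap K_2^\perp)$ directly — which agrees with the paper's $\natural_2(W_2)$ since condition~\ref{item:main1} gives $W_1\cap K_2^\perp=\smash{\widetilde W_1^+}\oplus(K_1\cap K_2)$ — and check the isometry via the eigenbundle decomposition of Definition~\ref{defn:generalisedisometry2}, using the rank identity $K_1^\perp\cap K_2^\perp=(W_1\cap K_2^\perp)+(W_1^\perp\cap K_2^\perp)$ to split each $(\natural_1(e),\natural_2(e))\in R$. This bypasses Proposition~\ref{prop:splittingexists} and the composition machinery of Section~\ref{ssec:transisomcomposition} entirely, at the cost of not exhibiting the intermediate transverse generalised metric $W_2$ on $E$ (which the paper reuses, e.g.\ for the Buscher rules in Remark~\ref{rmk:buscherrules}). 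The one step you leave as a sketch is exactly the step the paper spends most of its proof on: the invariance $\llbracket\mathsf{\Gamma}(K_2),\mathsf{\Gamma}(W_1\cap K_2^\perp)\rrbracket\subseteq\mathsf{\Gamma}(W_1+K_2)$ needed for $V_2^+$ to be a smooth subbundle. Your identification of the ingredients there is accurate — compatibility of $\sigma_1,\sigma_2$ reducing to $\pounds_XB=0$ on $\mathsf{Iso}(W_1)$, $D_1$-invariance handling the directions transverse to $K_1\cap K_2$, and $K_1$-invariance of $W_1$ together with involutivity of $K_2$ handling the rest — but note the paper needs one further observation you should not omit: since $\sigma_2(X_i)\in\mathsf{\Gamma}(K_2)$ and $w_j\in\mathsf{\Gamma}(K_1^\perp\cap K_2^\perp)$, Equation~\eqref{eqn:basicprop1} forces the residual $K_1$-component of $\llbracket\sigma_2(X_i),w_j\rrbracket_E$ into $K_2^\perp\cap K_1=K_1\cap K_2$, which is what ultimately lands the bracket in $W_1+K_2$ rather than merely in $W_1+K_1$.
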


\begin{remark}
We split the proof in three parts, beginning with Lemma \ref{lemma:conditionsonPhi} below which establishes the implications
\begin{align}\label{eqn:thmimplications}
    \mathrm{\ref{item:main1}} \Longleftrightarrow \mathrm{\ref{item:main2}} \Longleftarrow \mathrm{\ref{item:main3}} \ .
\end{align}
The final implication requires the construction of the generalised metric $V_2^+$ on $\red E{}_2$. The first step in this construction is to show that one can lift $V_1^+$ to \smash{$\widetilde W_1^+\subset K_1^\perp \cap K_2^\perp$}. Proposition \ref{prop:splittingexists} below establishes this result.
The remainder of the proof is devoted to showing that the induced pre-$K_2$-transverse generalised metric $W_2 \coloneqq K_2 \oplus \widetilde W_1^+$ is a $K_2$-transverse generalised metric, and hence reduces to a generalised metric $V_2^+$.
\end{remark}

\begin{lemma}\label{lemma:conditionsonPhi}
    Suppose that $E$ is an exact Courant algebroid over $M$ such that $\red E{}_1$ and $\red E{}_2$ are T-duality related with T-duality relation $R$. Then the implications \eqref{eqn:thmimplications} hold.    
\end{lemma}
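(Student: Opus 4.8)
The plan is to treat the three statements fibrewise and to cleanly separate the purely linear-algebraic equivalence $\ref{item:main1}\Leftrightarrow\ref{item:main2}$ from the place where the generalised isometry hypothesis $\ref{item:main3}$ actually enters.

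First I would prove $\ref{item:main1}\Leftrightarrow\ref{item:main2}$ using only non-degeneracy of the Courant pairing and the equal-rank hypothesis \eqref{eqn:rankK}. Since $K_1$ and $K_2$ are isotropic we have $K_i\subseteq K_i^\perp$, so the inclusion $K_1\cap K_2\subseteq K_1\cap K_2^\perp$ is automatic; hence $\ref{item:main1}$, namely $K_1\cap K_2^\perp\subseteq K_2$, is equivalent to the equality $K_1\cap K_2^\perp=K_1\cap K_2$, and likewise $\ref{item:main2}$ is equivalent to $K_2\cap K_1^\perp=K_1\cap K_2$. For finite-dimensional fibres these nested equalities are equivalent to the dimensional conditions $\dim(K_1\cap K_2^\perp)=\dim(K_1\cap K_2)$ and $\dim(K_2\cap K_1^\perp)=\dim(K_1\cap K_2)$, respectively. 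The key computation is then $\dim(K_1\cap K_2^\perp)=\dim(K_2\cap K_1^\perp)$: applying $(A\cap B)^\perp=A^\perp+B^\perp$ together with $\dim A+\dim A^\perp=\rk(E)$ gives $\dim(K_1\cap K_2^\perp)=\rk(K_1)-\rk(K_2)+\dim(K_1^\perp\cap K_2)$, which collapses to $\dim(K_1^\perp\cap K_2)$ precisely because $\rk(K_1)=\rk(K_2)$. Thus $\ref{item:main1}$ and $\ref{item:main2}$ are each equivalent to the same condition, and the equivalence follows.

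Next I would derive $\ref{item:main3}\Rightarrow\ref{item:main1},\ref{item:main2}$ by feeding $R$ directly into Proposition \ref{prop:GIhavetrivialK}. Writing $R=\set{(\natural_1(e),\natural_2(e))\, | \, e\in K_1^\perp\cap K_2^\perp}$, I would identify the two ``kernel'' subbundles of that proposition. At a point $c=(\varpi_1(m),\varpi_2(m))$ of the support, an element $(\natural_1(e),\natural_2(e))$ lies in $\red E{}_1\times\set{0}$ iff $\natural_2(e)=0$, i.e. iff $e\in K_2$ (since $\ker(\natural_2|_{K_2^\perp})=K_2$ and $K_2\subseteq K_2^\perp$); hence this kernel equals $\natural_1(K_1^\perp\cap K_2)$. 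Symmetrically, the kernel sitting in $\set{0}\times\red E{}_2$ equals $\natural_2(K_1\cap K_2^\perp)$. Because $R$ is a generalised isometry, Proposition \ref{prop:GIhavetrivialK} forces both kernels to vanish, and using $\ker(\natural_1|_{K_1^\perp})=K_1$ and $\ker(\natural_2|_{K_2^\perp})=K_2$ these read exactly $K_1^\perp\cap K_2\subseteq K_1$ and $K_1\cap K_2^\perp\subseteq K_2$, i.e. $\ref{item:main2}$ and $\ref{item:main1}$.

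The dimension count and the orthogonal-complement identities are genuinely routine, so the only place demanding care is the bookkeeping in the last step: correctly intersecting $R$ with the two coordinate subbundles and then translating the vanishing conclusion of Proposition \ref{prop:GIhavetrivialK} through the kernels of $\natural_i|_{K_i^\perp}$. I would stress that everything here is fibrewise, so no constant-rank hypothesis on $K_1\cap K_2^\perp$ is required for this lemma, and that the equal-rank assumption \eqref{eqn:rankK} is used only in the first step, which is precisely where the symmetry between $\ref{item:main1}$ and $\ref{item:main2}$ originates. I would not attempt to construct $V_2^+$ at this stage, since the converse implication $\ref{item:main2}\Rightarrow\ref{item:main3}$ is treated separately in Proposition \ref{prop:splittingexists} and the remainder of the proof of Theorem \ref{thm:maingeneral}.
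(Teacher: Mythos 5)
Your proposal is correct and follows essentially the same route as the paper: the equivalence of (i) and (ii) via a pointwise rank count exploiting non-degeneracy of the pairing and $\rk(K_1)=\rk(K_2)$, and the implication from (iii) via Proposition \ref{prop:GIhavetrivialK} applied to the explicit form $R=\set{(\natural_1(e),\natural_2(e))\,|\,e\in K_1^\perp\cap K_2^\perp}$ with $\ker(\natural_i|_{K_i^\perp})=K_i$. Your remark that the argument is fibrewise and needs no constant-rank hypothesis is accurate, though the paper's invocation of constant rank is harmless.
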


\begin{proof}
To show that \ref{item:main1} implies \ref{item:main2}, we note that 
\begin{align}
 K_2^\perp \cap K_1 \subseteq K_2   \implies K_2^\perp \cap K_1 \subseteq K_1 \cap K_2 \ .
\end{align}
Since $K_2^\perp \supset K_2$, it follows that 
\begin{align}
    K_2^\perp \cap K_1 = K_2 \cap K_1 \ .
\end{align}
By Theorem \ref{thm:topoligicaltd}, $K_2 \cap K_1$ has constant rank. 

Using $\rk(K_1) = \rk(K_2)$ (condition \eqref{eqn:rankK}), we then obtain
\begin{align}
    \rk\big(K_1 \cap K_2^\perp\big) &= \rk\big(K_1\big) + \rk(K_2^\perp) - \rk\big(K_1 + K_2^\perp\big)\\[4pt]
    &= \rk\big( K_2 \big) + \rk(K_2^\perp) - \big(\rk(E_2) - \rk( K_1^\perp \cap K_2)\big) \\[4pt]
    &=\rk(E_2) - \rk(E_2) + \rk\big( K_1^\perp \cap K_2)\big) =
    \rk\big( K_1^\perp \cap K_2)\big) \ .
\end{align}
Hence
\begin{align}
     \rk\big( K_1 \cap K_2 \big) = \rk\big( K_1 \cap K_2^\perp\big) = \rk\big( K_1^\perp \cap K_2\big) \ . 
\end{align}
One always has $K_1^\perp\cap K_2 \supseteq K_1 \cap K_2$. Since their ranks are the same, this becomes an equality. In particular, we obtain condition \ref{item:main2}.

The converse implication \ref{item:main2}$\implies$\ref{item:main1} follows from the same argument.

Finally, suppose that \ref{item:main1} does not hold. Then there is an element $k_1\in K_1 \cap K_2^\perp$ such that $k_1 \notin K_2$. Then by Proposition \ref{prop:GIhavetrivialK}, $R$ cannot be a generalised isometry.
\end{proof}

\begin{proposition}\label{prop:splittingexists}
    Suppose that $E$ is an exact Courant algebroid over $M$ endowed with involutive isotropic subbundles $K_1$ and $K_2$ of the same rank, such that $\red E{}_1$ and $\red E{}_2$ are T-duality related.
    If either of the equivalent conditions \ref{item:main1} or \ref{item:main2} of Theorem \ref{thm:maingeneral} holds,
    then there exists a splitting $s_0 \colon K_1^\perp / K_1 \to K_1^\perp$ of the short exact sequence
    \begin{align}\label{eqn:Kexactsequence}
        0 \longrightarrow K_1 \longrightarrow K_1^\perp \longrightarrow K_1^\perp / K_1 \longrightarrow 0
    \end{align}
    such that $\mathrm{im}(s_0) \subseteq K_2^\perp$, which is unique up to elements of $K_1 \cap K_2$.
\end{proposition}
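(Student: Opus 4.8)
The plan is to realise the splitting through its image: a splitting $s_0$ of \eqref{eqn:Kexactsequence} is the same data as a subbundle $C \coloneqq \mathrm{im}(s_0)$ complementary to $K_1$ inside $K_1^\perp$, i.e.\ $K_1^\perp = K_1 \oplus C$, and the extra requirement $\mathrm{im}(s_0)\subseteq K_2^\perp$ says precisely that $C \subseteq K_1^\perp \cap K_2^\perp = (K_1+K_2)^\perp$. So the whole statement reduces to producing such a complement lying inside $(K_1+K_2)^\perp$ and analysing its ambiguity.

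First I would fix ranks and extract the key algebraic consequence of the hypothesis. Write $\rk(K_1)=\rk(K_2)=k$ (by \eqref{eqn:rankK}), $\rk(E)=n$, and $r\coloneqq\rk(K_1\cap K_2)$, which is constant since $\red E{}_1$ and $\red E{}_2$ are T-duality related (Theorem \ref{thm:topoligicaltd}). The rank formula then gives $\rk\big((K_1+K_2)^\perp\big)=n-2k+r$, so $V\coloneqq K_1^\perp\cap K_2^\perp$ is a subbundle of constant rank. The hypothesis yields the identity $K_1\cap K_2^\perp = K_1\cap K_2$: condition \ref{item:main1} gives $K_1\cap K_2^\perp\subseteq K_2$, hence $K_1\cap K_2^\perp\subseteq K_1\cap K_2$, while the reverse inclusion is automatic from $K_2\subseteq K_2^\perp$. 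Since $K_1\subseteq K_1^\perp$, this gives $K_1\cap V = K_1\cap K_2^\perp = K_1\cap K_2$, sitting inside $V$ with constant rank $r$.

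I would then take $C$ to be a smooth complement of $K_1\cap K_2$ inside $V$ — such a complement exists, e.g.\ by equipping $E$ with a fibre metric and taking the orthogonal complement of $K_1\cap K_2$ in $V$ — so that $\rk(C)=(n-2k+r)-r=n-2k$ and $C\cap(K_1\cap K_2)=0$. To verify $K_1^\perp=K_1\oplus C$, note that $K_1\cap C\subseteq (K_1\cap V)\cap C=(K_1\cap K_2)\cap C=0$, while $\rk(K_1+C)=k+(n-2k)=n-k=\rk(K_1^\perp)$ together with $K_1+C\subseteq K_1^\perp$ forces equality. The associated splitting $s_0$ then has $\mathrm{im}(s_0)=C\subseteq V\subseteq K_2^\perp$, as required.

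For uniqueness, given two such splittings $s_0,s_0'$ the difference $s_0-s_0'$ is a bundle map $K_1^\perp/K_1\to K_1^\perp$ whose values lie in $\ker(\pi)=K_1$ (both project to the same class) and in $K_2^\perp$ (both images are contained there), hence in $K_1\cap K_2^\perp=K_1\cap K_2$; this is exactly the asserted ambiguity. I do not anticipate a serious obstacle: once the hypothesis is converted into the identity $K_1\cap K_2^\perp=K_1\cap K_2$ the argument is fibrewise linear algebra, and the only point needing care is the constant rank of the complement $C$, which is guaranteed by the constancy of $\rk(K_1\cap K_2)$ (the T-duality-relatedness assumption) and the rank formula for $(K_1+K_2)^\perp$.
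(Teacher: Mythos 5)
Your proof is correct, but it takes a genuinely different route from the paper's. The paper starts from an \emph{arbitrary} splitting $s$ of \eqref{eqn:Kexactsequence} and corrects it: it forms the pairing-induced maps $\bar\beta_0 \colon K_1^\perp/K_1 \to \big(K_2/(K_2\cap K_1)\big)^*$ and $\bar\beta \colon K_1 \to \big(K_2/(K_2\cap K_1)\big)^*$, proves $\bar\beta$ is surjective by computing $\ker(\bar\beta) = K_2^\perp\cap K_1 = K_2\cap K_1$ and counting ranks, lifts $\bar\beta_0$ through $\bar\beta$ to get $\alpha\colon K_1^\perp/K_1\to K_1$, and sets $s_0 = s-\alpha$; uniqueness is then traced back separately to the ambiguity in $s$ and in $\alpha$. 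You instead construct the image directly: you identify the splitting with a complement $C$ of $K_1$ in $K_1^\perp$ lying inside $V = K_1^\perp\cap K_2^\perp = (K_1+K_2)^\perp$, take $C$ to be any smooth complement of $K_1\cap V = K_1\cap K_2$ in $V$, and check $K_1^\perp = K_1\oplus C$ by the rank count $\rk(C) = n-2k$. Both arguments hinge on the same algebraic consequence of the hypothesis, namely $K_1\cap K_2^\perp = K_1\cap K_2$ together with the constancy of $\rk(K_1\cap K_2)$, but your construction is more transparent about the role of $K_1^\perp\cap K_2^\perp$ and yields the uniqueness statement in one line (the difference of two admissible splittings lands in $K_1\cap K_2^\perp$). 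What the paper's deformation argument buys is an explicit recipe for modifying a \emph{given} splitting, which is the form in which the result is actually used in the proof of Theorem \ref{thm:maingeneral}, where a prescribed lift $\widetilde W_1^+$ of $V_1^+$ must be moved into $K_1^\perp\cap K_2^\perp$. Your argument is complete as stated; the only points requiring care — smoothness of the complement and constancy of the relevant ranks — are both handled correctly via the auxiliary fibre metric and the T-duality-relatedness assumption.
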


\begin{proof}
Take an arbitrary splitting $s \colon K_1^\perp / K_1 \to K_1^\perp$ of the short exact sequence \eqref{eqn:Kexactsequence}. If $\mathrm{im}(s) \nsubseteq K_2^\perp$, then there exists $[e]\in K_1^\perp / K_1$ and $k_2\in K_2$ such that $\ip{s([e]),k_2}_{E} \neq 0$. We define the vector bundle map $\beta_s$ over the diffeomorphism $\phi$ by 
\begin{align}
    \beta_s = \flat_{E} \circ s\colon K_1^\perp / K_1 \longrightarrow K_2^* \ ,
\end{align} 
where $\flat_{E} \colon E \to E^*$ is the isomorphism induced by the pairing $\ip{\, \cdot \, , \, \cdot \,}_{E}$ on $E$:
\begin{align}
    \iota_{k_2}\,\beta_s([e_1]) \coloneqq \ip{s([e_1]), k_2}_{E} \ ,
\end{align}
for $[e_1]\in K_1^\perp/K_1$ and $k_2\in K_2$. 

If $k_2\in K_2 \cap K_1$, then
\begin{align}
    \ip{s([e_1]), k_2}_{E} = 0
\end{align}
since $s([e_1]) \in K_1^\perp$. Thus we can further define a map $\bar \beta_0$ by quotienting out this subspace:
\begin{gather}
    \bar \beta_0 \colon K_1^\perp / K_1 \longrightarrow \big (K_2 /(K_2\cap K_1) \big )^* \ ,  \qquad
    \iota_{[k_2]} \bar \beta_0([e_1]) \coloneqq \ip{s([e_1]), k_2}_{E} \ ,
\end{gather}
for $[e_1]\in K_1^\perp/K_1$ and $[k_2]\in K_2/(K_2\cap K_1)$. 

Similarly we define 
\begin{gather}
    \bar \beta \colon  K_1 \longrightarrow \big (K_2 /(K_2\cap K_1) \big )^*\ , 
    \qquad \iota_{[k_2]}\bar \beta(k_1) \coloneqq \ip{k_1, k_2}_{E} \ ,
\end{gather}
for $k_1\in K_1$ and $[k_2]\in K_2/(K_2\cap K_1)$, which is again well-defined.

We show that $\bar \beta$ is surjective: the kernel of $\bar \beta$ is given by
\begin{align}\label{eqn:barbetaker}
    \ker (\bar \beta) = \set{k_1\in K_1 \, | \, \ip{k_1,k_2}_{E} = 0 \ \text{for all} \ k_2 \in K_2} = K_2^\perp \cap K_1 \ .
\end{align}
By assumption, $K_2^\perp \cap K_1 \subseteq K_2$, hence\footnote{See the start of the proof of Lemma \ref{lemma:conditionsonPhi}.}
\begin{align}
    \ker(\bar \beta) = K_2^\perp \cap K_1 = K_2 \cap K_1 \ .
\end{align}
Recalling that $K_2 \cap K_1$ has constant rank and that $\rk(K_1) = \rk(K_2)$, it thus follows that
\begin{align}
    \rk(\bar \beta) &= \rk(K_1) - \rk\big(K_2 \cap K_1\big) \\[4pt]
    &= \rk(K_2) - \rk\big(K_1 \cap K_2\big)
    = \rk \big( (K_2 /(K_2\cap K_1) )^*\big) \ .
\end{align}

Hence $\bar \beta $ is onto, and we can find a map $\alpha \colon K_1^\perp / K_1 \to K_1$ which fits into the diagram
\begin{equation}\label{cd:alphacd}
\begin{tikzcd}[column sep=-1.5em]
    K_1^\perp / K_1 \arrow[dashed]{rr}{\alpha} \arrow[swap]{dr}{\bar \beta_0} & & K_1 \arrow{dl}{\bar \beta} \\[10pt]
    &  \big (K_2 /(K_2\cap K_1) \big )^*
\end{tikzcd}
\end{equation}
We can now define a new splitting $s_0 \colon K_1^\perp / K_1 \to K_1^\perp $ of the short exact sequence \eqref{eqn:Kexactsequence} by
\begin{align}\label{eqn:newsplitting}
    s_0 \coloneqq s - \alpha \ .
\end{align}
It follows that $\text{im}(s_0) \subseteq K_2^\perp$, since for every $k_2 \in K_2$ and $[e_1] \in K_1^\perp / K_1$ we can compute
\begin{align}
    \ip{s_0([e_1]), k_2}_{E} &= \ip{s([e_1]), k_2}_{E} - \ip{\alpha([e_1]), k_2}_{E} \\[4pt]
    &= \iota_{[k_2]}\bar \beta_0([e_1]) - \iota_{[k_2]}\bar \beta (\alpha([e_1])) \\[4pt]
    &= \iota_{[k_2]}\bar \beta_0([e_1]) - \iota_{[k_2]} \bar \beta_0 ([e_1]) = 0 \ .
\end{align}

Hence the new splitting is a map $s_0 \colon K_1^\perp / K_1 \to K_2^\perp\cap K_1^\perp$.
To show uniqueness up to elements of $K_2 \cap K_1$, note that there are two sources of non-uniqueness for $s_0$: one from the choice of the initial splitting $s \colon K_1^\perp / K_1 \to K_1^\perp$, and one from the choice of $\alpha$ fitting the diagram \eqref{cd:alphacd}.

Since the map $\alpha$ is given by the right inverse of $\bar \beta$, it is unique up to elements of the kernel of $\bar \beta$, which by Equation \eqref{eqn:barbetaker} is $K_2 \cap K_1$. 
That is, if $\alpha$ and $\alpha'$ close the diagram \eqref{cd:alphacd}, then there are induced splittings $s_0$ and $s_0'$ respectively defined by Equation \eqref{eqn:newsplitting}, and
\begin{align}
    s_0 - s_0' = \alpha - \alpha ' \colon K_1^\perp / K_1 \longrightarrow K_2 \cap K_1 \ .
\end{align}

Finally, suppose we started with different splittings $s,s'\colon K_1^\perp/ K_1 \to K_1^\perp$. Using the above construction, we obtain splittings $s_0, s_0' \colon K_1^\perp/ K_1 \to K_2^\perp \cap K_1^\perp$. These splittings differ by elements of $K_1$, hence
\begin{align}
    s_0 - s_0' \colon K_1^\perp / K_1 \longrightarrow K_2^\perp \cap K_1 = K_1 \cap K_2 \ ,
\end{align}
giving the required uniqueness.
\end{proof}

\begin{proof}[\textbf{Proof of Theorem \ref{thm:maingeneral}}]
Assume that condition \ref{item:main1} of Theorem \ref{thm:maingeneral} holds. 
By Theorem~\ref{thm:qktgi}, there is a unique $K_1$-transverse generalised metric $W_1$ on $E$ such that $Q(K_1)$ is a regular transverse generalised isometry between $W_1$ and $V_1^+$, using an arbitrary lift \smash{$\widetilde{W}_1^+$} of $W_1^+ = W_1/ K_1$ to $E$. From Proposition~\ref{prop:splittingexists} it follows that we can choose \smash{$\widetilde W_1^+$} such that \smash{$\widetilde W_1^+ \subseteq K_2^\perp \cap K_1^\perp$}. Set \smash{$\widetilde W_2^+ \coloneqq \widetilde W_1^+ \subseteq K_2^\perp$}. By the uniqueness of the splitting $s \colon K_1^\perp / K_1 \to K_1^\perp$ up to elements in $K_1\cap K_2$, this choice of \smash{$\widetilde W_2^+$} is unique up to elements of $K_1 \cap K_2$. 

Thus we may uniquely define the bundle
\begin{align}\label{eqn:transmetric2}
     W_2 = \widetilde W_2^+ \oplus K_2 \ .
\end{align}
The sum is direct: if $k_2\in \widetilde W_2^+\cap K_2$, then $k_2 \in \widetilde W_1^+$. Since $\widetilde W_1^+ \cap K_1 = \set{0}$, it follows that $k_2 \notin K_1$. But this contradicts condition \ref{item:main2} unless $k_2=0$.

It follows that $W_2$ is a pre-$K_2$-transverse generalised metric on $E$. This can be seen by noting firstly that $K_2 \subset W_2 \subset K_2^\perp$, since \smash{$\widetilde W_1^+ \subset K_2^\perp$}. Moreover, for every $w_2\in W_2$ such that $w_2\notin K_2$, 
it follows that \smash{$w_2\in \widetilde W_2^+$}.  
It then follows that $ \ip{w_2,w_2}_{E} > 0$. 

From Theorem~\ref{thm:qktgi} it follows that $ Q(K_1)^\top $ is a transverse generalised isometry between $V_1^+$ and~$W_2$. 

We shall now show that $W_2$ is a $K_2$-transverse generalised metric which descends to the quotient. Since $V_1^+$ is $D_1$-invariant, for every $\red w \in \sfGamma(V_1^+)$ and $\red X \in \mathsf{Iso}(V_1^+)$ it follows by definition that
\begin{align}\label{eqn:Vinvariantagain}
    \llbracket \red{\sigma}_1  (\red X) , \red w_1 \rrbracket_{\red E{}_1} - {\rho}{}_{\red E_1}^*\bigl({\red \sigma}^*_1(\llbracket \red{\sigma}_1  (\red X) , \red{\sigma}_1(\rho{}_{\red E_1}(\red w_1)) \rrbracket_{\red E{}_1}) \bigr) \ \in \ \mathsf{\Gamma}(V_1^+) \ .
\end{align}

Take a subset of sections $\set{\red w{}_j}$ spanning $V_1^+$ pointwise and $\set{\red X_i} \subset \mathsf{Iso}(V^+_1)$ spanning $D_1$ pointwise. The former lifts to a set of basic sections $\set{w_j}$ spanning \smash{$\widetilde W_1^+$} pointwise, the latter lifts to a set of projectable vector fields $\set{X_i} \subset \mathsf{Iso}(W_1)$ spanning $T\cF_2$ pointwise.
Since $\sigma_1$ and $\sigma_2$ are compatible, it follows that
\begin{align}\label{eqn:compatiblesplittingrearrange}
    \llbracket \sigma_2 (X_i) , w_j \rrbracket_{E} = \llbracket \sigma_1 (X_i) , w_j \rrbracket_{E} - \rho_{E}^* \bigl( \sigma^*_1 (\llbracket \sigma_1  (X_i) , \sigma_1(\rho_{E}(w_j)) \rrbracket_{E} )\bigr) \ .
\end{align}

If $X\in \sfGamma(TM_1)$ is projectable, then $\sigma_1(X)$ is a basic section, since $\sigma_1$ is an adapated splitting. 
Thus since the sections $\sigma(X_i)$ and $w_j$ are basic, the right-hand side of Equation~\eqref{eqn:compatiblesplittingrearrange} is basic\footnote{Recall that $\llbracket \sfGamma_{\mathrm{bas}}(K_1^\perp), \sfGamma_{\mathrm{bas}}(K_1^\perp) \rrbracket_{E_1} \subset \sfGamma_{\mathrm{bas}}(K_1^\perp)$.} and descends to the quotient; by Equation~\eqref{eqn:Vinvariantagain}, it moreover descends to a section of $V_1^+$. 
It follows that the right-hand side of Equation~\eqref{eqn:compatiblesplittingrearrange} is a section of $W_1$, hence \smash{$\llbracket \sigma_2 (X_i) , w_j \rrbracket_{E}\in \sfGamma(\widetilde W_1^+) \oplus \sfGamma(K_1)$}.

We next show that in fact $\llbracket \sigma_2 (X_i) , w_j \rrbracket_{E} \in \sfGamma(\widetilde W_1^+ \oplus K_1 \cap K_2)$. Suppose that $\llbracket \sigma_2 (X_i) , w_j \rrbracket_{E} \in \sfGamma(K_1)$. Recall that $X_i\in \mathsf{\Gamma}(\rho_{E}(K_2))$ 
and $\sigma_2$ is the adapted splitting to $K_2$. Therefore $\sigma_2(X_i) \in \mathsf{\Gamma}(K_2)$, and since $w_j\in \mathsf{\Gamma}(K_1^\perp\cap K_2^\perp)$ it follows from Equation \eqref{eqn:basicprop1} that 
\begin{align}
    \llbracket \sigma_{2}(X_i), w_j \rrbracket_{E} \ \in \ \mathsf{\Gamma}\big(K_2^\perp \cap K_1\big) = \mathsf{\Gamma}\big(K_1\cap K_2\big) \ ,
\end{align}
with the last equality following from condition \ref{item:main1}. We must therefore have
\begin{align}\label{eqn:tildeWK1capK2}
    \llbracket \sigma_2 (X_i) , w_j \rrbracket_{E} \ \in \ \sfGamma \big( \widetilde W_1^+ \oplus ( K_1\cap K_2)\big) \ .
\end{align}

The final piece of the puzzle is found by noting that, since $\rho_{E}$ is injective on $K_2$ as shown in Lemma \ref{lemma:injectiverho}, it follows that $\set{\sigma_2(X_i)}$ is a set of sections spanning $K_2$ pointwise. Let us  denote these sections by $k_i \coloneqq\sigma_2(X_i)$. Thus for any $k' \in \mathsf{\Gamma}(K_2)$ and \smash{$w\in \sfGamma(\widetilde W_1^+)$} there are expansions $k' = \sum_i\, f_i\, k_i$ and $w = \sum_j\, h_j\, w_j$ for some functions $f_i,h_j \in C^\infty (M)$ such that the sums are locally finite. 

Using the anchored Leibniz rule \eqref{eqn:anchorLeibniz} and Equation~\eqref{eqn:Leibnizfirst} we may then write
\begin{align}
    \llbracket k', w \rrbracket_{E} &= \sum_{i,j}\,\Big( f_i \, h_j\, \llbracket k_i, w_j \rrbracket_{E} - h_j\, \big( \rho_{E}(w_j)\cdot f_i\big)\, k_i \\
    & \hspace{4cm} + \big(\rho_{E}(f_i\, k_i)\cdot h_j\big)\,w_j + h_j\, \ip{k_i,w_j}_{E}\, \cD_{E} f_i \Big) \ .
\end{align}
The first term lives in $\sfGamma \big( \widetilde W_1^+  \oplus (K_1\cap K_2) \big)$ by Equation~\eqref{eqn:tildeWK1capK2}, the second term in $\mathsf{\Gamma}(K_2)$, the third term in \smash{$\mathsf{\Gamma}(\widetilde W_1^+)$}, and the final term is zero since \smash{$\widetilde W_1^+ \subset K_2^\perp$}.
Hence
\begin{align}
    \llbracket \mathsf{\Gamma}(K_2), \mathsf{\Gamma}(\widetilde W_1^+ )\rrbracket_{E} \ \subset \ \mathsf{\Gamma}(\widetilde W_1^+) + \mathsf{\Gamma} (K_2 ) \ .
\end{align}
Thus
\begin{align}\label{eqn:W2isinvariant}
\begin{split}
  \llbracket \mathsf{\Gamma}(K_2), \mathsf{\Gamma}(W_2) \rrbracket_{E} &= \llbracket \mathsf{\Gamma}(K_2), \mathsf{\Gamma}(\widetilde W_2^+) \rrbracket_{E} + \llbracket \mathsf{\Gamma}(K_2), \mathsf{\Gamma}(K_2) \rrbracket_{E} \\
  & \hspace{5cm} \subset \ \mathsf{\Gamma}(\widetilde W_2^+) + \mathsf{\Gamma}(K_2) = \mathsf{\Gamma}(W_2) \ ,
\end{split}
\end{align}
where we have also used the property that $K_2$ is involutive.

Equation \eqref{eqn:W2isinvariant} shows that $W_2$ is $K_2$-invariant, so by Proposition \ref{prop:reducedKsubbundle} it descends to a subbundle $V_2^+ \coloneqq \natural_{2}(W_2)$ of $\red E{}_2.$
We know from Theorem \ref{thm:qktgi} that $V_2^+$ defines a generalised metric on $\red E{}_2$ such that $Q(K_2)$ is a regular transverse generalised isometry between $W_2$ and $V_2^+$. Finally, since
\begin{align}
    Q(K_2)\diamond Q(K_1)^\top = \set{(\natural_{1}(e),e,e,\natural_{2}(e)) \, | \, e \in K_1^\perp \cap K_2^\perp } \ ,
\end{align}
and since $\widetilde W_1^\pm \subseteq K_1^\perp \cap K_2^\perp$, the rank condition \eqref{eqn:rankcondition} is satisfied. It follows that $Q(K_2) \circ Q(K_1)^\top  = R$ is a generalised isometry between $V_1^+$ and $V_2^+$.
Thus $(\red{E}_1 , V_1^+)$ and $(\red{E}_2, V_2^+)$ are geometrically T-dual.
Uniqueness of $V_2^+$ follows by Corollary~\ref{cor:GMisunique}.
\end{proof}

\begin{remark}[\textbf{Classification of T-duality}]
In Theorem \ref{thm:maingeneral} we are able to construct geometrically T-dual backgrounds because of the bisubmersion condition \eqref{eqn:bisubmersioncd}. This leads us to speculate that geometrically T-dual Courant algebroids can only be considered over Hausdorff Morita equivalent foliated manifolds, i.e. they might represent a subset of this equivalence class. This may lead to a classification of T-dual backgrounds.
\end{remark}

\begin{remark}
So far we have not discussed the dilaton field of the string background.
In order to include the dilaton in our picture, similarly to \cite{Garcia-Fernandez:2016ofz, Severa:2018pag}, we should discuss how divergence operators on Courant algebroids behave under transverse generalised isometries. We defer this to future work.
\end{remark}

\medskip

\subsubsection{Geometric T-duality for Standard Courant Algebroids}~\\[5pt]
\label{sec:geometricTdualitystandardCA}
We shall now discuss Theorem \ref{thm:maingeneral} when choosing an isomorphism of $E$ with $\IT M$. Recall that, in the assumptions of \ref{thm:maingeneral}, we have two compatible adapted splittings $\sigma_1$ and $\sigma_2$. Let $\Psi_1 = \sigma_1 + \tfrac{1}{2} \rho_E^*$ be the isomorphims between $\IT M$ and $E$ induced by $\sigma_1.$ Hence, $\Psi_1^{-1}(K_1) =T\cF_1$. Moreover, by applying $\Psi_1^{-1}$, Equation \eqref{eqn:differencesplit} can be written as $(\sigma_1^* \circ \sigma_2)(X)= -\iota_X B$,  for all $X \in TM$, thus, $\Psi_1^{-1}(K_2) = \e^{-B}(T\cF_2)$ and $\Psi_1^{-1}(K_2^\perp) = \e^{B}(TM) + \ann(T\cF_2)$. Therefore, the condition \ref{item:main1} yields that $T\cF_1 \cap T\cF_2 \subseteq \ker(B^\flat)$. Note that the same condition also follows from \ref{item:main2}.

\begin{proposition}\label{prop:Bisocondition}
Let $(\IT M, H)$ be the twisted standard Courant algebroid isomorphic to $E$ via $\Psi_1$, and suppose that the assumptions of Proposition \ref{prop:splittingexists} hold.
Then $B\in \mathsf{\Omega}^2(M)$ defined in Remark \ref{rmk:compatibilitysplit} takes the form
\begin{align}\label{eqn:Bdecomposition}
    B = B_{\rm ver} + B_{\rm hor} + B_{\rm mix} \ ,
\end{align}
where 
\begin{align}
    B_{\rm mix} \ \in \ \mathsf{\Gamma} \bigl( (T^*\cF_1 \cap \mathrm{Ann} (T\cF_2)) \wedge (\ann ( T\cF_1) \cap T^*\cF_2) \bigr)    
\end{align}
gives an isomorphism between $T\cF_1 \cap (\ann^* (T\cF_2)$ and $\ann ( T\cF_1) \cap T^*\cF_2,$ whereas 
\begin{align}
    B_{\rm ver}\in \mathsf{\Gamma}\left (\midwedge^2 T^*\cF_1 \right) \qquad \text{and} \qquad
    B_{\rm hor}\in \mathsf{\Gamma}\left (\midwedge^2\ann(T\cF_1) \right )
\end{align}
vanish on 
\begin{align}
\mathsf{\Gamma}\bigl((T^*\cF_1 \cap T^*\cF_2) \wedge (T^*\cF_1 \cap \ann (T\cF_2))\bigr)
\end{align}
and 
\begin{align}
\mathsf{\Gamma}\bigl((\ann (T\cF_1) \cap \ann(T\cF_2)) \wedge (\ann (T\cF_1) \cap T^* \cF_2) \bigr) \ , 
\end{align}
respectively.
\end{proposition}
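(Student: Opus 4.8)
The plan is to transport the entire statement to the twisted standard Courant algebroid $(\IT M , H)$ via the isomorphism $\Psi_1$ and then decompose $B$ in a pointwise frame adapted to both foliations. First I would record, as established just before the statement, that $\Psi_1^{-1}(K_1) = T\cF_1$, $\Psi_1^{-1}(K_2) = \e^{-B}(T\cF_2)$, and $A \coloneqq T\cF_1 \cap T\cF_2 \subseteq \ker(B^\flat)$ with $B^\flat(X) = \iota_X B$. The constant-rank hypotheses inherited from Theorem~\ref{thm:topoligicaltd} (constancy of $\rk(T\cF_1 \cap T\cF_2)$, hence of $\rk(T\cF_1 + T\cF_2)$) let me choose smooth subbundles realising $TM = A \oplus B_1 \oplus B_2 \oplus C$, with $T\cF_1 = A \oplus B_1$, $T\cF_2 = A \oplus B_2$, and $C$ a complement to $T\cF_1 + T\cF_2$. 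Dualising gives $T^*M = A^* \oplus B_1^* \oplus B_2^* \oplus C^*$, and a direct check identifies the four intersection bundles appearing in the statement: $T^*\cF_1 \cap T^*\cF_2 = A^*$, $T^*\cF_1 \cap \ann(T\cF_2) = B_1^*$, $\ann(T\cF_1) \cap T^*\cF_2 = B_2^*$, $\ann(T\cF_1) \cap \ann(T\cF_2) = C^*$, while $T\cF_1 \cap \ann^*(T\cF_2) = B_1$. Since $A \subseteq \ker(B^\flat)$, every component of $B$ involving $A^*$ vanishes, so $B \in \mathsf{\Gamma}\big(\midwedge^2(B_1^* \oplus B_2^* \oplus C^*)\big)$.

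The heart of the proof, which I expect to be the main obstacle, is converting the subbundle inclusion \ref{item:main1} (equivalently \ref{item:main2}) into a non-degeneracy statement for the $B_1^* \wedge B_2^*$ block. Concretely I would compute in $\IT M$ that $K_1 \cap K_2^\perp = \set{Y \in T\cF_1 \mid \iota_Y B \in \ann(T\cF_2)}$. Condition \ref{item:main1} forces this to lie in $K_2$; intersecting with $T\cF_1 \subset TM$ and using that $K_2 \cap T\cF_1 = A$ (an element $Z - \iota_Z B$ with $Z \in T\cF_2$ lying in $TM$ must have $\iota_Z B = 0$ and $Z \in T\cF_1 \cap T\cF_2$), this says $\set{Y \in T\cF_1 \mid B(Y, T\cF_2) = 0} \subseteq A$, i.e. the induced pairing $B \colon (T\cF_1/A) \times (T\cF_2/A) \to \IR$ is non-degenerate. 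Using $\rk(K_1) = \rk(K_2)$ from \eqref{eqn:rankK}, the complements $B_1$ and $B_2$ have equal rank, so this becomes a perfect pairing and $B_{\rm mix}$ induces the asserted isomorphism $B_1 \xrightarrow{\ \sim\ } B_2^*$. This is essentially the computation underlying the surjectivity of $\bar\beta$ in Proposition~\ref{prop:splittingexists}, reused here in purely algebraic form.

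Finally I would assign the surviving blocks: $B_{\rm mix}$ is the $B_1^* \wedge B_2^*$ component, $B_{\rm ver}$ the $\midwedge^2 B_1^* \subseteq \midwedge^2 T^*\cF_1$ component, and $B_{\rm hor}$ the $\midwedge^2\ann(T\cF_1)$ component. The vanishing of $B_{\rm ver}$ on $A^* \wedge B_1^*$ is immediate from $A \subseteq \ker(B^\flat)$. The remaining claim, namely the vanishing of $B_{\rm hor}$ on $B_2^* \wedge C^*$, amounts to eliminating the cross-terms $B(B_1 \oplus B_2, C)$, and this is the secondary technical point. I would handle it by replacing $C$ with $\set{c + \psi(c) \mid c \in C}$ for a suitable $\psi \colon C \to B_1 \oplus B_2$ solving $B(u, \psi(c)) = -B(u, c)$ for all $u \in B_1 \oplus B_2$; reading off components then yields $B = B_{\rm ver} + B_{\rm hor} + B_{\rm mix}$. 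The delicate issue, which I would address carefully, is that this system is solvable exactly when the restriction of $B$ to $(T\cF_1 + T\cF_2)/A$ is non-degenerate, a condition strictly stronger than non-degeneracy of the $B_1$–$B_2$ block alone; establishing it appears to require combining \ref{item:main1} with \ref{item:main2}, and possibly the involutivity of $K_2$, so this is where I anticipate the bulk of the remaining work.
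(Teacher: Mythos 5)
Your steps 1--4 are correct and coincide with the part of the argument the paper actually carries out: the fourfold splitting $TM = A\oplus B_1\oplus B_2\oplus C$ with $A=T\cF_1\cap T\cF_2\subseteq\ker(B^\flat)$ killing every $A^*$-component, the identification of $K_1\cap K_2^\perp$ and $K_2\cap K_1^\perp$ inside $\IT M$, and the translation of condition (i) of Theorem~\ref{thm:maingeneral} into non-degeneracy of the pairing $B\colon B_1\times B_2\to\IR$, promoted to a perfect pairing by $\rk(K_1)=\rk(K_2)$. You are also right that this is the same computation as the surjectivity of $\bar\beta$ in Proposition~\ref{prop:splittingexists}. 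The paper's proof consists of exactly this plus the assertion that ``the other conditions follow from similar considerations.''

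The gap you flag in step 5 is genuine, and your proposed fix cannot be made to work from the stated hypotheses. Conditions (i) and (ii) constrain only the $A$-directions and the $B_1$--$B_2$ block of $B$; they say nothing about $B(B_1\oplus B_2,C)$. Absorbing those cross-terms into a modified complement $C$ requires the functional $-B(\,\cdot\,,c)|_{B_1\oplus B_2}$ to lie in the image of $B^\flat|_{B_1\oplus B_2}$, and this can fail even when the $B_1$--$B_2$ block is invertible, because an antisymmetric form with invertible off-diagonal block need not be non-degenerate. Concretely, on $M=\sfT^5$ with coordinates $(x_1,x_2,y_1,y_2,c)$, foliations by the $x$- and $y$-planes, $H_1=H_2=0$ and
\begin{equation}
B = \de x^1\wedge\de x^2+\de y^1\wedge\de y^2+\de x^1\wedge\de y^1+\de x^2\wedge\de y^2+\de y^1\wedge\de c \ ,
\end{equation}
all constant-rank hypotheses and conditions (i), (ii) hold and the $B_1$--$B_2$ block is the identity, but $B|_{B_1\oplus B_2}$ has the two-dimensional radical spanned by $\partial_{x_1}+\partial_{y_2}$ and $\partial_{x_2}-\partial_{y_1}$, on which $B(\,\cdot\,,\partial_c)$ does not vanish; hence the term $\de y^1\wedge\de c\in(\ann(T\cF_1)\cap T^*\cF_2)\wedge(\ann(T\cF_1)\cap\ann(T\cF_2))$ cannot be removed by any choice of complements. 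So the decomposition \eqref{eqn:Bdecomposition} does not follow from the assumptions of Proposition~\ref{prop:splittingexists}: only the $B_{\rm mix}$ isomorphism does. This is consistent with how the result is used downstream --- in Theorem~\ref{thm:mainsplitcase} the decomposition is an independent \emph{hypothesis}, and the implication actually needed there is the easy converse (decomposition $\Rightarrow$ conditions (i) and (ii)), which your step-3 computation already delivers. I would either prove only the $B_{\rm mix}$ statement or reformulate the claim as that converse.
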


\begin{proof}
We find the conditions $B$ must meet to satisfy the conditions \ref{item:main1} or \ref{item:main2} of Theorem~\ref{thm:maingeneral}.

Write $N^*\cF = \ann(T\cF)$. We can decompose 
\begin{align}
    B = B_{\rm ver} + B_{\rm hor} + B_{\rm mix}
\end{align}
where
\begin{align}
    B_{\rm ver}\in \mathsf{\Gamma}(\midwedge^2 T^*\cF_1) \ , \quad B_{\rm hor} \in \mathsf{\Gamma}( \midwedge^2 N^* \cF_1) \qquad \text{and} \qquad B_{\rm mix} \in \mathsf{\Gamma}( N^*\cF_1 \wedge T^*\cF_1) \ .
\end{align}

Recall that $\Psi_1^{-1}(K_2) = \e^{-B}(T\cF_2)$.
Thus
\begin{align*}
    \Psi_1^{-1}(K_2)\cap \Psi_1^{-1}(K_1^\perp) = \set{v_2 - \iota_{v_2} B \ | \ v_2 \in T\cF_2 \ , \ \iota_{v_2}B \in \mathrm{Ann}(T\cF_1) } \ .
\end{align*}
Similarly
\begin{align*}
    \Psi_1^{-1}(K_2^\perp) \cap \Psi_1^{-1}(K_1) = \set{ v_1 \in T\cF_1 \ | \ \iota_{v_1}B \in \mathrm{Ann}(T\cF_2) } \ .
\end{align*}

Conditions \ref{item:main1} and \ref{item:main2} of Theorem \ref{thm:maingeneral} state that these must be respectively contained in $K_1$ and $K_2$. The conditions given in the statement of the proposition ensure that this happens. For example, if $v_1 \in T\cF_1$ but $v_1 \notin T\cF_2$, then $v_1 \notin \e^B(T\cF_2)$. It follows that $v_1$ cannot be in $\e^B(K_2^\perp)$. The only way this can happen is if $\iota_{v_1}B_{\rm mix} \in T^*\cF_2$ and is non-zero. Since this happens for all $v_1 \in T\cF_1\cap N \cF_2$, $B_{\rm mix}$ gives an isomorphism between $T\cF_1\cap N \cF_2$ and $N^* \cF_1\cap T^*\cF_2$. The other conditions follow from similar considerations. 
\end{proof}

We shall now consider the setting of Example \ref{sec:topogicalTdualitystandardCA}, since it is our main source of explicit examples. 

\begin{theorem}\label{thm:mainsplitcase}
    Let $(\IT M_1, H_1)$ and $(\IT M_2 ,H_2)$ be twisted standard Courant algebroids as in Example \ref{sec:topogicalTdualitystandardCA} and Proposition \ref{prop:splitcaseconditions}, isomorphic via $\Phi = \overline{\phi} \circ \e^{B}\, $, and suppose that $B$ decomposes as in Equation \eqref{eqn:Bdecomposition}. Take a generalised metric $V_1^+$ on $\IT \cQ_1$ defined by $\red g{}_1 \in \mathsf{\Gamma}(\midodot ^2 T^*\cQ_1)$ and $\red b{}_1 \in \mathsf{\Omega}^2 (\cQ_1)$ and define the subalgebras $\mathsf{Iso}(W_1)$ and $\mathsf{Iso}(V_1^+)$ by
    \begin{align}
        \mathsf{Iso}(V_1^+) =& \set{\red X\in \sfGamma(D_1) \colon \pounds_{\red X}(\red g_1 + \red b_1) = 0}\, , \\
        \mathsf{Iso}(W_1) =& (\varpi_1)^{-1}_*(\mathsf{Iso}(V_1^+))\cap \sfGamma(T\cF_2) \, .
    \end{align}
    Suppose that $\mathsf{Iso}(V_1^+)$ spans pointwise $D_1$, and that for each $X \in \mathsf{Iso}(W_1)$
    \begin{align}\label{eqn:Bgbliederiv}
        \pounds_{X} B = 0 \, .  
    \end{align}
    Then there exists a unique generalised metric $V_2^+$ on $\IT\cQ_2$ such that $(\IT \cQ_1, H_1, V_1^+)$ and $(\IT \cQ_2, H_2 , V_2^+)$ are geometrically T-dual, i.e. $R$ is a generalised isometry between $V_1^+$ and $V_2^+$.
\end{theorem}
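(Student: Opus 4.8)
The plan is to recognise Theorem \ref{thm:mainsplitcase} as a concrete realisation of the general Theorem \ref{thm:maingeneral}, so that the work is entirely in translating each hypothesis of \ref{thm:maingeneral} into the twisted standard Courant algebroid language and verifying it, after which \ref{thm:maingeneral} delivers $V_2^+$ and its uniqueness. Working in the model $E=(\IT M_1,H_1)$, I would take the isotropic subbundles to be $K_1 = T\cF_1\oplus\set{0}$ and $K_2 = \e^{-B}(T\cF_2)$, exactly as in Example \ref{sec:topogicalTdualitystandardCA} and Proposition \ref{prop:splitcaseconditions}; these are involutive (being $\e^{-B}$-images of tangent bundles of foliations, with $\iota_{X_i}H_i=0$ along the leaves) and of equal rank by the T-duality setup \eqref{eqn:rankK}. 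Proposition \ref{prop:splitcaseconditions} already provides the T-duality relation $R = Q(K_2)\circ Q(K_1)^\top$, and the bisubmersion involutivity condition \eqref{eqn:bisubmersioncd} is precisely what makes the distribution of T-duality directions $D_1 = \varpi_{1*}(T\cF_2)$ well-defined, as is implicit in the statement through the definition of $\mathsf{Iso}(V_1^+)\subseteq\sfGamma(D_1)$.

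The two substantive translations are the following. First, the natural adapted splittings are $\sigma_1\colon TM_1\to\IT M_1$, the tautological inclusion (adapted to $K_1 = T\cF_1$), and $\sigma_2(X) = X - \iota_X B$ (adapted to $K_2$), whose difference is $(\sigma_1-\sigma_2)(X) = \rho_E^*(\iota_X B)$ with $H_{\sigma_1}-H_{\sigma_2}=\de B$, as in Remark \ref{rmk:compatibilitysplit}. By that remark, $\sigma_1$ and $\sigma_2$ are compatible in the sense of Definition \ref{def:compatiblesplit} if and only if $\pounds_X B = 0$ for every $X\in\mathsf{Iso}(W_1)$, which is exactly hypothesis \eqref{eqn:Bgbliederiv}; here one uses that in this frame $\rho_E(K_2) = T\cF_2$, so the definition $\mathsf{Iso}(W_1) = (\varpi_1)_*^{-1}(\mathsf{Iso}(V_1^+))\cap\sfGamma(T\cF_2)$ matches the abstract one. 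Second, by Remark \ref{rmk:D1inv} the abstract $D_1$-invariance of $V_1^+$ (Definition \ref{def:D1invariance}) is equivalent, after transporting to $\IT\cQ_1$ via $\red\sigma_1+\tfrac12\rho^*_{\red E_1}$, to the Lie-derivative condition $\pounds_{\red X}(\red g_1+\red b_1)=0$ for $\red X\in\mathsf{Iso}(V_1^+)$; since $\mathsf{Iso}(V_1^+)$ is defined as precisely this stabiliser and is assumed to span $D_1$ pointwise, $V_1^+$ is $D_1$-invariant.

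It remains to feed in the intersection condition. By Proposition \ref{prop:Bisocondition}, the prescribed decomposition $B = B_{\rm ver}+B_{\rm hor}+B_{\rm mix}$ in \eqref{eqn:Bdecomposition} is exactly what guarantees $K_1\cap K_2^\perp\subseteq K_2$ (equivalently $K_2\cap K_1^\perp\subseteq K_1$), i.e. condition \ref{item:main1} of Theorem \ref{thm:maingeneral}. Concretely, in the $\sigma_1$-frame an element of $\Psi_1^{-1}(K_1)\cap\Psi_1^{-1}(K_2^\perp)$ is a $v_1\in T\cF_1$ with $\iota_{v_1}B\in\ann(T\cF_2)$; the fact that $B_{\rm mix}$ restricts to a fibrewise isomorphism onto $\ann(T\cF_1)\cap T^*\cF_2$ forces any such $v_1$ to already lie in $T\cF_1\cap T\cF_2$ with $\iota_{v_1}B = 0$, hence in $\e^{-B}(T\cF_2)=\Psi_1^{-1}(K_2)$. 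With all hypotheses of Theorem \ref{thm:maingeneral} now in force, that theorem yields a unique generalised metric $V_2^+$ on $\red E_2 = (\IT\cQ_2,\red H_2)$ for which $R$ is a generalised isometry, which is the assertion.

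The main obstacle I anticipate is bookkeeping rather than analysis: one must keep consistent track of the frame induced by $\sigma_1$ (equivalently by $\Psi_1 = \sigma_1+\tfrac12\rho_E^*$) throughout, so that the statements of Remark \ref{rmk:compatibilitysplit} and Remark \ref{rmk:D1inv} are applied with the correct representative $H_{\sigma_1}$ of the \v{S}evera class, and so that the isometry subalgebras $\mathsf{Iso}(W_1)$ and $\mathsf{Iso}(V_1^+)$ are correctly identified under $\varpi_{1*}$. Verifying that these pushforward/pullback identifications are mutually consistent, so that the single hypothesis $\pounds_X B = 0$ genuinely supplies the compatibility invoked in the proof of Theorem \ref{thm:maingeneral}, is where the care lies.
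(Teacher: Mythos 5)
Your proposal is correct and follows essentially the same route as the paper: the paper's proof likewise invokes Remarks \ref{rmk:D1inv} and \ref{rmk:compatibilitysplit} to convert the hypotheses on $\mathsf{Iso}(V_1^+)$ and $\pounds_X B=0$ into $D_1$-invariance and compatibility of the adapted splittings, uses the decomposition \eqref{eqn:Bdecomposition} (via Proposition \ref{prop:Bisocondition}) to verify conditions \ref{item:main1} and \ref{item:main2}, and then applies Theorem \ref{thm:maingeneral}. Your version simply spells out more of the frame bookkeeping ($\sigma_1$, $\sigma_2$, $K_2=\e^{-B}(T\cF_2)$) that the paper leaves implicit.
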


\begin{proof}
    By Remarks \ref{rmk:D1inv} and \ref{rmk:compatibilitysplit}, the conditions \eqref{eqn:Bgbliederiv} ensure that the generalised metric $(\red g{}_1, \red b{}_1)$ is $D_1$-invariant and the adapted splittings are compatible. The decomposition \eqref{eqn:Bdecomposition} ensures that conditions \ref{item:main1} and \ref{item:main2} of Theorem \ref{thm:maingeneral} are satisfied. Hence the conclusion follows by Theorem \ref{thm:maingeneral}.
\end{proof}
    
\begin{remark}
    The condition $\pounds_{X} B = 0 $ for $X\in \mathsf{Iso}(W_1)$ together with the assumption of reducibility of $E$ with respect to $K_2$ tells us that
    \begin{align}
        \iota_{X} H_1 = \iota_{X}\, \de B = - \de (\iota_{X} B) \ ,
    \end{align}
    similarly to Equation \eqref{eqn:constantSH}, i.e. it gives additional symmetries of the  sigma-model Wess-Zumino functional for $M,$ as discussed in Section \ref{subsect:sigma}.
\end{remark}

\begin{remark}
To see why we named $D_1$ the distribution of T-duality directions, consider a point $(q_1, q_2) \in C \subset \cQ_1 \times \cQ_2$, and take $\red v\in (D_1)_{q_1}$. Let \smash{$v\in \big(T \cF_2\big)_{m}$} be such that $\varpi_{1*}(v) = \red v$, where $\varpi_1(m) = q_1$. Denote $\varpi_2(m) = q_2$. It follows that $v\in (T_{m}\cF_2 \cap \ann^*(T_{m}\cF_1)).$ By Proposition \ref{prop:Bisocondition} we have
\begin{align}
    \iota_v B = \iota_v B_{\rm mix} \ \in \ T_{m}^*\cF_1 \cap  \ann(T_{m}\cF_2) \ .    
\end{align}
Thus the corresponding element in $R_{(q_1,q_2)}$ is
\begin{align}
    \big(\natural_{1}(v), \natural_{2}(v)\big) = \big(\red v, \natural_{2}(\iota_v B)\big) \ .
\end{align}
This is the usual exchange of tangent and cotangent (momentum and winding) directions seen in T-duality.
\end{remark}

\section{T-duality Relations and Doubled Geometry}\label{sec:applications}

In this Section we will show how the construction of Section \ref{sec:T-duality} describes the geometry of various kinds of T-duality such as T-duality with a correspondence space, where the Buscher rules for torus bundles are perfectly reproduced by the generalised isometry of exact Courant algebroids, and generalised T-duality of doubled sigma-models whose target space is endowed with an almost para-Hermitian structure. In particular, we discuss explicitly the Buscher rules for arbitrary rank torus bundles, as well as the examples of T-duality for lens spaces in three dimensions and generalised T-duality for the doubled Heisenberg nilmanifold in six dimensions. 

\medskip

\subsection{T-duality for Correspondence Spaces}\label{ssec:correspondencespace}~\\[5pt]
We briefly recall the definition of T-duality from \cite{cavalcanti2011generalized} for principal torus bundles, and show that this nicely fits into our definition.

\begin{definition} \label{def:CGtduality}
    Let $\cQ_1$ and $\cQ_2$ be principal $\sfT^k$-bundles over a common base manifold $\cB$, and let ${\red H}{}_1\in \mathsf{\Omega}_{\sfT^k}^3(\cQ_1)$ and ${\red H}{}_2\in \mathsf{\Omega}_{\sfT^k}^3(\cQ_2)$ be $\sfT^k$-invariant closed three-forms. Consider the fibred product $M\coloneqq\cQ_1\times_{\cB} \cQ_2$ giving the correspondence space diagram
    \begin{equation} \label{cd:correspondencespace}
        \begin{tikzcd}
             & M \arrow[swap]{dl}{\varpi_1} \arrow{dr}{\varpi_2} & \\
            \cQ_1 \arrow[swap]{dr}{\pi_1} &  & \cQ_2 \arrow{dl}{\pi_2} \\
             & \cB & 
        \end{tikzcd}
    \end{equation}
    with $M$ endowed with the closed three-form $\varpi_1^*\, {\red H}{}_1 - \varpi_2^*\, {\red H}{}_2.$

    Then $\cQ_1$ and $\cQ_2$ are \emph{T-dual} if there is a $\sfT^{2k}$-invariant two-form $B\in \mathsf{\Omega}^2_{\sfT^{2k}}(M)$ such that
    \begin{align}\label{eqn:hfluxdifference}
        \varpi_1^*\, {\red H}{}_1 - \varpi_2^*\, {\red H}{}_2 = \de B \ ,
    \end{align}
    and the smooth skew-symmetric map 
    \begin{align}\label{eqn:Bnondegenerate}
        B \colon \frk_1 \otimes \frk_2 \longrightarrow \IR
    \end{align}
    is non-degenerate, where $\frk_i = \ker (\varpi_{i *})$ for $i=1,2$. 
\end{definition}

\begin{remark}[\textbf{Correspondence Spaces as Bisubmersions}]\label{rmk:correspondencebi}
The commutative diagram \eqref{cd:correspondencespace} represents a particular case of a bisubmersion, as discussed in Remark~\ref{rmk:bisubmersions}. The condition \eqref{eqn:bisubmersioncd} is clearly satisfied, since the generators of the $\frt^k_{1}$-action commute with the generators of the $\frt^k_{2}$-action, where ${\sf Lie}(\sfT^k)= \frt^k$ and the subscript $i$ indicates the manifold $\cQ_i$ on which $\sfT^k$ acts. Here the $\sfT^k$-action whose orbits are the integral manifolds of $\ker(\varpi_{2 *})$ induces a $\sfT^k$-action on $\cQ_1$ which, in order to provide Definition \ref{def:CGtduality}, is supposed to be free and proper. The distribution induced by this $\sfT^k$-action on $\cQ_1$ defines the distribution of T-duality directions $D_1=(\varpi_1)_*\frk_2$, whose integral manifolds are the fibres of $\pi_1 \colon \cQ_1 \to \cB$. 
Similarly, $\cQ_2$ inherits a $\sfT^k$-action, and hence a distribution $D_2$, whose foliation, given by the fibres of $\pi_2 \colon \cQ_2 \to \cB,$ is Hausdorff Morita equivalent to the foliation induced by $D_1$.
\end{remark}

Let us reinterpret this picture in the setting of Section \ref{sec:T-duality}. We start with

\begin{lemma}
  Let  $\cQ_1$ and $\cQ_2$ be T-dual in the sense of Definition \ref{def:CGtduality}. Then there is a T-duality relation in the sense of Definition \ref{def:topologicalTdual}.
\end{lemma}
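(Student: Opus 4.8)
The plan is to produce, from the Cavalcanti--Gualtieri T-duality data of Definition~\ref{def:CGtduality}, an exact Courant algebroid $E$ over $M = \cQ_1 \times_\cB \cQ_2$ together with two isotropic subbundles $K_1, K_2$ that realise the reductions to $\red E{}_1$ and $\red E{}_2$, and then invoke Theorem~\ref{thm:topoligicaltd} to conclude that the T-duality relation $R$ exists. First I would take $E$ to be the $(\varpi_1^*\,\red H{}_1 - \varpi_2^*\,\red H{}_2)$-twisted standard Courant algebroid $(\IT M, H)$ with $H \coloneqq \varpi_1^*\,\red H{}_1 - \varpi_2^*\,\red H{}_2$, which is closed by construction. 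This puts us squarely in the setting of Example~\ref{sec:topogicalTdualitystandardCA} and Proposition~\ref{prop:splitcaseconditions}: the two foliations are $\cF_1$ with $T\cF_1 = \frk_1 = \ker(\varpi_{1*})$ and $\cF_2$ with $T\cF_2 = \frk_2 = \ker(\varpi_{2*})$, whose leaf spaces are $\cQ_1$ and $\cQ_2$ respectively, with $\varpi_i\colon M \to \cQ_i$ the quotient maps.

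Next I would set $K_1 = T\cF_1 \oplus \set{0}$ and $K_2 = \e^{-B}\big(T\cF_2 \oplus \set{0}\big)$, exactly as in the proof of Proposition~\ref{prop:splitcaseconditions}, using the two-form $B$ furnished by Equation~\eqref{eqn:hfluxdifference}. To apply Corollary~\ref{cor:qkbasicH} and form the relations $Q(K_1)$ and $Q(K_2)$, I must check that $\iota_{X_1}\big(\varpi_1^*\,\red H{}_1\big) = 0$ for $X_1 \in \mathsf{\Gamma}(T\cF_1)$ and $\iota_{X_2}\big(\varpi_2^*\,\red H{}_2\big) = 0$ for $X_2 \in \mathsf{\Gamma}(T\cF_2)$; these follow because $\red H{}_i$ is a form on the base $\cQ_i$ pulled back along $\varpi_i$, so it is annihilated by contraction with vertical vectors of $\varpi_i$. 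I would also note that $\rk(K_1) = \rk(K_2) = k$, satisfying the standing assumption~\eqref{eqn:rankK}, and that $M = \cQ_1 \times_\cB \cQ_2$ fits into the commutative diagram~\eqref{cd:fibredproduct}, so that the support $C$ is smooth by Lemma~\ref{lemma:cdsmoothcirclecomp}.

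The substance of the argument, and the place where the non-degeneracy hypothesis~\eqref{eqn:Bnondegenerate} enters, is verifying the two constant-rank conditions required by Theorem~\ref{thm:topoligicaltd}: that $T\cF_1 \cap T\cF_2$ has constant rank and that $K_1 \cap K_2$ has constant rank. For the first, since $\frk_1$ and $\frk_2$ are the vertical bundles of the two torus actions on the correspondence space, their intersection $\frk_1 \cap \frk_2$ is the vertical bundle of $M \to \cB$ restricted appropriately; in the principal bundle setting it has constant rank (indeed, over a common base the fibrewise intersection is governed by the group actions of Remark~\ref{rmk:correspondencebi}). For the second, the computation already recorded in the proof of Proposition~\ref{prop:splitcaseconditions} gives
\begin{align}
 K_1 \cap K_2 = \ker\big(B|_{T\cF_1 \cap T\cF_2}\big) \ ,
\end{align}
and here the non-degeneracy of $B\colon \frk_1 \otimes \frk_2 \to \IR$ forces $B$ to be non-degenerate when restricted to $T\cF_1 \cap T\cF_2$, so that $K_1 \cap K_2 = \set{0}$ has constant rank zero. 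I expect this last point to be the main obstacle: one must translate the bilinear non-degeneracy of~\eqref{eqn:Bnondegenerate} (a pairing between the two \emph{distinct} vertical bundles $\frk_1$ and $\frk_2$) into the statement that $B$ restricted to the \emph{common} subbundle $\frk_1 \cap \frk_2$ has trivial kernel, which requires care about how $\frk_1 \cap \frk_2$ sits inside both factors. With both constant-rank conditions in hand, Theorem~\ref{thm:topoligicaltd} yields that $\red E{}_1$ and $\red E{}_2$ are T-duality related, producing the T-duality relation $R = Q(K_2) \circ Q(K_1)^\top$ of Definition~\ref{def:topologicalTdual}, which completes the proof.
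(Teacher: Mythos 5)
Your overall strategy matches the paper's: realise both $\cQ_i$ as reductions of a twisted standard Courant algebroid over the correspondence space $M=\cQ_1\times_\cB\cQ_2$ with $K_1=\frk_1$ and $K_2=\e^{-B}(\frk_2)$, invoke Lemma~\ref{lemma:cdsmoothcirclecomp} for smoothness of the support, and conclude via Proposition~\ref{prop:splitcaseconditions}/Theorem~\ref{thm:topoligicaltd}. However, there is a genuine error in your choice of twisting three-form. You set $H=\varpi_1^*\,\red H{}_1-\varpi_2^*\,\red H{}_2$, whereas the construction requires $H=\varpi_1^*\,\red H{}_1$ (as in the paper, and as in the setup of Example~\ref{sec:topogicalTdualitystandardCA}, where the algebroid being reduced is $(\IT M_1,H_1)$ with $H_1=\varpi_1^*\,\red H{}_1$ and the difference $H_1-\phi^*H_2=\de B$ only enters through the $B$-field transformation). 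With your choice, Corollary~\ref{cor:qkbasicH} fails for $Q(K_1)$: for $0\neq X_1\in\mathsf{\Gamma}(\frk_1)$ one has $\iota_{X_1}\varpi_1^*\,\red H{}_1=0$ but $\iota_{X_1}\varpi_2^*\,\red H{}_2=\varpi_2^*\bigl(\iota_{\varpi_{2*}X_1}\red H{}_2\bigr)$, which is generically nonzero because $\varpi_{2*}X_1\neq 0$ (indeed $\frk_1\cap\frk_2=\set{0}$ on the fibred product, so $\varpi_2$ does not kill $\frk_1$). Moreover, even where the reduction went through, the quotient would be twisted by the descent of your $H$, not by $\red H{}_1$. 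The conditions you actually verify in your second paragraph are the correct ones, but they are the reducibility conditions for $(\IT M,\varpi_1^*\,\red H{}_1)$, not for the algebroid you declared.

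A secondary confusion: you treat the constant-rank condition on $K_1\cap K_2$ as the crux and try to derive it from the non-degeneracy of $B$ on $\frk_1\otimes\frk_2$. On the fibred product $\frk_1\cap\frk_2=\set{0}$ automatically, so $K_1\cap K_2=\ker\bigl(B|_{\frk_1\cap\frk_2}\bigr)=\set{0}$ for \emph{any} two-form $B$; the paper explicitly notes this after the lemma. The non-degeneracy hypothesis~\eqref{eqn:Bnondegenerate} plays no role in this purely topological statement — it only becomes essential later, in Proposition~\ref{prop:equivalencewithcorresp}, where the relation is upgraded to a generalised isometry.
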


\begin{proof}
    It is enough to find a Courant algebroid $E$ over $M$ such that Definition \ref{def:topologicalTdual} is satisfied. 
    Take with $E = (\IT M , \, H= \varpi_1^*\,{\red H}{}_1)$ over $M = \cQ_1 \times_\cB \cQ_2$ along with $K_1 = \frk_1$ and $K_2=\e^{-B}(\frk_2)$. 
    Since $\cQ_1$ and $\cQ_2$ are T-dual in the sense of Definition \ref{def:CGtduality}, there exists $B\in \mathsf{\Omega}^2_{\sfT^{2k}}(M)$ satisfying Equation~\eqref{eqn:hfluxdifference} which is non-degenerate on $\frk_1\otimes\frk_2$.  
    Moreover, $ \e^{-B}\,(\frk_2)\cap \frk_1 = \set{0}$ has constant rank. Hence we may form the Courant algebroid relation $R$ fitting the diagram
    \begin{equation} \label{cd:correspondencetdualitysquare}
    \begin{tikzcd}
    & \IT M \arrow[tail]{dr}{Q(K_2)} &  \\
    \IT \cQ_1 \arrow[dashed]{rr}{R}\arrow[dashed]{ur}{Q(K_1)^\top } & & \IT \cQ_2 
\end{tikzcd}  
    \end{equation}
    which is supported on $M=\cQ_1 \times_{\cB} \cQ_2$ by Lemma \ref{lemma:cdsmoothcirclecomp}.
\end{proof}

Note that the converse is not true, since our notion of T-duality related Courant algebroids imposes no condition on $B$ other than the constant rank condition from Proposition \ref{prop:splitcaseconditions}, which is trivially satisfied for any two-form $B\in \mathsf{\Omega}^2(M)$ since $\frk_1 \cap \frk_2 = \set{0}$.

We will see that, upon introducing geometric data, the possible choices for $B$ coincide with the $B$-field choice in Definition \ref{def:CGtduality}. This is exemplified through

\begin{proposition}\label{prop:equivalencewithcorresp}
    If $\cQ_1$ and $\cQ_2$ are T-dual in the sense of Definition \ref{def:CGtduality}, and there is a $\sfT^k_1$-invariant generalised metric $V_1^+$ on $(\IT \cQ_1, \red H{}_1)$, then there is a unique generalised metric $V_2^+$ such that $(\IT \cQ_1, {\red H}{}_1, V_1^+)$ and $(\IT \cQ_2, {\red H}{}_2, V_2^+)$ are geometrically T-dual.
\end{proposition}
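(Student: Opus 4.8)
The plan is to realise the Cavalcanti--Gualtieri data as an instance of the setup of Theorem~\ref{thm:maingeneral} and then invoke that theorem. Following the preceding Lemma, I would take $E=(\IT M, H)$ with $H=\varpi_1^*\,\red H{}_1$ over $M=\cQ_1\times_\cB\cQ_2$, together with the isotropic subbundles $K_1=\frk_1$ and $K_2=\e^{-B}(\frk_2)$, which have equal rank $k$. First I would record that $K_1$ and $K_2$ are involutive: $K_1$ because $\iota_X\varpi_1^*\red H{}_1=0$ for $X\in\frk_1=\ker(\varpi_{1*})$, and $K_2$ by the $B$-field transformation identity together with \eqref{eqn:hfluxdifference}. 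Explicitly, for $X,Y\in\frk_2$ one computes
\begin{align}
\llbracket \e^{-B}X,\e^{-B}Y\rrbracket_H = \e^{-B}([X,Y]) + \iota_Y\,\iota_X\big(H-\de B\big) = \e^{-B}([X,Y]) \ ,
\end{align}
since $H-\de B=\varpi_2^*\red H{}_2$ is annihilated by contraction with $\frk_2=\ker(\varpi_{2*})$, and $[X,Y]\in\frk_2$; hence $K_2$ is involutive. The existence of the T-duality relation $R$, and with it the topological hypotheses of Theorem~\ref{thm:maingeneral}, is provided by the preceding Lemma.

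Next I would verify the remaining hypotheses of Theorem~\ref{thm:maingeneral}. The bisubmersion involutivity condition \eqref{eqn:bisubmersioncd} holds because the two commuting torus actions generate $\frk_1$ and $\frk_2$, as explained in Remark~\ref{rmk:correspondencebi}. For the geometric data, I would take $\mathsf{Iso}(V_1^+)$ to be the abelian Lie algebra of fundamental vector fields of the principal $\sfT^k$-action on $\cQ_1$; since that action is free these span $D_1=\ker(\pi_{1*})$ pointwise (Remark~\ref{rmk:correspondencebi}), and the assumed $\sfT^k_1$-invariance of $V_1^+$ gives $\pounds_{\red X}(\red g{}_1+\red b{}_1)=0$, i.e. $V_1^+$ is $D_1$-invariant by Remark~\ref{rmk:D1inv} and Definition~\ref{def:D1invariance}. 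Compatibility of adapted splittings follows from Remark~\ref{rmk:compatibilitysplit}: choosing $\sigma_1$ to be the splitting with $H_{\sigma_1}=H$ and $\sigma_2$ its $\e^{-B}$-shift, compatibility reduces to $\pounds_XB=0$ for all $X\in\mathsf{Iso}(W_1)\subseteq\rho_E(K_2)=\frk_2$, which is guaranteed by the $\sfT^{2k}$-invariance of $B$ built into Definition~\ref{def:CGtduality}.

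The crucial step, and the one I expect to carry the real content, is to show that the non-degeneracy of $B$ on $\frk_1\otimes\frk_2$ in \eqref{eqn:Bnondegenerate} is exactly condition~\ref{item:main1} of Theorem~\ref{thm:maingeneral}. Since $\e^{-B}$ is an isometry, $K_2^\perp=\e^{-B}\big(TM\oplus\ann(\frk_2)\big)$, and a vector $v\in K_1=\frk_1$ lies in $K_2^\perp$ if and only if $\iota_vB\in\ann(\frk_2)$, that is $B(v,Y)=0$ for all $Y\in\frk_2$. Non-degeneracy then forces $v=0$, so $K_1\cap K_2^\perp=\set{0}\subseteq K_2$. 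With condition~\ref{item:main1} established, Theorem~\ref{thm:maingeneral} yields a unique generalised metric $V_2^+$ on $\red E{}_2=(\IT\cQ_2,\red H{}_2)$ making $R$ a generalised isometry between $V_1^+$ and $V_2^+$, with uniqueness also following from Corollary~\ref{cor:GMisunique}. This is precisely geometric T-duality of $(\IT\cQ_1,\red H{}_1,V_1^+)$ and $(\IT\cQ_2,\red H{}_2,V_2^+)$, as required. The main obstacle is bookkeeping rather than conceptual: correctly matching the two torus actions with the verticals $\frk_1,\frk_2$ and the T-duality directions $D_1$, and checking the splitting-compatibility condition, all of which ultimately reduce to the equivariance and non-degeneracy already encoded in Definition~\ref{def:CGtduality}.
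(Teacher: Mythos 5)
Your proposal is correct and follows essentially the same route as the paper: realise the data over the correspondence space with $K_1=\frk_1$, $K_2=\e^{-B}(\frk_2)$, obtain $D_1$-invariance of $V_1^+$ from torus invariance and compatibility of the adapted splittings from $\sfT^{2k}$-invariance of $B$, and then invoke the main theorem. The only cosmetic difference is that the paper routes through Theorem~\ref{thm:mainsplitcase} via the decomposition \eqref{eqn:Bdecomposition} of $B$, whereas you verify condition~\ref{item:main1} of Theorem~\ref{thm:maingeneral} directly from the non-degeneracy of $B$ on $\frk_1\otimes\frk_2$ — an equivalent and, since $\frk_1\cap\frk_2=\set{0}$ here, slightly more direct check.
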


\begin{proof}
    If $\cQ_1$ and $\cQ_2$ are T-dual in the sense of Definition \ref{def:CGtduality}, then $B$ decomposes as in Equation~\eqref{eqn:Bdecomposition}. Furthermore, since $V_1^+$ is $\sfT^k$-invariant, we define\footnote{If a Lie group $\sfG$ acts on a manifold $M$, we denote by $\#_M \colon \frg \to \sfGamma(TM)$ the corresponding action of its Lie algebra $\frg$ on $M$.} $\mathsf{Iso}(V_1^+) \coloneqq \#_{\cQ_1} (\frt^k_2) \subset \sfGamma(D_1)$, which spans $D_1$ pointwise. Thus $\mathsf{Iso}(W_1) = \#_{M} (\frt^k_2) \subset \sfGamma(\frk_1)$, and since $B$ is $\sfT^{2k}$-invariant it follows that $\pounds_{X} B = 0,$ for any $X \in \mathsf{Iso}(W_1)$. The conclusion then follows by Theorem \ref{thm:mainsplitcase}.
\end{proof}

In this picture, we can also provide a further characterisation of the components of the three-form $\red H{}_1 \in \mathsf{\Omega}^3_{\rm cl}(\cQ_1)$ through

\begin{lemma} \label{lemma:H1componentscorresp}
Let $(\IT \cQ_1, \red H{}_1)$ and $(\IT \cQ_2, \red H{}_2)$ be twisted standard Courant algebroids with $\red H{}_1\in \mathsf{\Omega}^3_{\sfT^k}(\cQ_1)$ and $\red H{}_2\in \mathsf{\Omega}^3_{\sfT^k}(\cQ_2)$, and let them be endowed with generalised metrics $V_1^+$ and $V_2^+$ such that they are geometrically T-dual. Then $\varpi_1^*\, \red H{}_1$ can be written as the sum of a doubly pulled back three-form $H_\cB \in\mathsf{\Omega}^3(\cB)$ and a component determined by
\begin{align} \label{eqn:iXV2}
 \iota_{X_{\texttt{v}_2}} \varpi_1^*\,\red H{}_1 = \iota_{X_{\texttt{v}_2}}\, \de B \ ,   
\end{align}
for any $X_{\texttt{v}_2} \in \mathsf{\Gamma}(\frt^k_2)$. 
\end{lemma}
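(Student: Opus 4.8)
The plan is to treat the two assertions separately, disposing of Equation \eqref{eqn:iXV2} first since it is the key input for the decomposition. In the present correspondence-space setting the reduction of Example \ref{sec:topogicalTdualitystandardCA} applies with $E = (\IT M, \varpi_1^*\,\red H{}_1)$, $K_1 = \frk_1$ and $K_2 = \e^{-B}(\frk_2)$, so that the reduced fluxes obey the T-duality relation $\de B = \varpi_1^*\,\red H{}_1 - \varpi_2^*\,\red H{}_2$ of Equation \eqref{eqn:hfluxdifference}. The T-duality directions are generated by the vector fields $X_{\texttt{v}_2}$ spanning $T\cF_2 = \rho_E(K_2) = \frk_2 = \ker(\varpi_{2*})$, which are therefore vertical for $\varpi_2$ while projecting under $\varpi_{1*}$ onto $D_1$ (this is the direction exchange recorded at the end of Section \ref{ssec:geometricTduality}). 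Contracting the flux relation and using $\varpi_{2*}X_{\texttt{v}_2} = 0$ then yields
\begin{align}
\iota_{X_{\texttt{v}_2}}\,\de B = \iota_{X_{\texttt{v}_2}}\,\varpi_1^*\,\red H{}_1 - \varpi_2^*\big(\iota_{\varpi_{2*}X_{\texttt{v}_2}}\,\red H{}_2\big) = \iota_{X_{\texttt{v}_2}}\,\varpi_1^*\,\red H{}_1 \ ,
\end{align}
which is precisely Equation \eqref{eqn:iXV2}.

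For the decomposition I would exploit the principal $\sfT^{2k}$-bundle $\pi \coloneqq \pi_1 \circ \varpi_1 = \pi_2 \circ \varpi_2 \colon M \to \cB$, whose vertical bundle is $\ker(\pi_*) = \frk_1 \oplus \frk_2$ by the bisubmersion structure of Remark \ref{rmk:correspondencebi} (recall $\frk_1 \cap \frk_2 = \set{0}$). The decisive observation is that $\varpi_1^*\,\red H{}_1$, being a pullback along $\varpi_1$, annihilates $\frk_1 = \ker(\varpi_{1*})$: for $X \in \mathsf{\Gamma}(\frk_1)$ one has $\iota_X\,\varpi_1^*\,\red H{}_1 = \varpi_1^*\big(\iota_{\varpi_{1*}X}\,\red H{}_1\big) = 0$, so that every vertical leg of $\varpi_1^*\,\red H{}_1$ lies along $\frk_2$. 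Moreover $\varpi_1^*\,\red H{}_1$ is $\sfT^{2k}$-invariant: it is invariant under the $\sfT^k$-action on $\cQ_1$ by hypothesis, and invariant under the remaining $\sfT^k$-factor, which generates $\frk_1 = \ker(\varpi_{1*})$, because $\red H{}_1$ is closed and $\iota_X\,\varpi_1^*\,\red H{}_1 = 0$ for $X\in\mathsf{\Gamma}(\frk_1)$.

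I would then fix a $\sfT^{2k}$-invariant connection on $\pi$, i.e. an invariant horizontal complement $\cH$ to $\frk_1 \oplus \frk_2$ with horizontal projector $\mathrm{hor} \colon TM \to \cH$. The fully horizontal part $H^\uparrow \coloneqq \mathrm{hor}^*\big(\varpi_1^*\,\red H{}_1\big)$ is horizontal and invariant, hence basic, so that $H^\uparrow = \pi^*\,H_\cB$ for a doubly pulled-back $H_\cB \in \mathsf{\Omega}^3(\cB)$. Setting $\red H_1^{\mathrm{vert}} \coloneqq \varpi_1^*\,\red H{}_1 - \pi^*\,H_\cB$, the observation above together with $\iota_X\,\pi^*H_\cB = 0$ for $X \in \mathsf{\Gamma}(\ker(\pi_*))$ shows that $\red H_1^{\mathrm{vert}}$ has all its legs along $\frk_2$ and satisfies
\begin{align}
\iota_{X_{\texttt{v}_2}}\,\red H_1^{\mathrm{vert}} = \iota_{X_{\texttt{v}_2}}\,\varpi_1^*\,\red H{}_1 = \iota_{X_{\texttt{v}_2}}\,\de B \ ,
\end{align}
which identifies $\red H_1^{\mathrm{vert}}$ as the component determined by Equation \eqref{eqn:iXV2} and completes the decomposition $\varpi_1^*\,\red H{}_1 = \pi^*H_\cB + \red H_1^{\mathrm{vert}}$.

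The main obstacle is the precise meaning of ``determined by Equation \eqref{eqn:iXV2}'': a component with a single leg along the T-duality directions $\frk_2$ is recovered from its contractions $\iota_{X_{\texttt{v}_2}}$, whereas any terms with two or three legs along $\frk_2$ are not. Such higher-leg pieces are exactly the obstruction to geometric T-dualisability and vanish under the standard condition $\iota_X\,\iota_Y\,\red H{}_1 = 0$ for $X,Y \in \mathsf{\Gamma}(D_1)$; I would either invoke this assumption, so that $\red H_1^{\mathrm{vert}}$ is literally determined by its contractions, or read the statement at the level of the contraction data, which is all that the subsequent Buscher rules require. I should also note that the generalised metrics $V_1^+$ and $V_2^+$ enter only through guaranteeing the T-dual setup and the decomposition of $B$ in Proposition \ref{prop:Bisocondition}; the flux decomposition itself rests solely on the relation \eqref{eqn:hfluxdifference} and the $\sfT^k$-invariance of $\red H{}_1$.
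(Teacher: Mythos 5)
Your derivation of Equation \eqref{eqn:iXV2} coincides with the paper's: contract the flux relation $\varpi_1^*\,\red H{}_1 - \varpi_2^*\,\red H{}_2 = \de B$ with $X_{\texttt{v}_2}\in\mathsf{\Gamma}(\frk_2)=\mathsf{\Gamma}(\ker(\varpi_{2*}))$ and use that $\varpi_2^*\,\red H{}_2$ annihilates $\ker(\varpi_{2*})$. For the decomposition you take a genuinely different and arguably cleaner route. The paper counts components of $\varpi_1^*\,\red H{}_1$ in the splitting $TM=\mathsf{Hor}(M)\oplus\frk_1\oplus\frk_2$ and argues, using the $\sfT^{2k}$-invariance of $B$ and the flux relation, that only the fully horizontal piece (basic, hence a double pullback $H_\cB$) and the piece with a single leg along $\frk_2$ survive. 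You instead build $H_\cB$ explicitly as the horizontal part of the $\sfT^{2k}$-invariant form $\varpi_1^*\,\red H{}_1$ with respect to an invariant connection and define the remainder by subtraction; this makes the existence of $H_\cB$ unambiguous without any component-vanishing claim.

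The ``obstacle'' you flag at the end, however, is not a genuine gap, and neither of your proposed fixes is needed. The kernel of the map $\alpha\mapsto\set{\iota_v\alpha}_{v\in\frk_1\oplus\frk_2}$ on three-forms is exactly $\midwedge^3\ann(\frk_1\oplus\frk_2)$, the fully horizontal forms. Your remainder $\red H{}_1^{\rm vert}=\varpi_1^*\,\red H{}_1-\pi^*H_\cB$ has vanishing fully horizontal part by construction and is annihilated by $\frk_1$, so it \emph{is} completely determined by the one-fold contractions $\iota_{X_{\texttt{v}_2}}\varpi_1^*\,\red H{}_1=\iota_{X_{\texttt{v}_2}}\,\de B$: any component with two or three legs along $\frk_2$ is read off as, e.g., $\red H{}_1^{\rm vert}(X_{\texttt{v}_2},Y_{\texttt{v}_2},\,\cdot\,)=(\iota_{X_{\texttt{v}_2}}\,\de B)(Y_{\texttt{v}_2},\,\cdot\,)$. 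Thus no auxiliary hypothesis of the form $\iota_X\iota_Y\,\red H{}_1=0$ for $X,Y\in\mathsf{\Gamma}(D_1)$ is required, nor any weakened reading of the statement. (If one does want the paper's sharper assertion that the multi-leg components actually vanish, the relevant computation is $\iota_{Y_{\texttt{v}_2}}\iota_{X_{\texttt{v}_2}}\,\de B=\de\bigl(B(X_{\texttt{v}_2},Y_{\texttt{v}_2})\bigr)$ by $\sfT^{2k}$-invariance, which is a basic one-form; in particular the three-leg component always vanishes, and in the standard case $B=-\varpi_1^*\theta_1\wedge\varpi_2^*\theta_2$ the two-leg component does as well.) Your closing observation that the generalised metrics enter only through guaranteeing the T-dual setup is accurate and consistent with the paper.
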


\begin{proof}
Definition \ref{def:CGtduality} implies that \smash{$\iota_{Y_{\texttt{v}_1}}\, \iota_{X_{\texttt{v}_1}} \varpi_1^*\, \red H{}_1 = 0,$} 
for all $X_{\texttt{v}_1}, Y_{\texttt{v}_1} \in \mathsf{\Gamma}(\frk_1),$ since $B$ is $\sfT^{2k}$-invariant and \smash{$\iota_{X_{\texttt{v}_1}}\, \varpi^*_2\, \red H{}_2 = 0,$} for all $X_{\texttt{v}_1} \in \mathsf{\Gamma}(\frk_1).$ Hence only two components of $\varpi_1^*\,\red H{}_1$ are non-vanishing and, because of $\sfT^k$-invariance of $\red H{}_1,$ one of them must be the double pullback of a three-form on $\cB.$   The other component is characterised as follows. Since any $\red X \in \mathsf{\Gamma}(D_1)$ lifts to a section 
$X_{\texttt{v}_2} \in \mathsf{\Gamma}(\frk_2)$, and $\varpi^*_2\, \red H{}_2$ is basic with respect to the orbits of $\sfT^k$ whose tangent distribution is $\frk_2,$ that is it comes from the base $\cQ_2$, Equation~\eqref{eqn:iXV2} follows.
\end{proof}

\begin{example}[\textbf{$\boldsymbol B$-fields from Connections}]
  Let $\cQ_1$ and $\cQ_2$ be principal $\sfT^k$-bundles over a common base manifold $\cB$, and let them be endowed with closed three-forms $\red H{}_1 \in \mathsf{\Omega}^3_{\sfT^k}(\cQ_1)$ and $\red H{}_2 \in \mathsf{\Omega}^3_{\sfT^k}(\cQ_2).$ Choose connections $\theta_1 \in \mathsf{\Omega}^1(\cQ_1, \frt^k_1) $ and $\theta_2 \in \mathsf{\Omega}^1(\cQ_2,\frt^k_2),$ respectively, and suppose that $$B = -\varpi^*_1 \theta_1 \wedge \varpi^*_2\theta_2 \ .$$ When Lemma \ref{lemma:H1componentscorresp} holds, we can write
  \begin{align} \label{eqn:H1integralcohom}
   \varpi^*_1\, \red H{}_1  = -\varpi^*_1 \theta_1\wedge\varpi^*_2\big(c_1(\cQ_2)\big) + \varpi^*_1 (\pi^*_1 H_\cB)  \ ,
  \end{align}
  where $c_1(\cQ_2) = \de \theta_2$ represents the Chern class of $\cQ_2.$ 
  This can be shown by letting $X_{\texttt{v}_2} \in \mathsf{\Gamma}(\frk_2)$ be a generator of the $\frt^k_2$-action. Then 
  \begin{align}
  \iota_{X_{\texttt{v}_2}} \varpi_1^*\,\red H{}_1 = \de( \varpi^*_2\theta_2) = \varpi^*_2 \big(c_1(\cQ_2)\big) \ ,  
  \end{align}
  and Equation \eqref{eqn:H1integralcohom} follows.
  A symmetric argument holds for $\varpi_2^*\,\red H{}_2.$ This is the key case considered in \cite{bouwknegt2004tduality,cavalcanti2011generalized} which provides criteria for the existence of T-dual pairs based on the components of the $H$-flux. It shows how T-duality in this case is characterised by an interchange of the Chern classes of the torus bundle with topological data associated to the $H$-flux.
\end{example}   

The analogy with the correspondence space picture extends further when we consider the construction in \cite[Section~3]{cavalcanti2011generalized}, which uses the Fourier-Mukai transform, of the isomorphism\footnote{In \cite{cavalcanti2011generalized} the isomorphism is denoted by $\phi$, but here we use the symbol $\mathscr{R}$ to distinguish it from the already used $\phi$ in the present paper.} of $\sfT^k$-invariant sections \smash{$\mathscr{R} \colon \mathsf{\Gamma}_{\sfT^k}(\IT \cQ_1) \to \mathsf{\Gamma}_{\sfT^k}(\IT \cQ_2) $}. 
In that construction a $\sfT^k$-invariant section $X_1+\xi_1\in \mathsf{\Gamma}_{\sfT^k}(\IT \cQ_1)$ is lifted to a $\sfT^{2k}$-invariant section \smash{${\hat X}_1 + \varpi_1^* \xi_1\in \mathsf{\Gamma}_{\sfT^{2k}}(\IT M)$} whose image under $B$-field transformation is basic:
\begin{align} \label{eqn:hatXlift}
    (\varpi_1^*\xi_1)(Y_2) + B({\hat X}_1, Y_2) = 0  \ , 
\end{align}
for all $Y_2 \in \mathsf{\Gamma}(\frt^k_2)$. The non-degeneracy of $B$ on $\frk_1\otimes\frk_2$ ensures that the lift ${\hat X}_1$ satisfying Equation~\eqref{eqn:hatXlift} is unique.
The isomorphism $\mathscr{R}$ is then constructed as the pushforward by $\varpi_2$ of ${\hat X}_1 + \varpi_1^* \xi_1 + \iota_{\hat X_1} B$:
\begin{align} \label{eqn:fouriermukai}
    \mathscr{R}(X_1 + \xi_1) \coloneqq \varpi_{2*}{\hat X}_1 + \varpi_1^* \xi_1 + \iota_{\hat X_1} B \ .
\end{align}

In our language, since the Courant algebroid relation $R$ is a generalised isometry, it follows by Lemma \ref{lemma:conditionsonPhi} that conditions \ref{item:main1} and \ref{item:main2} of Theorem~\ref{thm:maingeneral} are satisfied. 
Hence by Proposition \ref{prop:splittingexists} there is a splitting $s \colon (\frk_1)^\perp / \frk_1 \to (\frk_1)^\perp$ such that $\mathrm{im}(s) \subset \big(\e^{-B}(\frk_2)\big)^\perp$. 
This splitting is unique up to elements in \smash{$\e^{-B}\,(\frk_2)\cap \frk_1 = \set{0}$}. 
It follows that, at a point $(m,q_1)\in \gr(\varpi_1) \subset M \times \cQ_1$, an element $v_1+ \nu_1 \in \IT_{q_1} \cQ_1 $ lifts uniquely to an element $\hat{v}_1 + \varpi_1^*\nu_1 \in \IT_{m} M $ given by 
\begin{align}
    \hat{v}_1 + \varpi_1^*\nu_1 \coloneqq s_m\big(\mathscr{J}_{q_1,m}(v_1+ \nu_1 )\big) \ ,
\end{align}
such that
\begin{align}
    (\varpi_1^*\nu_1) ( v_2) + B(\hat{v}_1, v_2) = 0 \ , 
\end{align}
for all $ v_2 \in (\frk_2)_m.$

When we restrict this construction to $\sfT^k$-invariant sections, the analogy becomes exact through

\begin{proposition}\label{prop:Fourier-Mukai}
    The T-duality relation $R$ gives rise to an isomorphism $\red\Phi\colon\mathsf{\Gamma}_{\sfT^k}(\IT \cQ_1)\to\mathsf{\Gamma}_{\sfT^k}(\IT \cQ_2)$ of $C^\infty(\cB)$-modules which coincides with the isomorphism $\mathscr{R}$ defined by Equation \eqref{eqn:fouriermukai}.
\end{proposition}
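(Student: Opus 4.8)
The plan is to construct $\red\Phi$ directly from the relation $R$ and then to verify, term by term, that it agrees with the Fourier--Mukai isomorphism $\mathscr{R}$. Building on the pointwise lift already set up before the statement, I would first use the explicit description $R=\set{(\natural_{1}(e),\natural_{2}(e))\,|\,e\in K_1^\perp\cap K_2^\perp}$ together with Proposition~\ref{prop:splittingexists}: for a $\sfT^k$-invariant section $X_1+\xi_1\in\mathsf{\Gamma}_{\sfT^k}(\IT\cQ_1)$, take the splitting $s_0\colon K_1^\perp/K_1\to K_1^\perp$ with $\mathrm{im}(s_0)\subseteq K_2^\perp$ and lift $X_1+\xi_1$ to the unique $e\in K_1^\perp\cap K_2^\perp$ with $\natural_1(e)=X_1+\xi_1$. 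Uniqueness of this lift is exactly the statement that $s_0$ is unique up to $K_1\cap K_2$, which is $\set{0}$ here since non-degeneracy of $B$ on $\frk_1\otimes\frk_2$ forces $\e^{-B}(\frk_2)\cap\frk_1=\set{0}$. Setting $\red\Phi(X_1+\xi_1)\coloneqq\natural_2(e)$ then gives a well-defined map; $C^\infty(\cB)$-linearity is immediate because the lift and both reduction maps are linear over functions pulled back from $\cB$, and $\sfT^{2k}$-invariance of $H$, $B$, $K_1$ and $K_2$ guarantees that the lift of an invariant section is $\sfT^{2k}$-invariant, so that $\natural_2(e)$ is $\sfT^k$-invariant.

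The second step identifies the lift $e$ with the one appearing in $\mathscr{R}$. Since $K_1=\frk_1=\ker(\varpi_{1*})\oplus\set{0}$, one has $K_1^\perp=TM\oplus\ann(\frk_1)$, and any $\varpi_1^*\xi_1$ annihilates $\frk_1$ automatically, so writing $e=\hat X_1+\varpi_1^*\xi_1$ with $\varpi_{1*}\hat X_1=X_1$ already places $e$ in $K_1^\perp$. The condition $\mathrm{im}(s_0)\subseteq K_2^\perp$ reads $e\in K_2^\perp=\e^{-B}(\frk_2^\perp)=\e^{-B}(TM\oplus\ann(\frk_2))$, i.e. $\e^{B}(e)\in\frk_2^\perp$; expanding $\e^{B}(\hat X_1+\varpi_1^*\xi_1)=\hat X_1+\varpi_1^*\xi_1+\iota_{\hat X_1}B$, this is precisely $(\varpi_1^*\xi_1)(Y_2)+B(\hat X_1,Y_2)=0$ for all $Y_2\in\mathsf{\Gamma}(\frk_2)$, which is the defining equation~\eqref{eqn:hatXlift} for the Fourier--Mukai lift. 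Hence $e=\hat X_1+\varpi_1^*\xi_1$.

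The third step computes $\natural_2(e)$ and matches it with $\mathscr{R}(X_1+\xi_1)=\varpi_{2*}\bigl(\hat X_1+\varpi_1^*\xi_1+\iota_{\hat X_1}B\bigr)$. Here I would exploit that the $B$-field transform $\e^{B}$ is a Courant algebroid isomorphism $(\IT M,H)\to(\IT M,H-\de B)=(\IT M,\varpi_2^*\red H_2)$ carrying $K_2=\e^{-B}(\frk_2)$ onto $\frk_2$, hence $K_2^\perp$ onto $\frk_2^\perp$; since it covers the identity on $M$ and sends basic sections to basic sections, it descends to the identification $\red E_2\cong(\IT\cQ_2,\red H_2)$ through which $\natural_2$ is read, so that $\natural_2=\natural_2^{\rm std}\circ\e^{B}$ with $\natural_2^{\rm std}$ the standard reduction of $(\IT M,\varpi_2^*\red H_2)$ by $\frk_2$. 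On basic elements the latter sends $W+\gamma\in TM\oplus\ann(\frk_2)$ to $\varpi_{2*}W$ plus the one-form on $\cQ_2$ whose pullback is $\gamma$; applied to $\e^{B}(e)=\hat X_1+\varpi_1^*\xi_1+\iota_{\hat X_1}B$ this reproduces exactly $\varpi_{2*}\hat X_1+(\text{reduction of }\varpi_1^*\xi_1+\iota_{\hat X_1}B)=\mathscr{R}(X_1+\xi_1)$. As $\mathscr{R}$ is already a $C^\infty(\cB)$-module isomorphism~\cite{cavalcanti2011generalized}, the equality $\red\Phi=\mathscr{R}$ completes the argument.

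I expect the main obstacle to be the bookkeeping in the third step: making precise that reducing $(\IT M,H)$ by the $B$-twisted subbundle $K_2$ and then expressing the result in the standard frame of $\IT\cQ_2$ is the same as first applying $\e^{B}$ and performing the untwisted reduction by $\frk_2$. This requires checking that the induced isomorphism $\overline{\e^{B}}$ of reduced Courant algebroids covers the identity on $\cQ_2$ and is compatible with the reduced splittings $\red\sigma_2$ and $\red\sigma_2^{\rm std}$, so that the one-form part $\varpi_1^*\xi_1+\iota_{\hat X_1}B$ descends consistently; throughout, the non-degeneracy of $B$ on $\frk_1\otimes\frk_2$ is what keeps the lifts unique and the maps well-defined.
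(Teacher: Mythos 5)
Your proposal is correct and follows essentially the same route as the paper: both use the splitting $s_0$ from Proposition~\ref{prop:splittingexists} (unique here since $K_1\cap K_2=\set{0}$) to lift an invariant section to $K_1^\perp\cap K_2^\perp$, identify the condition $\mathrm{im}(s_0)\subseteq K_2^\perp$ with Equation~\eqref{eqn:hatXlift}, and define $\red\Phi$ by pushing the lift through $\natural_2$. The only difference is presentational — the paper leaves your third step as "clear by construction" from the discussion preceding the proposition, whereas you spell out the factorisation of $\natural_2$ through $\e^{B}$ explicitly.
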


\begin{proof}
    Recall that in this case, a basic section $\hat\psi_1 = X_1 + \xi_1\in \mathsf{\Gamma}_{\mathrm{bas}}\big((\frk_1)^\perp \big)$ satisfies
    \begin{align}
        \llbracket Y_1, X_1 + \xi_1 \rrbracket_{H} = [Y_1, X_1] + \pounds_{Y_1} \xi_1 \ ,
    \end{align}    
    for every $Y_1\in \mathsf{\Gamma}(\frk_1)$, where $X_1 \in \mathsf{\Gamma}_{\mathsf{T}^k}(TM),$ i.e. $X_1$ is projectable with respect to $\frk_1,$ and $\xi_1 \in \mathsf{\Gamma}({\rm Ann}(\frk_1)).$
    Suppose that \smash{$\psi_1 \in \mathsf{\Gamma}_{\sfT^k}(\IT \cQ_1) $}.
    We know that $\mathscr{J}$ extends to an isomorphism of $C^\infty(\cQ_1)$-modules 
 $\mathscr{J} \colon \mathsf{\Gamma}(\IT \cQ_1) \to \mathsf{\Gamma}_{\mathrm{bas}}((\frk_1)^\perp ) / \mathsf{\Gamma}(\frk_1)$.
    By Proposition \ref{prop:splittingexists}, the unique splitting $s \colon (\frk_1)^\perp/ \frk_1 \to (\frk_1)^\perp$ extends to a map between sections as 
    \begin{align}
        s \colon \mathsf{\Gamma}\bigl((\frk_1)^\perp \bigr) \,\big/\, \mathsf{\Gamma}(\frk_1) \longrightarrow \mathsf{\Gamma}\bigl((\frk_1)^\perp \bigr) \ ,
    \end{align}
    since it covers the identity,
    and so we may uniquely define
    \begin{align}
        \hat \psi_1 \coloneqq s\big( \mathscr{J}(\psi_1)\big) \ \in \ \mathsf{\Gamma}_{\sfT^{2k}}\bigl( \bigl(\e^{-B}(\frk_2) \bigr)^\perp  \bigr) \ .
    \end{align}
   and it follows that $\hat \psi_1 \in \mathsf{\Gamma}_{\mathrm{bas}}\bigl( \left(\e^{-B}(\frk_2)\right)^\perp \bigr)$. Define  $\psi_2 \in \mathsf{\Gamma}_{\sfT^k}(\IT \cQ_2)$ by
    \begin{align}
        \psi_2 \coloneqq \natural_{2}\bigl(\hat \psi_1 \bigr) \ .
    \end{align}
   It follows that $\psi_1 \sim_{R} \psi_2$. By construction, for every $\psi_1\in \mathsf{\Gamma}_{\sfT^k}(\IT \cQ_1) $ there is a unique section $\psi_2\in \mathsf{\Gamma}_{\sfT^k}(\IT \cQ_2) $ such that $\psi_1 \sim_{R} \psi_2$. Similarly, for every $\psi_2 \in \mathsf{\Gamma}_{\sfT^k}(\IT \cQ_2) $ there is a unique section $\psi_1 \in \mathsf{\Gamma}_{\sfT^k}(\IT \cQ_1) $ such that $\psi_2 \sim_{R^\top } \psi_1$, where \smash{$R^\top $} is the transpose Courant algebroid relation. 
   
   Thus the map 
    \begin{align}
        \red \Phi \colon \mathsf{\Gamma}_{\sfT^k}(\IT \cQ_1) \longrightarrow \mathsf{\Gamma}_{\sfT^k}(\IT \cQ_2) 
        \end{align}
        defined by
        \begin{align}
        \red\Phi(\psi_1) = \psi_2 \qquad \text{with} \quad \psi_1 \sim_{R} \psi_2
    \end{align}
    is well-defined and injective, hence bijective. It follows that $\red \Phi$ is a $C^\infty(\cB)$-module isomorphism, where the $C^\infty(\cB)$-module structures on $\mathsf{\Gamma}_{\sfT^k}(\IT \cQ_1)$ and $\mathsf{\Gamma}_{\sfT^k}(\IT \cQ_2)$ are given by pulling back smooth functions on $\cB$ by $\pi_1$ and $\pi_2$, respectively. It is clear by construction that $\psi_1 \sim_{R} \psi_2$ if and only if $\mathscr{R}(\psi_1) = \psi_2$, hence $\red \Phi = \mathscr{R}$.
\end{proof}

\begin{remark} 
    In the construction of $\red \Phi$ in Proposition \ref{prop:Fourier-Mukai} there is no mention of a Fourier-Mukai integral transform as in the analogous construction of $\mathscr{R}$ in \cite{cavalcanti2011generalized}. Thus it may be possible to extend a correspondence space type picture for T-duality to cases of non-compact manifolds, where the Fourier-Mukai transform is not defined, such as Drinfel'd doubles. We defer this to future work.
\end{remark}

\begin{remark}[\textbf{Buscher Rules}]\label{rmk:buscherrules}
    We pick connections $\theta_1 \in \mathsf{\Omega}^1(\cQ_1, \frt^k_1) $ and $\theta_2 \in \mathsf{\Omega}^1(\cQ_2,\frt^k_2)$ for the principal $\sfT^k$-bundles $\cQ_1$ and $\cQ_2.$ 
    To fulfil the conditions of Theorem~\ref{thm:mainsplitcase}, we use Remark \ref{rmk:correspondencebi} to consider a $\sfT^k$-invariant generalised metric $(\red g{}_1, \red b{}_1)$ on $\IT \cQ_1$, and set $\mathsf{Iso}(V_1^+) = \#_1 (\frt_2^k)$. 
    
    The connection $\theta_i$ gives a splitting
    \begin{align}
        T\cQ_i \simeq \mathsf{Hor}(\cQ_i) \oplus \mathsf{Ver} (\cQ_i)
        \end{align}
        into horizontal and vertical subbundles, for $i=1,2$, whose sections we denote by $\red X^{\rm h} + \red X^{{\rm v}_i}$. This also gives a decomposition of
    \begin{align}
        \midotimes^2 T^*\cQ_i = \bigl( \midotimes^2 \mathsf{Hor}^* (\cQ_i) \bigr) \ \oplus \ \bigl(\midotimes^2 \mathsf{Ver}^* (\cQ_i) \bigr) \ \oplus \ \bigl(\mathsf{Hor}^* (\cQ_i) \otimes \mathsf{Ver}^* (\cQ_i)\bigr) \ .
    \end{align}
    We denote the corresponding components of sections as $ \red\alpha^{\rm h} + \red\alpha^{{\rm v}_i}+ \red\alpha^{{\rm m}_i}$.
    Thus, denoting $$\red h{}_1 = \red g{}_1 + \red b{}_1 \ , $$ a section $\red w_1\in \mathsf{\Gamma}_{\sfT^k}(V_1^+)$ can be written as
    \begin{align}
        \red w{}_1 = \red X^{{\rm v}_1} + \red X^{\rm h} + \red A_1  \big(\red X^{{\rm v}_1} + \red X^{\rm h}\big) \ , 
        \end{align} 
        where
        \begin{align}
        \red A_1 = \begin{pmatrix}
            \red h{}_1^{{\rm v}_1} & \red h{}_1^{{\rm m}_1} \\[3pt]
            \red{\tilde h}{}_1^{{\rm m}_1} & \red h{}_1^{\rm h}
        \end{pmatrix} \qquad \text{with} \quad \red{\tilde h}{}_1^{{\rm m}_1} = (\red{ h}_1^{{\rm m}_1})^{\rm t} \ .
    \end{align}

    The pullbacks of the connections $\theta_1$ and $\theta_2$ give a splitting 
    \begin{align} 
    TM \simeq \mathsf{Hor} (M) \oplus \mathsf{Ver}_1 (M) \oplus \mathsf{Ver}_2 (M) \ , 
    \end{align}
    and we identify $\mathsf{\Gamma}_{\sfT^k}(\mathsf{Ver}_i(M)) \simeq \mathsf{\Gamma}(\mathsf{Ver}(\cQ_i))$ since they are isomorphic as $C^\infty(\cQ_i)$-modules.\footnote{This follows from the fact that there exists a fibrewise bijective map covering $\varpi_i$ between them, see \cite{Mackenzie}.} Thus $\red w_1 \in \mathsf{\Gamma}_{\sfT^k}(V_1^+)$ lifts to $w_1\in\sfGamma_{\sfT^k}(W_1)$ given by
    \begin{align}
        w_1 = X^{{\rm v}_1} + X^{\rm h} + X^{{\rm v}_2} + A_1 \big(X^{{\rm v}_1} + X^{\rm h} \big) \ ,
    \end{align}
    where by using the non-degenerate map $B \colon \frk_1 \to (\frk_2)^*$ we write
    \begin{align}\label{eqn:buscheruniquelift}
        X^{{\rm v}_2} = -(B^{\rm t})^{-1}\big( \bigl( A_1 (X^{{\rm v}_1}+X_1^{\rm h}) \bigr)^{{\rm v}_1}\big) = -(B^{\rm t})^{-1} \big( h_1^{{\rm v}_1}(X^{{\rm v}_1}) + h_1^{{\rm m}_1}(X^{\rm h})\big) \ ,
    \end{align}
    and $h_1$ is the pullback of $\red h{}_1$ determining the $\frt_{\cQ_1}^k$-transverse generalised metric $W_1$.

    We can now compute
    \begin{align}
        w_2 :=\e^{B}\,(w_1) &= X^{{\rm v}_1} + X^{\rm h} + X^{{\rm v}_2} + \iota_{X^{{\rm v}_1} + X^{\rm h} + X^{{\rm v}_2}}B + A_1 \big(X^{{\rm v}_1}+X^{\rm h}\big)\\[4pt]
        &= X^{{\rm v}_1} + X^{\rm h} + X^{{\rm v}_2} + B(X^{{\rm v}_1})^{{\rm v}_2} + B(X^{{\rm v}_2})^{{\rm v}_1} + A_1 \big(X^{{\rm v}_1}+X^{\rm h}\big)\\[4pt]
        &= X^{{\rm v}_1} + X^{\rm h} + X^{{\rm v}_2} + B(X^{{\rm v}_1})^{{\rm v}_2}  + \bigr( A_1(X^{{\rm v}_1}+X^{\rm h})\bigl)^{\rm h} \ ,
    \end{align}
    using $B(X^{\rm h})=0$, $B(X^{{\rm v}_i})^{{\rm v}_i} = 0$, and Equation \eqref{eqn:buscheruniquelift} for the last equality.
    It follows that $w_2\in \mathsf{\Gamma}_{\sfT^k}(W_2)$ for some $\frt^k_2$-transverse generalised metric $W_2$, with 
    \begin{align}
        A_2 = \begin{pmatrix}
            h_2^{\rm v_2} & h_2^{\rm m_2} \\[3pt]
            \tilde h_2^{\rm m_2} & h_2^{\rm h}
        \end{pmatrix} \ .
    \end{align}
    This means that
    \begin{align}
        B(X^{\rm v_1})^{\rm v_2} + \bigl(A_1\, (X^{\rm v_1}+X^{\rm h})\bigr)^{\rm h} = A_2(X^{\rm v_2}+X^{\rm h}) \ ,
    \end{align}
    or equivalently
    \begin{align}
        B(X^{\rm v_1}) &= h_2^{\rm v_2}(X^{\rm v_2}) + h_2^{\rm m_2}(X^{\rm h}) \ ,
    \end{align}
        and
    \begin{align}   
        \tilde h_1^{\rm m_1}(X^{\rm v_1}) + h_1^{\rm h}(X^{\rm h}) &= \tilde h_2^{\rm m_2}(X^{\rm v_2}) + h_2^{\rm h}(X^{\rm h}) \ .
    \end{align}
    
    Substituting Equation \eqref{eqn:buscheruniquelift}, we get
    \begin{align}
        B(X^{\rm v_1}) & = -h_2^{\rm v_2}\big((B^{\rm t})^{-1}(h_1^{\rm v_1}(X^{\rm v_1}))\big) \ , \\[4pt]
        0 & = -h_2^{\rm v_2}\big((B^{\rm t})^{-1}(h_1^{\rm m_1}(X^{\rm h}))\big)+ h_2^{\rm m_2}(X^{\rm h}) \ ,\\[4pt]
        \tilde h_1^{\rm m_1}(X^{\rm v_1}) & = -\tilde h_2^{\rm m_2}\big((B^{\rm t})^{-1}(h_1^{\rm v_1}(X^{\rm v_1}))\big) \ , \\[4pt]
        h_1^{\rm h}(X^{\rm h}) & = -\tilde h_2^{\rm m_2}\big((B^{\rm t})^{-1}(h_1^{\rm m_1}(X^{\rm h}))\big) + h_2^{\rm h}(X^{\rm h}) \ ,
    \end{align}
    yielding
    \begin{align}
        h_2^{\rm v_2} & = -B\, (h_1^{\rm v_1})^{-1}\, B^{\rm t} \ ,\\[4pt]
        h_2^{\rm m_2} & = B \, (h_1^{\rm v_1})^{-1}\, h_1^{\rm m_1} \ , \\[4pt]
        h_2^{\rm h} & = h_1^{\rm h} - \tilde h_1^{\rm m_1}\,(h_1^{\rm v_1})^{-1}\, h_1^{\rm m_1} \ .
    \end{align}
    By reducing to $\red h{}_2 = \red g{}_2+\red b{}_2$, one can now unravel these formulas to read off the structure tensors \smash{$\red g{}_2\in \sfGamma(\midodot^2 T^*\cQ_2)$} and \smash{$\red b{}_2 \in \sfGamma(\midwedge^2 T^*\cQ_2)$} of the corresponding $\sfT^k$-invariant generalised metric on $\IT\cQ_2$. This results in global Buscher rules for the transformation of the metric and Kalb-Ramond field under T-duality. 
\end{remark}

The case of an $\sfS^1$-fibration discussed in \cite[Section~4]{cavalcanti2011generalized} is a simplified case of the construction of Remark~\ref{rmk:buscherrules}, which here comes from the defining conditions for the generalised isometry~$R.$

\begin{example}[\textbf{Lens Spaces}]
\label{ex:lensspaces}
    We apply the construction above to an explicit example.
    Consider a three-dimensional $\sfS^1$-bundle $\pi_1  \colon \cQ_1 \to \sfS^2$. Choose a chart with coordinates $(x,y,z)$ such that $z$ is adapted to the circle fibres and $(x,y)$ are pullback coordinates from the two-sphere $\sfS^2.$ Pick a connection 
    \begin{align}
    \theta_1 = \de z + m\,  x \, \de y
    \end{align}
    on $\cQ_1$ and an integral volume form 
    \begin{align}
    {\red H}{}_1 = k \, \de x \wedge \de y \wedge \de z \ ,
    \end{align}
    with $m,k\in \IZ$, such that
    \begin{align}
        \int_{\cQ_1}\, {\red H}{}_1 = k \qquad \text{and}  \qquad \int_{\sfS^2}\, c_1(\cQ_1) = m\ ,
    \end{align}
    where $c_1(\cQ_1) = \de \theta_1 = m\,\de x\wedge \de y$ represents the Chern class of $\cQ_1$.
    
    On $\cQ_1$ we take as metric
    \begin{align}
        {\red g}{}_1 = \pi_1^*\,g_{\sfS^2} + \theta_1\otimes\theta_1 \ ,
    \end{align}
     where $g_{\sfS^2}$ is the standard round metric on $\sfS^2$. Then $\frac\partial{\partial z},$ a local expression of the generator of the $\mathfrak{u}(1)$-action, is a Killing vector field for ${\red g}{}_1$.

    Consider another three-dimensional $\sfS^1$-bundle $\pi_2 \colon \cQ_2 \to \sfS^2$ endowed with connection 
    \begin{align}
    \theta_2 = \de \tilde z + n\, x\,  \de y
    \end{align}
    and Chern number $n\in\IZ$, where ${\tilde z}$ is the coordinate adapted to the fibres. 
    Let $M= \cQ_1 \times_{\sfS^2} \cQ_2$ with projections $\varpi_i \colon M \to \cQ_i,$ for $i=1,2,$ and coordinates\footnote{We omit the pullback notation for the coordinates.} $(x,y,z,\tilde z)$ pulled back from $\cQ_1$ and $\cQ_2.$
    In order to satisfy the conditions of Theorem \ref{thm:mainsplitcase}, we consider the two-form 
    \begin{align}
    B = -\varpi_1^*\theta_1 \wedge \varpi_2^*\theta_2 = x\,\de y\wedge(n\,\de z - m\,\de\tilde z) -\de z\wedge\de\tilde z
    \end{align}
    for our $B$-field transformation.
    
    For $(\IT M, H)$ to be reducible with respect to $K_2 = \e^{-B}\frk_2$, we have that 
    \begin{align}
        \varpi_1^*\,{\red H}{}_1 - \de B &= k\, \de x \wedge \de y \wedge \de z + m\, \de x \wedge \de y \wedge \de \tilde z - n\, \de z \wedge \de x \wedge \de y\\[4pt]
        & = (k-n)\, \de x \wedge \de y \wedge \de z + m\, \de x \wedge \de y \wedge \de \tilde z 
    \end{align}
    has to be basic with respect to the fibres of $\varpi_2$.  
    This happens if and only if $n=k$. Thus $\cQ_2$ has three-form ${\red H}{}_2$ and Chern form $c_1(\cQ_2)$ satisfying
    \begin{align}
        \int_{\cQ_2}\, {\red H}{}_2 = m \qquad \text{and} \qquad \int_{\sfS^2}\, c_1(\cQ_2) = k \ ,
    \end{align}
    as well as metric obtained from Remark \ref{rmk:buscherrules} as
    \begin{align}
        {\red g}{}_2 = \pi_2^*\,g_{\sfS^2} + \theta_2\otimes\theta_2 \ ,
    \end{align}
    which is again $\sfS^1$-invariant. 
    
    Hence the lens space $\sfL(m,k)$ is T-dual to the lens space $\sfL(k,m)$, realising a topology change between T-dual spaces, see e.g. \cite{Bouwknegt2003topology}. In particular, the Hopf fibration $\sfL(1,0)\simeq\sfS^3$ without $H$-flux is T-dual to the trivial circle bundle $\sfL(0,1)\simeq\sfS^2\times\sfS^1$ with $H$-flux. Moreover, the Hopf fibration with $H$-flux $\sfL(1,1)$ is self-T-dual.
\end{example}

\medskip

\subsection{Generalised T-duality for Para-Hermitian Manifolds}\label{ssec:parahermitian}~\\[5pt]
We shall now study the conditions under which the relation $R$ becomes a generalised isometry when the manifolds $M_1$ and $M_2$ admit almost para-Hermitian structures, such that the diffeomorphism $\phi$ preserves their split-signature metrics.

\medskip

\subsubsection{Para-Hermitian Manifolds}
\label{sssec:parahermmfld}~\\[5pt]
Let us start by reviewing the main properties of para-Hermitian manifolds, see \cite{Cortes2004, Freidel2017, Freidel2019, Marotta2021born} for more details.

\begin{definition}
An \emph{almost para-complex structure} on an even-dimensional manifold $M$ is an automorphism of the tangent bundle $TM$ such that $\ccK^2=\unit_{TM}$ and whose $\pm1$-eigenbundles $L_\pm = \ker(\unit_{TM}\mp\ccK)$ have the same rank.

An \emph{almost para-Hermitian structure} on $M$ is a pair $(\eta, {\ccK})$ of a split-signature metric $\eta$ and an almost para-complex structure ${\ccK} \in \mathsf{Aut}(TM)$ which is compatible with $\eta$ in the sense that
\begin{align} \label{eqn:conditionparaherm}
    \eta\big({\ccK}(X) , {\ccK} (Y) \big) = - \eta(X,Y) 
\end{align}
for all $X , Y \in \mathsf{\Gamma}(TM).$ The triple $(M, \eta, {\ccK})$ is an \emph{almost para-Hermitian manifold}.

An almost para-Hermitian structure $(\eta, {\ccK})$ is a \emph{para-Hermitian structure} if $L_\pm$ are integrable distributions, and in this case $(M, \eta, {\ccK})$ is a \emph{para-Hermitian manifold}.
\end{definition} 

We will often omit the adjective `almost' for brevity, when no confusion can arise. Equation \eqref{eqn:conditionparaherm} implies that the eigenbundles $L_\pm$ are maximally isotropic with respect to $\eta.$ The splitting $TM = L_+ \oplus L_-$ defines a splitting
\begin{align}
    \midwedge^p T^*M = \bigoplus_{m+n=p}\, \midwedge^{+m,-n}T^*M \ ,
\end{align}
where \smash{$\midwedge^{+m,-n}T^*M = \midwedge^m L_+^* \otimes \midwedge^n L_-^*$}. 

A canonical Courant algebroid may be formed over an almost para-Hermitian manifold $(M,\eta,\ccK)$ in the following way.\footnote{See \cite{Marotta:2022tfe} for details of this construction and \cite{Vaisman2012, Vaisman2013, Freidel2017, Marotta2021born, Marotta:2021sia} for the definition of ${\tt D}$-bracket as well as the proof of this property and examples. See \cite[Section 4]{Svoboda2018} for the general definition of $\tt D$-brackets.} The Levi-Civita connection of the pseudo-Riemannian metric $\eta$ defines a $\tt D$-bracket $[\,\cdot \, , \, \cdot\,]^{\texttt{LC}}$. The para-Hermitian structure $\ccK$ defines a connection $\nabla^{\texttt{can}}$, and hence a canonical $\tt D$-bracket $[\,\cdot \, , \, \cdot\,]^{\texttt{can}}$. 
We can then define a three-form $H^{\texttt{can}}$ on $M$ by
\begin{align}\label{eqn:HcanLC}
    H^{\texttt{can}}(X,Y,Z) = \eta\big([X,Y]^{\texttt{LC}} - [X,Y]^{\texttt{can}} , Z\big) \ ,
\end{align}
for $X,Y,Z\in\sfGamma(TM)$.

For our purposes, the $H$-flux is required to be basic with respect to $L_-$. 
We thus take as our $H$-flux the $(+3,-0)$ component of $H^{\texttt{can}}$. Assuming this is closed, we obtain a canonical Courant algebroid \smash{$\big(\IT M, (H^{\texttt{can}})^{+3,-0}\big)$}. We further have 
\begin{align}\label{eqn:Hcananddomega}
   H:= (H^{\texttt{can}})^{+3,-0} = \de \omega^{+3,-0} \ ,
\end{align}
where the almost symplectic structure $\omega:=\eta\circ\ccK$ is called the \emph{fundamental two-form}.

\begin{remark}[\textbf{Wess-Zumino Functional and Integrability}] \label{rmk:Hcantopological}
The topological term of a sigma-model for a para-Hermitian manifold can be obtained from the lack of integrability of the eigenbundles of $\ccK,$ since 
\begin{align} \label{eq:Hcanintegrab}
  H(X_-,Y_-,Z_-) = \tfrac12\, \bigl( \eta([X_- , Y_-], Z_-) -\eta([X_-,Z_-],Y_-) + \eta(X_-,[Y_-,Z_-]) \bigr)\ ,
\end{align}
for all $X_-, Y_- , Z_- \in \mathsf{\Gamma}(L_-)$. This vanishes if $L_-$ is integrable. With $L_-=T\cF_-$, a foliated almost para-Hermitian manifold $(M, \eta, \ccK, \cF_-)$ gives a topological term for the sigma-model if $H$ is a closed three-form.\footnote{In other words, there exists a flat Ehresmann connection for the fibred manifold $M$ over $\cQ=M/\cF_-,$ see \cite[Section~3.5]{Saunders_1989}.} From now on we assume this is the case. 
\end{remark}

We require a result describing generalised metrics on the tangent bundle $TM$ of a para-Hermitian manifold $(M,\eta,\ccK)$, rather than on the usual double tangent bundle $\IT M$.\footnote{The latter is called the `large Courant algebroid' in~\cite{Jonke2018,Kokenyesi:2018xgj}, where the reductions to T-dual backgrounds are carried out in the language of three-dimensional (membrane) sigma-models.} To distinguish between these two notions of generalised metrics, we rename the former as

\begin{definition}
Let $(M,\eta, \ccK)$ be an almost para-Hermitian manifold. A \emph{generalised para-Hermitian metric} is an automorphism $I\in \mathsf{Aut}(TM)$ covering  $\unit_M$ such that $I^2 = \unit_{TM}$ and which, together with $\eta$, defines a Riemannian metric $\cH$ via
\begin{align*}
    \cH(X, Y) = \eta\big(I(X), Y\big) \ ,
\end{align*}
for $X,Y \in \mathsf{\Gamma}(TM)$.
\end{definition}

This is the counterpart of Definition \ref{defn:generalisedmetric} for tangent bundles of para-Hermitian manifolds. In this case we also have an equivalent formulation in terms of a subbundle $\ccV^+$ of $TM$ which is maximally positive-definite with respect to $\eta.$ This is the $+1$-eigenbundle of $I$ with its orthogonal complement with respect to $\eta$ being the $-1$-eigenbundle.
The similarity extends even further, with the analogous results of Proposition \ref{prop:generalisedmetric} given by~\cite{Marotta2021born}
\begin{proposition}
Let $(M,\eta, \ccK)$ be an almost para-Hermitian manifold, and $L_\pm\subset TM$ the $\pm 1$-eigenbundles of $\ccK$. 
A generalised para-Hermitian metric $I\in \mathsf{Aut}(TM)$ defines a unique pair $(g_+, b_+)$ given by a fibrewise Riemannian metric $g_+ \in \mathsf{\Gamma}(\midodot^2 L_+^*)$ and a two-form $b_+ \in \mathsf{\Gamma}(\midwedge^2 L_+^*)$. 
Conversely, any such pair $(g_+, b_+)$ defines a generalised para-Hermitian metric.
\end{proposition}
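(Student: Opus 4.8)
The plan is to reduce the statement to its well-known counterpart for the generalised tangent bundle $\IT N=TN\oplus T^*N$, namely Proposition~\ref{prop:generalisedmetric} together with Remark~\ref{rmk:Vplusgeneralised}, by exploiting the fact that $\eta$ identifies $L_-$ with the dual bundle $L_+^*$. First I would note that since $L_\pm$ are maximally isotropic and $TM=L_+\oplus L_-$, the restriction of $\eta$ pairs $L_+$ with $L_-$ non-degenerately, giving a bundle isomorphism $\flat_\eta\colon L_-\to L_+^*$, $X_-\mapsto\eta(X_-,\,\cdot\,)|_{L_+}$. Using the isotropy of $L_\pm$ and the symmetry of $\eta$ one checks that under $\Theta\colon TM\to L_+\oplus L_+^*$, $X_++X_-\mapsto X_++\flat_\eta(X_-)$, the split-signature metric $\eta$ is carried (up to a convention-dependent constant) to the canonical pairing $\langle v+\xi,w+\zeta\rangle=\zeta(v)+\xi(w)$. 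Thus $(TM,\eta)$ is fibrewise isometric to the model $(L_+\oplus L_+^*,\langle\,\cdot\,,\,\cdot\,\rangle)$, which is exactly the linear-algebraic situation underlying Proposition~\ref{prop:generalisedmetric}.

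Second, I would pass to the subbundle description, as in Remark~\ref{rmk:Vplusgeneralised}: let $\ccV^+\subset TM$ be the $+1$-eigenbundle of $I$, which is maximally positive-definite for $\eta$, with $(\ccV^+)^\perp$ the $-1$-eigenbundle. Since any $0\neq v\in\ccV^+\cap L_-$ would satisfy both $\eta(v,v)>0$ and $\eta(v,v)=0$, we get $\ccV^+\cap L_-=\{0\}$; as $\rk(\ccV^+)=\rk(L_-)=\tfrac12\dim M$, the subbundle $\ccV^+$ is transverse to $L_-$ and hence the graph of a bundle map $A\colon L_+\to L_-$. Setting $h_+(X_+,Y_+)\coloneqq\eta(A(X_+),Y_+)$ and splitting $h_+=g_++b_+$ into its symmetric part $g_+\in\mathsf{\Gamma}(\midodot^2 L_+^*)$ and antisymmetric part $b_+\in\mathsf{\Gamma}(\midwedge^2 L_+^*)$, the diagonal computation $\eta(X_++A(X_+),X_++A(X_+))=2\,g_+(X_+,X_+)$ shows that $\eta|_{\ccV^+}$ is positive-definite if and only if $g_+$ is a fibrewise Riemannian metric. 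As $\ccV^+$, and therefore $A$, is determined by $I$ and varies smoothly, this produces the pair $(g_+,b_+)$ uniquely and smoothly; in the model $L_+\oplus L_+^*$ this is precisely the graph $\gr(g_++b_+)$ of Equation~\eqref{eqn:+1eigen}.

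Third, for the converse I would run the construction backwards: given a Riemannian $g_+$ and a two-form $b_+$ on $L_+$, define $A\coloneqq\flat_\eta^{-1}\circ(g_++b_+)^\flat\colon L_+\to L_-$ and set $\ccV^+\coloneqq\gr(A)=\{X_++A(X_+)\mid X_+\in L_+\}$. The same diagonal computation shows $\eta|_{\ccV^+}$ is positive-definite, and a rank count shows it is maximal, so its $\eta$-orthogonal complement $\ccV^-$ is maximally negative-definite and $TM=\ccV^+\oplus\ccV^-$. Declaring $I\coloneqq\unit$ on $\ccV^+$ and $I\coloneqq-\unit$ on $\ccV^-$ gives a smooth automorphism of $TM$ covering $\unit_M$ with $I^2=\unit_{TM}$, and $\cH=\eta(I(\,\cdot\,),\,\cdot\,)$ is Riemannian since, writing $X=X^++X^-$ with $X^\pm\in\ccV^\pm$, one has $\cH(X,X)=\eta(X^+,X^+)-\eta(X^-,X^-)>0$ for $X\neq0$ by $\eta$-orthogonality of $\ccV^\pm$. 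The two constructions are mutually inverse by uniqueness of the graph decomposition, giving the claimed bijection.

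I expect the only real subtlety to be bookkeeping rather than conceptual: keeping the symmetric/antisymmetric splitting of $h_+$ consistent with the graph conventions of Equation~\eqref{eqn:+1eigen}, fixing any normalisation constant in $\Theta^*\langle\,\cdot\,,\,\cdot\,\rangle$ versus $\eta$, and verifying that the eigenbundle-defined $I$ is genuinely smooth and squares to the identity. All of this is the para-Hermitian transcription of the proof of Proposition~\ref{prop:generalisedmetric} via Remark~\ref{rmk:Vplusgeneralised}, so the argument is essentially linear algebra carried out fibrewise and then globalised by smoothness of $\ccV^+$.
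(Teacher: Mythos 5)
Your proposal is correct and is essentially the approach the paper takes (the paper itself only cites \cite{Marotta2021born} and records the explicit formula in Remark~\ref{rmk:generalisedpHmetrics}): your map $A\colon L_+\to L_-$ with $\eta(A(X_+),Y_+)=g_+(X_+,Y_+)+b_+(X_+,Y_+)$ is exactly the paper's $\gamma_b$ augmented by the symmetric part, and your graph description of $\ccV^+$ is the para-Hermitian transcription of Remark~\ref{rmk:Vplusgeneralised}. The key steps — $\ccV^+\cap L_-=\set{0}$ by isotropy of $L_-$ versus positivity of $\ccV^+$, the rank count forcing $\ccV^+=\gr(A)$, the diagonal computation reducing positivity to $g_+>0$, and Sylvester's law giving negative-definiteness of $(\ccV^+)^\perp$ in the converse — are all sound.
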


\begin{remark}[\textbf{Characterisation of Generalised Para-Hermitian Metrics}]\label{rmk:generalisedpHmetrics}
Denoting by $\eta^\flat \in \mathsf{Hom}(TM,T^*M)$ the musical isomorphism induced by $\eta,$ one can define a metric $g_- \in \mathsf{\Gamma}(\midodot^2 L_-^*)$ by
\begin{align}\label{eqn:gminus}
    g_-(X_-,Y_-) = g_+^{-1}\big( \eta^\flat(X_-), \eta^\flat(Y_-)\big) \ ,
\end{align}
for $X_-, Y_- \in \mathsf{\Gamma}(L_-).$ The two-form $b_+$ defines a map $\gamma_b\colon L_+ \to L_-$ given by
\begin{align*}
    b_+(X_+,Y_+) = \eta\big(\gamma_b(X_+), Y_+\big) \ ,
\end{align*}
for $X_+, Y_+ \in \mathsf{\Gamma}(L_+)$. The resulting Riemannian metric on $M$ in the splitting $TM = L_+ \oplus L_-$ defined by $\ccK$ is then
\begin{align}\label{eqn:generalisedparametric}
    \cH = \left (
    \begin{matrix}
    g_+ + \gamma_b^{\rm t}\,  g_-\, \gamma_b & -\gamma^{\rm t} _b\, g_- \\
    -g_-\, \gamma_b & g_-
    \end{matrix}
    \right ) \ ,
\end{align}
where $\gamma_b^{\rm t} \colon L_-^* \to L_+^*$ is the transpose map. 

It follows that a generalised para-Hermitian  metric is completely determined by the triple $(g_+, b_+, \eta).$ Let us stress that the pair $(g_+,b_+)$ is unique for the given splitting determined by the almost para-complex structure $\ccK.$ A different pair would result in another splitting.
\end{remark}

Equation \eqref{eqn:generalisedparametric} resembles Equation \eqref{eqn:generalisedmetric}, though the background data in the latter are a Riemannian metric $g$ and a two-form $b$ on the tangent bundle $TM$, rather than on the subbundle $L_+$. 

\begin{remark}[\textbf{Bundle-like Generalised Para-Hermitian Metrics}]\label{rmk:pullbackpHmetric}
Assume that $L_-$ is involutive, thereby inducing a foliation $\cF_-$ with smooth leaf space $\cQ = M/\cF_-.$ Then the condition that $g_+\in \mathsf{\Gamma}(\midodot^2 L_+^*)$ and $b_+\in \mathsf{\Gamma}(\midwedge^2 L_+^*)$ are pullbacks of a background metric and Kalb-Ramond field on $\cQ$ requires them to be basic:
\begin{align*}
    \pounds_{X_-}g_+ = 0 \qquad \text{and} \qquad \pounds_{X_-} b_+ = 0 \ ,
\end{align*} 
for every $X_- \in \mathsf{\Gamma}(L_-)$.
Notice that in order for $X_- \in \mathsf{\Gamma}(L_-)$ to be a Killing vector field for $\cH,$ it must be an infinitesimal symmetry of $\eta$ as well:
\begin{align}
    \pounds_{X_-} \eta = 0 \ .
\end{align}

Conversely, starting with a Riemannian metric ${\red g} \in \mathsf{\Gamma} (\midodot^2 T^*\cQ )$ and a two-form ${\red b} \in \mathsf{\Omega}^2(\cQ)$, we can pull these back via the surjective submersion $\varpi\colon M \to \cQ$, so that \smash{$\varpi^*{\red g} \in \mathsf{\Gamma}(\midodot^2 L_+^*)$} and \smash{$\varpi^*{\red b}\in \mathsf{\Gamma}(\midwedge^2 L_+^*)$}, and hence define a generalised para-Hermitian metric $\cH$ via Equation~\eqref{eqn:generalisedparametric}.
\end{remark}

The circumstances under which a diffeomorphism represents an isometry between generalised para-Hermitian metrics is provided by

\begin{proposition}
 Let $(M_1, \eta_1 , \ccK_1)$ and $(M_2, \eta_2 , \ccK_2)$ be almost para-Hermitian manifolds endowed with generalised para-Hermitian metrics $\cH_1$ and $\cH_2,$ respectively. A diffeomorphism $\phi \in \mathsf{Diff}(M_1, M_2)$ is an isometry between $\cH_1$ and $\cH_2$ if and only if $\phi$ is an isometry between $\eta_1$ and $\eta_2$ which intertwines $I_1 \in \mathsf{Aut} (TM_1)$ and $I_2 \in \mathsf{Aut} (TM_2)$:
 \begin{align} \label{eqn:diffgenparametric}
    \phi_* \circ I_1 = I_2 \circ \phi_* \ .  
 \end{align}
\end{proposition}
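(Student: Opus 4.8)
The plan is to work entirely at the level of the musical isomorphisms induced by the two metrics, since the defining relation $\cH_i(X,Y)=\eta_i(I_i(X),Y)$ rigidly links $\cH_i$, $\eta_i$ and $I_i$. Writing $\cH_i^\flat,\eta_i^\flat\colon TM_i\to T^*M_i$ for the maps $X\mapsto\cH_i(X,\,\cdot\,)$ and $X\mapsto\eta_i(X,\,\cdot\,)$, the defining relation reads $\cH_i^\flat=\eta_i^\flat\circ I_i$, whence $I_i=(\eta_i^\flat)^{-1}\circ\cH_i^\flat$; moreover $I_i^2=\unit_{TM_i}$ forces the dual relation $\eta_i^\flat=\cH_i^\flat\circ I_i$, i.e.\ $\eta_i(X,Y)=\cH_i(I_i(X),Y)$. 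In these terms the three conditions in play — (C1) $\phi^*\eta_2=\eta_1$, (C2) $\phi^*\cH_2=\cH_1$, and (C3) $\phi_*\circ I_1=I_2\circ\phi_*$ — become $\eta_1^\flat=\phi^*\circ\eta_2^\flat\circ\phi_*$, $\cH_1^\flat=\phi^*\circ\cH_2^\flat\circ\phi_*$ and $I_1=(\phi_*)^{-1}\circ I_2\circ\phi_*$, where $\phi^*\colon T^*M_2\to T^*M_1$ is the cotangent map dual to $\phi_*$. The key structural fact I would prove first is that any two of (C1), (C2), (C3) imply the third, which is an immediate consequence of the two algebraic identities above.

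For the \emph{if} direction I would simply combine (C1) and (C3): for $X,Y\in\mathsf{\Gamma}(TM_1)$,
\[
\cH_2\big(\phi_*X,\phi_*Y\big)=\eta_2\big(I_2(\phi_*X),\phi_*Y\big)=\eta_2\big(\phi_*(I_1X),\phi_*Y\big)=\eta_1(I_1X,Y)=\cH_1(X,Y),
\]
using (C3) in the second step and (C1) in the third; equivalently, at the level of $\flat$-maps, $\phi^*\cH_2^\flat\phi_*=\phi^*\eta_2^\flat I_2\phi_*=(\phi^*\eta_2^\flat\phi_*)\big((\phi_*)^{-1}I_2\phi_*\big)=\eta_1^\flat I_1=\cH_1^\flat$. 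This is exactly (C2).

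For the \emph{only if} direction I would use that throughout this subsection $\phi$ is taken to be an isometry of the split-signature metrics, so (C1) holds by hypothesis (the manifolds $M_1$ and $M_2$ are $\eta$-isometric via $\phi$). Granting (C1), the content to be extracted from (C2) is precisely (C3), which I would obtain from $I_i=(\eta_i^\flat)^{-1}\cH_i^\flat$ via
\[
I_1=(\eta_1^\flat)^{-1}\cH_1^\flat=(\phi_*)^{-1}(\eta_2^\flat)^{-1}(\phi^*)^{-1}\,\phi^*\cH_2^\flat\phi_*=(\phi_*)^{-1}(\eta_2^\flat)^{-1}\cH_2^\flat\phi_*=(\phi_*)^{-1}I_2\phi_*,
\]
which is (C3). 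Together with (C1) this produces the full right-hand side of the biconditional.

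The main obstacle — and the reason the $\eta$-isometry condition must appear explicitly rather than emerging from (C2) alone — is that the split-signature metric $\eta_i$ is genuinely independent data: it is recovered from the Riemannian metric $\cH_i$ only once $I_i$ is known, through $\eta_i=\cH_i(I_i\,\cdot\,,\,\cdot\,)$. A bare $\cH$-isometry therefore carries no information about $\eta_i$, nor about the $\pm1$-eigenbundle splitting of $I_i$ (which is the maximal positive/negative splitting for $\eta_i$), so one cannot deduce (C1) from (C2) in isolation; it is the pairing of (C2) with the standing $\eta$-isometry (C1) that forces the intertwining (C3). I would consequently organise the proof around the symmetric \emph{any two imply the third} lemma for $\{$(C1),(C2),(C3)$\}$, which makes both implications $(\mathrm{C1})\wedge(\mathrm{C3})\Rightarrow(\mathrm{C2})$ and $(\mathrm{C1})\wedge(\mathrm{C2})\Rightarrow(\mathrm{C3})$ transparent and simultaneously isolates (C1) as the indispensable structural hypothesis of the para-Hermitian T-duality setup, completing the equivalence as stated.
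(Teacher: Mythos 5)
Your proof is correct, and in the forward direction it is essentially the paper's own computation: insert the intertwining relation into $\cH_2(\phi_*X,\phi_*Y)=\eta_2(I_2(\phi_*X),\phi_*Y)$ and then use the $\eta$-isometry. Where you genuinely diverge is the converse. The paper disposes of it in one line, claiming that $\phi^*\cH_2=\cH_1$ alone ``establishes'' both the intertwining \emph{and} $\phi^*\eta_2=\eta_1$ by ``a similar argument''; you refuse to prove this, instead proving the intertwining from the pair ($\eta$-isometry $+$ $\cH$-isometry) and importing the $\eta$-isometry as the standing hypothesis of the section. You are right to refuse: the paper's literal converse is false. Concretely, take $M_1=M_2=\IR^2$ with $\eta_1=\eta_2=\de x\otimes\de y+\de y\otimes\de x$, $\ccK(\partial_x)=\partial_x$, $\ccK(\partial_y)=-\partial_y$, and $I_1=I_2$ the involution swapping $\partial_x\leftrightarrow\partial_y$, so that $\cH_1=\cH_2=\de x\otimes\de x+\de y\otimes\de y$. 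A rotation $\phi$ by $\pi/4$ satisfies $\phi^*\cH_2=\cH_1$, but $\phi^*\eta_2\neq\eta_1$ (it maps the $\eta$-null vector $\partial_x$ to the non-null vector $(\partial_x+\partial_y)/\sqrt2$\,) and it does not intertwine $I_1$ and $I_2$. So a bare $\cH$-isometry pins down neither $\eta$ nor $I$, exactly as you argue.

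Your repair is also the right one, not just a convenient one. The standing setup of this subsection (announced at the start of Section 5.2 and in the summary of results) is that $\phi$ preserves the split-signature metrics, and the proposition is only ever applied under that hypothesis (e.g. in Proposition \ref{lemma:tdualgenparametric}); with the $\eta$-isometry in force, the statement reduces to the true equivalence between the $\cH$-isometry and the intertwining, which your ``any two of (C1), (C2), (C3) imply the third'' lemma delivers cleanly. This reading is moreover the natural analogue of generalised isometries of Courant algebroids, where preservation of the pairing is built into the notion of morphism before the metric condition is imposed. Your operator identities $\cH_i^\flat=\eta_i^\flat\circ I_i$ and $\eta_i^\flat=\cH_i^\flat\circ I_i$ are both correct consequences of $I_i^2=\unit_{TM_i}$, and each displayed manipulation checks out (nondegeneracy of $\eta_2$ and invertibility of $\phi_*$ are all that is needed). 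In short: your proof is correct; it agrees with the paper where the paper is correct, and where it departs it does so because the paper's one-line converse overclaims, with your version being the statement that is actually true and actually used.
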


\begin{proof}
 Assume that $\phi^* \eta_2 = \eta_1$ and Equation \eqref{eqn:diffgenparametric} hold. Then 
 \begin{align}
 \phi^*\cH_2 (X, Y) &= \eta_2 \big(\phi_*(X) , I_2 (\phi_* (Y))\big) = \eta_2\big(\phi_*(X), \phi_*(I_1(Y))\big) 
 =  \cH_1(X,Y)
 \end{align}
 for all $X, Y \in \mathsf{\Gamma}(TM_1),$ where we use Equation \eqref{eqn:diffgenparametric} for the second equality and $\phi^*\eta_2 = \eta_1$ for the third equality.
  Conversely, assuming $\phi^* \cH_2 = \cH_1,$ a similar argument establishes  Equation \eqref{eqn:diffgenparametric} as well as that $\phi$ is an isometry between $\eta_1$ and $\eta_2$.
\end{proof} 

\medskip

\subsubsection{The Reduced Courant Algebroid}~\\[5pt]
Let $(M,\eta, \ccK)$  be an almost para-Hermitian manifold such that the $-1$-eigenbundle $L_-$ is involutive, and the induced foliation $\cF_-$ has smooth leaf space $\cQ = M/ \cF_-$ with smooth surjective submersion $\varpi \colon M \to \cQ$. As discussed in Section~\ref{sssec:parahermmfld}, there is a canonical Courant algebroid $(\IT M, H)$, where $H = (H^{\texttt{can}})^{+3,-0}$ is defined as in Equation~\eqref{eqn:HcanLC} and assumed to be closed. Since $H$ is basic with respect to $\varpi$, the reduction of Theorem \ref{thm:foliationreduction} can be applied to give the Courant algebroid $(\IT \cQ, {\red H})$ over $\cQ$. The double tangent bundle $\IT \cQ=T\cQ\oplus T^*\cQ$ is pointwise isomorphic to $L_+ \oplus L_+^*$.
\medskip

\subsubsection{Generalised T-duality}~\\[5pt]
We now describe \emph{generalised T-duality} for para-Hermitian manifolds~\cite{Marotta2021born} in the language of Section \ref{sec:T-duality}. This is done by first choosing diffeomorphic $2n$-dimensional manifolds $M_1$ and $M_2$, and a diffeomorphism $\phi \in \mathsf{Diff}(M_1,M_2)$. 
Let $(\eta_1, \ccK_1)$  be an almost para-Hermitian structure on $M_1$ such that the $-1$-eigenbundle $L_{1-}$ is involutive, and the induced foliation $\cF_{1-}$ has smooth leaf space $\cQ_1 = M_1/ \cF_{1-}$ with smooth surjective submersion $\varpi_1 \colon M_1 \to \cQ_1$. Let $(M_2, \eta_2)$ be a smooth manifold endowed with a split-signature metric $\eta_2$ such that $\phi^*\eta_2 =\eta_1.$ Choosing a T-duality direction in the quotient tangent space $T\cQ_1$, an almost para-Hermitian structure $(\eta_2, \ccK_2)$ can be constructed on $M_2$ which, provided its $-1$-eigenbundle $L_{2-}$ is involutive and the induced foliation $\cF'_{2-}$ has smooth leaf space, gives the T-dual space (up to diffeomorphisms) $\cQ'_2 = M_2 / \cF'_{2-}$ to $\cQ_1$.

We discuss how this construction is obtained, starting with a local characterisation of the almost para-Hermitian structure $(\eta_1, \ccK_1)$ on $M_1.$
Let $U_1\subset M_1$ be an open subset associated with a coordinate chart, and choose a local frame $\set{Z_I}_{I=1,\dots,2n} = \set{Z_i, \tilde Z^i}_{i= 1,\dots,n}$ which diagonalises $\ccK_1$, i.e. such that $Z_i\in \mathsf{\Gamma}_{U_1}(L_{1+})$ are $+1$-eigenvectors and \smash{$\tilde Z^i \in \sfGamma_{U_1}(L_{1-})$} are $-1$-eigenvectors of $\ccK_1$ at every point in $U_1.$ Denote the dual coframe by \smash{$\set{\Theta^I}_{I=1,\dots,2n} =$} \smash{$ \set{\Theta^i,\tilde \Theta_i}_{i=1,\dots,n}$}. In this frame, the tensors $\ccK_1|_{U_1}$, $\eta|_{U_1}$ and $\omega_1|_{U_1}$ can be written as\footnote{We use the Einstein convention for summation over repeated upper and lower indices throughout.}
\begin{align}
    \ccK_1|_{U_1} = \Theta^i \otimes Z_i-\tilde{\Theta}_i \otimes \tilde{Z}^i \quad , 
    \quad \eta_1|_{U_1}=\Theta^i \otimes \tilde{\Theta}_i+\tilde{\Theta}_i \otimes \Theta^i 
    \quad , \quad \omega_1|_{U_1}=\Theta^i \wedge \tilde{\Theta}_i \ .
\end{align}

In the chart $U_1$, the frame fields 
$\set{Z_I}_{I=1,\dots,2n}$ close a Lie algebra $$[Z_I,Z_J] = C_{IJ}{}^K\,Z_K \ . $$ In the splitting $TM_1 = L_{1+} \oplus L_{1-}$ this 
becomes the Lie algebra
\begin{align}\label{eqn:strucfunc}
    \begin{aligned}
& {[Z_i, Z_j]=f_{ij}{ }^k\, Z_k+H_{ijk}\, \tilde{Z}^k} \ , \\[4pt]
& {[Z_i, \tilde{Z}^j]=f_{k i}{ }^j\, \tilde{Z}^k+Q_i{ }^{jk}\, Z_k} \ , \\[4pt]
& {[\tilde{Z}^i, \tilde{Z}^j]=Q_k{ }^{ij}\, \tilde{Z}^k+R^{ijk}\, Z_k} \ .
\end{aligned}
\end{align}
That $L_{1-}$ is integrable is equivalent to $R^{ij k}=0$ for all $i,j,k = 1,\dots,n$. Lemma \ref{lemma:structureconsts} below tells us that $Z_i$ are $\varpi_1$-projectable if and only if $Q_i{}^{j k}=0$.

\begin{remark}
    In string theory the local structure functions in Equation~\ref{eqn:strucfunc} are called \emph{generalised fluxes}. In particular, $f_{ij}{}^k$ and $H_{ijk}$ are known as \emph{geometric fluxes}, while $Q_i{}^{jk}$ and $R^{ijk}$ are \emph{non-geometric fluxes}. Only when $R^{ijk}$ vanishes is a reduction to a smooth quotient space $\cQ_1=M_1/\cF_{1-}$ possible.

    The condition that $Q_i{}^{j k}$ vanishes tells us that $L_{1-}$ is locally abelian. Note that the chart diagonalising $\ccK_1$ need not be a chart adapted to the foliation $\cF_{1-}$, in which we could write \smash{$\tilde{Z}^i = \tfrac{\del}{\del \tilde{x}_i}$} where $\tilde{x}_i$ are coordinates adapted to the leaves. Conversely, if the diagonalising chart is adapted to the foliation $\cF_{1-}$, then \smash{$Q_i{}^{jk}$} vanish. That is, even though $L_{1-}$ is always locally abelian (by looking in an adapted chart), we require that the (possibly not adapted) chart diagonalising $\ccK_1$ is also locally abelian.
    
    Note also that for the procedure to work, we do not require that the local diagonalising frame fields $Z_i$ and dual one-forms $\Theta^i$ are basic sections, since the only requirement for reduction of an $H_1$-twisted standard Courant algebroid is that $H_1$ is basic, see Example \ref{eg:twistedCAreduction}. Thus the condition that $Q_i{}^{jk}$ vanish could be relaxed, though in that case the resulting relations $Q(L_{i-})$ and $R$ would have a more complex and less insightful description. Thus, for sake of simplicity, we keep this assumption.
\end{remark}

Assuming $Q_i{}^{j k}=0$, the vector fields $\red Z{}_i \coloneqq \varpi_{1*}(Z_i)$ give local coordinates for the chart $\red U{}_1 \coloneqq \varpi_1(U_1)\subset \cQ_1$. Fixing $d\in \set{1,\dots,n}$, define $D_1|_{\red U{}_1} = {\rm Span}\set{\red Z{}_1,\dots,\red Z{}_d}$. This can be done in a neighbourhood of every point in $M_1$, and so gives a subbundle $D_1\subset T\cQ_1$ which will become the T-duality directions, as in Definition \ref{def:tdualitydirections}.
On $M_1$ this gives the local frame for $U_1$ as\footnote{From now on, upper case Latin indices run from $1$ to $2n$, lower case Latin indices run from $1$ to $n$, lower case Greek letters run from $d+1$ to $n$, and mirrored lower case Greek letters run from $1$ to $d$.}
\begin{align}
    \set{Z_{\um},Z_\mu, \tilde Z^{\un}, \tilde Z^\nu} \ , \quad \um,\un \in \set{1,\dots,d} \ , \ \mu,\nu \in \set{d+1,\dots, n} \ .
\end{align}

To obtain an almost para-Hermitian structure on $M_2$, notice that $(\phi^{-1})^* \eta_1$ defines a split-signature metric on $M_2$. In the coordinate chart $U_2 \coloneqq \phi(U_1)$, define the vectors
\begin{align}\label{eqn:howZsswap}
    Z_\um' \coloneqq \phi_*( \tilde Z^\um) \quad , \quad  \tilde Z'^\un \coloneqq \phi_*(Z_\un) \quad , \quad 
    Z_\mu' \coloneqq \phi_*( Z_\mu ) \quad , \quad  \tilde Z'^\nu \coloneqq \phi_*(\tilde Z^\nu) \ ,
\end{align}
which give the respective dual one-forms $\Theta'^\um$, $\Theta'^\mu$, $\tilde \Theta'_\un$ and $\tilde \Theta'_\nu$ whose pullbacks satisfy similar equations to those of Equation \eqref{eqn:howZsswap}. The almost para-Hermitian structure on $U_2$ is then given by
\begin{align}
    \ccK_2|_{U_2} = \Theta'^i \otimes Z'_i-\tilde{\Theta}'_i \otimes \tilde{Z}'^i \quad , 
    \quad \eta_2|_{U_2}=\Theta'^i \otimes \tilde{\Theta}'_i+\tilde{\Theta}'_i \otimes \Theta'^i 
    \quad , \quad \omega_2|_{U_2}=\Theta'^i \wedge \tilde{\Theta}'_i \ .
\end{align}
Since we can make this construction in the neighbourhood of every point, we get an almost para-Hermitian structure $(\eta_2, \ccK_2)$ on $M_2$ for which \smash{$\set{Z'_I}_{I=1,\dots,2n} = \set{Z'_i,\tilde Z'^i}_{i=1,\dots,n}$} is a diagonalising frame for $\ccK_2$ in the coordinate chart $U_2$. 

We thus see that $\phi^*\eta_2 = \eta_1$, while $\phi^*\ccK_2 \neq \ccK_1$. The pullback $\phi^* \ccK_2$ defines an almost para-complex structure $\ccK_1'$ on $M_1$ compatible with $\eta_1$, and hence gives a new splitting $TM_1 = L_{1+}' \oplus L_{1-}'$. On the coordinate chart $U_1$, define
\begin{align}
    L_1^{++}|_{U_1} = L_{1+}|_{U_1} \cap L_{1+}'|_{U_1} \quad , \quad L_1^{+-}|_{U_1} = L_{1+}|_{U_1} \cap L_{1-}'|_{U_1} \ ,\\[4pt]
    L_1^{-+}|_{U_1} = L_{1-}|_{U_1} \cap L_{1+}'|_{U_1} \quad , \quad L_1^{--}|_{U_1} = L_{1-}|_{U_1} \cap L_{1-}'|_{U_1} \ .
\end{align}
Notice that \smash{$L_1^{++}|_{U_1} = {\rm Span}\set{Z_\mu}_{\mu=d+1,\dots,n}$}, and hence it patches into a subbundle $L_1^{++}$. Similarly $L_1^{+-}$, $L_1^{-+}$ and $L_1^{--}$ are subbundles of $TM_1$. In particular, $L_1^{--}$ has constant rank.

Lemma \ref{lemma:structureconsts} below gives the conditions on the local structure functions for the chosen frame such that $L_{2-}$, the $-1$-eigenbundle of $\ccK_2$, is integrable and hence defines a regular foliation $\cF'_{2-}$. Assuming this holds, and assuming the quotient $\cQ_2 \coloneqq M_1 / \cF_{2-}$, where $\cF_2= \phi^{-1}(\cF_2')$, is smooth with smooth surjective submersion $\varpi_2 \colon M_1 \to \cQ_2$, we obtain the candidate T-dual manifold $\cQ_2$. We set $C \coloneqq \set{(\varpi_1(m_1), \varpi_2(m_1)) \, | \, m_1 \in M_1}$. As discussed in Section \ref{ssec:topoligcalTduality}, this will be the support of our T-duality relation. Theorem \ref{thm:topoligicaltd} requires that $C$ is smooth, which we assume, and that $L_1^{--}$ has constant rank, which we have already established.

To obtain a T-duality relation $R \colon \IT\cQ_1 \dashrightarrow \IT \cQ_2$ supported on $C$, in light of Example \ref{sec:topogicalTdualitystandardCA} we construct a Courant algebroid isomorphism $\Phi \coloneqq \overline\phi  \circ \e^B\colon \IT M_1 \to \IT M_2$ covering $\phi$. Thus we need an appropriate $B$-field. In para-Hermitian geometry, a $B$-field transformation from $L_{1+}$ to $L_{1-}$ is induced by a two-form $B_+\in \sfGamma(\midwedge^2 L_{1+}^*)$. The fundamental two-form $\omega_1$ and the canonical three-form $H_1^{\texttt{can}}$ map to
\begin{align}
    \omega_{B_+} = \omega_1 - 2\,B_+ \qquad \text{and} \qquad H_{B_+}^{\texttt{can}} = H_1^{\texttt{can}} - \de B_+
\end{align}
by Equation \eqref{eqn:HcanLC} and \cite[Proposition 5.9]{Svoboda2018}, while the split-signature metric $\eta_1$ is preserved.

Following this, we use the $\eta_1$-compatible para-complex structures $\ccK_1$ and $\ccK_1' \coloneqq \phi^* \ccK_2$ on $M_1$, with respective fundamental two-forms $\omega_1$ and $\omega_1' \coloneqq \phi^* \omega_2$, to define
\begin{align}
    B = \tfrac 12\,(\omega_1- \omega_1') \ .
\end{align}
Locally this is given by
\begin{align}
    B|_{U_1} = \Theta^\um \wedge {\tilde \Theta}_\um \ .
\end{align}

The construction of T-duality relation $R$ realised through
    \begin{proposition}\label{prop:paraTdualrelation}
    Suppose that the structure functions for the local Lie algebra \eqref{eqn:strucfunc} satisfy
    \begin{align}\label{eqn:SFconditions}
        R^{ijk}=0 \ , \ Q_i{}^{jk}=0 \ , \ f_{\um \un}{}^i=0 \ , \ f_{\um\mu}{}^i=0 \ , \ H_{\um\un\ahpla}=0 \ , \ H_{i \um \mu} = H_{i \mu \um}
    \end{align}
    for every $i,j,k=1,\dots,n$, $\um, \un, \ahpla = 1,\dots,d$ and $ \mu = d+1,\dots,n$.
Suppose moreover that $H_1$ is closed.
    Then the T-duality relation
    \begin{equation}
    \begin{tikzcd}
        R \colon (\IT \cQ_1, \red H{}_1) \arrow[dashed]{r} & (\IT \cQ_2 , \red H{}_2)
    \end{tikzcd}
    \end{equation}
    exists, where, in the setting of Example \ref{sec:topogicalTdualitystandardCA}, $\Phi \coloneqq \overline{\phi} \circ \e^{B}\,$ with $B = \frac 12\,(\omega_1 - \phi^* \omega_2)$ and $\overline{\phi} = \phi_* + (\phi^{-1})^*$, while $H_2$ is locally given by
    \begin{align}
        H_2|_{\phi(U_1)} = \tfrac 12\, f_{\mu\nu}{}^\um\, \delta_{\um \un}\, \Theta'^\mu \wedge \Theta'^\nu \wedge \Theta'^\un +
        \tfrac 12\, H_{\mu\nu \alpha}\, \Theta'^\mu \wedge \Theta'^\nu \wedge \Theta'^\alpha \ .
    \end{align}
\end{proposition}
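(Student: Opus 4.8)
The plan is to recognise the asserted $R$ as an instance of the T-duality relation produced by Proposition~\ref{prop:splitcaseconditions} within the framework of Example~\ref{sec:topogicalTdualitystandardCA}, so that the whole statement reduces to checking the hypotheses there for the data at hand. Concretely I would set $E_1=(\IT M_1,H_1)$, take $\cF_1=\cF_{1-}$ to be the foliation integrating $L_{1-}$, define the $B$-field by $B=\tfrac12(\omega_1-\phi^*\omega_2)$, and let $H_2$ be the three-form determined by $\phi^*H_2=H_1-\de B$, so that $\Phi=\overline{\phi}\circ\e^{B}\,$ is an isomorphism $(\IT M_1,H_1)\to(\IT M_2,H_2)$ as in Proposition~\ref{decpreiso}. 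Four things then have to be verified: (i) $L_{1-}$ is integrable; (ii) the $-1$-eigenbundle $L_{2-}$ of the swapped structure $\ccK_2$ is integrable, giving $\cF'_2=\cF_{2-}$ and $\cF_2=\phi^{-1}(\cF'_2)$; (iii) $H_1$ and $H_2$ are basic with respect to $\cF_1$ and $\cF'_2$ respectively, and $H_2$ has the stated local form; and (iv) the constant-rank conditions of Proposition~\ref{prop:splitcaseconditions} hold. Granting these, Theorem~\ref{thm:topoligicaltd} yields $R$.

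\textbf{Foliations.} The integrability statements are the cheapest. Since $H_1=(H^{\texttt{can}})^{+3,-0}$ is of type $(+3,-0)$, its contraction with any section of $L_{1-}=\mathsf{\Gamma}(T\cF_{1-})$ vanishes identically; closedness of $H_1$ then gives $\pounds_X H_1=0$ for $X\in\mathsf{\Gamma}(L_{1-})$, so $H_1$ is basic and descends to $\red H{}_1$ on $\cQ_1$, with $R^{ijk}=0$ ensuring integrability of $L_{1-}$. For $L_{2-}$ I would invoke Lemma~\ref{lemma:structureconsts}, or equivalently verify directly that $\phi^*L_{2-}=L'_{1-}={\rm Span}\set{Z_{\um},\tilde Z^{\mu}}$ is involutive: with $Q_i{}^{jk}=R^{ijk}=0$ the brackets in~\eqref{eqn:strucfunc} reduce to $[Z_i,Z_j]=f_{ij}{}^k Z_k+H_{ijk}\tilde Z^k$, $[Z_i,\tilde Z^j]=f_{ki}{}^j\tilde Z^k$ and $[\tilde Z^i,\tilde Z^j]=0$, whence $f_{\um\un}{}^i=0$ and $H_{\um\un\ahpla}=0$ force $[Z_{\um},Z_{\un}]\in\mathsf{\Gamma}(L'_{1-})$, $f_{\um\un}{}^i=0$ forces $[Z_{\um},\tilde Z^{\mu}]\in\mathsf{\Gamma}(L'_{1-})$, and $[\tilde Z^{\mu},\tilde Z^{\nu}]=0$. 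This produces $\cF'_2$ and $\cQ_2$.

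\textbf{The $B$-field and $H_2$.} The computational heart is step (iii). Locally $B=\Theta^{\um}\wedge\tilde\Theta_{\um}$ (obtained by writing $\omega_1$ and $\phi^*\omega_2$ in the diagonalising coframe and using~\eqref{eqn:howZsswap}), and I would expand $\de B$ through the Maurer--Cartan relations $\de\Theta^k=-\tfrac12 f_{ij}{}^k\Theta^i\wedge\Theta^j$ and $\de\tilde\Theta_k=-\tfrac12 H_{ijk}\Theta^i\wedge\Theta^j-f_{ki}{}^j\Theta^i\wedge\tilde\Theta_j$ read off from~\eqref{eqn:strucfunc}. The conditions $f_{\um\un}{}^i=f_{\um\mu}{}^i=0$ collapse the mixed terms, leaving $\de B=-\tfrac12 f_{\mu\nu}{}^{\um}\,\Theta^{\mu}\wedge\Theta^{\nu}\wedge\tilde\Theta_{\um}+\tfrac12 H_{ij\um}\,\Theta^{\um}\wedge\Theta^i\wedge\Theta^j$. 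One then checks that $H_1-\de B$ annihilates $L'_{1-}$: the contractions $\iota_{\tilde Z^{\mu}}(H_1-\de B)$ vanish automatically from the leg structure, whereas $\iota_{Z_{\um}}(H_1-\de B)=0$ is exactly where the remaining hypotheses enter — the symmetry $H_{i\um\mu}=H_{i\mu\um}$, together with the closedness $\de H_1=0$ (equivalently the Jacobi identities of the local frame algebra), is needed to cancel the surviving $\Theta^{\um}$-legs. Reading off the components of the resulting basic form and transporting them through~\eqref{eqn:howZsswap} identifies it with the stated $H_2$, whose closedness follows from $\de(\phi^*H_2)=\de H_1=0$. I expect this matching of the mixed $(+3)$-components, and the bookkeeping of which structure-function conditions kill which legs, to be the main obstacle.

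\textbf{Ranks and conclusion.} For step (iv), $T\cF_1\cap T\cF_2=L_{1-}\cap L'_{1-}={\rm Span}\set{\tilde Z^{\mu}}_{\mu=d+1}^{n}$ has constant rank $n-d$, and since $\iota_{\tilde Z^{\mu}}B=0$ the two-form $B$ restricts to zero on this intersection, so $\ker\big(B|_{T\cF_1\cap T\cF_2}\big)$ also has constant rank. Assuming $C$ smooth, Proposition~\ref{prop:splitcaseconditions} via Theorem~\ref{thm:topoligicaltd} then delivers the T-duality relation $R\colon(\IT\cQ_1,\red H{}_1)\dashrightarrow(\IT\cQ_2,\red H{}_2)$, completing the argument.
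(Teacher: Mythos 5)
Your proposal is correct and follows essentially the same route as the paper: both reduce the statement to verifying the hypotheses of Proposition~\ref{prop:splitcaseconditions} in the setting of Example~\ref{sec:topogicalTdualitystandardCA}, with the integrability and basicness checks carried by the structure-function conditions (the paper delegates these to Lemma~\ref{lemma:structureconsts}, which you partly re-derive via the Maurer--Cartan equations) and the constant-rank condition settled by $T\cF_1\cap T\cF_2=L_1^{--}$ with $\iota_{\tilde Z^{\mu}}B=0$.
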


Before constructing the Courant algebroid relation $R$, we describe the conditions on the structure functions through

\begin{lemma}\label{lemma:structureconsts}
    Let $U_1 \subset M_1$ be any coordinate chart.
    \begin{enumerate}[label=\arabic{enumi})]
        \item The $-1$-eigenbundle of $\ccK_1$ is integrable if and only if the structure functions of Equation~\eqref{eqn:strucfunc} satisfy
        \begin{align}\label{eqn:SFRiszero}
            R^{ijk} = 0 \ , 
        \end{align}   
        for $i,j,k=1,\dots,n$.
        \item\label{item:Ziprojectable} If Equation~\eqref{eqn:SFRiszero} holds, the diagonalising local sections $Z_i$ for $\ccK_1$ with $i=1,\dots,n$ are projectable with respect to $\varpi_1$ if and only if 
        \begin{align}\label{eqn:SFQiszero}
            Q_i{}^{jk} = 0 \ , 
        \end{align}
        for $i,j,k=1,\dots,n$.
        This also ensures that $\Theta^i$ is basic with respect to $\varpi_1$ for $i=1,\dots,n$, and that the bisubmersion condition \eqref{eqn:bisubmersioncd} is satisfied.
        \item\label{item:L2-integrable} If Equations~\eqref{eqn:SFRiszero} and \eqref{eqn:SFQiszero} hold, the $-1$-eigenbundle of $\ccK_2$ is integrable if and only if
        \begin{align}\label{eqn:SFfHarezero}
            f_{\um\un}{}^\mu = 0 \qquad \text{and} \qquad H_{\um \un \ahpla} = 0 \ , 
        \end{align}
        for all $\um, \un, \ahpla=1,\dots,d$ and $\mu,\nu = d+1,\dots,n$.
        \item\label{item:strucH2basic} If Equations~\eqref{eqn:SFRiszero}--\eqref{eqn:SFfHarezero} hold and the three-form $H_1$ is closed, the three-form $H_2 \coloneqq (\phi^{-1})^*(H_1 - \de B)$ is basic if and only if
        \begin{align}
            \begin{gathered}\label{eqn:SFmorezero}
            f_{\um\mu}{}^\nu = 0 \quad , \quad f_{\um \un}{}^\ahpla = 0 \quad , \quad
            H_{\mu \um\un}=H_{\mu \un\um} \quad , \quad H_{\mu \um \nu} = H_{\mu \nu \um} \ ,
        \end{gathered}
        \end{align}
        for all $\um, \un, \ahpla=1,\dots,d$ and $\mu,\nu = d+1,\dots,n$.
    \end{enumerate}
\end{lemma}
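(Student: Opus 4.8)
The plan is to prove each of the four items by a direct frame computation, expressing each geometric condition (integrability of an eigenbundle, projectability of a frame, basicness of a three-form) in terms of the structure functions $f$, $H$, $Q$, $R$ of the Lie algebra \eqref{eqn:strucfunc} and its primed counterpart. The central bookkeeping device is the pair of frames $\set{Z_i,\tilde Z^i}$ for $\ccK_1$ and $\set{Z_i',\tilde Z'^i}$ for $\ccK_2$, related by the index-swap \eqref{eqn:howZsswap} on the mirrored-Greek range $\um\in\set{1,\dots,d}$ and by the identity on the Greek range $\mu\in\set{d+1,\dots,n}$. Before treating the items, I would record the primed structure functions $C'_{IJ}{}^K$ in terms of the unprimed $C_{IJ}{}^K$, since $\phi_*$ is a Lie-algebra isomorphism on frames: under the swap $Z_\um\leftrightarrow\tilde Z^\um$ (with $Z_\mu,\tilde Z^\nu$ fixed), each bracket in \eqref{eqn:strucfunc} gets its $\um$-type indices reinterpreted, so that a given primed coefficient equals the unprimed coefficient read off after the relabelling. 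This dictionary is the engine for items 3) and 4).

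For item 1), I would use that $L_{1-}=\mathrm{Span}\set{\tilde Z^i}$ is integrable (Frobenius) if and only if $[\tilde Z^i,\tilde Z^j]\in\sfGamma(L_{1-})$, i.e.\ the $Z_k$-component $R^{ijk}$ of the third line of \eqref{eqn:strucfunc} vanishes; this is immediate. For item 2), projectability of $Z_i$ with respect to $\varpi_1$ means $[\,\sfGamma(L_{1-}),Z_i\,]\subset\sfGamma(L_{1-})$ (the flow of a leafwise field preserves the projectable field up to leafwise corrections); reading off the $Z_k$-component of $[Z_i,\tilde Z^j]$ in the second line gives exactly $Q_i{}^{jk}$, so projectability $\Leftrightarrow Q_i{}^{jk}=0$. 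The dual statement that $\Theta^i$ is then basic follows by dualising, and the bisubmersion condition \eqref{eqn:bisubmersioncd} reduces to $[\sfGamma(L_{1-}),\sfGamma(L_{2-})]\subset\sfGamma(L_{1-})+\sfGamma(L_{2-})$, which I would verify from the vanishing of $Q$ together with the swap dictionary (the two $-1$-eigenbundles share the leafwise directions $\tilde Z^\nu$ and exchange $\tilde Z^\um\leftrightarrow Z_\um$).

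Item 3) is where the swap dictionary does real work: $L_{2-}=\mathrm{Span}\set{\tilde Z'^i}=\mathrm{Span}\set{Z_\um,\tilde Z^\mu}$ after translating \eqref{eqn:howZsswap} back to $M_1$ via $\phi$. Integrability then requires the three bracket types $[Z_\um,Z_\un]$, $[Z_\um,\tilde Z^\mu]$, $[\tilde Z^\mu,\tilde Z^\nu]$ to close on $\mathrm{Span}\set{Z_\um,\tilde Z^\mu}$. Expanding via \eqref{eqn:strucfunc} and demanding that the ``wrong'' components ($Z_\mu$-parts and $\tilde Z^\un$-parts) vanish produces precisely $f_{\um\un}{}^\mu=0$ and $H_{\um\un\ahpla}=0$ (using $R=Q=0$ from the hypotheses to kill the remaining terms). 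I would present this as a short case check over the three bracket types. Finally, for item 4) I would compute $H_2=(\phi^{-1})^*(H_1-\de B)$ with $B|_{U_1}=\Theta^\um\wedge\tilde\Theta_\um$; expanding $\de B$ through $\de\Theta^I=-\tfrac12 C_{JK}{}^I\,\Theta^J\wedge\Theta^K$ and then imposing that $H_2$ contain no leg along the leafwise directions $\tilde Z'^i$ of $L_{2-}$ gives the stated relations \eqref{eqn:SFmorezero}.

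The main obstacle I anticipate is item 4): computing $\de B$ in a non-holonomic frame forces careful use of the Maurer--Cartan-type formula $\de\Theta^I=-\tfrac12\,C_{JK}{}^I\,\Theta^J\wedge\Theta^K$, and then combining $\de B$ with $H_1$ and pulling back by $\phi^{-1}$ requires the swap dictionary on every index; the symmetry conditions $H_{\mu\um\un}=H_{\mu\un\um}$ and $H_{\mu\um\nu}=H_{\mu\nu\um}$ emerge only after organising the components by their $L_{2+}/L_{2-}$ bidegree and insisting that the purely-$L_{2-}$ and mixed-with-one-$L_{2-}$-leg parts cancel. I would keep the closedness of $H_1$ in reserve, since $\de H_1=0$ is exactly what guarantees that the leftover $\de\de B$ terms drop out and that the surviving $H_2$ is itself closed, consistent with the reduction hypothesis of Example \ref{eg:twistedCAreduction}.
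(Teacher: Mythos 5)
Your proposal is correct and follows essentially the same route as the paper's proof: items 1)--3) are read off directly from the bracket relations \eqref{eqn:strucfunc} after translating $L_{2-}$ back to $M_1$ via $\phi^{-1}$, the basicness of $\Theta^i$ and item 4) are handled through the Maurer--Cartan equations for the coframe, and closedness of $H_1$ is invoked to upgrade $\iota_X H_2=0$ to basicness. The paper merely compresses what you spell out (it dismisses item 4) as ``a lengthy calculation''), so your plan is a faithful, more explicit version of the same argument.
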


\begin{proof}
    We have already mentioned the first item. The projectability of the local sections $Z_i$ and the bisubmersion condition from item~\ref{item:Ziprojectable}, as well as item~\ref{item:L2-integrable}, can be read off immediately from Equation \eqref{eqn:strucfunc}. For example, in a local coordinate chart $U_1$, the vector fields $Z_i$ are projectable with respect to $\varpi_1$ if and only if $[X,Z_i] \in \sfGamma_{U_1}(L_{1-})$ for every $X\in \sfGamma_{U_1}(L_{1-})$, which happens if and only if $[{\tilde Z}^j, Z_i] \in \sfGamma_{U_1}(L_{1-})$ for every $i,j =1,\dots,n$, hence $Q_i{}^{jk} =0$.

    To show that $\Theta^i$ are basic, we consider the Maurer-Cartan equations for the coframe $\set{\Theta^I}_{I=1,\dots,2n}$:
    \begin{align}
        \de \Theta^I = -\tfrac{1}{2}\, C_{JL}{}^{I}\, \Theta^J \wedge \Theta^L \ .
    \end{align}
    In the splitting $TM_1 = L_{1+} \oplus L_{1-}$ this reads
    \begin{align}\label{eqn:maurercartan}
        \begin{aligned}
            \de \Theta^i&=-\tfrac{1}{2}\,\big(f_{j k }{ }^i\, \Theta^j \wedge \Theta^k+R^{ijk}\, \tilde{\Theta}_j \wedge \tilde{\Theta}_k\big)-Q_k{ }^{j i}\, \Theta^k \wedge \tilde{\Theta}_j \ , \\[4pt]
            \de \tilde{\Theta}_i & = f_{ij}{ }^k\, \tilde{\Theta}_k \wedge \Theta^j-\tfrac{1}{2}\,\big(Q_i{ }^{jk}\, \tilde{\Theta}_j \wedge \tilde{\Theta}_k+H_{ijk}\, \Theta^j \wedge \Theta^k\big) \ .
        \end{aligned}
    \end{align}
    The one-form $\Theta^i$ is basic with respect to $\varpi_1$, for $i=1,\dots,n$, if and only if $\iota_{{\tilde Z}^j} \Theta^i = 0$ and $\pounds_{{\tilde Z}^j}\Theta^i = 0$ for each $j=1,\dots,n$. The first condition holds by definition, thus assuming Equations~\eqref{eqn:SFRiszero} and \eqref{eqn:SFQiszero} we get
    \begin{align}
        \pounds_{{\tilde Z}^j}\Theta^i = \iota_{{\tilde Z}^j}\,\de \Theta^i +\de\,\iota_{{\tilde Z}^j}\Theta^i =-\tfrac12\,f_{k l}{}^i\,\iota_{{\tilde Z}^j}\, \big(\Theta^k \wedge \Theta^l\big) = 0 \ .
    \end{align}

    To show item \ref{item:strucH2basic}, recall that $B = \Theta^\um \wedge {\tilde \Theta}_\um$, and since \smash{$H_1 = \de\omega_1^{+3,-0}$} one must find the constraints such that $(\phi^{-1})^*(H_1 - \de B)$ is basic, i.e. \smash{$(\phi^{-1})^*(H_1 - \de B) \in {\sfGamma}(\midwedge^{+3,-0} T^*M_2).$}
    After a lengthy calculation, one arrives at the conditions \eqref{eqn:SFmorezero}.
\end{proof}

\begin{proof}[\textbf{Proof of Proposition \ref{prop:paraTdualrelation}}]
Let $K_1 = L_{1-} \oplus \set{0}$ and $K_2 = \Phi^{-1}(L_{2-} \oplus \set{0})$. Since the three-form \smash{$H_1 = \de \omega_1^{+3,-0}$} is closed, the assumption $\iota_X H_1 =0,$ for all $X \in \sfGamma(L_1)$ of Proposition \ref{prop:splitcaseconditions} is satisfied. 

The condition $\iota_{X}H_2 = 0,$ for all $X \in \sfGamma(L_2)$ of Proposition \ref{prop:splitcaseconditions} is satisfied under the assumption of Equation~\eqref{eqn:SFconditions}, and since $H_1$ is closed it follows that $H_2\coloneqq (\phi^{-1})^*(H_1 - \de B)$ is closed. Hence the Courant algebroids $(\IT M_1, H_1)$ and $(\IT M_2, H_2)$ are isomorphic via $\Phi$.

Finally, $\ker(B|_{T\cF_{1-} \cap (T\cF_{2-})}) = T\cF_{1-} \cap T\cF_{2-} = L_1^{--}$ has constant rank $n-d$, and hence all the assumptions of Proposition \ref{prop:splitcaseconditions} are satisfied.

Applying Proposition \ref{prop:splitcaseconditions} we thus obtain a Courant algebroid relation supported on $C = \set{(q_1,q_2)\in\cQ_1\times\cQ_2 \, | \, q_1 = \varpi_1(m_1) \ , \ q_2 = \varpi_2(m_1) \ , \ m_1 \in M_1}$ which, at a point $(q_1,q_2) \in C$, is given by
\begin{align}
    R_{(q_1, q_2)} = \text{Span}\set{({\red Z}{}_\um, {\red \Theta}'^\um) \,,\, ({\red Z}{}_\mu, {\red Z}{}'_\mu) \,,\, ({\red \Theta}^\un, {\red Z}'_\un) \,,\, ({\red \Theta}^\nu, {\red \Theta}'^\nu)} \ \subset \ \IT_{q_1} \cQ_1 \times \overline{\IT_{q_2} \cQ_2} \ ,
\end{align}
as required.
\end{proof}

From now on, we assume that the conditions of Proposition~\ref{prop:paraTdualrelation} are satisfied. Having constructed the T-duality relation $R$, we show that it gives a generalised isometry when appropriate background fields are taken on $\IT \cQ_1$ and $\IT \cQ_2$. The first step is to provide conditions under which the $B$-field has the required symmetry, which we establish through

\begin{lemma}\label{lemma:paraBinvariance}
    $\pounds_X B =0$ for each $X \in \set{Z_\um, {\tilde Z}^\mu}^{\mu = d+1,\dots, n}_{\um = 1,\dots,d}$ if and only if $H_{\um \un \mu} = H_{\um \mu \un}$ for each $\um, \un = 1,\dots,d$ and $\mu = d+1,\dots, n$.
\end{lemma}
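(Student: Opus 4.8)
The plan is to compute $\pounds_X B$ directly using the local expression $B|_{U_1} = \Theta^{\um} \wedge \tilde\Theta_{\um}$ together with the Maurer-Cartan equations \eqref{eqn:maurercartan}, and to extract the precise condition on the structure functions that makes this vanish for the two relevant families of vector fields. Since $B$ is a two-form, I would use Cartan's magic formula $\pounds_X B = \iota_X\,\de B + \de\,\iota_X B$. The first observation is that the hypotheses of Proposition~\ref{prop:paraTdualrelation} are in force, so Equations~\eqref{eqn:SFRiszero}, \eqref{eqn:SFQiszero} and \eqref{eqn:SFfHarezero} all hold; in particular the $\Theta^i$ are basic, which will simplify the exterior derivatives considerably. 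I expect the computation to split naturally into the two cases $X = Z_{\um}$ and $X = \tilde Z^{\mu}$.

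First I would compute $\de B$. Using the Leibniz rule, $\de B = \de\Theta^{\um}\wedge\tilde\Theta_{\um} - \Theta^{\um}\wedge\de\tilde\Theta_{\um}$, and I would substitute the Maurer-Cartan relations \eqref{eqn:maurercartan}, discarding all terms containing the structure functions already set to zero ($R^{ijk}$, $Q_i{}^{jk}$, $f_{\um\un}{}^{\mu}$, $f_{\um\mu}{}^{\nu}$, $f_{\um\un}{}^{\ahpla}$, $H_{\um\un\ahpla}$). For the case $X=Z_{\um}$, I would contract: $\iota_{Z_{\um}}B = \tilde\Theta_{\um}$ (up to sign bookkeeping from the wedge), so $\de\,\iota_{Z_{\um}}B = \de\tilde\Theta_{\um}$, and I would pair this against $\iota_{Z_{\um}}\,\de B$. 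The surviving terms involve $f_{ij}{}^k$ and $H_{ijk}$ with indices in the allowed ranges. For the case $X=\tilde Z^{\mu}$, the contraction $\iota_{\tilde Z^{\mu}}\Theta^{\um}=0$ by duality, so $\iota_{\tilde Z^{\mu}}B = 0$ and $\pounds_{\tilde Z^{\mu}}B = \iota_{\tilde Z^{\mu}}\,\de B$; here I would isolate the coefficient of the relevant two-form basis elements and identify when it vanishes.

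The key step, and the point where the stated equivalence becomes visible, is the bookkeeping of the $H$-coefficients: the terms that do not automatically vanish by the hypotheses of Proposition~\ref{prop:paraTdualrelation} reduce to a combination of $H_{\um\un\mu}$ and $H_{\um\mu\un}$, and requiring $\pounds_X B = 0$ forces precisely the symmetry $H_{\um\un\mu}=H_{\um\mu\un}$. I would verify that no other independent constraint arises, so that the condition is both necessary and sufficient. The main obstacle I anticipate is the careful handling of signs and of which index contractions survive once the allowed versus forbidden structure-function ranges are tracked (the mirrored-Greek index convention for $\um,\un$ running $1,\dots,d$ versus ordinary Greek $\mu,\nu$ running $d+1,\dots,n$ must be respected throughout), since a single sign or range error would spoil the clean equivalence. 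A useful cross-check is that the condition $H_{\um\un\mu}=H_{\um\mu\un}$ is exactly the remaining $H$-symmetry not already imposed in \eqref{eqn:SFmorezero}, which makes it plausible that it is the sole obstruction to $B$-invariance in the T-duality directions $\set{Z_{\um},\tilde Z^{\mu}}$.
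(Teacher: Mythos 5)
Your proposal follows essentially the same route as the paper: the paper's proof of this lemma is the single line ``the proof again follows from the Maurer-Cartan equations \eqref{eqn:maurercartan} and Cartan calculus,'' and your plan — computing $\pounds_X B$ via Cartan's magic formula on $B|_{U_1}=\Theta^{\um}\wedge\tilde\Theta_{\um}$, substituting the Maurer-Cartan relations, and discarding the structure functions already killed by the hypotheses of Proposition~\ref{prop:paraTdualrelation} — is precisely that computation spelled out. Your observations that $\iota_{\tilde Z^{\mu}}B=0$ and that the surviving obstruction is the symmetry $H_{\um\un\mu}=H_{\um\mu\un}$ are correct, so this is a faithful (and more detailed) version of the paper's argument.
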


\begin{proof}
    The proof again follows from the Maurer-Cartan equations \eqref{eqn:maurercartan} and Cartan calculus.
\end{proof}

Assuming the conditions of Lemma~\ref{lemma:paraBinvariance} from now on, we then find

\begin{corollary}
    In a coordinate chart $U_1$, the three-form $H_1$ is given by
    \begin{align} \label{eqn:H1U1}
        H_1|_{U_1} = \tfrac 12\, H_{i \mu \nu }\, \Theta^{i} \wedge \Theta^{\mu} \wedge \Theta^{\nu} \ .
    \end{align}
\end{corollary}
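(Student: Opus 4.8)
The plan is to exploit that $H_1$ is, by construction, a form of pure type $(+3,-0)$, and then to read off which of its components are forced to vanish by the conditions already assumed. First I would recall from Equation \eqref{eqn:Hcananddomega} that $H_1 = (H_1^{\texttt{can}})^{+3,-0}$ lies in $\sfGamma(\midwedge^3 L_{1+}^*)$; hence in the diagonalising coframe $\set{\Theta^I} = \set{\Theta^i,\tilde\Theta_i}$ on $U_1$ only the $L_{1+}^*$-legs $\Theta^i$ can appear, so we may write $H_1|_{U_1} = \tfrac16\, H_{ijk}\,\Theta^i\wedge\Theta^j\wedge\Theta^k$ with $H_{ijk}$ totally antisymmetric and all indices running over $1,\dots,n$.

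Next I would split the Latin index $i$ into the mirror block $\um=1,\dots,d$ (the T-duality directions) and the complementary block $\mu=d+1,\dots,n$, organising $H_1|_{U_1}$ according to the number of mirror legs into the four types $H_{\um\un\ahpla}$, $H_{\um\un\mu}$, $H_{\um\mu\nu}$ and $H_{\mu\nu\rho}$. The two vanishing conditions then eliminate precisely the types with two or three mirror legs. The all-mirror piece is killed directly by Equation \eqref{eqn:SFconditions} of Proposition \ref{prop:paraTdualrelation}, which asserts $H_{\um\un\ahpla}=0$. For the two-mirror piece I would combine the symmetry $H_{\um\un\mu}=H_{\um\mu\un}$ supplied by Lemma \ref{lemma:paraBinvariance} with total antisymmetry of $H_{ijk}$: interchanging the last two indices gives $H_{\um\mu\un}=-H_{\um\un\mu}$, so that $H_{\um\un\mu}=-H_{\um\un\mu}$ and hence $H_{\um\un\mu}=0$.

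Only the components with at most one mirror leg then survive, that is exactly those carrying at least two legs in the $\mu$-block; collecting these and relabelling recovers the displayed expression $H_1|_{U_1} = \tfrac 12\, H_{i \mu \nu }\, \Theta^{i} \wedge \Theta^{\mu} \wedge \Theta^{\nu}$. The one routine point to keep straight in this last reassembly is the combinatorial bookkeeping of factors and index ranges when recombining the one- and zero-mirror-leg terms into a single sum over $i=1,\dots,n$ against the antisymmetrised pair $\Theta^\mu\wedge\Theta^\nu$. The only genuinely substantive step, and the natural place for an error to creep in, is the antisymmetry argument that converts the \emph{symmetry} condition of Lemma \ref{lemma:paraBinvariance} into the \emph{vanishing} of $H_{\um\un\mu}$; everything else is a direct type decomposition of a $(+3,-0)$-form.
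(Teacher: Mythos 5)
Your overall strategy -- expand $H_1$ in the diagonalising coframe using its $(+3,-0)$ purity, sort components by the number of mirror legs, and kill the two- and three-mirror pieces -- is the right skeleton, and the place you flagged as delicate is indeed where the argument breaks. The coefficients $H_{ijk}$ appearing in \eqref{eqn:strucfunc}, in \eqref{eqn:SFconditions} and in Lemma~\ref{lemma:paraBinvariance} are structure functions of the frame Lie algebra, $[Z_i,Z_j]=f_{ij}{}^k\,Z_k+H_{ijk}\,\tilde Z^k$, so they are antisymmetric in the \emph{first two} indices only, not totally antisymmetric (this is also why the corollary, and the formula for $H_2$ in Proposition~\ref{prop:paraTdualrelation}, carry a prefactor $\tfrac12$ rather than the $\tfrac16$ you start from). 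Your one-line elimination $H_{\um\un\mu}=H_{\um\mu\un}=-H_{\um\un\mu}$ swaps the \emph{last} two indices with a sign, which is not available. Worse, if total antisymmetry were granted, the condition $H_{i\um\mu}=H_{i\mu\um}$ already in force from \eqref{eqn:SFconditions} would, by the identical one-liner, give $H_{i\um\mu}=0$ for every $i$, wiping out the one-mirror-leg components $H_{\um\mu\nu}$ as well; that contradicts the statement being proved, in which $i$ runs over the full range including the mirror block, and contradicts the manifestly nonzero mixed term $\tfrac12\,f_{\mu\nu}{}^{\um}\,\delta_{\um\un}\,\Theta'^{\mu}\wedge\Theta'^{\nu}\wedge\Theta'^{\un}$ of $H_2$. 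So the shortcut is not merely unjustified: applied consistently it proves too much.

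The repair is to use only the genuine antisymmetry $H_{ijk}=-H_{jik}$ and chain the symmetry conditions around a cycle. With $H_{\um\un\mu}=H_{\um\mu\un}$ from Lemma~\ref{lemma:paraBinvariance}, $H_{i\um\mu}=H_{i\mu\um}$ from \eqref{eqn:SFconditions}, and $H_{\mu\um\un}=H_{\mu\un\um}$ from \eqref{eqn:SFmorezero} (which is in force because the existence of the relation requires $H_2$ to be basic, cf.\ Lemma~\ref{lemma:structureconsts}), one gets
\begin{align}
    H_{\um\un\mu}=H_{\um\mu\un}=-H_{\mu\um\un}=-H_{\mu\un\um}=H_{\un\mu\um}=H_{\un\um\mu}=-H_{\um\un\mu} \ ,
\end{align}
hence $H_{\um\un\mu}=0$; together with $H_{\um\un\ahpla}=0$ this kills everything with at least two mirror legs. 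The analogous chain starting from $H_{\um\mu\nu}$ closes with a plus sign and yields only $H_{\um\mu\nu}=-H_{\um\nu\mu}$, so the one-mirror-leg components survive, as they must. The remaining components then assemble into \eqref{eqn:H1U1}, with the $\tfrac12$ normalising for antisymmetry of the structure functions in their first two indices rather than for total antisymmetry.
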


Equation~\eqref{eqn:H1U1} is the analog of~\cite[Equation~(2.5)]{cavalcanti2011generalized}.

Take a generalised metric $V_1^+$ on $\IT \cQ_1$ specified by $({\red g}{}_1, {\red b}{}_1)$. Locally, we suppose that 
\begin{align}\label{eqn:paraD1invmetric}
    \red g_1 = \red g_{i j} \red \Theta^i \otimes \red \Theta^j \, , \quad \red b_1 = \red b_{i j} \red \Theta^i \wedge \red \Theta^j \, \, \text{ such that } \quad \pounds_{\red Z_\um} \red g_{ij} = \pounds_{\red Z_\um} \red b_{ij} = 0.
\end{align}
Hence we define $\mathsf{Iso}(V_1^+) \coloneqq \mathrm{Span}_{\IR}\set{Z_1,\dots,Z_d}$, so that $V_1^+$ is $D_1$-invariant. The two-form $B$ decomposes as in Equation~\eqref{eqn:Bdecomposition} (with $B_{\rm hor}=B_{\rm ver}=0$), and by Lemma \ref{lemma:paraBinvariance}, the conditions of Theorem \ref{thm:mainsplitcase} are satisfied. Thus 
\begin{corollary}
    With $V_1^+$ given by \eqref{eqn:paraD1invmetric}, there is a unique background $V_2^+$ on $\IT \cQ_2$ given by $({\red g}{}_2, {\red b}{}_2)$ such that $R$ is a generalised isometry between $V_1^+$ and $V_2^+$.    
\end{corollary}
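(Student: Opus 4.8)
The plan is to obtain the statement as a direct consequence of Theorem \ref{thm:mainsplitcase}; the work consists entirely in verifying that the para-Hermitian data of Section \ref{ssec:parahermitian} satisfy its hypotheses. Proposition \ref{prop:paraTdualrelation} already situates us in the framework of Example \ref{sec:topogicalTdualitystandardCA} and Proposition \ref{prop:splitcaseconditions}: the twisted standard Courant algebroids $(\IT M_1, H_1)$ and $(\IT M_2, H_2)$ are isomorphic through $\Phi = \overline{\phi}\circ\e^{B}$ with $B = \tfrac12(\omega_1 - \phi^*\omega_2)$, and the T-duality relation $R\colon (\IT\cQ_1,\red H{}_1)\rel(\IT\cQ_2,\red H{}_2)$ has been constructed. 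It therefore remains to confirm the type decomposition of $B$, the $D_1$-invariance of $V_1^+$, and the compatibility condition $\pounds_X B = 0$ along $\mathsf{Iso}(W_1)$.

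First I would check the decomposition \eqref{eqn:Bdecomposition}. In the diagonalising coframe $B|_{U_1} = \Theta^\um\wedge\tilde\Theta_\um$, and reading off the foliations from \eqref{eqn:howZsswap} gives $T\cF_1 = L_{1-} = {\rm Span}\{\tilde Z^i\}$ and $T\cF_2 = \phi_*^{-1}(L_{2-}) = {\rm Span}\{Z_\um,\tilde Z^\nu\}$. A short index count then shows $\Theta^\um\in\ann(T\cF_1)\cap T^*\cF_2$ and $\tilde\Theta_\um\in T^*\cF_1\cap\ann(T\cF_2)$, so that $B = B_{\rm mix}$ with $B_{\rm ver}=B_{\rm hor}=0$; this is precisely the form demanded by Proposition \ref{prop:Bisocondition} for conditions \ref{item:main1} and \ref{item:main2} of Theorem \ref{thm:maingeneral}, and incidentally $T\cF_1\cap T\cF_2 = L_1^{--}$ has constant rank $n-d$.

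Next I would verify $D_1$-invariance in the sense of Definition \ref{def:D1invariance}. Since $\varpi_{1*}$ kills $L_{1-}$ and sends $Z_\um$ to $\red Z{}_\um$, the span $\mathsf{Iso}(V_1^+) = {\rm Span}\{\red Z{}_1,\dots,\red Z{}_d\}$ is pointwise equal to $D_1 = \varpi_{1*}(T\cF_2)$. To upgrade the component identities $\pounds_{\red Z{}_\um}\red g{}_{ij} = \pounds_{\red Z{}_\um}\red b{}_{ij} = 0$ of \eqref{eqn:paraD1invmetric} to invariance of the full tensors, I would invoke the Maurer--Cartan equations \eqref{eqn:maurercartan}: with $R^{ijk}=Q_i{}^{jk}=0$ they collapse to $\de\Theta^i = -\tfrac12 f_{jk}{}^i\,\Theta^j\wedge\Theta^k$, giving $\pounds_{Z_\um}\Theta^i = -f_{\um k}{}^i\,\Theta^k$, which vanishes by $f_{\um\un}{}^i = f_{\um\mu}{}^i = 0$ from \eqref{eqn:SFconditions}. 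Hence $\red\Theta^i$ is $\red Z{}_\um$-invariant and $\pounds_{\red Z{}_\um}(\red g{}_1 + \red b{}_1) = 0$, as required.

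Finally I would treat the condition $\pounds_X B = 0$ for $X\in\mathsf{Iso}(W_1)$. Unwinding $\mathsf{Iso}(W_1) = (\varpi_1)_*^{-1}(\mathsf{Iso}(V_1^+))\cap\sfGamma(T\cF_2)$ in the frame produces $\mathsf{Iso}(W_1) = {\rm Span}\{Z_\um,\tilde Z^\nu\}$, which is exactly the set appearing in Lemma \ref{lemma:paraBinvariance}; that lemma then delivers $\pounds_X B = 0$ under the standing hypothesis $H_{\um\un\mu} = H_{\um\mu\un}$. With all hypotheses of Theorem \ref{thm:mainsplitcase} thereby verified, it supplies the unique generalised metric $V_2^+$ on $\IT\cQ_2$, specified by a pair $(\red g{}_2,\red b{}_2)$, for which $R$ is a generalised isometry between $V_1^+$ and $V_2^+$. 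I expect the only genuine obstacle to be the bookkeeping of the last two paragraphs — matching the abstract modules $\mathsf{Iso}(V_1^+)$ and $\mathsf{Iso}(W_1)$ to the correct index ranges of the mirrored frame, and ensuring that the component-wise invariance of \eqref{eqn:paraD1invmetric} genuinely promotes to tensorial invariance, the latter being the single step that is not purely formal and that rests on \eqref{eqn:SFconditions} via the Maurer--Cartan structure equations.
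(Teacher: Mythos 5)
Your proposal is correct and follows exactly the route the paper takes: the corollary is stated as an immediate consequence of Theorem \ref{thm:mainsplitcase} once one observes that $B=B_{\rm mix}$ in the decomposition \eqref{eqn:Bdecomposition}, that $\mathsf{Iso}(V_1^+)=\mathrm{Span}\set{\red Z{}_1,\dots,\red Z{}_d}$ makes $V_1^+$ $D_1$-invariant, and that Lemma \ref{lemma:paraBinvariance} supplies $\pounds_X B=0$ on $\mathsf{Iso}(W_1)$. The only difference is that you spell out the index bookkeeping and the Maurer--Cartan computation promoting componentwise invariance of \eqref{eqn:paraD1invmetric} to tensorial invariance, which the paper leaves implicit.
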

While there is always a way to construct the unique generalised metric $V_2^+$, with the extra structure a para-Hermitian manifold carries there is cleaner way to find it. 

Remarks \ref{rmk:generalisedpHmetrics} and \ref{rmk:pullbackpHmetric} show that we get a generalised para-Hermitian metric $\cH_1$ on $TM_1$. Similarly, the T-dual background $({\red g}{}_2, {\red b}{}_2)$ on $\IT \cQ_2$ gives a generalised para-Hermitian metric $\cH_2$ on $TM_2$. Since $\phi$ is an isometry between the split-signature metrics $\eta_1$ and $\eta_2,$ we can show that it is also an isometry between $\cH_1$ and $\cH_2$ through

\begin{proposition}\label{lemma:tdualgenparametric}
    Let $R \colon (\IT \cQ_1, \red H{}_1, ({\red g}{}_1, {\red b}{}_1)) \dashrightarrow (\IT \cQ_2, \red H{}_2, ({\red g}{}_2, {\red b}{}_2))$ be the generalised isometry coming from Theorem \ref{thm:maingeneral} applied to the $\eta$-isometric almost para-Hermitian manifolds $(M_1,\eta_1,\ccK_1)$ and $(M_2,\eta_2,\ccK_2)$. Then  $\phi^*\cH_2 = \cH_1$.
\end{proposition}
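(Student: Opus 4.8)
The plan is to reduce the claim to an eigenbundle statement and then transport the generalised isometry $R$ through the pointwise identifications of the $TM_i$ with the reduced Courant algebroids. Since $\phi^*\eta_2=\eta_1$ holds by construction, the preceding Proposition characterising isometries between generalised para-Hermitian metrics reduces the statement $\phi^*\cH_2=\cH_1$ to the intertwining relation $\phi_*\circ I_1 = I_2\circ\phi_*$, equivalently to the equality of maximal $\eta_2$-positive-definite subbundles
\begin{align}
\phi_*\big(\ccV_1^+\big) = \ccV_2^+ \ , \qquad \ccV_i^+ = \ker\big(\unit_{TM_i}-I_i\big) \ .
\end{align}
This is the form I would work with.

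First I would make the eigenbundles explicit. Computing $\ker(\cH_i^\flat-\eta_i^\flat)$ in the splitting $TM_i=L_{i+}\oplus L_{i-}$ from Equation \eqref{eqn:generalisedparametric} and Equation \eqref{eqn:gminus} gives
\begin{align}
\ccV_i^+ = \set{X_+ + (\eta_i^\flat)^{-1}\,\iota_{X_+}(g_{i+}+b_{i+}) \ \vert \ X_+ \in L_{i+}} \ ,
\end{align}
where $(g_{i+},b_{i+})$ is the pair defining $\cH_i$, which by Remark \ref{rmk:pullbackpHmetric} is the pullback of the reduced background $(\red g{}_i,\red b{}_i)$. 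Under the $\eta_i$-induced pointwise isomorphism $\iota_i\colon TM_i\to (L_{i-})^\perp/L_{i-}$, $X_++X_-\mapsto[X_++\eta_i^\flat(X_-)]$, this subbundle corresponds precisely to the graph $\gr(g_{i+}+b_{i+})$, i.e. to the canonical lift of the reduced generalised metric $V_i^+=\gr(\red g{}_i+\red b{}_i)$; for $i=1$ the target is $K_1^\perp/K_1$ with reduction $\IT\cQ_1$. Thus each $\ccV_i^+$ is exactly $V_i^+$ read through $TM_i$, and uniqueness of this $\eta_i$-positive-definite lift will pin down $\ccV_2^+$ at the end.

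With this dictionary in hand I would invoke the common lift constructed in the proof of Theorem \ref{thm:maingeneral}: a single subbundle $\widetilde W_1^+=\widetilde W_2^+\subset K_1^\perp\cap K_2^\perp\subset\IT M_1$ with $\natural_1(\widetilde W_1^+)=V_1^+$ and $\natural_2(\widetilde W_2^+)=V_2^+$, together with the Courant algebroid isomorphism $\Phi=\overline\phi\circ\e^{B}$ of Proposition \ref{prop:paraTdualrelation}, which covers $\phi$ and carries $K_2$ to $L_{2-}\oplus\set{0}$. Transporting $\ccV_1^+$ along $\iota_1$ to $\widetilde W_1^+$, applying $\Phi$, and reading the result back through $\iota_2$ should produce $\phi_*(\ccV_1^+)$; since $R=Q(K_2)\circ Q(K_1)^\top$ is a generalised isometry between $V_1^+$ and $V_2^+$, the transported bundle descends to $V_2^+$ and is $\eta_2$-positive-definite, so by the uniqueness noted above it must equal $\ccV_2^+$. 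It is precisely the generalised isometry condition $\tau(R)=R$ that guarantees the transported bundle is again an $I_2$-eigenbundle rather than an arbitrary maximal positive subbundle.

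The main obstacle is the bookkeeping forced by the exchange \eqref{eqn:howZsswap}: because $\phi_*(L_{1+})\neq L_{2+}$ — the $\um$-directions are swapped between $L_{2+}$ and $L_{2-}$ — the two graphs live over different subbundles and cannot be matched index by index, so the equality only emerges after the Buscher transformation encoded in $R$ has been applied. The delicate point is verifying that the two $\eta$-twisted identifications $\iota_1,\iota_2$ are compatible with $\Phi=\overline\phi\circ\e^B$, i.e. that the cotangent shift produced by $\e^B$ and the tangent swap produced by $\phi_*$ combine so that exactly the $L_{2-}$-components introduced by $\Phi$ are the ones discarded when passing to the canonical lift $\gr(g_{2+}+b_{2+})$. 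As an independent check I would verify the identity directly in the diagonalising frame $\set{Z_i,\tilde Z^i}$: by Equation \eqref{eqn:generalisedparametric}, writing $\phi^*\cH_2$ in this frame via \eqref{eqn:howZsswap} and comparing with $\cH_1$ reduces to the component Buscher rules of Remark \ref{rmk:buscherrules}, a finite linear-algebra computation.
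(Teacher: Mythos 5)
Your route is genuinely different from the paper's. The paper does not argue through eigenbundles at all: it assumes (after absorbing $\red b{}_1$ into $\red H{}_1$) that the background is purely metric, computes the T-dual pair $(g_{2+},b_{2+})$ in components by a Buscher-type calculation through the relation (Equation \eqref{eqn:parabuscherrules}), writes $(\phi^{-1})^*\cH_1$ as a $4\times 4$ block matrix in the four-fold splitting $L_1^{+-}\oplus L_1^{++}\oplus L_1^{--}\oplus L_1^{-+}$, reads off the pair this matrix determines via Equation \eqref{eqn:generalisedparametric}, and checks the two answers coincide. So the ``independent check'' you relegate to the last sentence is essentially the paper's entire proof, while your main argument is a more conceptual alternative: reduce to $\phi_*(\ccV_1^+)=\ccV_2^+$, identify each $\ccV_i^+$ with the canonical lift of $V_i^+$ through the $\eta_i$-twisted isomorphism, and transport along the common lift $\widetilde W_1^+\subseteq K_1^\perp\cap K_2^\perp$ from Theorem \ref{thm:maingeneral}.

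As written, however, your argument has a gap exactly where you flag ``the delicate point'': the claim that the composite of $\iota_1$, the lift to $\widetilde W_1^+$, the isomorphism $\overline\phi\circ\e^{B}\,$, the quotient projection and $\iota_2^{-1}$ equals $\phi_*$ is asserted (``should produce'') but never verified, and it is the entire content of the proposition --- everything else in your write-up is formal bookkeeping. The identity does hold, and for a reason specific to the para-Hermitian setting: since $B=\tfrac12(\omega_1-\omega_1')$ with $\omega_1=\eta_1\circ\ccK_1$ and $\omega_1'=\eta_1\circ\phi^*\ccK_2$, one has $(\eta_1^\flat)^{-1}\bigl(\iota_{Y}B\bigr)=\tfrac12(\ccK_1-\phi^*\ccK_2)(Y)$ for all $Y$, and a two-line computation with $Y=X_++k$, $X_+\in L_{1+}$, $k\in K_1$, shows that the $L_{2-}$-component of $\phi_*(X_+)$ discarded by the projection, minus the $L_{2+}$-component of $\phi_*(k)$ that survives it, equals $\phi_*\bigl((\eta_1^\flat)^{-1}(\iota_{X_++k}B)\bigr)$; this is precisely the compensation you need, and it is independent of the choice of correction $k$. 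Without this verification (or the component computation you defer to) the proof is incomplete; with it, your argument closes and has the merit of making visible why the cotangent shift of $\e^{B}\,$ and the tangent swap of \eqref{eqn:howZsswap} cancel, which the paper's matrix computation of Remark \ref{rmk:buscherrules} establishes only after the fact.
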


\begin{proof}
    Assume, by absorbing the two-form ${\red b}{}_1$ into ${\red H{}_1}$ if necessary, that the background on $\IT \cQ_1$ is given solely by the Riemannian metric ${\red g}{}_1$. Using Equation~\eqref{eqn:generalisedparametric}, the generalised para-Hermitian metric $\cH_1$ thus takes the form
    \begin{align}\label{eqn:paraHmetricdiagonal}
    \cH_1 =
    \begin{pmatrix}
    g_{1+}  & 0 \\
    0 & g_{1-}
    \end{pmatrix} \ .
\end{align}
The four-fold splitting $$TM_1 = L_1^{+-} \oplus L_1^{++} \oplus L_1^{--} \oplus L_1^{-+}$$ gives a decomposition of $g_{1+}$ into four parts: 
\begin{align}
    g_1^{\pmpm}\in \sfGamma\big(\midotimes^2 (L_1^{+-})^*\big) \quad & , \quad g_1^{\pmpp}\in \sfGamma\big((L_1^{+-})^*\otimes(L_1^{++})^*\big) \ , \\[4pt]
    g_1^{\pppm} \in \sfGamma\big((L_1^{++})^*\otimes (L_1^{+-})^*\big) \quad & , \quad g_1^{\pppp}\in \sfGamma\big(\midotimes^2 (L_1^{++})^*\big)   \ ,
\end{align}
 where $g_1^{\pppm} = \big(g_1^{\pmpp}\big)^{\rm t} = g_1^{\pmpp}$.
 Locally this decomposition is given explicitly by
\begin{align}
    g_{1+}|_{U_1} = (g_{1+})_{\um\un}\,\Theta^\um \otimes \Theta^\un + (g_{1+})_{\um\mu}\, \Theta^\um \otimes \Theta^\mu + (g_{1+})_{\mu \nu}\,  \Theta^\mu \otimes \Theta^\nu \ ,
\end{align}
with $(g_{1+})_{\um\un} = \big(g_1^{\pmpm}\big)_{\um\un}$, $(g_{1+})_{\um\mu} = \big(g_1^{\pppm}\big)_{\um\mu} $ and $(g_{1+})_{\mu\nu} = \big(g_1^{\pppp}\big)_{\mu\nu}$. 

The T-dual generalised metric \smash{$V_2^+$} is determined by the reduction of the basic tensors $g_{2+}\in \sfGamma\big(\midotimes^2 (L_{2+})^*\big)$, with components \smash{$g_2^{\pmpm}$}, \smash{$g_2^{\pmpp}$} and \smash{$g_2^{\pppp}$}, and $b_{2+}\in \sfGamma\big(\midwedge^2 (L_{2+})^*\big)$, with components \smash{$b_2^{\pmpm}$}, \smash{$b_2^{\pmpp}$} and \smash{$b_2^{\pppp}$} where \smash{$b_2^{\pppm} = \big(b_2^{\pmpp}\big)^{\rm t} = -b_2^{\pmpp}.$} Since $B = \Theta^\um \wedge \tilde\Theta_\um$, by a calculation similar to that of Remark \ref{rmk:buscherrules} this can be written locally as
\begin{align}
    g_{2+}|_{U_1} &= (g_{2+})_{\um\un} \, \Theta'^\um \otimes \Theta'^\un + (g_{2+})_{\um\mu} \, \Theta'^\um \otimes \Theta'^\mu + (g_{2+})_{\mu \nu} \, \Theta'^\mu \otimes \Theta'^\nu \ , \\[4pt]
    b_{2+}|_{U_1} &= (b_{2+})_{\um\un}\,\Theta'^\um \wedge \Theta'^\un + (b_{2+})_{\um\mu} \,\Theta'^\um \wedge \Theta'^\mu + (b_{2+})_{\mu \nu}\, \Theta'^\mu \wedge \Theta'^\nu \ ,
\end{align}
where 
\begin{align}\label{eqn:parabuscherrules}
\begin{split}
    (g_{2+})_{\um\un} = \phi_*\big(g_1^{\pmpm}\big)^{-1}_{\um\un} & \quad , \quad (g_{2+})_{\um\mu} = 0 \ , \\[4pt] 
    (g_{2+})_{\mu\nu} = \phi_*(g_{1+})_{\mu\nu} - \phi_*\big( & (g_{1+})_{\ahpla\mu} \,(g_{1+})^{\ahpla\ateb}\,(g_{1+})_{\nu\ateb}\big) \ , \\[4pt]
    (b_{2+})_{\um\mu} = 0 \quad , \quad (b_{2+})_{\um\mu} = \, \phi_*\big( & (g_{1+})_{\mu \flipm{\alpha}} \,(g_{1+})^{\flipm{\alpha} \um}\big) \quad , \quad (b_{2+})_{\mu \nu} = 0 \ ,
\end{split}
\end{align}
and $(g_{1+})^{\ahpla\ateb} = \big(g_1^{\pmpm}\big)^{-1}_{\ahpla \ateb}$.

Conversely, the generalised para-Hermitian metric \eqref{eqn:paraHmetricdiagonal} is given by
\begin{align}\label{eqn:riemmetric4x4}
    \cH_1 =
    \begin{pmatrix}
    g_1^{\pmpm} & g_1^{\pppm} & 0 & 0 \\[3pt]
    g_1^{\pmpp} & g_1^{\pppp} & 0 & 0 \\[3pt]
    0 & 0 & g_1^{\mpmp} & g_1^{\mmmp} \\[3pt]
    0 & 0 & g_1^{\mpmm} & g_1^{\mmmm}
    \end{pmatrix} \ ,    
\end{align}
where
\begin{align}
    g_1^{\mmmm} &= \bigl(g_1^{\pppp} - g_1^{\pmpp}\, \bigl(g_1^{\pmpm}\bigr)^{-1} \,g_1^{\pppm}\bigr)^{-1} \ , \\[4pt]
    g_1^{\mmmp} &= -\big(g_1^{\pmpm}\big)^{-1}\, g_1^{\pppm}\, g_1^{\mmmm} \ ,\\[4pt]
    g_1^{\mpmp} &= \big(g_1^{\pmpm}\big)^{-1} + \big(g_1^{\pmpm}\big)^{-1}\, g_1^{\pppm}\, g_1^{\mmmm}\, g_1^{\pmpp}\, \big(g_1^{\pmpm}\big)^{-1} \ ,
\end{align}
and $g_1^{\mpmm} = \big(g_1^{\mmmp}\big)^{\rm t}$.
This comes from the formula for the inverse of a $2\times 2$ matrix.

Applying $(\phi^{-1})^*$ to Equation~\eqref{eqn:riemmetric4x4} yields
\begin{align}\label{eqn:riemmetric4x42}
    (\phi^{-1})^*\cH_1 =
    \begin{pmatrix}
    (\phi^{-1})^*g_1^{\mpmp} & 0 & 0 & (\phi^{-1})^*g_1^{\mmmp} \\[3pt]
    0 & (\phi^{-1})^*g_1^{\pppp} & (\phi^{-1})^*g_1^{\pmpp} & 0 \\[3pt]
    0 & (\phi^{-1})^*g_1^{\pppm} & (\phi^{-1})^*g_1^{\pmpm} & 0 \\[3pt]
    (\phi^{-1})^*g_1^{\mpmm} & 0 & 0 & (\phi^{-1})^*g_1^{\mmmm}
    \end{pmatrix} \ ,
\end{align}
This is in the form of Equation~\eqref{eqn:generalisedparametric}, and hence  
we can read off the metric $g_{2+}$ and two-form $b_{2+}$ on $L_{2+}$ as
\begin{align}
    g_{2+} &=     
    \begin{pmatrix}
        (\phi^{-1})^*\big(g_1^{\pmpm}\big)^{-1} & 0 \\[3pt]
        0 & (\phi^{-1})^*\big(g_1^{\mmmm}\big)^{-1}
    \end{pmatrix} \ , \\[4pt]
    b_{2+} &= 
    \begin{pmatrix}
        0 & - (\phi^{-1})^*\big(g_1^{\pmpm}\big)^{-1}\, g_1^{\pppm} \\[3pt]
        (\phi^{-1})^*g_1^{\pppm}\, \big(g_1^{\pmpm}\big)^{-1} & 0
    \end{pmatrix} \ .
\end{align}
Looking in a coordinate chart $\phi(U_1)=U_2\subset M_2$, we find that this agrees with Equation~\eqref{eqn:parabuscherrules}.
\end{proof}

Equation~\eqref{eqn:parabuscherrules} gives the component form of the Buscher rules for generalised T-duality in para-Hermitian geometry.

\medskip

\subsection{Doubled Nilmanifolds}\label{ssec:doubledtwistedtorus}~\\[5pt]
In this final section we consider the class of string theory compactifications provided by {nilmanifolds}, which are quotients of nilpotent Lie groups by a discrete cocompact subgroup.
We focus on the particular example of the three-dimensional Heisenberg nilmanifold $\mathsf{N}(m,k)$, which is a quotient of the Heisenberg group $\mathsf{H}$ of upper triangular $3\times3$ matrices whose diagonal entries are all equal to $1$; geometrically it defines a circle bundle of degree $m\in\IZ$ over a two-torus $\sfT^2$ with $H$-flux representing the \v{S}evera class labelled by $k\in\IZ.$ This is analogous to the lens space $\mathsf{L}(m,k)$ from Example~\ref{ex:lensspaces}, which is a quotient of $\sfS^3\simeq\mathsf{SU}(2)$ by a cyclic subgroup $\IZ_m$ of the isometry group $\mathsf{SU}(2)$ of the round three-sphere. In particular, it can be similarly treated in the correspondence space picture of Section~\ref{ssec:correspondencespace}, giving a topology changing T-duality between $\mathsf{N}(m,k)$ and $\mathsf{N}(k,m)$.

Here we consider this T-duality relation instead in the framework of Section~\ref{ssec:parahermitian} by compactifying the Drinfel'd double $T^*\mathsf{H} = \sfH \ltimes \IR^3$, the cotangent bundle of the Heisenberg group $\sfH$, by the action of a discrete cocompact subgroup which defines a doubled nilmanifold~\cite{Hull2009,ReidEdwards2009, Chaemjumrus:2019fsw}. We illustrate this explicitly for the simplest case with $k=0$, where $\mathsf{N}(m,0)$ is the nilmanifold $\sfT^\sfH$ of degree $m$ without $H$-flux and $\mathsf{N}(0,m)$ is the three-torus $\mathsf{T}^3$ with \v{S}evera class $m$. We shall study two foliations of the doubled Heisenberg nilmanifold, and obtain a T-duality between the respective quotients in sense of Section \ref{ssec:parahermitian}.

\medskip

\subsubsection{The Drinfel'd Double of the Heisenberg Group}~\\[5pt]
The doubled Heisenberg nilmanifold is constructed by considering a quotient of the Drinfel'd double $\sfD^\sfH$ of the three-dimensional Heisenberg group $\sfH$  given by $\sfD^\mathsf{H} = T^* \mathsf{H} = \mathsf{H}\ltimes \IR^3,$ with Lie algebra $\frd = \frh 
 \ltimes \IR^3$, by a discrete cocompact subgroup $\sfD^\mathsf{H}(\IZ)$. 
Here the Heisenberg algebra $\mathfrak{h}={\sf Lie}(\sfH)$ and the abelian Lie algebra $\mathbb{R}^3$ together with $\mathfrak{d}=\mathfrak{h}\ltimes \mathbb{R}^3$ form a Manin triple. 
Despite the fact that $\sfH$ is not semi-simple, we can still give a matrix representation for the Lie algebra of the Drinfel'd double $T^*\sfH$. This is useful for explicitly writing down the coordinate identifications defining the global structure of the doubled nilmanifold. 
 
 In local coordinates $(x,y,z, \tilde x, \tilde y, \tilde z)\in\sfH\times\IR^3$, an element $\gamma$ in $\sfD^\mathsf{H}$ may be written as
\begin{align*}
    \gamma=\left(\begin{array}{cccccc}
1 & m\, x & y & 0 & 0 & \tilde{z} \\
0 & 1 & z & 0 & 0 & -\tilde{y} \\
0 & 0 & 1 & 0 & 0 & 0 \\
0 & -m\, \tilde{y} & \tilde{x}-m\, z\, \tilde{y} & 1 & m\, x & y+\frac{1}{2}\, m\, \tilde{y}^2 \\
0 & 0 & 0 & 0 & 1 & z \\
0 & 0 & 0 & 0 & 0 & 1
\end{array}\right) \ .
\end{align*}
Then the left-invariant one-forms on $\sfD^\sfH$ are the Lie algebra components of
the corresponding Maurer-Cartan one-form $\Theta = \gamma^{-1}\, \de \gamma$ given by
\begin{align}
\begin{gathered}\label{eqn:nilfold1forms}
\Theta^{x}=\mathrm{d} x \quad , \quad 
\Theta^{y}=\mathrm{d} y-m\, x\, \mathrm{d} z \quad , \quad
\Theta^{z}=\mathrm{d} z \ ,\\[4pt]
\tilde{\Theta}_{x}=\mathrm{d} \tilde{x}-m\, z\, \mathrm{d} \tilde{y} \quad , \quad 
\tilde{\Theta}_{y}=\mathrm{d} \tilde{y} \quad , \quad 
\tilde{\Theta}_{z}=\mathrm{d} \tilde{z}+m\, x\, \mathrm{d} \tilde{y} \ .
\end{gathered}
\end{align}

The dual left-invariant vector fields are therefore
\begin{align}
\begin{gathered}\label{eqn:nilfoldvectors}
Z_x = \frac{\partial}{\partial x} \quad , \quad 
Z_y = \frac{\partial}{\partial y} \quad , \quad 
Z_z = \frac{\partial}{\partial z} + m\,x\,\frac{\partial}{\partial y} \ ,\\[4pt]
\tilde Z^x = \frac{\partial}{\partial \tilde x} \quad , \quad 
\tilde Z^y = \frac{\partial}{\partial \tilde y} + m\,z\, \frac{\partial}{\partial \tilde x} - m\,x\, \frac{\partial}{\partial \tilde z} \quad , \quad 
\tilde Z^z = \frac{\partial}{\partial \tilde z} \ .
\end{gathered}
\end{align}
The nilpotent Lie algebra $\frd$ of $T^*\sfH=\sfH\ltimes\IR^3$ thus has non-vanishing brackets
\begin{align}
[Z_x, Z_z]= m\, Z_y \ , \quad [Z_x, \tilde{Z}^y]=- m\, \tilde Z^z \qquad
\mbox{and} \qquad [Z_z, \tilde{Z}^y]= m\, \tilde{Z}^x \ ,  \label{eq:lieth}
\end{align}
and in particular the only non-vanishing structure function is $f_{xz}{}^y=-f_{zx}{}^y=m.$

The Lie algebra $\frd$ is naturally endowed with a para-Hermitian structure which induces a left-invariant para-Hermitian structure $(\eta^\frd_1,\ccK^\frd_1)$ on $\sfD^\sfH$ given by
\begin{align}\label{eqn:parahermitiannilfold}
    \ccK^\frd_1 = \Theta^i \otimes Z_i - {\tilde \Theta}_i \otimes {\tilde Z}^i \quad , \quad 
    \eta^\frd_1 = \Theta ^i \otimes \tilde \Theta_i + \tilde \Theta_i \otimes \Theta^i \quad , \quad 
    \omega^\frd_1 = \Theta^i \wedge {\tilde \Theta}_i \ .
\end{align}
This makes $(\sfD^{\sfH}, \eta^\frd_1, \ccK^\frd_1)$ a para-Hermitian manifold. 
The  eigenbundles of $\ccK^\frd_1$ are then the integrable distributions given by $L^\frd_+$ spanned by the left-invariant vector fields $\set{Z_x,Z_y,Z_z}$ induced by the Lie subalgebra $\frh$ whose leaves are all given by the Heisenberg group $\mathsf{H}$, and $L^\frd_-$ spanned by $\set{\tilde Z^x,\tilde Z^y,\tilde Z^z}$ coming from the generators of $\IR^3$ whose leaves are just $\IR^3$.
The global frame \smash{$\set{Z_i,{\tilde Z}_j}$ for $i,j\in\set{x,y,z}$} defines a diagonalising frame for the para-Hermitian structure~$(\eta_1^\frd , \ccK^\frd_1)$. 

\medskip

\subsubsection{The Doubled Heisenberg Nilmanifold}~\\[5pt]
To compactify $\sfD^\mathsf{H}$, we consider the left action by a discrete cocompact subgroup $\sfD^\mathsf{H}(\IZ)$ whose generic element $\xi$ is given by
\begin{align*}
    \xi=\left(\begin{array}{cccccc}
1 & m\, \alpha & \beta & 0 & 0 & \tilde{\delta} \\
0 & 1 & \delta & 0 & 0 & -\tilde{\beta} \\
0 & 0 & 1 & 0 & 0 & 0 \\
0 & -m\, \tilde{\beta} & \tilde{\alpha}-m\, \delta\, \tilde{\beta} & 1 & m \alpha & \beta+\frac{1}{2}\, m\, \tilde{\beta}^2 \\
0 & 0 & 0 & 0 & 1 & \delta \\
0 & 0 & 0 & 0 & 0 & 1
\end{array}\right) \ ,
\end{align*}
where $\alpha, \beta, \delta , \tilde{\alpha}, \tilde \beta, \tilde \delta \in\IZ$. Under the identification $\gamma \sim \xi \, \gamma$, we get the simultaneous identifications of coordinates as
\begin{align}\label{eqn:nilfoldidentification}
\begin{gathered}
x \sim x+\alpha \quad , \quad y \sim y+m\, \alpha\, z+\beta \quad , \quad z \sim z+\delta \ , \\[4pt]
\tilde{x} \sim \tilde{x}+m\, \delta\, \tilde{y}+\tilde{\alpha} \quad , \quad \tilde{y} \sim \tilde{y}+\tilde{\beta} \quad , \quad \tilde{z} \sim \tilde{z}-m\, \alpha\, \tilde{y}+\tilde{\delta} \ ,
\end{gathered}
\end{align}
which defines the doubled Heisenberg nilmanifold $M^\mathsf{H} = \sfD^\mathsf{H}(\IZ)\setminus\sfD^\mathsf{H}$.

The left-invariant one-forms \eqref{eqn:nilfold1forms} and left-invariant vector fields \eqref{eqn:nilfoldvectors} are invariant under the identifications \eqref{eqn:nilfoldidentification}, and hence descend globally through the quotient. Thus the distributions $L^\frd_+$ and $L^\frd_-$ descend respectively to integrable distributions $L_{1\pm} \subset T M^\sfH$. Their leaves are, respectively, the Heisenberg nilmanifold $\sfT^\mathsf{H} = \mathsf{H}(\IZ)\setminus\mathsf{H}$ and the three-torus~$\sfT^3= \IR^3/ \IZ^3$. 

The para-Hermitian structure \eqref{eqn:parahermitiannilfold} is left-invariant, hence invariant under the left discrete action. Thus it descends to a para-Hermitian structure $( \eta_1,\ccK_1)$ on $M^\sfH$, with integrable 
$\pm 1$-eigenbundles $L_{1\pm}$ and globally defined diagonalising frame  $\set{Z_i,{\tilde Z}^j}$ for $i,j\in\set{x,y,z}$.
Since $L_{1\pm}$ are involutive, Remark \ref{rmk:Hcantopological} tells us that $H_1 = 0$. 

\medskip

\subsubsection{Generalised T-duality}~\\[5pt]
Consider the diffeomorphism of $M_1:=M^\sfH$ given in coordinates by
\begin{align}\label{eqn:tdualdiffeo}
    \phi(x, y, z, \tilde x, \tilde y, \tilde z) = (x, \tilde y, z, \tilde x - m\,z\,\tilde y, y, \tilde z) \eqqcolon (x',y',z',\tilde x', \tilde y', \tilde z') \ ,
\end{align}
with the coordinate identifications
\begin{align}
\begin{gathered}\label{eqn:t3identification}
x' \sim x'+\alpha' \quad ,\quad y' \sim y' + \tilde \beta' \quad ,  \quad  z' \sim z'+\delta' \ , \\[4pt]
\tilde{x}' \sim \tilde{x}'+m\, \delta'\, \tilde y' +m\,z'\,(m\,\alpha'\, z' + \beta') + \tilde \alpha' \quad , \quad \tilde{y}' \sim \tilde{y}'+m\,\alpha'\, z' + \beta' \ ,\\[4pt]
\tilde{z}' \sim \tilde{z}'-m\, \alpha'\, y'+\tilde{\delta}' \ ,
\end{gathered}
\end{align}
where $\alpha', \beta', \delta' , \tilde{\alpha}', \tilde \beta', \tilde \delta' \in\IZ$.

Since the only non-vanishing structure function is $f_{xz}{}^y$, by Proposition \ref{prop:paraTdualrelation} it follows that the only T-duality direction we can consider is in the $y$-direction, that is, $D_1 = \text{Span}\set{\red Z{}_y}$.
The global frame fields on $M_2:=\phi(M^\sfH)$, where the T-duality direction is swapped as in Equation~\eqref{eqn:howZsswap}, are given by
\begin{align}
\begin{gathered}\label{eqn:t3vectors}
Z_x' = \frac{\partial}{\partial x'} \quad , \quad 
Z_y' = \frac{\partial}{\partial y'} - m\,x'\,\frac{\partial}{\partial \tilde z'} \quad , \quad 
Z_z' = \frac{\partial}{\partial z'} -m\,y'\, \frac{\partial}{\partial \tilde x'} + m\,x'\, \frac{\partial}{\partial \tilde y'} \ ,\\[4pt]
\tilde Z'^x = \frac{\partial}{\partial \tilde x'} \quad , \quad 
\tilde Z'^y = \frac{\partial}{\partial \tilde y'} \quad , \quad 
\tilde Z'^z = \frac{\partial}{\partial \tilde z'} \ ,
\end{gathered}
\end{align}
with the non-vanishing Lie brackets
\begin{align}
    [Z_x',Z_z']=m\,\tilde Z'^y \ , \quad [Z_x',Z_y']=-m\,\tilde Z'^z \qquad \text{and} \qquad [Z_z',Z_y'] = m\,\tilde Z'^x \ ,
\end{align}
and dual one-forms
\begin{align}\label{eqn:t31forms}
\begin{gathered}
\Theta^{\prime x} =\de x' \quad , \quad 
\Theta^{\prime y}=\de y' \quad , \quad 
\Theta^{\prime z}=\de z' \ , \\[4pt]
\tilde{\Theta}_{x}^{\prime}=\mathrm{d} \tilde{x}'+m\, y'\, \mathrm{d} z' \quad , \quad 
\tilde \Theta'_y = \de\tilde y' - m\,x'\, \mathrm{d}z' \quad , \quad 
\tilde{\Theta}_{z}^{\prime}=\mathrm{d} \tilde{z}'+m\, x'\, \mathrm{d} y' \ .
\end{gathered}
\end{align}

This is the global diagonalising frame for the almost para-Hermitian structure $(\eta_2, \ccK_2)$ on the doubled nilmanifold $M^\sfH$, with $\phi^* \eta_2 = \eta_1$ as well as
\begin{align}\label{eqn:parahermitiant3}
    \ccK_2 = Z_i' \otimes \Theta'^i - \tilde Z'^i \otimes \tilde \Theta'_i \qquad \text{and} \qquad \omega_2 = \Theta'^i \wedge \tilde \Theta'_i \ .
\end{align}
Note that the $+1$-eigenbundle $L_{2+}$ of $\ccK_2$ is not an integrable distribution.

Taking the quotient by the foliation $\cF_{1-}$ with $L_{1-}=T\cF_{1-}$ leaves the coordinates $(x,y,z)$, which by Equation~\eqref{eqn:nilfoldidentification} parametrise the Heisenberg nilmanifold $\sfT^\sfH$, while the quotient by the foliation $\cF_{2-}$ with $L_{2-}=T\cF_{2-}$ leaves the coordinates $(x',y', z')$, which by Equation~\eqref{eqn:t3identification} parametrise the three-torus $\sfT^3$. 
Thus $M^\sfH$ is the doubled space for $\sfT^\sfH$ and $\sfT^3$ obtained in \cite[Section 7.3]{Marotta2021born}, and hence the respective quotients $M^\sfH/\cF_{1-}=\sfT^\sfH$ and $M^\sfH / \cF_{2-} = \sfT^3$ of $M^\sfH$ by the foliations $\cF_{1-}$ and $\cF_{2-}$ integrating the $-1$-eigenbundles $L_{1-}$ of $\ccK_1$ and $L_{2-}$ of $\ccK_2$ are smooth manifolds. Since $\sfT^{\sfH}$ and $\sfT^3$ can be viewed as circle bundles over $\sfT^2$, Lemma~\ref{lemma:cdsmoothcirclecomp} implies that 
\begin{align}
 C \coloneqq \set{(\varpi_1(m), \varpi_2(\phi(m))) \, | \, m \in M^{\sfH} }=\sfT^{\sfH} \times_{\sfT^{2}} \sfT^{3}
 \end{align}
 is hence smooth.

We are now ready to apply the results of Section \ref{ssec:parahermitian}.
We know that $H_1 = 0$ is closed, and hence we may apply Proposition \ref{prop:paraTdualrelation} to obtain the T-duality relation $R \colon (\IT \sfT^\sfH, 0) \rel (\IT\sfT^3, \red H{}_{\sfT^3})$, where $\Phi=\overline{\phi}\circ \e^{B}\,$ and
\begin{align}
    \red H{}_{\sfT^3} = m\, \de x' \wedge \de y' \wedge \de z' \ .
\end{align}
Since Lemma \ref{lemma:paraBinvariance} says that the two-form
\begin{align}
    B = \Theta^y\wedge\tilde\Theta_y = (\de y-m\,x\,\de z)\wedge\de\tilde y
\end{align}
has the appropriate invariance,\footnote{Note that the $H$-flux $H_2 = (\varphi^{-1})^*\de B = \varpi_2^*\,\red H{}_{\sfT^3}$ is exact on $M^\sfH$. However, this $B$-field is not basic and so does not descend to a two-form on \smash{$\sfT^3$}. On the other hand, $H_2$ is basic and descends to a cohomologically non-trivial three-form $\red H{}_{\sfT^3}$ on $\sfT^3$. That $B$ is not basic is the origin of `topology change' in T-duality.} the relation $$R=\mathrm{Span}\set{(Z_x,Z_x') \,, \, (Z_y,\Theta'^y)\,,\,(Z_z,Z_z')\,,\, (\Theta^x,\Theta'^x)\,,\, (\Theta^y,Z_y')\,,\,(\Theta^z,\Theta'^z)}$$ is a generalised isometry in the following way. 

The standard left-invariant Riemannian metric ${\red g}{}_{\sfT^\sfH}$ on the Heisenberg nilmanifold $\sfT^\sfH$ is given by
\begin{align}
    {\red g}{}_{\sfT^\sfH} = \delta_{ij}\,{\red \Theta}^i \otimes {\red \Theta}^j = \de x \otimes \de x + (\de y - m\,x\, \de z) \otimes (\de y - m\,x\, \de z) + \de z \otimes \de z \ .
\end{align}
With $\mathsf{Iso}(V_1^+) = \mathsf{Lie}(\mathsf{T^H})$, the notion of $D_1$ invariance becomes that of $\mathsf{T^H}$-invariance, and hence ${\red g}{}_{\sfT^\sfH}$ is a $D_1$-invariant metric.

The pullback of ${\red g}{}_{\sfT^\sfH}$ to $M_1$, \smash{$ g_{1+} =\varpi_1^*\,{\red g}{}_{\sfT^\sfH} = \delta_{ij}\,{\Theta}^i \otimes {\Theta}^j$}, defines a generalised para-Hermitian metric $\cH_1 = g_{1+} + g_{1-}$ on \smash{$M^\sfH$} given by
\begin{align}
    \cH_1 = \delta_{ij}\,{\Theta}^i \otimes {\Theta}^j + \delta^{ij}\,\tilde{\Theta}_i \otimes \tilde{\Theta}_j \ .
\end{align}
The pullback by $\phi^{-1}$ gives $\cH_2 = \delta_{ij}\,{\Theta}'^i \otimes {\Theta}'^j + \delta^{ij}\,\tilde{\Theta}'_i \otimes \tilde{\Theta}'_j$, and hence we get a basic tensor $g_{2+} = \delta_{ij}\,{\Theta}'^i \otimes {\Theta}'^j$ which is the pullback by $\varpi_2$ of the standard flat Riemannian metric 
\begin{align}
{\red g}{}_{\sfT^3} = \delta_{ij}\, {\red \Theta}'^i \otimes {\red \Theta}'^j = \de x'\otimes\de x' + \de y'\otimes\de y' + \de z'\otimes\de z'
\end{align}
on the three-torus $\sfT^3$. 

Thus the Heisenberg nilmanifold $\sfT^\sfH$ with zero $H$-flux and background metric \smash{${\red g}{}_{\sfT^\sfH}$} is T-dual to the three-torus $\sfT^3$ with $H$-flux $\red H{}_{\sfT^3}$ and background metric \smash{${\red g}{}_{\sfT^3}$} via the generalised isometry
\begin{equation}
\begin{tikzcd}
    R \colon \big(\IT \sfT^\sfH\,,\, 0\,,\, ({\red g}{}_{\sfT^\sfH},0)\big) \arrow[dashed]{r} & \big(\IT \sfT^3\,,\, \red H{}_{\sfT^3}\,,\, ({\red g}{}_{\sfT^3},0)\big) \ .
    \end{tikzcd}
\end{equation}

\appendix

\section{Change of Splitting for Compositions} \label{app:changesplitting}

The composition of transverse generalised isometries $R_1 \colon (E_1,W_1) \rel (E_2,W_2)$ and $R_2 \colon (E_2,W_2) \rel (E_3,W_3)$ may fail due to the fact that the lifts of $W_2^\pm$ for the decomposition \eqref{eqn:transdecomp} may differ for $R_1$ and $R_2$. 
To see the solution, let us first investigate the case of relations $R_i$ which are graphs of classical Courant algebroid morphisms given by isomorphisms of exact Courant algebroids. Suppose that $\Phi_1\colon E_1\to E_2$ and $\Phi_2\colon E_2 \to E_3$ are Courant algebroid isomorphisms, where $E_i$ are exact Courant algebroids over $M_i$. Let $K_i$ be isotropic involutive subbundles of $E_i$, let $W_i$ be pre-$K_i$-transverse generalised metrics, and assume that $\Phi_i$ is a regular transverse generalised isometry between $W_i$ and $W_{i+1}$. 

Then there exist the bundles $\widetilde W_1^\pm$, $\widetilde W_2^\pm$, $\widetilde W_2'^\pm$ and $\widetilde W_3'^\pm$ such that 
\begin{align}
    \Phi_1\big(\widetilde W_1^\pm\big) = \widetilde W_2^\pm \qquad \text{and} \qquad \Phi_2\big(\widetilde W_2'^\pm\big) = \widetilde W_3'^\pm \ .
\end{align}
We can define the subbundles $\widetilde W_1'^\pm \coloneqq \Phi_1^{-1}\big(\widetilde W_2'^\pm\big)$. If $\widetilde W_1'^\pm$ are also lifts of $W_1^\pm$ to $W_1$, then 
\begin{align}
    \Phi_2 \circ \Phi_1 \big(\widetilde W_1'^\pm\big) = \widetilde W_3'^\pm \ ,
\end{align}
and $\Phi_2 \circ \Phi_1$ is a regular transverse generalised isometry between $W_1$ and $W_3$. If $s^\pm_2 \colon W_2^\pm \to W_2 $ and $s_2'^\pm \colon W_2^\pm \to W_2$ are the splittings with images \smash{$\widetilde{W}_2^\pm$} and \smash{$\widetilde{W}_2'^\pm$} respectively, the requirement that \smash{$\widetilde W_1'^\pm$} are also lifts of $W_1^\pm$ to $W_1$ is that the differences $s^\pm_2 - s_2'^\pm$ map to~$\Phi_1(K_1)$. 
Similarly one could define \smash{$\widetilde{W}_3^\pm \coloneqq \Phi_2\big(\widetilde W_2^\pm\big)$} which, provided that $s^\pm_2 - s_2'^\pm$ map to $\Phi_2^{-1}(K_3)$, define new splittings such that \smash{$\Phi_2 \circ \Phi_1 \big(\widetilde{W}_1^\pm\big) = \widetilde{W}_3^\pm$}.

When $R_i$ are general Courant algebroid relations, the condition on the splittings $s_2^\pm$ and $s_2'^\pm$ may be written as 
\begin{align}
    {\rm pr}_1\big (R_1\cap (E_1 \times \mathrm{im}(s^\pm_2 - s_2'^\pm)\big ) \subset K_1 \qquad \text{or} \qquad {\rm pr}_2\big (R_2\cap ( \mathrm{im}(s_2^\pm - s_2'^\pm) \times E_3)\big ) \subset K_3 \ ,
\end{align}
which should be viewed as pointwise inclusions.
These are similar to the conditions given in Theorem \ref{thm:maingeneral}, though here the splittings $s^\pm_2$ and $s_2'^\pm$ are given \emph{a priori} whereas in the proof of Theorem \ref{thm:maingeneral} we had to construct these splittings. That is, a portion of the proof is showing that $\Phi$ is a transverse generalised isometry. The pointwise nature of the present condition is also in contrast with the construction in Section \ref{ssec:geometricTduality}, which provides a smooth splitting and hence shows that $\Phi$ is a regular transverse generalised isometry.

\bibliographystyle{ourstyle}
\bibliography{bibprova3}

\end{document}